\newcommand{\Hilb}{\mathcal{H}}
\newcommand{\eps}{\text{$\varepsilon$}}
\newcommand{\oneop}{\mathds{1}}
\newcommand{\Xbar}{{\overline{X}}}
\newcommand{\rbar}{{\overline{r}}}
\newcommand{\iotabar}{{\overline{\iota}}}
\newcommand{\C}{\mathcal{C}}
\newcommand{\Z}{\mathcal{Z}}
\newcommand{\cS}{\mathcal{S}}
\newcommand{\A}{\mathcal{A}}
\newcommand{\cL}{\mathcal{L}}
\renewcommand{\L}{\cL}
\newcommand{\M}{\mathcal{M}}
\newcommand{\N}{\mathcal{N}}
\newcommand{\R}{\mathcal{R}}
\newcommand{\cR}{\mathcal{R}}
\renewcommand{\L}{\cL}
\newcommand{\RR}{\mathbb{R}}
\newcommand{\CC}{\mathbb{C}}
\newcommand{\NN}{\mathbb{N}}
\DeclareMathOperator{\Ind}{Ind}
\DeclareMathOperator{\Hom}{Hom}
\DeclareMathOperator{\Ad}{Ad}
\DeclareMathOperator{\tr}{tr}
\def\II{{I\!I}}
\newcommand{\lqq}{\lq\lq}
\DeclareRobustCommand{\eg}{e.g.\@\xspace}
\DeclareRobustCommand{\cf}{cf.\@\xspace}
\DeclareRobustCommand{\ie}{i.e.\@\xspace}
\DeclareRobustCommand{\p}{p.\@\xspace}
\DeclareRobustCommand{\Sec}{Sec.\@\xspace}
\DeclareRobustCommand{\Prop}{Prop.\@\xspace}
\DeclareRobustCommand{\Lem}{Lem.\@\xspace}
\DeclareRobustCommand{\Cor}{Cor.\@\xspace}
\DeclareRobustCommand{\Thm}{Thm.\@\xspace}
\DeclareRobustCommand{\Ch}{Ch.\@\xspace}
\DeclareRobustCommand{\App}{App.\@\xspace}
\DeclareRobustCommand{\Ex}{Ex.\@\xspace}
\DeclareRobustCommand{\Def}{Def.\@\xspace}
\DeclareRobustCommand{\Rmk}{Rmk.\@\xspace}
\DeclareRobustCommand{\eq}{eq.\@\xspace}
\DeclareRobustCommand{\etc}{%
    \@ifnextchar{.}%
        {etc}%
        {etc.\@\xspace}%
}
\newcommand{\Cstar}{$C^\ast$\@\xspace}
\def\u1net{{\A_\RR}}
\theoremstyle{plain}
\newtheorem{theorem}{Theorem}[section]
\newtheorem{corollary}[theorem]{Corollary}
\newtheorem{lemma}[theorem]{Lemma}
\newtheorem{proposition}[theorem]{Proposition}
\theoremstyle{definition}
\newtheorem{definition}[theorem]{Definition}
\theoremstyle{remark}
\newtheorem{example}[theorem]{Example}
\newtheorem{remark}[theorem]{Remark}
\numberwithin{equation}{section}
\begin{document}

\title{\huge Minimal index and dimension for 2-$C^*$-categories with finite-dimensional centers}

\author{\Large Luca Giorgetti}
\author{\Large Roberto Longo}
\affil{\normalsize Dipartimento di Matematica, Universit\`a di Roma Tor Vergata\\

Via della Ricerca Scientifica, 1, I-00133 Roma, Italy\\

{\tt giorgett@mat.uniroma2.it}, {\tt longo@mat.uniroma2.it}}

\date{}

\maketitle

\begin{abstract}
In the first part of this paper, we give a new look at inclusions of von Neumann algebras with finite-dimensional centers and finite Jones' index.
 The minimal conditional expectation is characterized by means of a canonical state on the relative commutant, that we call the spherical state; the minimal index is neither additive nor multiplicative (it is submultiplicative), contrary to the subfactor case.
 So we introduce a matrix dimension with the good functorial properties: it is always additive and multiplicative.
 The minimal index turns out to be the square of the norm of the matrix dimension, as was known in the multi-matrix inclusion case.
In the second part, we show how our results are valid in a purely 2-$C^*$-categorical context, in particular they can be formulated in the framework of Connes' bimodules over von Neumann algebras.
\end{abstract}

\tableofcontents

\vskip3cm

{\footnotesize Supported in part by the ERC Advanced Grant 669240 QUEST ``Quantum Algebraic Structures and Models'', MIUR FARE R16X5RB55W QUEST-NET, OPAL\,-``Consolidate the Foundations", GNAMPA-INdAM.}

\newpage
\section*{Introduction}
\addcontentsline{toc}{section}{Introduction}

In a landmark paper \cite{Jon83}, Jones initiated the study of inclusions of type $\II_1$ factors and defined an index for subfactors. If finite, the index is a positive number and, if less than 4, the possible values are bound to the set $\{4 \cos^2(\pi/k), k\in\NN, k\geq 3\} $, showing a remarkable quantization phenomenon. Moreover, every value in this set is the index of some subfactor, and it may also be realized by inclusions of multi-matrices, see also \cite{GdHJ89}.

The notion of index was generalized by Kosaki \cite{Kos86} to arbitrary subfactors $\N\subset\M$, or better to an arbitrary $E\in E(\M,\N)$, with $E(\M,\N)$ the set of normal faithful conditional expectations  $E:\M\rightarrow\N$. The index $\Ind(E)$ is again a positive number or infinity, its values are quantized as before, and gives back the Jones index in the type $\II_1$ subfactor case if $E$ is the trace-preserving conditional expectation. If the subfactor is not irreducible, namely if $\N'\cap\M \neq \CC\oneop$, there is more than one element in $E(\M,\N)$, yet there are expectations that minimize $\Ind(E)$ and one calls this minimal value the minimal index of $\N\subset\M$; such value is attained by exactly one expectation, the minimal one, see Hiai \cite{Hia88}, Longo \cite{Lon89} and Havet \cite{Hav90}.

The finiteness of the index is equivalent to the existence of a conjugate morphism $\iotabar: \M\rightarrow\N$ with respect to the inclusion morphism $\iota:\N\hookrightarrow\M$, where $\iota,\iotabar$ have to satisfy a conjugate equation  \cite{Lon90}. It was later shown \cite{LoRo97} how this leads to a notion of dimension in a purely tensor \Cstar-categorical (or better 2-\Cstar-categorical) setting, in the case of simple tensor unit. The solutions of the conjugate equations (or adjoint equations in the language of Mac Lane \cite{Mac98}) for $\iota$ and $\iotabar$ correspond to the conditional expectations $E\in E(\M,\N)$ and their dual expectations $E'\in E(\N',\M')$, both restricted to the common relative commutant $\N'\cap\M$ of the subfactors $\N\subset\M$ and $\M'\subset\N'$. Note that $\N'\cap\M = \Hom (\iota,\iota)$ and recall from \cite{Lon89}, \cite{Lon90} that $\iotabar(\M) \subset \N$ 
is isomorphic to the Jones basic extension $\M\subset\M_1$ determined by $E\in E(\M,\N)$, and $\M'\subset\N'$ is anti-isomorphic to $\M\subset\M_1$.

The aim of this paper is to define a notion of dimension and minimal index in the context of rigid 2-\Cstar-categories with finite-dimensional centers (not necessarily simple tensor units). We first treat the special case of inclusions of von Neumann algebras (not necessarily subfactors), in a language which is more familiar to operator-algebraists.

The first observation that one makes when dealing with non-factorial inclusions $\N\subset\M$ is that the index $\Ind(E)$, $E\in E(\M,\N)$, is not necessarily a scalar, but an element of $\Z(\M)$, the center of $\M$. Similarly $\Ind(E')$, $E'\in E(\N',\M')$, lies in $\Z(\N') = \Z(\N)$. Moreover, $E$ and $E'$ map $\N'\cap\M$ respectively onto $\Z(\N)$ and $\Z(\M)$, hence they need not be states of the relative commutant. Thus one has several options on how to compare the two restrictions $E_{\restriction \N'\cap\M}$ and $E'_{\restriction \N'\cap\M}$, \ie, several candidates for defining a sphericality property for solutions of the conjugate equations (comparison of the associated left and right inverses). 

The starting point of our work is determining the correct notion of sphericality for non-factorial inclusions (with finite index and finite-dimensional centers), namely the one that singles out the minimal expectations $E^0$ and ${E^0}'$, \ie, those minimizing respectively $\|\Ind(E)\|$ and $\|\Ind(E')\|$, among all other expectations (Theorem \ref{thm:minimalconnectedfindim}). Assuming without loss of generality that $\N\subset\M$ is connected, namely $\Z(\N) \cap \Z(\M) = \CC\oneop$, the sphericality property is controlled by two states on $\Z(\N)$ and $\Z(\M)$ respectively, that we call left and right state of the inclusion. Both the minimal index and these two canonical states are in turn determined by a further more fundamental invariant for the inclusion, a matrix, that we call the matrix dimension of the inclusion. The minimal index is the square of the $l^2$-norm of this matrix (hence we call the norm itself the scalar dimension of the inclusion), and the two previous states are determined by the unique left and right $l^2$-normalized Perron-Frobenius eigenvectors of the matrix dimension. These results on the minimal index turn out to be a natural generalization of those contained in \cite{Jon83}, \cite{GdHJ89} concerning the index for finite-dimensional \Cstar-algebras (multi-matrices), to other types of von Neumann algebras (not necessarily finite-dimensional, nor possessing a faithful normal trace). Moreover, in Theorem \ref{thm:minimalconnectedfindim} we give a weighted additivity formula that allows to compute the minimal index out of the matrix dimension and the left and right Perron-Frobenius eigenvectors of the inclusion. 

In Section \ref{sec:multi-mats}, we draw some consequences
on the easy observation that the matrix dimension of a multi-matrix inclusion is the inclusion matrix considered by Jones (Theorem \ref{thm:DisLambdaNinM}). 

In Section \ref{sec:s-extr}, we study the meeting-point of two aspects of index (the minimal one, and the one given by a trace) for multi-matrices (or more generally for finite von Neumann algebras). As in the factor case, we call an inclusion of finite von Neumann algebras (with finite index, finite-dimensional centers, and connected) extremal if the minimal expectation is the one which preservers the unique Markov trace of the inclusion (a trace on $\M$ that extends to the Jones tower generated by $\N\subset\M$). We call such an inclusion super-extremal if in addition the Markov trace itself, when restricted respectively to $\Z(\N)$ and $\Z(\M)$, coincides with the previously defined left and right states of the inclusion. In the case of multi-matrices, which are always extremal, we give a particularly simple characterization of super-extremality in terms of the inclusion matrix and of the dimensions of the full matrix algebras in the direct sum decompositions of $\N$ and $\M$, \ie, in terms of the Bratteli diagram of $\N\subset\M$ (Proposition \ref{prop:superextremalmultimat}). We show that the index of a super-extremal multi-matrix inclusion equals the ratio of the algebraic dimensions of $\M$ over $\N$, as it is the case for finite type $I$ subfactors, and that it must be an integer. Moreover every positive integer is realized as the index of such an inclusion. 

In Section \ref{sec:ifNorMfactor}, back to arbitrary inclusions with finite-dimensional centers, and assuming $\N$ or $\M$ to be a factor, we show that minimality of an expectation is equivalent to the commutation with its dual expectation on $\N'\cap\M$ (Proposition \ref{prop:minimaliffcommute}). 

In Section \ref{sec:multiplicativity}, we study the multiplicativity properties of the minimal index for non-factorial inclusions, namely its behaviour under composition of inclusions (this was the original motivation of our work). Contrary to the subfactor case \cite{KoLo92}, \cite{Lon92}, it turns out that the minimal index, or equivalently the scalar dimension, is not always multiplicative. The matrix dimension instead is always multiplicative (Theorem \ref{thm:dimensionmatmult}). We compare the minimal index of the composition of two inclusions with the product of the two minimal indices separately, in some particular cases. We give a general sufficient condition for multiplicativity (Proposition \ref{prop:minindexmultif}), namely the minimal index, or equivalently the scalar dimension, is multiplicative if the left state of the upper inclusion coincides with the right state of the lower inclusion, and in this case the minimal conditional expectation of the composed inclusion is the product of the two intermediate minimal expectations. Remarkably, the minimal index is always multiplicative for the particular subclass of multi-matrix inclusions considered in Section \ref{sec:s-extr}, namely the super-extremal ones (Proposition \ref{prop:multipsemm}). Furthermore, we study the minimal index in the non-factorial Jones tower of basic extensions, and show its multiplicativity in this case as well (Corollary \ref{cor:jonesextensionmultip}). 

In Section \ref{sec:additivity}, we study the additivity properties of the minimal index, which is again not additive in general. The matrix dimension instead is always additive (Proposition \ref{prop:dimensionmatadd}). In the case that $\N$ or $\M$ is a factor, the square of the scalar dimension, \ie, the minimal index itself, can be additive if we decompose either $\M$ or $\N$ along minimal central projections. 

In Section \ref{sec:2-cstar-cats}, which is the second main part of this work, we develop the theory of minimal index and dimension (as a matrix) for 2-\Cstar-categories with non-simple tensor units (the 1-arrow units for the horizontal composition). We push the analogy with the theory of minimal index for non-factorial inclusions developed in the previous sections, or better for Connes' bimodules between algebras with finite-dimensional centers described in \cite{Lon17arxiv}, ending up in a complete generalization of the results contained in \cite{LoRo97} concerning the simple unit case. The crucial step is the definition of standard solution of the conjugate equations (Definition \ref{def:stdsol}), obtained in the case of connected 1-arrows as a weighted direct sum of the standard solutions (in the sense of \cite{LoRo97}) associated with the corresponding factorial sub-1-arrows. The weights are given by the left and right Perron-Frobenius eigenvectors of the matrix dimension and they cannot be omitted in order to characterize, as we do, sphericality (Theorem \ref{thm:stdiffspherical}) and minimality (Theorem \ref{thm:stdiffminimal}) properties of solutions of the conjugate equations by means of standardness. This last section is the most novel part of the work, it is self-contained and it generalizes the results of Section \ref{sec:inclusions}, \ref{sec:multiplicativity} and \ref{sec:additivity}.

As an outlook, we intend to study further the theory of minimal index, and its possibly 2-$W^*$-categorical translation and generalization, in the most general case of von Neumann algebras with arbitrary centers, extending our analysis to the infinite-dimensional centers case.

\section{Minimal index, spherical state and matrix dimension}\label{sec:inclusions}

Let $\N\subset\M$ be a unital inclusion of $\sigma$-finite von Neumann algebras acting on a Hilbert space $\Hilb$. Throughout this paper we shall always deal with $\sigma$-finite von Neumann algebras and unital inclusions (\ie, $\N$ and $\M$ have the same identity operator $\oneop = \oneop_\Hilb$) without further mentioning. Denoted by $E(\M,\N)$ the set of all normal faithful conditional expectations from $\M$ onto $\N$, the inclusion $\N\subset\M$ is called \emph{finite-index} if $E(\M,\N)\neq\emptyset$ and there is an expectation $E\in E(\M,\N)$ with finite Jones index, denoted by $\Ind(E)$. See \cite{Kos86} and \cite{BDH88}, \cite{Pop95} for the definition of $\Ind(E)$, which is in general a positive invertible element of $\Z(\M) = \M'\cap\M$, the center of $\M$, if $E$ has finite index. In fact, $\Ind(E) \geq \oneop$.

\begin{definition}\label{def:minimalindex}
A conditional expectation $E\in E(\M,\N)$ with finite index is called \textbf{minimal} if the norm of its index $\|\Ind(E)\|$ is minimal among all other elements of $E(\M,\N)$. This minimal value is called the \textbf{minimal index} of the inclusion $\N\subset\M$.
\end{definition}

At this level of generality, it is a result of Jolissaint \cite[\Thm 1.8]{Jol91} that guarantees the \emph{existence} of minimal expectations for finite-index inclusions of von Neumann algebras.

\begin{definition}
The inclusion $\N\subset\M$ is called \textbf{connected} if $\Z(\N)\cap \Z(\M) = \CC\oneop$. This is obviously equivalent to $\M'\subset\N'$ being connected. 
\end{definition}

Now assume that $\N\subset\M$ is \emph{connected} and that $\N$ and $\M$ have \emph{finite-dimensional centers}. In this case, we know by \cite[\Thm 2.9 (ii)]{Hav90}, \cite[\Prop 3.1]{Ter92} that there exists a \emph{unique} minimal expectation $E^0\in E(\M,\N)$, \ie, $\|\Ind(E^0)\|$ is minimal among all other $\|\Ind(E)\|$, furthermore $\Ind(E^0)$ turns out to be a scalar multiple of the identity, \ie, $\Ind(E^0) = c \oneop$, where $c:=\|\Ind(E^0)\|\geq 1$ is the minimal index of $\N\subset\M$.

\begin{remark}
Recall that if there is an expectation $E\in E(\M,\N)$ with finite index, then the finite-dimensionality of $\Z(\N)$, $\Z(\M)$ or $\N'\cap\M$ are equivalent conditions, see \cite[\Cor 3.18, 3.19]{BDH88} for a proof of this fact. Furthermore, when $\N'\cap\M$ is finite-dimensional we know by \cite[\Thm 6.6]{Haa79II}, \cite[\Thm 5.3]{CoDe75} that either \emph{every} conditional expectation has finite index, \ie, the operator-valued weight $E^{-1}$ defined in \cite{Kos86} is bounded on $\N'$, or $\|\Ind(E)\| = \infty$ for every $E\in E(\M,\N)$.
\end{remark}

\begin{remark}
Connected inclusions are the building blocks in the analysis of the index of expectations between algebras with non-trivial centers, in the sense that an inclusion of algebras $\N\subset\M$, equipped with a conditional expectation $E\in E(\M,\N)$, can always be written as direct sum (or direct integral) of connected inclusions, also endowed with conditional expectations, by taking the decomposition along $\Z(\N)\cap\Z(\M)$, \cf \cite[\Rmk 3.4]{Ter92}, \cite[\Thm 1]{FiIs96}.
In the non-connected case (but still assuming finite-dimensional centers) there is a unique minimal expectation, see Definition \ref{def:minimalindex}, such that in addition it has minimal index (in the sense of operators in $\Z(\M)$) among all other minimal expectations, see \cite[\Thm 2.9 (iii)]{Hav90}, \cite[\Thm 3.5]{Iso95}. The index of this expectation is an element of $\Z(\N)\cap\Z(\M)$.
In the case of infinite-dimensional centers instead, even assuming them to be atomic, the connectedness of the inclusion does \emph{not} guarantee uniqueness of the minimal expectation, see \cite[\Prop 10, \Sec 5]{FiIs96}.
\end{remark}

In the following Theorem \ref{thm:minimalconnectedfindim} we give an intrinsic characterization of minimality for a normal faithful conditional expectation $E\in E(\M,\N)$, \ie, for its dual conditional expectation $E' := \Ind(E)^{-1}E^{-1}\in E(\N',\M')$, improving a result of Teruya, see \cite[\Thm 3.3]{Ter92}, by using Perron-Frobenius theory for non-negative matrices. We also characterize the minimal index, \cf \cite{Lon89}, \cite{Lon90}, \cite{LoRo97}, as the Perron-Frobenius eigenvalue of a certain \lqq matrix dimension", similarly to the index given by the trace in the case of finite von Neumann algebras \cite{Jon83}, \cite{GdHJ89}, \cite{Jol90}.

First we introduce some notation. Denote by $\{q_j, j=1,\dots,n\}$, $\{p_i, i=1,\ldots,m\}$ the minimal central projections of $\N$, $\M$, respectively, where $n=\dim(\Z(\N))$, $m=\dim(\Z(\M))$. In particular, $\sum_j q_j = \oneop$ and $\sum_i p_i = \oneop$, the $q_j$ are mutually orthogonal, similarly for the $p_i$, and clearly $p_iq_j = q_jp_i$. For every $E\in E(\M,\N)$, let $\lambda_{ij}\geq 0$, $\lambda'_{ji}\geq 0$ be such that 
$$\lambda_{ij} q_j = E(p_i)q_j,\quad \lambda'_{ji} p_i = E'(q_j)p_i,$$ 
then $\lambda_{ij}> 0$, or equivalently $\lambda'_{ji}> 0$, if and only if $p_iq_j \neq 0$. Whenever $p_iq_j \neq 0$, define $E_{ij}\in E(\M_{ij}, \N_{ij})$ by $E_{ij}(q_j x p_i q_j) := \lambda_{ij}^{-1} E(x p_i) p_i q_j$, $x\in\M$, and define $E'_{ji}\in E(\N'_{ji}, \M'_{ji})$ by $E'_{ji}(p_i y q_j p_i) := {\lambda'}_{ji}^{-1} E'(y q_j) q_j p_i$, $y\in\N'$, where $\M_{ij} := q_j \M p_i q_j$, $\N_{ij} := \N p_i q_j$ and $\N_{ij}\subset\M_{ij}$ is a subfactor. 
Let $\Lambda$ and $\Lambda'$ be respectively the $m\times n$ and $n\times m$ matrices with entries $\Lambda_{i,j} := (\lambda_{ij})_{i,j}$ and $\Lambda'_{j,i} := (\lambda'_{ji})_{j,i}$ and observe that they are column-stochastic, \ie, $\sum_i \lambda_{ij} = 1$ and $\sum_j \lambda'_{ji} = 1$. We refer to $\Lambda$ and $\Lambda'$ as the \lqq expectation matrices" respectively of $E$ and $E'$. 

Recall that by \cite[\Prop 2.2, 2.3]{Hav90}, \cite[\Prop 2.1]{Ter92} every expectation $E\in E(\M,\N)$ is uniquely determined by $\Lambda$ and $(E_{ij})_{i,j}$ via the formula
\begin{equation}\label{eq:decompositionofE}
E(x) = \sum_{i,j} \lambda_{ij} \sigma_{ij}(E_{ij}(q_j x p_i q_j))
\end{equation}
where $\sigma_{ij} : \N_{ij} \rightarrow \N_j := \N q_j$ the inverse of the induction isomorphism $y q_j \mapsto y p_i q_j$, $y\in\N$ ($p_i$ has central support $\oneop$ in $\N_j '$). Similarly, every $E'\in E(\N',\M')$ is determined by $\Lambda'$ and $(E'_{ij})_{i,j}$.

\begin{lemma}\label{lem:connectediffirred}
The inclusion $\N\subset\M$ is connected if and only if the $m\times n$ matrix $S$ with $(i,j)$-th entry equal to $1$ (or any positive number) if $p_iq_j \neq 0$ and equal to $0$ if $p_iq_j = 0$, is indecomposable, which is equivalent to $S S^t$, or equivalently $S^t S$, being irreducible square matrices. 
\end{lemma}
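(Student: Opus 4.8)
The plan is to translate the statement into graph theory via the bipartite graph $G$ with vertex classes $\{1,\dots,m\}$ (the rows) and $\{1,\dots,n\}$ (the columns) and an edge between $i$ and $j$ exactly when $p_iq_j\neq 0$, \ie when $S_{ij}\neq 0$; by definition $S$ is indecomposable precisely when $G$ is connected, and only the zero pattern of $S$ enters, so the freedom to put ``any positive number'' is immaterial. Since $\sum_i p_i=\sum_j q_j=\oneop$ and none of the $p_i,q_j$ vanish, $G$ has no isolated vertices: every row and every column index carries at least one edge, \ie $S$ has no zero row and no zero column. The whole lemma then rests on a dictionary between connected components of $G$ and projections in $\Z(\N)\cap\Z(\M)$, together with the textbook passage from a rectangular incidence matrix to its two symmetric products.

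For the first (and only non-formal) step I would exploit that, as $p_i$ and $q_j$ commute, the nonzero members of $\{p_iq_j\}$ form a family of mutually orthogonal nonzero projections, hence are linearly independent, with $\sum_{i,j}p_iq_j=\oneop$, $q_j=\sum_i p_iq_j$ and $p_i=\sum_j p_iq_j$. A projection $z$ lies in $\Z(\N)\cap\Z(\M)$ if and only if it is at the same time a sum $z=\sum_{j\in J}q_j$ of minimal central projections of $\N$ and a sum $z=\sum_{i\in I}p_i$ of minimal central projections of $\M$, for suitable $J\subseteq\{1,\dots,n\}$, $I\subseteq\{1,\dots,m\}$. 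Expanding both sums in the orthogonal family $\{p_iq_j\}$ and comparing coefficients, the coefficient of a nonzero $p_iq_j$ equals $1$ when $j\in J$ (resp. $i\in I$) and $0$ otherwise, so the two expressions agree if and only if, along every edge $(i,j)$ of $G$, one has $i\in I$ exactly when $j\in J$. Thus the admissible pairs $(I,J)$ are precisely those that are constant (all in, or all out) on each connected component of $G$, and the minimal projections of $\Z(\N)\cap\Z(\M)$ correspond bijectively to the connected components of $G$. Hence $\Z(\N)\cap\Z(\M)=\CC\oneop$, \ie the inclusion is connected, if and only if $G$ is connected, \ie $S$ is indecomposable.

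For the second step I would reduce irreducibility of $SS^t$ (an $m\times m$ symmetric matrix) to connectedness of $G$ by the standard one-mode projection. Indeed $(SS^t)_{ii'}=\sum_j S_{ij}S_{i'j}>0$ if and only if the rows $i,i'$ have a common neighbour in $G$, so $SS^t$ is, up to irrelevant positive weights, the adjacency matrix of the graph $G_{\mathrm{row}}$ on $\{1,\dots,m\}$ in which $i\sim i'$ iff $i,i'$ share a neighbour; being symmetric, $SS^t$ is irreducible iff $G_{\mathrm{row}}$ is connected. A path in $G$ joining two rows projects to a walk in $G_{\mathrm{row}}$, while every edge of $G_{\mathrm{row}}$ lifts to a length-two path in $G$ through a shared column; using that each column vertex is attached to some row vertex (no zero column), this yields that $G$ is connected iff $G_{\mathrm{row}}$ is connected iff $SS^t$ is irreducible. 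Interchanging the roles of rows and columns gives, verbatim, that $G$ is connected iff $S^tS$ is irreducible. Chaining the three equivalences with the first step completes the proof.

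I expect the only delicate point to be the first step, namely the linear independence of the nonzero $p_iq_j$ and the fact that a projection central in $\N$ (resp. $\M$) is automatically a sum of the minimal central projections $q_j$ (resp. $p_i$); once the comparison of coefficients is justified on this orthogonal family, the rest is bookkeeping on $G$ and the classical equivalence between indecomposability of a nonnegative rectangular matrix without zero rows or columns and irreducibility of $SS^t$ and $S^tS$.
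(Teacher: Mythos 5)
Your proof is correct and follows essentially the same route as the paper, whose proof of this lemma consists only of a pointer to \cite[Prop.\ 2.3.1(f), Lem.\ 1.3.2(b)]{GdHJ89}: the dictionary between projections in $\Z(\N)\cap\Z(\M)$ and unions of connected components of the bipartite graph attached to the zero pattern of $S$, combined with the standard equivalence between indecomposability of a nonnegative rectangular matrix without zero rows or columns and irreducibility of $SS^t$ and $S^tS$. You have simply written out in full the ``suitable modification'' of those arguments that the paper leaves to the reader.
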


\begin{proof}
Suitable modification of the arguments in \cite[\Prop 2.3.1 (f)]{GdHJ89} and \cite[\Lem 1.3.2 (b)]{GdHJ89}.
\end{proof}

\begin{theorem}\label{thm:minimalconnectedfindim}
Let $\N\subset\M$ be a finite-index connected inclusion of von Neumann algebras with finite-dimensional centers, and denote by $\{q_j, j=1,\dots,n\}$, $\{p_i, i=1,\dots,m\}$ the minimal central projections as above. Let $E\in E(\M,\N)$, the following conditions are equivalent:
\begin{itemize}
\item[$(a)$] $E=E^0$, \ie, $E$ is the minimal conditional expectation in $E(\M,\N)$, or equivalently $E'$ is the minimal conditional expectation in $E(\N',\M')$.
\item[$(b)$] 
There is a normal faithful state $\omega_s$ on $\N'\cap\M$ such that $\omega_s = \omega_s \circ E = \omega_s \circ E'$ on $\N'\cap\M$, or equivalently
\begin{equation*}
\omega_l \circ E = \omega_r \circ E' \quad \text{on} \quad \N'\cap\M
\end{equation*}
where $\omega_l$ and $\omega_r$ are the two normal faithful states obtained by restricting $\omega_s$ to $\Z(\N)$ and $\Z(\M)$ respectively.

In this case, the states $\omega_l$, $\omega_r$ and $\omega_s$ are uniquely determined, and we refer to them respectively as \lqq left", \lqq right" and \lqq spherical" state of the inclusion. Moreover, $\omega_s$ is a trace on $\N'\cap\M$.
\item[$(c)$] For every $i=1,\ldots, m$, $j=1,\ldots, n$ denote by $c_{ij}\geq 1$ the minimal index of the subfactor $\N_{ij}\subset\M_{ij}$ if $p_iq_j \neq 0$, and set $c_{ij} := 0$ otherwise. Let $d_{ij} := c_{ij}^{1/2}$ be the intrinsic dimension of the respective inclusion morphism and $D$ the $m \times n$ \lqq matrix dimension" with entries $D_{i,j} := (d_{ij})_{i,j}$. Let $(\nu_j^{1/2})_j$ and $(\mu_i^{1/2})_i$ be the Perron-Frobenius eigenvectors (with positive entries, normalized such that $\sum_j \nu_j =1$ and $\sum_i \mu_i = 1$) respectively of $D^t D$ and $D D^t$, namely
\begin{equation}\label{eq:PF-eqns}
\begin{aligned}
D^t D \nu^{1/2} = d^2 \nu^{1/2}\\
D D^t \mu^{1/2} = d^2 \mu^{1/2}
\end{aligned}
\end{equation}
where $d^2>0$ denotes the common spectral radius (the Perron-Frobenius eigenvalue), \ie, $d=\|D\|$, the operator norm of $D$ between Hilbert spaces $\CC^n$ and $\CC^m$ with the $l^2$-norm.

Then $\|\Ind(E_{ij})\| = c_{ij}$  for every $i,j$, moreover $c_{ij} = c \lambda_{ij} \lambda'_{ji}$ and $\lambda_{ij} = \mu_i \nu_j^{-1} \lambda'_{ji}$ for some constant $c$.

In this case, the index of $E$ is a scalar multiple of the identity and $\|\Ind(E)\| = c$, \ie, $\Ind(E) = c \oneop$. Moreover, $c$ is the minimal index of the inclusion $\N\subset\M$, and it holds  
\begin{equation}\label{eq:cisdsquare}
c=d^2
\end{equation}
\ie, the minimal index equals the previous spectral radius (the squared norm of the matrix dimension). $E$ is uniquely determined by setting $E_{ij} = E^0_{ij}$, the unique minimal expectation in $E(\M_{ij}, \N_{ij})$, and
\begin{equation}\label{eq:DdeterminesLambda}
\lambda_{ij} = \frac{d_{ij}}{d} \frac{\mu_i^{1/2}}{\nu_j^{1/2}}
\end{equation}
for every $i,j$. Similarly for $E'$ after exchanging $E_{ij}$, $E^0_{ij}$ with $E'_{ji}$, ${E^0_{ji}}'$ respectively, $\lambda_{ij}$ with $\lambda'_{ji}$, 
and exchanging the roles of $\nu_j^{1/2}$ and $\mu_i^{1/2}$ in equation (\ref{eq:DdeterminesLambda}).
\end{itemize}

The \lqq left" and \lqq right" states $\omega_l$, $\omega_r$ determine respectively the eigenvectors $\nu^{1/2}$, $\mu^{1/2}$, and vice versa, via
\begin{equation*}
\nu_j = \omega_l(q_j),\quad \mu_i = \omega_r(p_i),
\end{equation*}
hence we call $\nu^{1/2}$ and $\mu^{1/2}$ respectively the \lqq left" and \lqq right" Perron-Frobenius eigenvector of the inclusion. The \lqq scalar dimension" $d$ of the inclusion $\N\subset\M$ and the $d_{ij}$ are related by
\begin{equation}\label{eq:weightedadditivd}
d = \sum_{i,j} d_{ij} \nu_j^{1/2} \mu_i^{1/2}.
\end{equation}
\end{theorem}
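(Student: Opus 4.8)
The plan is to reduce everything to a single singular-vector relation between the two Perron--Frobenius eigenvectors, from which both the weighted additivity formula and the identification of the states fall out. Since $(a)$, $(b)$, $(c)$ are by then equivalent, I work with $E=E^0$ and may use all three. First I would record that, from $D^tD\,\nu^{1/2}=d^2\nu^{1/2}$, the vector $D\nu^{1/2}\in\CC^m$ satisfies $DD^t(D\nu^{1/2})=d^2(D\nu^{1/2})$, so it is a $d^2$-eigenvector of $DD^t$. By Lemma \ref{lem:connectediffirred}, connectedness makes $DD^t$ irreducible, so by Perron--Frobenius its $d^2$-eigenspace is one-dimensional and spanned by the strictly positive vector $\mu^{1/2}$. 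Comparing norms, $\|D\nu^{1/2}\|^2=\langle\nu^{1/2},D^tD\nu^{1/2}\rangle=d^2=d^2\|\mu^{1/2}\|^2$, and positivity of both vectors fixes the scalar, giving the normalized singular-vector identities
\begin{equation*}
D\nu^{1/2}=d\,\mu^{1/2},\qquad D^t\mu^{1/2}=d\,\nu^{1/2}.
\end{equation*}

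Granting these, the weighted additivity formula (\ref{eq:weightedadditivd}) is immediate:
\[
\sum_{i,j}d_{ij}\,\nu_j^{1/2}\mu_i^{1/2}=\langle\mu^{1/2},D\nu^{1/2}\rangle=d\,\langle\mu^{1/2},\mu^{1/2}\rangle=d\sum_i\mu_i=d,
\]
using the normalization $\sum_i\mu_i=1$.

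For the identification $\nu_j=\omega_l(q_j)$, $\mu_i=\omega_r(p_i)$, I would exploit the sphericality $\omega_l\circ E=\omega_r\circ E'$ on $\N'\cap\M$ from $(b)$, evaluated on the central projections, all of which lie in $\N'\cap\M$. Since $E$ fixes $\N$ and $E'$ fixes $\M'$, while $E(p_i)=\sum_j\lambda_{ij}q_j$ and $E'(q_j)=\sum_i\lambda'_{ji}p_i$ by the definitions of the expectation matrices, evaluating at $p_i$ and at $q_j$ and writing $a_j:=\omega_l(q_j)$, $b_i:=\omega_r(p_i)$ yields the linear system $\Lambda a=b$ and $\Lambda'b=a$. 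Both $a,b$ are strictly positive by faithfulness of $\omega_s$ and normalized by $\sum_j a_j=\sum_i b_i=1$. I then check that $(\nu,\mu)$ solves the same system: plugging the explicit formula (\ref{eq:DdeterminesLambda}), $\lambda_{ij}=\tfrac{d_{ij}}{d}\tfrac{\mu_i^{1/2}}{\nu_j^{1/2}}$, and its dual $\lambda'_{ji}=\tfrac{d_{ij}}{d}\tfrac{\nu_j^{1/2}}{\mu_i^{1/2}}$ (the exchange of $\nu^{1/2},\mu^{1/2}$ described in the statement, using self-duality of the subfactor minimal index so that $d_{ij}$ is unchanged) into $\Lambda\nu$ and $\Lambda'\mu$ and using the singular-vector identities gives $(\Lambda\nu)_i=\tfrac{\mu_i^{1/2}}{d}(D\nu^{1/2})_i=\mu_i$ and $(\Lambda'\mu)_j=\nu_j$. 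Finally, $\Lambda'\Lambda$ is column-stochastic and, again by connectedness and Lemma \ref{lem:connectediffirred}, irreducible, so Perron--Frobenius forces a unique strictly positive $l^1$-normalized fixed vector; hence $a=\nu$ and $b=\mu$, which is the claimed identification.

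The main obstacle is the first step: pinning down the exact scalar $d$, not merely proportionality, in $D\nu^{1/2}=d\mu^{1/2}$, since this is what makes all subsequent computations close up. This rests on the one-dimensionality of the Perron--Frobenius eigenspaces of $DD^t$ and of $\Lambda'\Lambda$, which is precisely where connectedness enters through the irreducibility furnished by Lemma \ref{lem:connectediffirred}; without it these eigenvectors would be determined only up to a higher-dimensional subspace and both the norm-matching and the uniqueness argument would break down.
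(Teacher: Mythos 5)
Your proposal establishes only the trailing assertions of the theorem (the singular-vector identities, the weighted additivity formula (\ref{eq:weightedadditivd}), and the identification $\nu_j=\omega_l(q_j)$, $\mu_i=\omega_r(p_i)$), while the core of the statement --- the equivalence of $(a)$, $(b)$ and $(c)$ --- is assumed rather than proved: you say you ``may use all three'' because they ``are by then equivalent'', but no argument for that equivalence is given. In the paper this is where almost all the work lies: $(a)\Leftrightarrow(b)$ is obtained by constructing the spherical state as the pointwise limit of the alternating words $EE'EE'\ldots$ (via von Neumann's ergodic theorem), following Teruya and Havet, and $(b)\Leftrightarrow(c)$ by computing $\|\Ind(E_{ij})\|=c_{ij}=c\,\lambda_{ij}\lambda'_{ji}$ and $\Ind(E)=c\oneop$ from the index formula for conditional expectations, from which the relations $D\nu^{1/2}=c^{1/2}\mu^{1/2}$ and $D^t\mu^{1/2}=c^{1/2}\nu^{1/2}$ are derived \emph{with} $\mu_i:=\omega_s(p_i)$, $\nu_j:=\omega_s(q_j)$; Perron--Frobenius uniqueness then simultaneously identifies these vectors with the normalized eigenvectors and yields $c=d^2$ and (\ref{eq:DdeterminesLambda}). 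None of $c=d^2$, $\Ind(E)=c\oneop$, the trace property and uniqueness of $\omega_s$, or the formula (\ref{eq:DdeterminesLambda}) is derived in your proposal.

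This omission also makes your third paragraph circular as written: to show $a=\nu$ and $b=\mu$ you substitute (\ref{eq:DdeterminesLambda}) (with $\mu$, $\nu$ the Perron--Frobenius eigenvectors and $d=\|D\|$) into $\Lambda\nu$, but that formula is precisely one of the conclusions that must first be extracted from hypothesis $(b)$; the paper obtains it by proving $\lambda_{ij}=\mu_i\nu_j^{-1}\lambda'_{ji}$ and $c_{ij}=c\lambda_{ij}\lambda'_{ji}$ from the invariance of $\omega_s$ and then solving for $\lambda_{ij}$. The linear-algebra steps you do carry out are sound --- the passage from the eigenvalue equations (\ref{eq:PF-eqns}) to $D\nu^{1/2}=d\,\mu^{1/2}$ by norm comparison and positivity, the resulting one-line proof of (\ref{eq:weightedadditivd}), and the uniqueness argument via the irreducible column-stochastic matrix $\Lambda'\Lambda$ (a mild and legitimate variant of the paper's use of $DD^t$ and $D^tD$) --- but they only close the loop once the equivalence $(a)\Leftrightarrow(b)\Leftrightarrow(c)$ and the formula for the expectation matrix of $E^0$ are in place.
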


\begin{proof}
The equivalence of $(a)$ and $(b)$ is essentially contained in \cite{Ter92}, we just have to observe that 
$$E_{\restriction \N'\cap\M}:\N'\cap\M \rightarrow \Z(\N), \quad E'_{\restriction \N'\cap\M}: \N'\cap\M \rightarrow \Z(\M)$$ 
and that in the proof of \cite[\Thm 3.3]{Ter92} only the existence of an $E$-invariant and $E'$-invariant state on $\N'\cap\M$ is actually needed,
\footnote{We shall re-prove and generalize the equivalence of $(a)$ and $(b)$, \ie, of \lqq minimality" and \lqq sphericality" properties in the setting of rigid 2-\Cstar-categories, see Section \ref{sec:2-cstar-cats}, Theorem \ref{thm:stdiffspherical} and \ref{thm:stdiffminimal}.}.
By connectedness assumption, such a state $\omega_s$ is unique and explicitly given by the formula
\begin{equation}\label{eq:sphericalstate}
\omega_s(x)\oneop = P(x), \quad x\in\N'\cap\M
\end{equation}
where $P:\N'\cap\M \rightarrow \Z(\N)\cap\Z(\M) = \CC\oneop$ is the pointwise strong limit of the words $\{EE'EE'\ldots, E'EE'E\ldots\}$, hence also the pointwise norm limit given that $\N'\cap\M$ is finite-dimensional. The convergence can be checked by representing $E$ and $E'$ as projections on a common Hilbert space and applying von Neumann's ergodic theorem \cite[\Thm II.11]{RS1} to compute the projection on the intersection. 
The trace property of $\omega_s$ follows from the fact that $E=E^0$ (similarly for $E'={E^0}'$) is \emph{b\'ecarre} in the terminology of \cite[\Thm 2.9]{Hav90}, namely $E(xy) = E(yx)$ for every $x\in\N'\cap\M$ and $y\in\M$.

To show the equivalence of $(b)$ and $(c)$, we first assume the existence of an $E$-invariant and $E'$-invariant state $\omega_s$ on $\N'\cap\M$.
We let $\mu_i := \omega_s(p_i)$, $\nu_j := \omega_s(q_j)$ and $c$ be the minimal index of $\N\subset\M$. Thus $\sum_i \mu_i = 1$ and $\sum_j \nu_j = 1$. Moreover, $\mu_i = \sum_j \lambda_{ij} \nu_j$ for every $i$ and $\nu_j = \sum_i \lambda'_{ji} \mu_i$ for every $j$ using $E(p_i) = \sum_j \lambda_{ij} q_j$ and $E'(q_j) = \sum_i \lambda'_{ji} p_i$, together with the invariance properties of $\omega_s$. Similarly we get $\omega_s(p_i q_j) = \lambda_{ij} \nu_j = \lambda'_{ji} \mu_i$, hence $\lambda_{ij} = \mu_i \nu^{-1}_j \lambda'_{ji}$. Now, as in \cite[\Thm 3.3 $(c)\Rightarrow(b)$]{Ter92}, with suitable modifications of the notation, we can argue that  
$$\|\Ind(E_{ij})\| = c_{ij} = c \lambda^2_{ij} \frac{\nu_j}{\mu_i} = c \lambda_{ij} \lambda'_{ji}$$
where $c_{ij}$ is the minimal index of the subfactor $\N_{ij}\subset\M_{ij}$, \ie, $E_{ij}$ is the minimal expectation in $E(\M_{ij},\N_{ij})$ (or equivalently, $E'_{ji}$ is the minimal expectation in $E(\N'_{ji},\M'_{ji})$). The expression of $\lambda_{ij}$ (the entries of the expectation matrix associated with $E^0$) appearing in equation (\ref{eq:DdeterminesLambda}) follows by solving the two previous equations for $\lambda_{ij}$ and $\lambda'_{ji}$. The equivalence of $(b)$ and $(c)$ now follows as in \cite[\Thm 3.3]{Ter92} once we observe that the condition that $\Ind(E)$ is a scalar \cite[\Thm 3.3 $(b)$ $(ii)$]{Ter92} is a consequence of our assumptions $(c)$. Indeed we have
$$\Ind(E) = \sum_{i,j} \|\Ind(E_{ij})\|\lambda^{-1}_{ij} p_i = \sum_{i,j} c \lambda'_{ji} p_i = c\oneop$$
by the formula for the index of an arbitrary conditional expectation in $E(\M,\N)$ \cite[\Thm 2.5]{Hav90}, see also \cite[\Prop 2.3]{Ter92}.

The states $\omega_l$ and $\omega_r$ are clearly determined by their values on the minimal projections in the centers $\Z(\N)$ and $\Z(\M)$ respectively. The statement about the Perron-Frobenius theoretical characterization of our numerical data $\nu^{1/2}$, $\mu^{1/2}$ and $d$ (hence of the minimal index $c$ of $\N\subset\M$) can be seen as follows. The norm of the index of an arbitrary conditional expectation in $E(\M,\N)$ \cite[\Rmk 2.6]{Hav90}, \cite[\eq (2.5)]{Ter92} reads 
$$\|\Ind(E)\| = \max_i \{\sum_j \|\Ind(E_{ij})\| \lambda_{ij}^{-1}\}$$
hence for $E=E^0$, using $(c)$, we get
$$c = \sum_j c_{ij}\lambda_{ij}^{-1} = \sum_j c_{ij}^{1/2} c^{1/2} \nu_j^{1/2} \mu_i^{-1/2}$$
where the sum over $j$ is independent of $i$, \cf \cite[\Prop 2.6]{Ter92}, thus
$$c^{1/2} \mu_i^{1/2} = \sum_j d_{ij} \nu_j^{1/2}.$$
Similarly for ${E^0}'$, using $\|\Ind(E)\| = \|\Ind(E')\|$ and again $(c)$, we get
$$c^{1/2} \nu_j^{1/2} = \sum_i d_{ij} \mu_i^{1/2}.$$
In matrix notation, we obtained $D \nu^{1/2} = c^{1/2} \mu^{1/2}$ and $D^t \mu^{1/2} = c^{1/2} \nu^{1/2}$. Now the statements about $DD^t$, $D^tD$ and $d$, namely equations (\ref{eq:PF-eqns}), (\ref{eq:cisdsquare}), hence (\ref{eq:DdeterminesLambda}) and (\ref{eq:weightedadditivd}), easily follow by the Perron-Frobenius theory for irreducible non-negative matrices \footnote{In which case, the two systems of equations $D \nu^{1/2} = c^{1/2} \mu^{1/2}$, $D^t \mu^{1/2} = c^{1/2} \nu^{1/2}$ and $D^t D \nu^{1/2} = c \nu^{1/2}$, $D D^t \mu^{1/2} = c \mu^{1/2}$ are actually equivalent.}, see, \eg, \cite[\Thm 8.3.4, 8.4.4]{HoJoBook}, thanks to Lemma \ref{lem:connectediffirred}. In particular, the minimal index $c$ of the inclusion and the spectral radius $d^2$ of the matrix dimension coincide, thus the proof is complete.
\end{proof}

\begin{corollary}
In the previous assumptions, the minimal index $d^2 = \|D\|^2$ of the inclusion $\N\subset\M$ is either in the Jones' discrete series $\{4 \cos^2(\pi/k), k\in\NN, k\geq 3\}$ or in $[4,\infty[$.
\end{corollary}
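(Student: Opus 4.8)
The plan is to bypass the matrix $D$ altogether and reduce the statement to Jones' original restriction on the index values \cite{Jon83}, exploiting that, by \rthm{minimalconnectedfindim}, the minimal expectation $E^0$ has scalar index $\Ind(E^0)=c\oneop$ with $c=d^2=\|D\|^2$. The point is that this scalarity lets one run the ordinary Jones basic construction verbatim, even though $\N$ and $\M$ are not factors, so that the admissible-value restriction becomes a statement purely about the Jones projections.

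First I would form the Jones tower $\N\subset\M\subset\M_1\subset\M_2\subset\cdots$ by iterating the basic construction, at each step using the minimal conditional expectation and its dual. The inclusion $\M\subset\M_1$ is anti-isomorphic to $\M'\subset\N'$, hence again connected with finite-dimensional centers, and by $\|\Ind(E)\|=\|\Ind(E')\|$ its minimal index is again $c$; iterating, every step of the tower is connected with scalar minimal index $c\oneop$ (this is the content of the multiplicativity along the Jones tower established in Corollary~\ref{cor:jonesextensionmultip}). Let $e_n$ denote the Jones projection implementing the $n$-th basic construction. Because each index is the scalar $c\oneop$, the relevant dual expectation sends $e_n\mapsto c^{-1}\oneop$, and the standard basic-construction identities yield the Temperley--Lieb relations with modulus $c$:
\begin{equation*}
e_n=e_n^{\,*}=e_n^{\,2},\qquad e_n e_{n\pm1} e_n = c^{-1} e_n,\qquad e_n e_m = e_m e_n\ \ (|n-m|\geq 2),
\end{equation*}
with all $e_n\neq 0$.

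Finally I would invoke Jones' theorem on the admissible values: an infinite sequence of nonzero projections in a $C^*$-algebra obeying these relations forces $c\in\{4\cos^2(\pi/k):k\in\NN,\,k\geq 3\}\cup[4,\infty[$. The crucial observation is that the derivation of this restriction in \cite{Jon83}, \cite{GdHJ89} uses only the relations above together with the positivity of the $e_n$, and not the factoriality of the algebras; the $C^*$-representability of the Temperley--Lieb relations with modulus $c$ is exactly what pins $c$ to this set. In the purely categorical language this is the statement that the dimension $d=c^{1/2}$ of a $1$-arrow takes values in $\{2\cos(\pi/k):k\geq 3\}\cup[2,\infty[$, cf. \cite{LoRo97}.

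The main obstacle is the first step: one must verify that minimality, connectedness, and the \emph{scalarity} of the index are all preserved under the basic construction, so that the $e_n$ genuinely satisfy the scalar relations above rather than relations twisted by a central element; this is where the results on the Jones tower (Corollary~\ref{cor:jonesextensionmultip}) and on the dual expectation enter. A secondary point, absent in Jones' type $\II_1$ setting, is that $\M$ may be of type $\III$, so the value restriction must be extracted from the Temperley--Lieb relations without recourse to a finite trace; this is why I phrase the last step as $C^*$-representability of the relations rather than through the Markov trace. For the special case of multi-matrix inclusions one may instead argue directly: there $D$ is the integer inclusion matrix (Theorem~\ref{thm:DisLambdaNinM}), and $c=\|D\|^2$ lands in the required set by the Goodman--de la Harpe--Jones classification of the norms of connected graphs below $2$ \cite{GdHJ89}, applied via Lemma~\ref{lem:connectediffirred}.
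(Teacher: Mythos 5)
Your proposal is correct, but it takes a genuinely different route from the paper's. The paper's own proof is a two-step reduction exploiting precisely the matrix $D$ you deliberately bypass: by Theorem \ref{thm:minimalconnectedfindim} each nonzero entry $d_{ij}$ of $D$ is the square root of the minimal index of the subfactor $\N_{ij}\subset\M_{ij}$, hence lies in $\{2\cos(\pi/k),\, k\geq 3\}\cup[2,\infty[$ by the subfactor quantization \cite[\Cor 4.3]{Lon89}, and then \cite[\Thm 1.1.3, \Prop 3.7.12 (c)]{GdHJ89} is invoked to conclude that the $l^2$-norm of an indecomposable matrix with entries in that set (indecomposability being Lemma \ref{lem:connectediffirred}) again lies in that set. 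Your tower/Temperley--Lieb argument is sound and in effect re-proves, in the non-factorial setting, the very ingredient the paper cites (the quantization of \cite{Lon89} is itself obtained by such a tower argument); what it buys is independence from the Perron--Frobenius/graph-norm input of \cite{GdHJ89}, at the cost of a forward reference to Corollary \ref{cor:jonesextensionmultip} (harmless: that corollary nowhere uses the present statement, so there is no circularity). Two points in your sketch deserve care. First, the relation $e_ne_{n+1}e_n=c^{-1}e_n$, unlike $e_{n+1}e_ne_{n+1}=c^{-1}e_{n+1}$, is not a direct consequence of $e_{n+1}xe_{n+1}=E(x)e_{n+1}$; it is recovered by applying the next dual expectation, which is faithful and sends the new Jones projection to $c^{-1}\oneop$ --- exactly the place where the scalarity you emphasize is indispensable, since otherwise one only gets $e_ne_{n+1}e_n=c^{-1}p$ for some subprojection $p\leq e_n$. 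Second, the trace-free value restriction is not literally what is proved in \cite{Jon83} or \cite[\Ch 2]{GdHJ89}, both of which argue through the Markov trace; the purely \Cstar-algebraic statement you need (positivity of the $e_n$ alone pins $c$ to the admissible set) is Wenzl's argument via the Jones--Wenzl idempotents, which, as you correctly anticipate, is what must be used here since $\M$ may be of type \three.
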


\begin{proof}
Follows from Theorem \ref{thm:minimalconnectedfindim}, from the quantization of the minimal index for subfactors \cite[\Cor 4.3]{Lon89} and from \cite[\Thm 1.1.3, \Prop 3.7.12 (c)]{GdHJ89}.
\end{proof}

\begin{definition}\label{def:dimensionmatrix}
We call \textbf{matrix dimension}, or simply \textbf{dimension}, of the inclusion $\N\subset\M$ the $m\times n$ matrix $D$ defined in Theorem \ref{thm:minimalconnectedfindim}. Its $l^2$-operator norm $d=\|D\|$ will be referred to as the \textbf{scalar dimension} of $\N\subset\M$.
\end{definition}

\begin{remark}
Notice that by Theorem \ref{thm:minimalconnectedfindim} all the numerical invariants appearing in the theory of minimal index $\N\subset\M$ are \emph{determined} by the matrix dimension $D$ of the inclusion. The minimal index $c$ equals the squared norm of $D$, and the left and right states $\omega_l$, $\omega_r$ on the centers of $\N$ and $\M$, respectively, are uniquely determined by Perron-Frobenius theory. The entries $\lambda_{ij}$ of the matrix $\Lambda$ associated with the minimal conditional expectation $E^0$ are determined by $D$ via (\ref{eq:DdeterminesLambda}). Hence $E^0$ itself and the spherical state $\omega_s$ are determined via the formulas (\ref{eq:decompositionofE}) and (\ref{eq:sphericalstate}) by $D$ and by the unique minimal expectations  $E^0_{ij}$ in the subfactors $\N_{ij}\subset\M_{ij}$ for every $i,j$.
\end{remark}

\begin{remark}
The eigenvalue equations (\ref{eq:PF-eqns}) of Theorem \ref{thm:minimalconnectedfindim} are similar to those appearing in \cite[\Thm 3.3.2]{Jon83}. Actually they coincide with the latter in the special case of multi-matrix inclusions as we shall see in Section \ref{sec:multi-mats}. However, the normalization of the respective eigenvectors, which fixes $\mu^{1/2}$ and $\nu^{1/2}$ in our case, does not coincide with the normalization chosen in the case of algebras with a trace, \cf the proof of Corollary \ref{cor:sphericalvsmarkov}.
\end{remark}

A similar interplay between index theory and Perron-Frobenius theory appears also in \cite{GdHJ89} and \cite[\Sec 3]{Hav90}, \cite{Jol90}, together with the previously mentioned work of Jones \cite{Jon83}, where inclusions of \emph{finite} von Neumann algebras again with finite-dimensional centers (typically finite direct sums of type $\II_1$ factors, see comments after \cite[\Prop 3.5.4]{GdHJ89}) are taken into account. In that case, there is another notion of index for a subfactor (or inclusion) $\N\subset\M$ which is defined by means of the unique normal faithful normalized (Markov) \emph{trace} on $\M$, see \cite[\Def 3.7.5]{GdHJ89} and \cite[\Thm 3.9]{Hav90}. This notion of index need not coincide with the minimal index, beyond the class of the so-called \emph{extremal} inclusions, see, \eg, \cite[\Sec 4]{PiPo86}, \cite{PiPo91}, \cite[\Sec 2.3]{BurPhD}, and Remarks after \cite[\Thm 4.1, 5.5]{Lon89}, \cite[\Thm 3.7]{KosLec}. Notice that every irreducible subfactor, \ie, such that $\N'\cap\M = \CC\oneop$, in particular every subfactor with index (in either sense) $<4$, is automatically extremal. 

The two main differences between the index defined by a trace and the minimal index (for inclusions of finite von Neumann algebras) will be outlined at the end of Section \ref{sec:multi-mats}. We shall come back to extremal inclusions in Section \ref{sec:s-extr}.

\section{Index for multi-matrix inclusions revisited}\label{sec:multi-mats}

The easiest class of examples of inclusions of von Neumann algebras with finite-dimensional centers are the inclusions of (finite-dimensional) \emph{multi-matrix} algebras, \ie, $\N\subset\M$ where both $\N$ and $\M$ are finite direct sums of full matrix algebras, \ie, of finite type $I$ factors. See \cite[\Ch 2]{GdHJ89} for the terminology and \cite[\Sec 3.2, 3.3]{Jon83} where the index (necessarily finite) of such inclusions has been first explicitly considered.
Denoted by $M_n(\CC)$ the algebra of $n\times n$ complex matrices, $n\in\NN$, with unity $\oneop_n$, one can show by direct computation that the minimal index of $\CC\oneop_n \subset M_n(\CC)$ is $n^2$, see, \eg, \cite[\Sec 3.3]{KosLec}, \ie, it equals the index given by the normalized trace on $M_n(\CC)$. Similarly for the most general finite type $I$ subfactor $M_m(\CC) \otimes \CC\oneop_n \subset M_{mn}(\CC) \cong M_m(\CC)\otimes M_n(\CC)$, $n,m\in\NN$, the minimal index equals $(mn)^2m^{-2} = n^2$ and the minimal conditional expectation is the partial trace. In particular, in all these cases the minimal expectation $E^0$ always coincides with the trace-preserving expectation $E_{\N,\tr}$ of \cite{Jon83}, namely $\tr(E_{\N,\tr}(x)y) = \tr(xy)$ for $x\in\M$, $y\in\N$, where $\tr$ is the unique normalized trace on the finite factor $\M$ (= $M_{mn}(\CC)$ in this case). As a consequence we have the following more general statements.

\begin{theorem}\label{thm:DisLambdaNinM}
Let $\N\subset\M$ be a connected inclusion of multi-matrix algebras with finite-dimensional centers, then the matrix dimension $D$ of $\N\subset\M$ of Definition \ref{def:dimensionmatrix} and the inclusion matrix $\Lambda_{\N}^{\M}$ of \emph{\cite[\Sec 3.2]{Jon83}} coincide.
\end{theorem}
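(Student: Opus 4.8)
The plan is to reduce the computation of each entry $d_{ij}$ of the matrix dimension to the type $I$ subfactor computation recalled just before the statement, and then to recognize the resulting integer as the multiplicity defining Jones' inclusion matrix. Write $\N = \bigoplus_j \N_j$ and $\M = \bigoplus_i \M_i$ with $\N_j = \N q_j \cong M_{r_j}(\CC)$ and $\M_i = \M p_i \cong M_{s_i}(\CC)$, and let $a_{ij} := (\Lambda_\N^\M)_{i,j}$ denote the multiplicity of $\N_j$ in $\M_i$, \ie, the number of edges from the $j$-th to the $i$-th vertex in the Bratteli diagram of $\N\subset\M$. It suffices to show $d_{ij} = a_{ij}$ for all $i,j$.

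First I would identify the local inclusions $\N_{ij}\subset\M_{ij}$ explicitly as type $I$ subfactors. Since $p_i$ is central in $\M$ and $q_j \in \N\subset\M$, one has $\M_{ij} = q_j\M p_i q_j = e_{ij}\M_i e_{ij}$ with $e_{ij}:=p_iq_j$, so $\M_{ij}$ is the compression of the factor $\M_i$ by the projection $e_{ij}$, hence a full matrix algebra $M_{k_{ij}}(\CC)$ with $k_{ij} = \mathrm{rank}_{\M_i}(e_{ij})$. Likewise $\N_{ij} = \N p_i q_j \cong \N_j \cong M_{r_j}(\CC)$, confirming the claim in the theorem's setup that $\N_{ij}\subset\M_{ij}$ is a finite type $I$ subfactor.

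Next I would compute the multiplicity and match it to $a_{ij}$. The embedding $\N_j\hookrightarrow\M_i$ is described by the representation of $M_{r_j}(\CC)$ on the space $\CC^{s_i}$ of $\M_i$, which by definition of the inclusion matrix is the multiple $(\mathrm{std})^{\oplus a_{ij}}$ of the standard representation. The range of $e_{ij}$ in $\CC^{s_i}$ is exactly the support of this subrepresentation, of dimension $a_{ij}\, r_j$; hence $k_{ij} = a_{ij}\, r_j$, and under the identification $\M_{ij} \cong M_{r_j}(\CC)\otimes M_{a_{ij}}(\CC)$ the subalgebra $\N_{ij}$ sits as $M_{r_j}(\CC)\otimes\CC\oneop_{a_{ij}}$. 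By the type $I$ computation recalled above, the minimal index of $M_m(\CC)\otimes\CC\oneop_n\subset M_{mn}(\CC)$ is $n^2$, so here $c_{ij} = a_{ij}^2$ and $d_{ij} = c_{ij}^{1/2} = a_{ij}$. When $p_iq_j = 0$ both sides vanish by convention (no edge in the Bratteli diagram), so $d_{ij} = a_{ij}$ in all cases, \ie, $D = \Lambda_\N^\M$.

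The only step requiring genuine care is the identification, in the previous paragraph, of the index-theoretic integer arising as the \lqq fiber" of the type $I$ subfactor $M_{r_j}(\CC)\otimes\CC\oneop_{a_{ij}}$ with the representation-theoretic multiplicity defining $\Lambda_\N^\M$. I would make this precise by computing $\mathrm{rank}_{\M_i}(e_{ij})$ directly from the decomposition of the restriction to $\N$ of the standard $\M_i$-module, obtaining $k_{ij} = a_{ij}\, r_j$ and thus $a_{ij} = k_{ij}/r_j$; everything else is the bookkeeping of Definition \ref{def:dimensionmatrix} together with the quoted value of the minimal index of a finite type $I$ subfactor.
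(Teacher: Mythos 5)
Your proof is correct and takes essentially the same route as the paper's, which is a one-line reduction to the type $I$ subfactor computation ($M_m(\CC)\otimes\CC\oneop_n\subset M_{mn}(\CC)$ has minimal index $n^2$) applied entrywise to the subfactors $\N_{ij}\subset\M_{ij}$. The only difference is that you spell out the identification $\N_{ij}\subset\M_{ij}\cong M_{r_j}(\CC)\otimes\CC\oneop_{a_{ij}}\subset M_{r_j a_{ij}}(\CC)$ via the rank of $p_iq_j$, which the paper leaves implicit in the phrase ``the previous discussion applied entrywise.''
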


\begin{proof}
In the notation of Theorem \ref{thm:minimalconnectedfindim}, the equality of $D$ and $\Lambda_\N^\M$ follows by the previous discussion applied entrywise to the type $I$ subfactors $\N_{ij}\subset\M_{ij}$, for every $i,j$.
\end{proof}

\begin{corollary}\label{cor:sphericalvsmarkov}
In the same situation as above, the minimal expectation $E^0\in E(\M,\N)$ is the one which preserves the unique normalized Markov trace $\tau$ of $\M$ for the pair $\N\subset\M$, see \emph{\cite[\Sec 3.3]{Jon83}}, \emph{\cite[\Sec 2.7]{GdHJ89}}, and the minimal index of $\N\subset\M$ equals $\|\Lambda_\N^\M\|^2$. 

However, the left and right states $\omega_l$ and $\omega_r$, see Theorem \ref{thm:minimalconnectedfindim}, need not coincide with the restrictions of $\tau$ to $\Z(\N)$ and $\Z(\M)$ respectively, nor the spherical state $\omega_s$, which is tracial, be equal to $\tau_{\restriction \N'\cap\M}$, \ie, we always have $\tau = \tau \circ E^0$, but $\tau = \tau \circ {E^0}'$ may or may not hold on $\N'\cap\M$.

More precisely, the equality $\omega_r = \tau_{\restriction\Z(\M)}$ is satisfied if and only if the right Perron-Frobenius eigenvector $\mu^{1/2}$ of Theorem \ref{thm:minimalconnectedfindim} is given by $\mu_i = \alpha_i^2 (\sum_k \alpha_k^2)^{-1}$, where $\M \cong \oplus_i M_{\alpha_i}(\CC)$, $\alpha_i\in\NN$ and $i=1,\ldots,m$. Similarly, $\omega_l = \tau_{\restriction\Z(\N)}$ if and only if the left Perron-Frobenius eigenvector $\nu^{1/2}$ of Theorem \ref{thm:minimalconnectedfindim} is given by $\nu_j = \beta_j^2 (\sum_k \beta_k^2)^{-1}$, where $\N \cong \oplus_j M_{\beta_j}(\CC)$, $\beta_j\in\NN$ and $j=1,\ldots,n$.
\end{corollary}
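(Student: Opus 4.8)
The plan is to reduce everything to the Perron--Frobenius data of Theorem \ref{thm:minimalconnectedfindim} together with the two classical facts recalled before the statement: for a type $I$ subfactor the minimal expectation is the trace-preserving (partial trace) one, and its minimal index is the square of the multiplicity. First I would dispose of the index formula: by Theorem \ref{thm:minimalconnectedfindim} the minimal index is $d^2 = \|D\|^2$, and by Theorem \ref{thm:DisLambdaNinM} $D = \Lambda_\N^\M$, so it equals $\|\Lambda_\N^\M\|^2$ at once.

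To identify $E^0$ with the Markov-trace-preserving expectation $E_\tau$, I would use that an expectation is determined by its expectation matrix and its factorial components via (\ref{eq:decompositionofE}). The components agree for free: each $\N_{ij}\subset\M_{ij}$ is a type $I$ subfactor, so the minimal $E^0_{ij}$ is the trace-preserving one, while the compression of $E_\tau$ to the factor $\M_{ij}$ preserves the (normalized) trace there and is therefore also $E^0_{ij}$. It remains to match the matrices. Writing $\M\cong\oplus_i M_{\alpha_i}(\CC)$, $\N\cong\oplus_j M_{\beta_j}(\CC)$ and letting $t_i$, $s_j$ be the $\tau$-traces of minimal projections of $\M$, $\N$, one computes the expectation matrix of $E_\tau$ as $\lambda^\tau_{ij} = \tau(p_iq_j)/\tau(q_j) = d_{ij}\, t_i/s_j$, using $\tau(p_iq_j) = d_{ij}\beta_j t_i$ and $\tau(q_j) = \beta_j s_j$.

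The crux is the Markov-trace / Perron--Frobenius comparison. From the Markov property of modulus $\beta = \|\Lambda_\N^\M\|^2$ (see \cite[\Sec 2.7]{GdHJ89}) the vectors $\vec t$ and $\vec s = D^t\vec t$ satisfy $D D^t\vec t = \beta\vec t$ and $D^t D\,\vec s = \beta\vec s$; since $\beta = d^2$ and, by Lemma \ref{lem:connectediffirred}, $DD^t$ and $D^tD$ are irreducible, uniqueness of the Perron--Frobenius eigenvector gives $\mu^{1/2}\propto\vec t$ and $\nu^{1/2}\propto\vec s$. Substituting into (\ref{eq:DdeterminesLambda}) then shows $\lambda_{ij}$ is an $(i,j)$-independent multiple of $\lambda^\tau_{ij}$, and since both matrices are column-stochastic this multiple is $1$. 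Hence $\lambda_{ij} = \lambda^\tau_{ij}$, so $E^0 = E_\tau$ and $\tau = \tau\circ E^0$ always holds; the minimal index is $\|\Lambda_\N^\M\|^2$ as claimed.

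Finally, for the center states I would exploit precisely the mismatch between the Perron--Frobenius eigenvector and the central masses. Since $\omega_r(p_i) = \mu_i$ while $\tau(p_i) = \alpha_i t_i$ with $t_i = \kappa\,\mu_i^{1/2}$, the identity $\omega_r = \tau_{\restriction\Z(\M)}$ reads $\mu_i = \kappa\,\alpha_i\,\mu_i^{1/2}$, i.e. $\mu_i^{1/2}\propto\alpha_i$, which after $\sum_i\mu_i = 1$ is exactly $\mu_i = \alpha_i^2(\sum_k\alpha_k^2)^{-1}$; the symmetric computation with $s_j$ and $\beta_j$ gives the statement for $\omega_l$. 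That $\tau = \tau\circ{E^0}'$ may fail then follows from characterization $(b)$ of Theorem \ref{thm:minimalconnectedfindim}: the tracial spherical state is the unique $E^0$- and ${E^0}'$-invariant state, and since $\tau\circ E^0 = \tau$, one checks $\tau_{\restriction\N'\cap\M} = \omega_s$ iff $\omega_r = \tau_{\restriction\Z(\M)}$ and $\omega_l = \tau_{\restriction\Z(\N)}$ both hold, which need not be the case because the $\alpha_i$ (resp. $\beta_j$) need not be proportional to $\mu_i^{1/2}$ (resp. $\nu_j^{1/2}$). The main obstacle is the bookkeeping of this last mismatch---correctly locating $\mu^{1/2}$ as the minimal-projection trace vector $\vec t$ rather than the central-mass vector $(\alpha_i t_i)_i$, and tracking the transposes and normalizations when importing the Markov eigenvalue equation from \cite{GdHJ89}.
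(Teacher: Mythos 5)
Your proof is correct and, for the heart of the statement (the characterization of $\omega_l=\tau_{\restriction\Z(\N)}$ and $\omega_r=\tau_{\restriction\Z(\M)}$), it follows exactly the paper's route: identify the Perron--Frobenius eigenvectors $\mu^{1/2},\nu^{1/2}$ of $DD^t,D^tD$ with the Markov trace vectors of minimal projections via irreducibility and uniqueness (the paper quotes \cite[\Prop 2.7.2]{GdHJ89} for the eigenvalue equations you derive), and then observe that $\omega_r=\tau_{\restriction\Z(\M)}$ amounts to $\mu_i^{1/2}\propto\alpha_i$. The only genuine divergence is in establishing $E^0=E_{\N,\tau}$: the paper does this in one stroke by citing \cite[\Prop 3.2]{Hav90} to the effect that $\Ind(E_{\N,\tau})$ is a scalar of norm equal to the Markov modulus $\|\Lambda_\N^\M\|^2$, whence $E_{\N,\tau}=E^0$ by uniqueness of the minimal expectation, whereas you compute the expectation matrix $\lambda^\tau_{ij}=d_{ij}t_i/s_j$ of $E_{\N,\tau}$ directly and match it against formula (\ref{eq:DdeterminesLambda}) using column-stochasticity; your version is more computational but self-contained, since the Perron--Frobenius comparison it relies on is needed for the second half of the corollary anyway. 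One small imprecision: $\omega_s=\tau_{\restriction\N'\cap\M}$ is equivalent to $\omega_l=\tau_{\restriction\Z(\N)}$ alone (via $\omega_s=\omega_l\circ E^0$ and $\tau=\tau\circ E^0$), not to the conjunction of the left and right conditions --- this is Lemma \ref{lem:superextremalisleft} and matters later, since the right condition alone does not imply the left one.
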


\begin{remark}\label{rmk:super-extremality}
The expression for $\mu_i$, $i=1,\ldots,m$, in the previous corollary is nothing but the ratio of the algebraic dimensions of $i$-th direct summand of $\M$ and of $\M$ itself, similarly for $\nu_j$, $j=1,\ldots,n$. In particular, these numbers do not depend on the inclusion of $\N$ in $\M$ but on the two algebras separately. 

Now, denote by $\tau^{\N\subset\M}$ the unique normalized Markov trace on $\M$ for the pair $\N\subset\M$. As a consequence of the forthcoming Proposition \ref{prop:minindexmultif}, the (minimal) index of a nested sequence of multi-matrix algebras $\L\subset\N\subset\M$ (finite-index, connected, finite-dimensional) is multiplicative whenever $\omega_r^{\L\subset\N} = {\tau^{\L\subset\N}}_{\restriction\Z(\N)}$ and $\omega_l^{\N\subset\M} = {\tau^{\N\subset\M}}_{\restriction\Z(\N)}$, namely the index of $\L\subset\M$ is the product of the indices of $\L\subset\N$ and $\N\subset\M$. Indeed, in this case, by the previous Corollary \ref{cor:sphericalvsmarkov} we have that $\omega_r^{\L\subset\N} = \omega_l^{\N\subset\M}$ and that they depend only on $\N$, hence the sufficient condition for multiplicativity expressed in Proposition \ref{prop:minindexmultif} is triggered. Moreover, the index of multi-matrix inclusions is \emph{not} always multiplicative, see Corollary \ref{cor:minnomultifLMfactors} and \cite[\Rmk \p 62]{GdHJ89}, hence we know that the spherical (tracial) state and the normalized Markov trace on $\N'\cap\M$ may well be different in general.
\end{remark}

\begin{proof} (of Corollary \ref{cor:sphericalvsmarkov}).
The equality between the minimal index of $\N\subset\M$, see Definition \ref{def:minimalindex}, and $\|\Lambda_\N^\M\|^2$ follows immediately from Theorem \ref{thm:DisLambdaNinM} and \ref{thm:minimalconnectedfindim}.

Concerning the conditional expectations, by the deep results on the index of multi-matrix inclusions due to \cite[\Thm 2.1.1, 2.1.4]{GdHJ89} we know that $\|\Lambda_\N^\M\|^2$ equals the modulus of the unique normalized Markov trace $\tau$. The latter also equals $\|\Ind(E_{\N,\tau})\|$, where $E_{\tau,\N}\in E(\M,\N)$ is the expectation that preserves $\tau$. Indeed by \cite[\Prop 3.2]{Hav90}, $\Ind(E_{\N,\tau})$ is a scalar multiple of the identity (a property that is not automatic if we consider arbitrary traces over direct sums of finite factors, actually it is equivalent to $\tau$ being a Markov trace), thus we conclude that $E^0 = E_{\N,\tau}$.

The left and right states $\omega_l$ and $\omega_r$ are described by the unique normalized Perron-Frobenius eigenvectors $\nu^{1/2}$ and $\mu^{1/2}$, respectively, via $\mu_i = \omega_r(p_i)$ and $\nu_j = \omega_l(q_j)$. The Markov trace $\tau$ is also characterized by Perron-Frobenius theory in a similar fashion. Namely, denoted by $s_i := \tau(p_i)$ and $t_j:= \tau(q_j)$ the entries of the \lqq trace vectors" (which uniquely determine the trace $\tau$ on $\M$ and $\N$), by \cite[\Prop 2.7.2]{GdHJ89} with suitable modification of the notation, \cf \cite[\Rmk \p 175]{GdHJ89}, we have that $\Lambda_\N^\M(\Lambda_\N^\M)^t$ and $(\Lambda_\N^\M)^t\Lambda_\N^\M$ admit respectively $(s_1 \alpha_1^{-1},\ldots, s_m \alpha_m^{-1})^t$ and $(t_1 \beta_1^{-1}, \ldots, t_n \beta_n^{-1})^t$ as Perron-Frobenius eigenvectors. Thus by uniqueness there are positive normalization constants $\gamma, \eta$ such that $\mu_i^{1/2} = \gamma s_i \alpha_i^{-1}$ and $\nu_j^{1/2} = \eta t_j \beta_j^{-1}$. Observe that $\sum_i s_i = 1$ and $\sum_j t_j = 1$. One can directly check that $\mu_i = s_i$ is equivalent to $\mu_i = \alpha_i^2 (\sum_k \alpha_k^2)^{-1}$, and to $s_i = \alpha_i^2 (\sum_k \alpha_k^2)^{-1}$, analogously for $\nu_j = t_j$, and the proof is complete.
\end{proof}

\begin{remark}
We mention that in the case of inclusions of finite von Neumann algebras (not necessarily multi-matrices) Havet \cite[\Thm 3.9]{Hav90}, see also \cite[\Sec 3.7]{GdHJ89}, has shown that the Markov trace-preserving expectation has scalar index and it is minimal among those expectations in $E(\M,\N)$ that preserve some faithful normal trace on $\M$.
\end{remark}

We conclude this section with a comment on the inclusions $\N\subset\M$ of finite direct sums of type $\II_1$ factors, treated in \cite[\Ch 3]{GdHJ89}. In that case, the analogue of \cite[\Thm 2.1.1]{GdHJ89}, valid for arbitrary multi-matrix inclusions, holds for type $\II_1$ inclusions with index $\leq 4$. Namely the index of $\N\subset\M$ (in the sense of the trace) equals $\|\Lambda_\N^\M\|^2$, where $\Lambda_\N^\M$ is the index matrix considered in \cite[\Def 3.5.3]{GdHJ89}, at least when it takes values in the discrete series between 1 and 4, see \cite[\Thm 3.7.13]{GdHJ89}.

On the other hand, the minimal index of $\N\subset\M$ \emph{always} equals $\|D\|^2$ by Theorem \ref{thm:minimalconnectedfindim}, hence it generalizes some aspects of the index theory from multi-matrix algebras to more general inclusions of von Neumann algebras (not necessarily finite-dimensional, nor finite).

Another important point is that the minimal index admits a purely \emph{categorical} formulation based on the existence of \lqq conjugate objects" (sometimes called \lqq duals" or also \lqq adjoints"), as we shall see in Section \ref{sec:2-cstar-cats} when dealing with 2-\Cstar-categories with finite-dimensional centers. This allows to talk about \lqq minimal index" in different contexts, that may have in principle nothing to do with inclusions of von Neumann algebras.

\section{Extremal and super-extremal inclusions}\label{sec:s-extr}

In this section we turn our attention to finite-index inclusions of \emph{finite} von Neumann algebras (not necessarily multi-matrices, but possessing a normal faithful trace). In that case, assuming connectedness and finite-dimensionality of the centers, we have two canonical tracial states on $\N'\cap\M$ obtained from the inclusion $\N\subset\M$, namely the spherical state $\omega_s$ defined in Theorem \ref{thm:minimalconnectedfindim}, and the restriction of the unique normalized Markov trace $\tau$ from $\M$ to $\N'\cap\M$ defined in \cite[\Sec 2.7, 3.7]{GdHJ89}.
Motivated by the statements of Corollary \ref{cor:sphericalvsmarkov} about multi-matrix inclusions, we give the following

\begin{definition}\label{def:superextremal}
Let $\N\subset\M$ be an inclusion of finite von Neumann algebras with finite index, finite-dimensional centers and connected. We call $\N\subset\M$ \textbf{extremal} if $E^0 = E_{\N,\tau}$, where $E^0$ is the minimal expectation and $E_{\N,\tau}$ is the Markov trace-preserving expectation in $E(\M,\N)$. Furthermore, we call $\N\subset\M$ \textbf{super-extremal} if $E^0 = E_{\N,\tau}$ and if in addition $\omega_s = \tau_{\restriction \N'\cap\M}$.
\end{definition}

\begin{lemma}\label{lem:superextremalisleft}
If $\N\subset\M$ is extremal, then super-extremality, \ie, the equality $\omega_s = \tau_{\restriction \N'\cap\M}$, is equivalent to $\omega_l = \tau_{\restriction \Z(\N)}$. Thus $\omega_l = \tau_{\restriction \Z(\N)}$ implies $\omega_r = \tau_{\restriction \Z(\M)}$, but vice versa does not hold in general. Recall that $\omega_l$ and $\omega_r$ are the left and right state for the inclusion defined in Theorem \ref{thm:minimalconnectedfindim} on the respective center. In particular, every extremal inclusion $\N\subset\M$ where $\N$ is a factor, is automatically super-extremal.
\end{lemma}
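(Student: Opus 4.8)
The plan is to exploit the fact that, under extremality, both the spherical state $\omega_s$ and the Markov trace $\tau$ factor through the minimal expectation $E^0$ in exactly the same way, so they are forced to agree as soon as they agree on $\Z(\N)$. First I would record the two structural facts needed. Since $\N\subset\M$ gives $\N\cap\N'\subset\M\cap\N'$, and $\M'\subset\N'$ gives $\M\cap\M'\subset\M\cap\N'$, both centers satisfy $\Z(\N),\Z(\M)\subseteq\N'\cap\M$. Moreover, as noted in the proof of Theorem \ref{thm:minimalconnectedfindim}, $E^0$ restricts to a map $\N'\cap\M\to\Z(\N)$. These two containments are all the ambient structure I use.

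Next I would establish the two factorization identities on $\N'\cap\M$. From Theorem \ref{thm:minimalconnectedfindim}$(b)$ we have $\omega_s=\omega_s\circ E^0$; since $E^0(\N'\cap\M)\subseteq\Z(\N)$ and $\omega_l$ is by definition the restriction of $\omega_s$ to $\Z(\N)$, this reads $\omega_s=\omega_l\circ E^0$ on $\N'\cap\M$. On the other side, extremality means $E^0=E_{\N,\tau}$, so $\tau=\tau\circ E^0$ on all of $\M$; restricting to $\N'\cap\M$ and using the same containment, for $x\in\N'\cap\M$ we get $\tau(x)=\tau(E^0(x))$ with $E^0(x)\in\Z(\N)$, \ie $\tau$ on $\N'\cap\M$ is computed from $\tau_{\restriction\Z(\N)}$ composed with $E^0$. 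These two identities are the heart of the matter.

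With them in hand the equivalence is immediate. The direction $\omega_s=\tau_{\restriction \N'\cap\M}\Rightarrow\omega_l=\tau_{\restriction\Z(\N)}$ is just restriction to $\Z(\N)$. Conversely, assuming $\omega_l=\tau_{\restriction\Z(\N)}$, for $x\in\N'\cap\M$ one computes $\omega_s(x)=\omega_l(E^0(x))=\tau(E^0(x))=\tau(x)$, which is super-extremality. Restricting this last equality to $\Z(\M)\subseteq\N'\cap\M$ yields $\omega_r=\tau_{\restriction\Z(\M)}$, which is the stated one-way implication. For the factor case, if $\N$ is a factor then $\Z(\N)=\CC\oneop$, on which $\omega_l$ and $\tau$ are both the unique state; hence $\omega_l=\tau_{\restriction\Z(\N)}$ holds automatically and super-extremality follows from the above.

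The only point that is not a one-line deduction is the \emph{failure} of the converse, $\omega_r=\tau_{\restriction\Z(\M)}\not\Rightarrow\omega_l=\tau_{\restriction\Z(\N)}$, which is an existence statement rather than a computation; I expect this to be the main work, and I would settle it with an explicit example displaying the left--right asymmetry. The cleanest choice is a multi-matrix inclusion (automatically extremal, and covered by Corollary \ref{cor:sphericalvsmarkov}) with $\M$ a full matrix algebra: then $\Z(\M)=\CC\oneop$ forces $\omega_r=\tau_{\restriction\Z(\M)}$ trivially, whereas by Corollary \ref{cor:sphericalvsmarkov} the equality $\omega_l=\tau_{\restriction\Z(\N)}$ amounts to $\nu_j=\beta_j^2(\sum_k\beta_k^2)^{-1}$, a constraint on the left Perron--Frobenius eigenvector of the inclusion matrix that generically fails once a suitable Bratteli diagram for $\N\subset\M$ is chosen.
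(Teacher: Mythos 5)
Your proof is correct and follows essentially the same route as the paper: the forward direction by restriction, the converse via $\omega_s=\omega_s\circ E^0=\tau\circ E_{\N,\tau}=\tau$ on $\N'\cap\M$ using extremality, and the failure of the reverse implication by a multi-matrix example with $\M$ a factor. The paper's concrete instance of the counterexample you sketch is $M_2(\CC)\oplus M_3(\CC)\hookrightarrow M_5(\CC)$ with the diagonal embedding, given at the end of that section.
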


\begin{proof}
By assumption we have $E^0 = E_{\N,\tau}$. The equality $\omega_s = \tau$ on $\N'\cap\M$ trivially implies $\omega_l = \tau$ on $\Z(\N)$, vice versa follows from $\omega_s = \omega_s \circ E^0 = \tau \circ E_{\N,\tau} = \tau$ on $\N'\cap\M$. The proof that $\omega_r = \tau_{\restriction \Z(\M)}$ does not imply $\omega_l = \tau_{\restriction \Z(\N)}$ is postponed at the end of this section by means of an example.
\end{proof}

In the terminology of Definition \ref{def:superextremal}, the previous Corollary \ref{cor:sphericalvsmarkov} shows that connected multi-matrix inclusions are extremal, but not necessarily super-extremal. Moreover, as already observed in Remark \ref{rmk:super-extremality}, we have the following result on the \emph{multiplicativity} of the minimal index, that should be compared with the general case studied in Proposition \ref{prop:minindexmultif}. 

\begin{proposition}\label{prop:multipsemm}
The index, or equivalently the scalar dimension, of connected and super-extremal multi-matrix inclusions with finite-dimensional centers is multiplicative.
\end{proposition}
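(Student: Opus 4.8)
The plan is to reduce the statement to the general sufficient condition for multiplicativity established in Proposition~\ref{prop:minindexmultif}. For a nested sequence $\L\subset\N\subset\M$ of multi-matrix algebras satisfying the stated hypotheses (finite-index, connected, finite-dimensional centers), that proposition guarantees multiplicativity of the minimal index as soon as the left state $\omega_l$ of the upper inclusion $\N\subset\M$ coincides with the right state $\omega_r$ of the lower inclusion $\L\subset\N$, as states on $\Z(\N)$. So the entire task collapses to verifying this single equality of states on $\Z(\N)$.

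First I would unpack what super-extremality yields on each step. Since multi-matrix inclusions are always extremal (Corollary~\ref{cor:sphericalvsmarkov}), Lemma~\ref{lem:superextremalisleft} applies to both steps: super-extremality of $\L\subset\N$ is equivalent to $\omega_l=\tau_{\restriction\Z(\L)}$ and in particular forces $\omega_r=\tau_{\restriction\Z(\N)}$ for the pair $\L\subset\N$; likewise super-extremality of $\N\subset\M$ gives $\omega_l=\tau_{\restriction\Z(\N)}$ for the pair $\N\subset\M$. Thus both states entering the multiplicativity criterion are restrictions to $\Z(\N)$ of the respective Markov traces, and the equality I must establish becomes the comparison of the two (a priori distinct) Markov traces $\tau^{\L\subset\N}$ and $\tau^{\N\subset\M}$ restricted to $\Z(\N)$.

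The crucial observation — and the step I expect to carry the real content — is that these two different Markov traces restrict to the \emph{same} state on $\Z(\N)$. Writing $\N\cong\oplus_j M_{\beta_j}(\CC)$ with minimal central projections $q_j$, Corollary~\ref{cor:sphericalvsmarkov} identifies both restrictions with the state determined by $q_j\mapsto \beta_j^2(\sum_k\beta_k^2)^{-1}$: for $\L\subset\N$ this is exactly the content of $\omega_r=\tau_{\restriction\Z(\N)}$ with $\N$ playing the role of the larger algebra, while for $\N\subset\M$ it is the content of $\omega_l=\tau_{\restriction\Z(\N)}$ with $\N$ playing the role of the smaller algebra. As emphasized in Remark~\ref{rmk:super-extremality}, this vector of algebraic dimensions depends only on $\N$ itself and not on how $\N$ sits inside $\L$ or inside $\M$. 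Hence $\omega_r^{\L\subset\N}=\omega_l^{\N\subset\M}$ on $\Z(\N)$.

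With the equality of states in hand, Proposition~\ref{prop:minindexmultif} immediately gives that the minimal conditional expectation of $\L\subset\M$ is the composite of the two intermediate minimal expectations, and correspondingly that the minimal index of $\L\subset\M$ factors as the product of the minimal indices of $\L\subset\N$ and $\N\subset\M$; equivalently the scalar dimension is multiplicative. The only delicate point to watch is the bookkeeping of which copy of $\N$ plays the ``larger'' versus the ``smaller'' role in the two applications of Corollary~\ref{cor:sphericalvsmarkov}, so that the dimension-vector normalizations genuinely match up; once this is checked the conclusion is essentially immediate.
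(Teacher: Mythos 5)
Your proof is correct and follows essentially the same route as the paper, whose argument for Proposition~\ref{prop:multipsemm} is given in Remark~\ref{rmk:super-extremality}: super-extremality of both steps combined with Corollary~\ref{cor:sphericalvsmarkov} and Lemma~\ref{lem:superextremalisleft} identifies both $\omega_r^{\L\subset\N}$ and $\omega_l^{\N\subset\M}$ with the state $q_j\mapsto \beta_j^2(\sum_k\beta_k^2)^{-1}$ on $\Z(\N)$, which depends only on $\N$, so the sufficient condition of Proposition~\ref{prop:minindexmultif} applies. Your bookkeeping of which copy of $\N$ is the larger versus smaller algebra in the two applications of Corollary~\ref{cor:sphericalvsmarkov} is exactly right.
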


Now we give another characterization of super-extremality, again in the case of multi-matrix inclusions, which makes it easier to produce examples. Given $\N\subset\M$ as in Corollary \ref{cor:sphericalvsmarkov}, we denote by $\alpha$ and $\beta$ the vectors with entries in $\NN$ such that $\M\cong \oplus_i M_{\alpha_i}(\CC)$, $i=1,\ldots,m$, and $\N\cong \oplus_j M_{\beta_j}(\CC)$, $j=1,\ldots,n$. Then it is well known that $D \beta = \alpha$ is always fulfilled, where $D=\Lambda_\N^\M$ is the matrix dimension, \ie, the inclusion matrix of $\N\subset\M$ by Theorem \ref{thm:DisLambdaNinM}, and that this is the only constraint on the Bratteli diagram \cite{Bra72} constructed from the triple $(D, \alpha, \beta)$ in order to have a well-defined associated multi-matrix inclusion, see, \eg, \cite[\Prop 2.3.9]{GdHJ89}.

\begin{proposition}\label{prop:superextremalmultimat}
A multi-matrix inclusion $\N\subset\M$ given as in Corollary \ref{cor:sphericalvsmarkov} is super-extremal if and only if, in addition to $D\beta = \alpha$, the equality $D^t \alpha = d^2 \beta$ holds, where $d=\|D\|$ is the scalar dimension of the inclusion, \ie, $d^2$ is its index.

If this is the case, we have that 
$$d^2 = \frac{\|\alpha\|^2}{\|\beta\|^2}$$
where $\|\cdot\|$ is the $l^2$-norm of the vectors $\alpha$ and $\beta$ with positive integer entries. In other words, the index of the inclusion in this special case is the ratio of the algebraic dimensions of $\M$ and $\N$ (similarly to the finite type $I$ subfactor case), but here $d^2\in\NN$ (not necessarily the square of an integer), and every possible value arises from some connected super-extremal inclusion of multi-matrices.
\end{proposition}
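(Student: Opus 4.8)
The plan is to reduce super-extremality to a statement about the left Perron-Frobenius eigenvector and then translate it into the two matrix equations. Multi-matrix inclusions are always extremal by Corollary \ref{cor:sphericalvsmarkov}, so by Lemma \ref{lem:superextremalisleft} the inclusion $\N\subset\M$ is super-extremal if and only if $\omega_l = \tau_{\restriction\Z(\N)}$; again by Corollary \ref{cor:sphericalvsmarkov} this holds precisely when $\nu_j = \beta_j^2(\sum_k\beta_k^2)^{-1}$, that is, when the left eigenvector $\nu^{1/2}$ is proportional to $\beta$. Thus the whole equivalence rests on recognizing when $\beta$ is a scalar multiple of the Perron-Frobenius eigenvector of $D^tD$.

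For the equivalence, I would argue the forward direction first: if $\N\subset\M$ is super-extremal then $\nu^{1/2}\propto\beta$, and since $\nu^{1/2}$ solves $D^tD\,\nu^{1/2} = d^2\nu^{1/2}$ by Theorem \ref{thm:minimalconnectedfindim}, homogeneity gives $D^tD\beta = d^2\beta$; combining with the always-valid relation $D\beta=\alpha$ yields $D^t\alpha = D^tD\beta = d^2\beta$. Conversely, assuming $D\beta=\alpha$ and $D^t\alpha = d^2\beta$, substitution gives $D^tD\beta = D^t\alpha = d^2\beta$, so $\beta$ is a strictly positive eigenvector of $D^tD$ with eigenvalue $d^2$. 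Here the key input is connectedness: by Lemma \ref{lem:connectediffirred} the matrix $D^tD$ is irreducible, and since $d^2=\|D\|^2$ is its spectral radius (the Perron-Frobenius eigenvalue), uniqueness of the positive eigenvector forces $\beta\propto\nu^{1/2}$; after normalization $\nu_j = \beta_j^2(\sum_k\beta_k^2)^{-1}$, hence $\omega_l = \tau_{\restriction\Z(\N)}$ and super-extremality. The one point requiring care is exactly this appeal to Perron-Frobenius uniqueness, which is why connectedness (irreducibility) cannot be omitted and is the main obstacle to a shortcut.

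To obtain the index formula I would pair the two matrix equations: from $D\beta=\alpha$ and $D^t\alpha = d^2\beta$ one computes $\|\alpha\|^2 = \langle D\beta,\alpha\rangle = \langle\beta,D^t\alpha\rangle = d^2\|\beta\|^2$, giving $d^2 = \|\alpha\|^2/\|\beta\|^2$. Since $\|\alpha\|^2 = \sum_i\alpha_i^2 = \dim_\CC\M$ and $\|\beta\|^2 = \sum_j\beta_j^2 = \dim_\CC\N$, this is the announced ratio of algebraic dimensions. For integrality, I note that $D=\Lambda_\N^\M$ has non-negative integer entries by Theorem \ref{thm:DisLambdaNinM}, so $d^2$, being an eigenvalue of the integer matrix $D^tD$, is an algebraic integer; being also rational (it equals $\|\alpha\|^2/\|\beta\|^2$) it must lie in $\ZZ$, and positivity gives $d^2\in\NN$.

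Finally, to see that every positive integer arises, I would exhibit the diagonal inclusion $\N = \CC^N \subset M_N(\CC) = \M$, for which $D = \Lambda_\N^\M = (1,\dots,1)$ is a single row, $\beta = (1,\dots,1)^t$ and $\alpha = (N)$. One checks directly $D\beta = N = \alpha$ and $D^t\alpha = N(1,\dots,1)^t = N\beta$, with $d^2 = \|\alpha\|^2/\|\beta\|^2 = N^2/N = N$, while $D^tD$ is the $N\times N$ all-ones matrix, which is irreducible, so the inclusion is connected by Lemma \ref{lem:connectediffirred}. Thus this inclusion is connected and super-extremal with index $N$, and for $N$ not a perfect square (\eg $N=2$) the scalar dimension $d=\sqrt{N}$ is irrational while $d^2\in\NN$, confirming the last assertions.
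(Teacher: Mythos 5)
Your proof is correct and follows essentially the same route as the paper's: reduce super-extremality to $\omega_l=\tau_{\restriction\Z(\N)}$ via Corollary \ref{cor:sphericalvsmarkov} and Lemma \ref{lem:superextremalisleft}, translate this into the Perron--Frobenius eigenvector condition $\nu^{1/2}\propto\beta$, and then into the two matrix equations, with the same algebraic-integer argument for $d^2\in\NN$. Your treatment is in fact slightly more explicit than the paper's at two points --- the appeal to uniqueness of the positive eigenvector of the irreducible matrix $D^tD$ in the converse direction, and the concrete family $\CC^N\subset M_N(\CC)$ realizing every positive integer --- but these are elaborations of the same argument, not a different one.
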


\begin{example}
The multi-matrix inclusion with matrix dimension, \ie, inclusion matrix $D = \left( \begin{array}{cc} 1 & 1 \end{array}\right)$ and vectors $\beta = \left( \begin{array}{c} 2 \\ 2 \end{array} \right)$, $\alpha = 4$, namely $M_2(\CC)\oplus M_2(\CC) \hookrightarrow M_4(\CC)$ with the embedding $(a,b) \mapsto a\oplus b$, is super-extremal. Its index is $d^2 = 2$, hence $d=\sqrt{2}$, moreover the ratio of the dimensions is also $16/8 = 2$, in accordance with the previous proposition. 

The inclusions with matrix dimensions $D = \left( \begin{array}{cc} 1 & 1 \\ 1 & 1 \end{array}\right)$ and $D = \left( \begin{array}{cc} 2 & 0 \\ 0 & 2 \end{array}\right)$, and with the same vectors $\beta = \left( \begin{array}{c} 2 \\ 2 \end{array} \right)$, $\alpha = \left( \begin{array}{c} 4 \\ 4 \end{array} \right)$, namely two different embeddings of $M_2(\CC)\oplus M_2(\CC) \hookrightarrow M_4(\CC) \oplus M_4(\CC)$, are both super-extremal, but only the first is connected (hence strictly speaking the second should be disregarded).

The inclusion with the same matrix dimension $D = \left( \begin{array}{cc} 1 & 1 \end{array}\right)$ as before, but with vectors $\beta = \left( \begin{array}{c} 2 \\ 3 \end{array} \right)$, $\alpha = 5$, namely $M_2(\CC)\oplus M_3(\CC) \hookrightarrow M_5(\CC)$ again with the diagonal embedding, is not super-extremal as one can immediately check. Indeed, $2 \neq 25/13$. It will become clear in the proof of Proposition \ref{prop:superextremalmultimat} given below that this example also shows that $\omega_r = \tau_{\restriction \Z(\M)}$ does not imply $\omega_l = \tau_{\restriction \Z(\N)}$, as claimed in Lemma \ref{lem:superextremalisleft}. Namely, the first condition is equivalent to the Perron-Frobenius eigenvalue equation $DD^t \alpha = d^2 \alpha$, which in this case is satisfied by any number, in particular by $\alpha$, because $DD^t = d^2 = 2$, while the second condition is equivalent to the Perron-Frobenius eigenvalue equation $D^tD \beta = d^2 \beta$, which does not hold in this case.

An example of a super-extremal multi-matrix inclusion with index $d^2=4$ and a $5 \times 3$ inclusion matrix appears in \cite[\p 58]{GdHJ89}, where the consequences of super-extremality discussed in Proposition \ref{prop:superextremalmultimat} have also been accidentally noticed.
\end{example}

\begin{proof} (of Proposition \ref{prop:superextremalmultimat}).
By Corollary \ref{cor:sphericalvsmarkov} we know that connected multi-matrix inclusions are extremal, and by Lemma \ref{lem:superextremalisleft} we know that super-extremality is equivalent to $\omega_l = \tau_{\restriction \Z(\N)}$. Again by Corollary \ref{cor:sphericalvsmarkov}, in the case of multi-matrices, the last condition is equivalent to $\nu^{1/2} = \beta \|\beta\|^{-1}$, \ie, to $D^tD \beta = d^2 \beta$, where $d=\|D\|$, hence to $D^t \alpha = d^2 \beta$. By applying $D$ to both members of the last equation we get $DD^t \alpha = d^2 \alpha$, \ie, $\mu^{1/2} = \alpha \|\alpha\|^{-1}$, or equivalently $\omega_r = \tau_{\restriction \Z(\M)}$ by Corollary \ref{cor:sphericalvsmarkov}. In particular we obtained an alternative, linear-algebraic, proof of the fact that $\omega_l = \tau_{\restriction \Z(\N)}$ implies $\omega_r = \tau_{\restriction \Z(\M)}$, in the special case of multi-matrices.

The equality $d=\|\alpha\|\|\beta\|^{-1}$ is obtained by combining $D\beta = \alpha$ and $D\nu^{1/2} = d \mu^{1/2}$. The fact that $d^2\in\NN$ holds because $d^2$ is an algebraic integer and rational, thus integer, and the fact that every possible value is realized can be seen already from the examples above.
\end{proof}

In the case of finite \emph{subfactors} with finite index, see \cite[\Sec 2]{Hia88}, \cite[\Sec 4]{PiPo86}, extremality and super-extremality are the same notion. Indeed in this case $\omega_s = \omega_s \circ {E_0}_{\restriction \N'\cap\M} = {E_0}_{\restriction \N'\cap\M}$ and $\tau_{\restriction \N'\cap\M} = \tau \circ {E_{\tau,\N}}_{\restriction \N'\cap\M} = {E_{\tau,\N}}_{\restriction \N'\cap\M}$, hence $E_0 = E_{\tau,\N}$ on $\M$ trivially implies that $\omega_s = \tau$ on $\N'\cap\M$, while vice versa holds because the restriction of conditional expectations to the relative commutant is a bijection due to \cite[\Thm 5.3]{CoDe75}.

\section{Characterization of minimality ($\N$ or $\M$ factor)}\label{sec:ifNorMfactor}

In this section we give another characterization of minimality for a conditional expectation $E\in E(\M,\N)$, and its dual expectation $E'\in E(\N',\M')$, in the case where $\N\subset \M$ is a finite-index inclusion and either $\N$ or $\M$ is a factor. As a consequence, the other algebra has finite-dimensional center and the inclusion is obviously connected. In this case, we improve Theorem \ref{thm:minimalconnectedfindim} $(b)$ in the spirit of the rigid tensor \Cstar-categorical characterization of minimality given by \cite[\Thm 3.8, \Lem 3.9 and \Thm 3.11]{LoRo97} and called \emph{sphericality} in the literature (see, \eg, \cite[\Sec 4.7]{EGNO15}) valid in the subfactor case, \ie, for tensor categories with simple unit object. 

\begin{lemma}\label{lem:onefactorhencestates}
Let $\N\subset\M$ be a finite-index inclusion of von Neumann algebras and assume that either $\N$ or $\M$ is a factor. Then for every $E\in E(\M,\N)$ the restrictions of $EE'_{\restriction \N'\cap\M}$ and $E'E_{\restriction \N'\cap\M}$ give rise to two normal faithful states on the relative commutant $\N' \cap \M$.
\end{lemma}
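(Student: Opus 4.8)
The plan is to show that, under the factor hypothesis, both composite maps in fact take values in the scalars $\CC\oneop$ on $\N'\cap\M$, after which the state properties come essentially for free from those of $E$ and $E'$. First I would recall that since $\N\subset\M$ is finite-index, the dual expectation $E'=\Ind(E)^{-1}E^{-1}\in E(\N',\M')$ is a well-defined normal faithful conditional expectation, and that $\N'\cap\M$ is the common relative commutant of $\N\subset\M$ and of the dual inclusion $\M'\subset\N'$. Using $\N$-bimodularity of $E$ one has $E(\N'\cap\M)\subseteq \N'\cap\N=\Z(\N)$, and using $\M'$-bimodularity of $E'$ together with $\M''=\M$ one has $E'(\N'\cap\M)\subseteq \M\cap\M'=\Z(\M)$; these are exactly the co-restrictions $E_{\restriction\N'\cap\M}\colon \N'\cap\M\to\Z(\N)$ and $E'_{\restriction\N'\cap\M}\colon\N'\cap\M\to\Z(\M)$ noted in the proof of Theorem \ref{thm:minimalconnectedfindim}. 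I would also record the reverse inclusions $\Z(\N)\subseteq\N'\cap\M$ and $\Z(\M)\subseteq\N'\cap\M$, so that the composites $EE'$ and $E'E$ are well-defined self-maps of $\N'\cap\M$.

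The key step is the observation that a factor hypothesis collapses one of the two centers to $\CC\oneop$, and that this forces both composites into the scalars. By passing to the dual inclusion $\M'\subset\N'$ (which has the same relative commutant, has $E'$ as defining expectation with dual $E$, and whose two composites are again $EE'$ and $E'E$), it suffices to treat the case where $\N$ is a factor, so that $\Z(\N)=\CC\oneop$. Then for $x\in\N'\cap\M$: on the one hand $E(x)\in\Z(\N)=\CC\oneop$, whence $E'(E(x))=E(x)\in\CC\oneop$ by unitality of $E'$; on the other hand $E'(x)\in\Z(\M)\subseteq\N'\cap\M$, whence $E(E'(x))\in\Z(\N)=\CC\oneop$. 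Thus both $EE'_{\restriction\N'\cap\M}$ and $E'E_{\restriction\N'\cap\M}$ take values in $\CC\oneop$, and I may define functionals $\omega_1,\omega_2$ on $\N'\cap\M$ by $EE'(x)=\omega_1(x)\oneop$ and $E'E(x)=\omega_2(x)\oneop$.

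It then remains to verify that $\omega_1$ and $\omega_2$ are normal faithful states, and here nothing beyond the standard properties of conditional expectations is needed. Positivity and normality of each composite follow because $E$ and $E'$ are positive and normal and these properties are stable under composition; unitality of $\omega_1,\omega_2$ follows from $E(\oneop)=E'(\oneop)=\oneop$, giving $\omega_i(\oneop)=1$. Faithfulness is the only point requiring a word: if $x\in\N'\cap\M$ with $x\geq 0$ and $\omega_1(x)=0$, then $E(E'(x))=0$ with $E$ faithful and $E'(x)\geq 0$ forces $E'(x)=0$, and then faithfulness of $E'$ forces $x=0$; the argument for $\omega_2$ is identical with the roles of $E$ and $E'$ exchanged.

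The computation is essentially formal, so I do not expect a genuine obstacle; the one point that needs care is the bookkeeping of domains and codomains — in particular using that $E$ is $\N$-bimodular whereas $E'$ is $\M'$-bimodular (not $\N$-bimodular), so that the two composites land in $\Z(\N)$ and $\Z(\M)$ respectively, and that the relevant center becomes scalar precisely because of the factor assumption. This is also the feature that fails without a factor hypothesis: there the composites map into $\Z(\N)$ or $\Z(\M)$ and give conditional expectations onto the centers rather than states, which is exactly the phenomenon that Theorem \ref{thm:minimalconnectedfindim} handles through the left and right states of the inclusion.
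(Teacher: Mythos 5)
Your proof is correct and follows essentially the same route as the paper: both arguments rest on the observations that $E$ maps $\N'\cap\M$ into $\Z(\N)$ and $E'$ maps it into $\Z(\M)$, so that the factor hypothesis collapses one composite to $E$ or $E'$ itself and forces both composites into $\CC\oneop$. The only cosmetic difference is that you reduce to the case $\N$ a factor via the dual inclusion, whereas the paper treats the two cases symmetrically in one line; your explicit verification of normality, faithfulness and unitality fills in details the paper leaves implicit.
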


\begin{proof}
Write the chains of inclusions $\N\cap\M'\subset\N\cap\N'\subset\M\cap\N'$ and $\N\cap\M'\subset\M\cap\M'\subset\M\cap\N'$, and observe that $E'$ maps $\N'\cap\N$ onto $\M'\cap\M$ because $E' = \Ind(E)^{-1} E^{-1}$, $\Ind(E)\in\Z(\M)$ and $E^{-1}$ is $\M'$-bimodular. Similarly, $E$ maps $\M'\cap\M$ onto $\N'\cap\N$. Now, by our factoriality assumption either $E'E = E$ and $EE'$ are states on $\N'\cap\M$, or $EE' = E'$ and $E'E$ are states on $\N'\cap\M$, more precisely they equal $\varphi_1(\cdot) \oneop$ and $\varphi_2(\cdot) \oneop$ where $\varphi_1$ and $\varphi_2$ are states, hence the desired statement.
\end{proof}

\begin{remark}
More generally, we observe that under the connectedness assumption one has $\N\cap\M' = \Z(\N)\cap\Z(\M) = \CC\oneop$. However $E$ and $E'$ do not preserve in general $\M'$ and $\N$, respectively, but only $\N'$ and $\M$. Hence $EE'$ and $E'E$ are not states in general of the relative commutant, without further assumptions, \cf equation (\ref{eq:sphericalstate}) in the proof of Theorem \ref{thm:minimalconnectedfindim}.   
\end{remark}

\begin{proposition}\label{prop:minimaliffcommute}
Let $\N\subset\M$ be a finite-index inclusion of von Neumann algebras and assume that either $\N$ or $\M$ is a factor. Then $E$ is the minimal conditional expectation in $E(\M,\N)$ if and only if
\begin{equation}\label{eq:leftrightcommut}
{EE'} = {E'E}\quad \text{on} \quad \N'\cap\M.
\end{equation}
In this case, equation (\ref{eq:leftrightcommut}) defines the spherical state $\omega_s$ appearing in Theorem \ref{thm:minimalconnectedfindim}.
\end{proposition}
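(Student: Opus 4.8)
The plan is to deduce the statement from the equivalence $(a)\Leftrightarrow(b)$ of Theorem \ref{thm:minimalconnectedfindim}, which applies here since, as observed, factoriality of $\N$ or $\M$ forces the inclusion to be connected with finite-dimensional centers. By passing to the dual inclusion $\M'\subset\N'$---for which $E'$ plays the role of $E$, the relative commutant $\N'\cap\M$ is unchanged, condition (\ref{eq:leftrightcommut}) is symmetric, and minimality of $E$ in $E(\M,\N)$ corresponds to minimality of $E'$ in $E(\N',\M')$ by Theorem \ref{thm:minimalconnectedfindim}$(a)$---I may assume without loss of generality that $\N$ is a factor. Then, as in Lemma \ref{lem:onefactorhencestates}, $\N$-bimodularity gives $E(\N'\cap\M)\subset\N'\cap\N = \Z(\N) = \CC\oneop$, so there is a normal faithful state $\varphi$ on $\N'\cap\M$ with $E(x) = \varphi(x)\oneop$; consequently $E'E = E = \varphi(\slot)\oneop$, while $EE'(x) = \varphi(E'(x))\oneop$ is again scalar-valued because $E'(x)\in\Z(\M)\subset\N'\cap\M$.

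First I would prove that minimality implies (\ref{eq:leftrightcommut}). If $E = E^0$, Theorem \ref{thm:minimalconnectedfindim}$(b)$ provides a normal faithful state $\omega_s$ on $\N'\cap\M$ with $\omega_s = \omega_s\circ E = \omega_s\circ E'$. Since $E$ is $\CC\oneop$-valued on $\N'\cap\M$ and $\omega_s(\oneop)=1$, one reads off $\omega_s\circ E = \varphi$, so the first equality forces $\omega_s = \varphi$. The second equality then becomes $\varphi = \varphi\circ E'$, that is $\varphi(x) = \varphi(E'(x))$ for every $x\in\N'\cap\M$; multiplying by $\oneop$ this is precisely $E'E(x) = EE'(x)$, which is (\ref{eq:leftrightcommut}).

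For the converse I would run the same computation backwards: assuming (\ref{eq:leftrightcommut}), set $\omega_s := \varphi$. Then $\omega_s\circ E = \varphi$ holds automatically from $\varphi(\oneop)=1$, and (\ref{eq:leftrightcommut}) rewritten as $\varphi(E'(x))\oneop = EE'(x) = E'E(x) = \varphi(x)\oneop$ yields $\omega_s\circ E' = \omega_s$; hence condition $(b)$ holds and $E = E^0$ by Theorem \ref{thm:minimalconnectedfindim}. Finally, to identify the spherical state it suffices to note that at minimality $\omega_s = \varphi$ and $\varphi(x)\oneop = E(x) = E'E(x) = EE'(x)$, so (\ref{eq:leftrightcommut}) does compute $\omega_s$, as claimed.

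I expect the only genuinely delicate point to be the bookkeeping of which of the two compositions $EE'$, $E'E$ is scalar-valued (it is $E'E = E$ when $\N$ is the factor, and dually $EE' = E'$ when $\M$ is), together with checking that the reduction to the case $\N$ factor via the dual inclusion is legitimate; everything else is a direct unwinding of Theorem \ref{thm:minimalconnectedfindim}$(b)$ once Lemma \ref{lem:onefactorhencestates} has pinned down $\varphi$.
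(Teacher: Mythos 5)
Your proposal is correct and follows essentially the same route as the paper: both arguments rest on Lemma \ref{lem:onefactorhencestates} to see that the relevant compositions of $E$ and $E'$ on $\N'\cap\M$ are already scalar-valued states, and then reduce the claim to the equivalence $(a)\Leftrightarrow(b)$ of Theorem \ref{thm:minimalconnectedfindim}. The only cosmetic difference is that you normalize to the case $\N$ factor via the dual inclusion, whereas the paper treats the two cases symmetrically inside Lemma \ref{lem:onefactorhencestates}; your explicit identification $\omega_s=\varphi$ from $\omega_s\circ E=\omega_s$ is a correct unwinding of the paper's terser ``special case of Theorem \ref{thm:minimalconnectedfindim}'' conclusion.
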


\begin{proof}
The existence of a normal faithful $E$-invariant and $E'$-invariant (\lqq spherical") state $\omega_s$ on $\N'\cap\M$, which is then explicitly given as in equation (\ref{eq:sphericalstate}), boils down to (\ref{eq:leftrightcommut}) because $EE'$ and $E'E$ already define two states on $\N'\cap\M$ by Lemma \ref{lem:onefactorhencestates}. Hence $\omega_s(EE'(x))\oneop = EE'(x)$ and $\omega_s(E'E(x))\oneop = E'E(x)$ for every $x\in\N'\cap \M$, and we have the claim as a special case of Theorem \ref{thm:minimalconnectedfindim}.
\end{proof}

\section{Multiplicativity}\label{sec:multiplicativity}

Let $\L\subset \N\subset \M$ be finite-index connected inclusions of von Neumann algebras with finite-dimensional centers and denote respectively by $\{r_k, k = 1,\ldots,l\}$, $\{q_j, j=1,\ldots, n\}$, $\{p_i, i=1,\ldots,m\}$ the minimal central projections of $\L$, $\N$, $\M$. 

\begin{lemma}
If $\L\subset\N$ or $\N\subset\M$ is connected, then $\L\subset\M$ is also connected. The same conclusion holds if $\N$ is a factor, or if both $\M$ and $\L$ are factors. 
\end{lemma}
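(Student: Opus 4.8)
The plan is to reduce the statement about connectedness of $\L\subset\M$ to the combinatorial characterization provided by Lemma \ref{lem:connectediffirred}, namely that an inclusion with finite-dimensional centers is connected if and only if its associated zero-one incidence matrix (recording which products of minimal central projections are nonzero) is indecomposable. First I would set up notation: let $R$ be the $n\times l$ incidence matrix for $\L\subset\N$ (with $(j,k)$-entry nonzero iff $q_j r_k\neq 0$) and $S$ the $m\times n$ incidence matrix for $\N\subset\M$ (with $(i,j)$-entry nonzero iff $p_i q_j\neq 0$). The key observation is that the incidence matrix $T$ for $\L\subset\M$ is controlled by the matrix product $SR$: since $\sum_j q_j=\oneop$, one has $p_i r_k = \sum_j p_i q_j r_k$, and because the $q_j$ are central and mutually orthogonal, $p_i r_k\neq 0$ as soon as some $p_i q_j\neq 0$ and $q_j r_k\neq 0$ share an index $j$. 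Thus the support of $T$ contains the support of $SR$; more precisely the $(i,k)$-entry of $SR$ is strictly positive exactly when there exists $j$ with both $p_iq_j\neq 0$ and $q_jr_k\neq 0$, which forces $p_ir_k\neq 0$.

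The main step is then a purely combinatorial/graph-theoretic lemma: if $R$ and $S$ are indecomposable non-negative matrices of compatible sizes (so that the bipartite graphs they define are connected), then $SR$, and hence any non-negative matrix whose support contains that of $SR$, is indecomposable. I would phrase this in terms of the bipartite incidence graphs. Connectedness of $\L\subset\N$ means the bipartite graph on vertex sets $\{r_k\}\sqcup\{q_j\}$ (edge $j\!-\!k$ iff $q_jr_k\neq 0$) is connected; similarly for $\{q_j\}\sqcup\{p_i\}$. Gluing these two bipartite graphs along the common middle layer $\{q_j\}$ produces a connected tripartite graph, and connectedness of the induced graph on the outer layers $\{r_k\}\sqcup\{p_i\}$ (which is exactly the statement that $T$ is indecomposable, i.e. $\L\subset\M$ is connected) follows because any $p_i$ and $r_k$ can be joined by a path through the middle layer. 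This is the heart of the argument and the place where one must be slightly careful, since in general the product of two indecomposable matrices need not be indecomposable — the point is that connectivity through the shared index set $\{q_j\}$ is preserved precisely because \emph{every} $q_j$ is used (the sum of the $q_j$ is $\oneop$, so no middle vertex is isolated).

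For the three remaining cases I would argue as follows. If $\N$ is a factor, then $n=1$, so the middle layer is a single vertex through which every $r_k$ and every $p_i$ must connect (each $p_i q_1=p_i\neq 0$ and $q_1 r_k=r_k\neq 0$), and indecomposability of $T$ is immediate. If both $\M$ and $\L$ are factors, then $m=l=1$ and the matrix $T$ is a single positive entry, which is trivially indecomposable; equivalently $\Z(\M)\cap\Z(\L)=\CC\oneop$ holds automatically. I expect the main obstacle to be stating the bipartite-graph gluing argument cleanly enough that the role of the hypothesis ``$\L\subset\N$ \emph{or} $\N\subset\M$ is connected'' is transparent — namely, that connectedness of just \emph{one} of the two factors suffices provided the other inclusion merely has the property that its incidence matrix has no zero row or column, which is automatic since the relevant central projections sum to the identity. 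A careful reader should check that when only one inclusion is assumed connected, the other still links every middle vertex to the appropriate outer layer, so no outer vertex becomes isolated and the combined graph stays connected.
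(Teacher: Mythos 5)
Your argument is correct in outline but takes a genuinely different, and much heavier, route than the paper. The paper's entire proof is the observation that $\Z(\L)\cap\Z(\M)\subset\Z(\N)$: any $x\in\Z(\L)\cap\Z(\M)$ lies in $\L\subset\N$ and in $\M'\subset\N'$, hence in $\N\cap\N'=\Z(\N)$. Consequently $\Z(\L)\cap\Z(\M)$ is contained in both $\Z(\L)\cap\Z(\N)$ and $\Z(\N)\cap\Z(\M)$, so connectedness of either intermediate inclusion forces $\Z(\L)\cap\Z(\M)=\CC\oneop$; the factor cases are immediate for the same reason (if $\N$ is a factor, $\Z(\N)=\CC\oneop$ already; if $\L$ and $\M$ are factors, $\Z(\L)\cap\Z(\M)=\CC\oneop$ trivially). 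Your reduction to Lemma \ref{lem:connectediffirred} and the tripartite-graph gluing does reach the same conclusion, and it has the side benefit of making visible the combinatorial mechanism (no middle vertex $q_j$ is isolated because $\sum_j q_j=\oneop$), but it uses machinery the statement does not need and, unlike the paper's argument, it presupposes finite-dimensional centers.

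There is one step where your stated justification is insufficient, though the claim itself is true. You assert that $p_iq_j\neq0$ and $q_jr_k\neq0$ force $p_iq_jr_k\neq0$ (hence $p_ir_k\neq0$) ``because the $q_j$ are central and mutually orthogonal.'' Centrality and commutativity alone do not give this: three commuting projections can satisfy $pq\neq0$ and $qr\neq0$ with $pqr=0$ (take $q=\oneop$, $p$ and $r$ orthogonal). What saves the step is the \emph{minimality} of $q_j$ in $\Z(\N)$: $\N q_j$ is a factor on $q_j\Hilb$, $r_kq_j$ is a nonzero projection in $\N q_j$ (since $r_k\in\L\subset\N$), and $p_iq_j$ is a nonzero projection in its commutant (since $p_i\in\M'\subset\N'$), and a nonzero projection in a factor cannot annihilate a nonzero projection in its commutant because the latter has full central support. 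You should insert this factoriality argument; without it the containment of the support of $SR$ in the support of $T$, which is the hinge of your whole approach, is not established.
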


\begin{proof}
It is enough to observe that $\Z(\L)\cap\Z(\M) \subset \Z(\N)$.
\end{proof}

The following theorem says that the matrix dimension, see Definition \ref{def:dimensionmatrix}, is indeed \emph{multiplicative}, \ie, it respects inclusions.

\begin{theorem}\label{thm:dimensionmatmult}
Let $D_1$ be the $m\times n$ matrix dimension of $\N\subset\M$ and $D_2$ the $n\times l$ matrix dimension of $\L\subset\N$, then the $m\times l$ matrix dimension of $\L\subset\M$ is given by $D_3 = D_1 D_2$.
\end{theorem}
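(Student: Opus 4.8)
The plan is to prove that the matrix dimension is multiplicative, i.e. $D_3 = D_1 D_2$, by reducing everything to the entrywise definition of the matrix dimension and exploiting the multiplicativity of minimal index for \emph{subfactors} (the simple-unit case), which is available from \cite{KoLo92}, \cite{Lon92}. Recall from Theorem \ref{thm:minimalconnectedfindim}$(c)$ that the $(i,k)$-th entry of $D_3$ is $d^{(3)}_{ik} = (c^{(3)}_{ik})^{1/2}$, where $c^{(3)}_{ik}$ is the minimal index of the subfactor $\L_{ik} \subset \M_{ik}$ (with $\L_{ik} := \L\, p_i r_k$ and $\M_{ik} := r_k \M p_i r_k$), and $d^{(3)}_{ik} := 0$ when $p_i r_k = 0$. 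Similarly $d^{(1)}_{ij}$ and $d^{(2)}_{jk}$ are the intrinsic dimensions (square roots of minimal subfactor indices) attached to $\N_{ij}\subset\M_{ij}$ and $\L_{jk}\subset\N_{jk}$. So the identity to prove reduces entrywise to
\begin{equation*}
d^{(3)}_{ik} = \sum_{j} d^{(1)}_{ij}\, d^{(2)}_{jk},
\end{equation*}
which is exactly the assertion that, cutting by the minimal central projection $q_j$ of the intermediate algebra $\N$, the scalar dimension of the composite factorial inclusion decomposes as a sum over $j$ of products of the intermediate scalar dimensions.

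First I would fix $i$ and $k$ with $p_i r_k \neq 0$ and analyze the inclusion $\L_{ik}\subset\M_{ik}$ together with the \emph{intermediate} algebra obtained by cutting $\N$ with $p_i r_k$. The central projections $q_j$ of $\N$ decompose the unit $p_i r_k$ into the subprojections $p_i q_j r_k$; those $j$ for which $p_i q_j r_k \neq 0$ index the factorial pieces of $\N$ that sit between $\L_{ik}$ and $\M_{ik}$. For each such $j$, the cut-down inclusions are precisely the subfactors $\N_{ij}\subset\M_{ij}$ (upper) and $\L_{jk}\subset\N_{jk}$ (lower), whose intrinsic dimensions are $d^{(1)}_{ij}$ and $d^{(2)}_{jk}$. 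The key conceptual point is that the scalar dimension $d^{(3)}_{ik}$ of the composite simple-unit inclusion is governed by the intermediate algebra $\N \cap \M_{ik}$, which is a finite direct sum of factors indexed by the relevant $j$'s; this is exactly the setting in which the minimal-index/dimension of a composite of factorial inclusions through a multi-factor intermediate algebra equals the $l^2$-type sum over intermediate summands. Here one can either invoke the additivity/multiplicativity of intrinsic dimension in the 2-$C^*$-categorical framework, or argue directly: the conjugate morphism for $\iota_3 = \iota_1\circ\iota_2$ through $\N\cap\M_{ik}$ splits as a direct sum over $j$, and intrinsic dimension of a direct sum is additive while intrinsic dimension of a composite (through a single factorial block) is multiplicative, giving the stated sum of products.

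The main obstacle, and the step requiring the most care, is bookkeeping the threefold cutting by $p_i$, $q_j$, $r_k$ and verifying that the cut-down inclusions genuinely are the subfactors $\N_{ij}\subset\M_{ij}$ and $\L_{jk}\subset\N_{jk}$ used to define $D_1$ and $D_2$, \emph{and} that the induction isomorphisms $\sigma_{ij}$ appearing in \eqref{eq:decompositionofE} are compatible across the two inclusions. One must check that $d^{(1)}_{ij}$, defined via the subfactor $\N p_i q_j \subset q_j\M p_i q_j$, agrees with the intrinsic dimension of the corresponding morphism cut into the composite, and likewise for $d^{(2)}_{jk}$; the subtlety is that a nonzero $p_i r_k$ does not by itself force $p_i q_j r_k \neq 0$ for a given $j$, so the sum over $j$ in the product $D_1 D_2$ automatically restricts to the intermediate factorial blocks that actually occur, and one must confirm the convention $d_{ij}=0$ on vanishing cuts makes the matrix product pick out exactly the right terms. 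Once the correspondence of the factorial pieces is pinned down, the identity $d^{(3)}_{ik} = \sum_j d^{(1)}_{ij} d^{(2)}_{jk}$ follows from the simple-unit multiplicativity of dimension under composition (within a fixed $j$) and additivity under direct sum (over $j$), and reassembling the entries yields $D_3 = D_1 D_2$. I would present the cut-down identification carefully and then quote the factorial multiplicativity and the direct-sum additivity of intrinsic dimension to conclude.
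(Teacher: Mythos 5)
Your proposal takes essentially the same route as the paper's proof: reduce to the entrywise identity $(D_3)_{i,k}=\sum_j (D_1)_{i,j}(D_2)_{j,k}$, decompose the subfactor $\L_{ik}\subset\M_{ik}$ along the projections $q_j\in\Z(\N)\subset\L'\cap\M$, and combine the additivity of the square root of the minimal index over that decomposition with its multiplicativity through each intermediate factorial block, which are exactly the two subfactor results from \cite{Lon89} and \cite{Lon92} invoked in the paper. The one step you flag but leave unexecuted --- that the triple cut-downs $\L_{ijk}\subset\N_{ijk}$ and $\N_{ijk}\subset\M_{ijk}$ have the same minimal index as $\L_{jk}\subset\N_{jk}$ and $\N_{ij}\subset\M_{ij}$ --- is settled in the paper in one line, by observing that $p_i$ (resp.\ $r_k$) is a projection of central support $\oneop$ in the relevant relative commutant, passing to the dual inclusion $\M'\subset\N'$ where needed.
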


\begin{proof}
For a subfactor $\R\subset\cS$ we denote by $[\cS:\cR]$ the minimal index, previously denoted by $c$ or $c_{ij}$, so to keep better track of the inclusions. Then 
$$(D_3)_{i,k} = [\M_{ik}:\L_{ik}]^{1/2}=\sum_j [\M_{ijk}:\L_{ijk}]^{1/2}=\sum_j[\M_{ijk}:\N_{ijk}]^{1/2}[\N_{ijk}:\L_{ijk}]^{1/2}$$
where, \eg, $\M_{ijk}$ denotes $q_j\M_{ik}q_j = q_jr_k\M p_i q_j r_k$, $q_j\in \Z(\N)\subset\L'\cap\M$ are such that $\sum_j q_j = \oneop$, and we have used the additivity of the minimal index \cite[\Thm 5.5]{Lon89} and its multiplicativity \cite[\Cor 2.2]{Lon92} in the case of subfactors. Now, $p_i$ is a projection in the commutant of the subfactor $\L_{jk}\subset\N_{jk}$, hence it has central support $\oneop$ both in $\N_{jk}$ and $\L_{jk}$, thus we have $[\N_{ijk}:\L_{ijk}] = [\N_{jk}:\L_{jk}]$, and similarly $[\M_{ijk}:\N_{ijk}] = [\N'_{ijk}:\M'_{ijk}] = [\N'_{ij}:\M'_{ij}] = [\M_{ij}:\N_{ij}]$, from which we get
$$(D_3)_{i,k} = \sum_j (D_1)_{i,j} (D_2)_{j,k}$$
and the proof is complete. 
\end{proof}

\begin{remark}
A straightforward computation shows that the expectation matrix $\Lambda=\Lambda_E$ of an arbitrary conditional expectation $E$ (not necessarily the minimal one) is also multiplicative, \ie, $\Lambda_{E} = \Lambda_F\Lambda_G$ whenever $E=GF$.
\end{remark}

Concerning the minimal index, \ie, the spectral radius of the matrices $D^t D$ and $D D^t$ thanks to Theorem \ref{thm:minimalconnectedfindim} when we consider connected inclusions, from Theorem \ref{thm:dimensionmatmult} we deduce the following.

\begin{corollary}\label{cor:minmultifNfactor}
Let $\L\subset\N\subset\M$ as in Theorem \ref{thm:dimensionmatmult}. If $\N$ is a factor, then the minimal index $c_3$ of $\L\subset\M$ is given by $c_3=c_1c_2$, where $c_1$ is the minimal index of $\N\subset\M$ and $c_2$ is the minimal index of $\L\subset\N$. 
\end{corollary}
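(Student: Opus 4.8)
The plan is to reduce the statement entirely to two results already at hand: the multiplicativity of the matrix dimension (Theorem \ref{thm:dimensionmatmult}) and the identification of the minimal index with the squared $l^2$-operator norm of the matrix dimension (Theorem \ref{thm:minimalconnectedfindim}), exploiting the special shape these matrices acquire when $\N$ is a factor. First I would note that factoriality of $\N$ means $\Z(\N) = \CC\oneop$, so $n = \dim(\Z(\N)) = 1$. This in turn forces $\Z(\L)\cap\Z(\N) \subseteq \Z(\N) = \CC\oneop$ and $\Z(\N)\cap\Z(\M) \subseteq \Z(\N) = \CC\oneop$, so both $\L\subset\N$ and $\N\subset\M$ are connected; by the Lemma preceding Theorem \ref{thm:dimensionmatmult} the composed inclusion $\L\subset\M$ is connected as well. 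Hence Theorem \ref{thm:minimalconnectedfindim} applies to all three inclusions, giving $c_1 = \|D_1\|^2$, $c_2 = \|D_2\|^2$ and $c_3 = \|D_3\|^2$ with $l^2$-operator norms.

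The decisive structural observation is that, because $n = 1$, the matrix dimension $D_1$ of $\N\subset\M$ is an $m\times 1$ column vector, while the matrix dimension $D_2$ of $\L\subset\N$ is a $1\times l$ row vector. By Theorem \ref{thm:dimensionmatmult} the matrix dimension of $\L\subset\M$ is the product $D_3 = D_1 D_2$, which is therefore the \emph{rank-one} outer product of these two vectors. For such a product the operator norm factorizes: for $x\in\CC^l$ one has $D_1 D_2\, x = (D_2 x)\, D_1$, whence $\|D_1 D_2\, x\| = |D_2 x|\,\|D_1\|$, and the supremum of $|D_2 x|$ over unit vectors $x$ equals $\|D_2\|$ by Cauchy--Schwarz; moreover for a single column (resp.\ row) vector the operator norm coincides with its Euclidean norm. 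Thus $\|D_3\| = \|D_1\|\,\|D_2\|$.

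Combining the two ingredients yields $c_3 = \|D_3\|^2 = \|D_1\|^2\,\|D_2\|^2 = c_1 c_2$, which is the claim. I do not expect a genuine obstacle here: the entire content is that factoriality of the intermediate algebra collapses its center to one dimension, converting the two factorial matrix dimensions into a column and a row vector whose product is rank one, after which the norm factorization is a one-line Cauchy--Schwarz computation. The only point requiring care is checking that connectedness holds for all three inclusions so that Theorem \ref{thm:minimalconnectedfindim} is legitimately applicable throughout, and that the common spectral-radius/operator-norm conventions of that theorem are the ones used in the rank-one estimate.
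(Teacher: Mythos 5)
Your proof is correct and follows essentially the same route as the paper: both rest on $D_3 = D_1D_2$ from Theorem \ref{thm:dimensionmatmult} together with $c_i = \|D_i\|^2$ from Theorem \ref{thm:minimalconnectedfindim}, and both exploit that factoriality of $\N$ forces $n=1$. The only cosmetic difference is in the last step: the paper invokes cyclic invariance of the spectral radius to reduce to the $1\times 1$ matrix $D_1^tD_1D_2D_2^t$, whereas you compute $\|D_1D_2\| = \|D_1\|\,\|D_2\|$ directly from the rank-one structure via Cauchy--Schwarz, which is an equally valid (and if anything slightly more elementary) finish.
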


\begin{proof}
The spectral radius of a product of matrices (not necessarily square) is invariant under cyclic permutations, see \cite[\Thm 1.3.22]{HoJoBook}, hence $c_3$ equals the spectral radius of $D_1^tD_1D_2D_2^t$, where $D_1^tD_1$ and $D_2D_2^t$ are both $n\times n$ matrices (positive as operators, with non-negative entries). By factoriality assumption on $\N$ we have that $n=1$, hence the spectral radius of the product trivially equals the product of the respective spectral radii, \ie, $c_3 =c_1c_2$.
\end{proof}

\begin{corollary}\label{cor:minnomultifLMfactors}
Let $\L\subset\N\subset\M$ as in Theorem \ref{thm:dimensionmatmult}. If $\L$ and $\M$ are factors, then $c_3 = \cos^2(\alpha) c_1c_2$, where $c_1$, $c_2$ and $c_3$ are as above and $\alpha$ is the angle between $D_1$ and $D_2$ as vectors in $\RR^n$.
\end{corollary}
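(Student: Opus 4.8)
The plan is to reduce the statement to elementary Euclidean geometry in $\RR^n$, using the factoriality hypotheses together with two results already established: the identification of the minimal index as the squared $l^2$-operator norm of the matrix dimension (Theorem \ref{thm:minimalconnectedfindim}, equation (\ref{eq:cisdsquare})) and the multiplicativity $D_3 = D_1 D_2$ of the matrix dimension (Theorem \ref{thm:dimensionmatmult}).

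First I would record the dimension count forced by the hypotheses. Since $\M$ and $\L$ are factors, $m = \dim(\Z(\M)) = 1$ and $l = \dim(\Z(\L)) = 1$, while $n = \dim(\Z(\N))$ is unconstrained. Hence $D_1$ is a $1\times n$ matrix, that is, a row vector in $\RR^n$ with non-negative entries, and $D_2$ is an $n\times 1$ matrix, that is, a column vector in $\RR^n$. By Theorem \ref{thm:dimensionmatmult} the matrix dimension of $\L\subset\M$ is the $1\times 1$ matrix $D_3 = D_1 D_2$, which is the scalar $\sum_j (D_1)_{1,j}(D_2)_{j,1} = \langle D_1, D_2\rangle$, the Euclidean inner product of the two vectors.

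Next I would invoke Theorem \ref{thm:minimalconnectedfindim} to write each minimal index as the squared $l^2$-operator norm of its matrix dimension: $c_1 = \|D_1\|^2$, $c_2 = \|D_2\|^2$, $c_3 = \|D_3\|^2$. For a single-row or single-column non-negative matrix the $l^2$-operator norm coincides with the Euclidean length of the underlying vector, so $c_1 = \|D_1\|_{l^2}^2$ and $c_2 = \|D_2\|_{l^2}^2$; and since $D_3$ is a scalar, $c_3 = D_3^2 = \langle D_1, D_2\rangle^2$. The conclusion then follows directly from the definition of the angle $\alpha$ between $D_1$ and $D_2$, namely $\cos\alpha = \langle D_1, D_2\rangle / (\|D_1\|_{l^2}\|D_2\|_{l^2})$, which gives
$$\cos^2(\alpha)\, c_1 c_2 = \frac{\langle D_1, D_2\rangle^2}{\|D_1\|_{l^2}^2 \|D_2\|_{l^2}^2}\,\|D_1\|_{l^2}^2\,\|D_2\|_{l^2}^2 = \langle D_1, D_2\rangle^2 = c_3.$$

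There is no genuine obstacle here, as the corollary is essentially a geometric reformulation of the preceding two theorems. The only points requiring care are the bookkeeping of dimensions forced by factoriality (which collapses $D_1$ and $D_2$ to a row and a column vector, and $D_3$ to a scalar) and the observation that the operator norm of a non-negative row or column matrix equals its Euclidean length; once these are in place, the Cauchy–Schwarz geometry of $\RR^n$ immediately produces the stated $\cos^2(\alpha)$ factor and, in particular, makes transparent why multiplicativity fails precisely when $D_1$ and $D_2$ are not parallel.
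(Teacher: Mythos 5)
Your proof is correct and follows essentially the same route as the paper: factoriality of $\L$ and $\M$ collapses $D_1$ and $D_2$ to a row and a column vector, Theorem \ref{thm:dimensionmatmult} gives $D_3 = D_1 D_2 = \langle D_1 | D_2\rangle$ as a scalar, and Theorem \ref{thm:minimalconnectedfindim} identifies each $c_i$ with $\|D_i\|^2$, so the $\cos^2(\alpha)$ factor drops out of the definition of the angle. The paper's proof is a condensed version of exactly this argument.
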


\begin{proof}
Observe that $D_1D_2$ is a number because $m=1$ and $l=1$ by factoriality assumptions on $\M$ and $\L$, and use again the invariance under cyclic permutations of the spectral radius, or directly $D_3 = D_1D_2$. Then $c_3 = (D_1D_2)^2 = [(D_1|D_2)]^2$ and $c_1 = \|D_1\|^2$, $c_2 = \|D_2\|^2$, where $\langle\cdot|\cdot\rangle$ and $\|\cdot\|$ indicate respectively the $l^2$-scalar product and norm in $\RR^n$. 
\end{proof}

The analysis of these special cases already shows that the minimal index itself is \emph{not} always multiplicative once we deal with inclusions of von Neumann algebras with non-trivial centers, \cf with the case of subfactors \cite{Lon92} where multiplicativity always holds.

More generally, the minimal index is only \emph{submultiplicative} (this is not true for the spectral radius in general) and we state a sufficient condition 
for the minimal index of $\L\subset\M$, to be the product of the minimal indices of two intermediate inclusions $\L\subset\N$ and $\N\subset\M$. The same condition guarantees that the minimal expectation $G^0\in E(\M,\L)$ factors  through  $\N$, \ie, it is the product of the two intermediate minimal expectations $E^0\in E(\M,\N)$ and $F^0\in E(\N,\L)$.

\begin{proposition}\label{prop:minindexmultif}
Let $\L\subset\N\subset\M$ as in Theorem \ref{thm:dimensionmatmult}, and 
denote by $c_1$, $c_2$ and $c_3$ the minimal indices of $\N\subset\M$, $\L\subset\N$ and $\L\subset\M$ as above, then $c_3 \leq c_1 c_2$.

If the left Perron-Frobenius eigenvector of $\N\subset\M$ coincides with the right Perron-Frobenius eigenvector of $\L\subset\N$, or equivalently if the left state $\omega_l^{\N\subset\M}$ for $\N\subset\M$ and the right state $\omega_r^{\L\subset\N}$ for $\L\subset\N$ are the same state on $\Z(\N)$, then $c_3 = c_1 c_2$. Moreover, we have that $G^0 = F^0 E^0$.
\end{proposition}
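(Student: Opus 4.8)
The plan is to leverage the multiplicativity of the matrix dimension (Theorem \ref{thm:dimensionmatmult}), writing $D_3 = D_1 D_2$, and to translate the eigenvector hypothesis into an exact equality of spectral radii. First I would establish the submultiplicativity $c_3 \leq c_1 c_2$ from Theorem \ref{thm:dimensionmatmult} together with Theorem \ref{thm:minimalconnectedfindim}: since $c_k = d_k^2 = \|D_k\|^2$ and the operator norm (as an $l^2$-norm on matrices between Euclidean spaces) is submultiplicative, one has $d_3 = \|D_1 D_2\| \leq \|D_1\|\,\|D_2\| = d_1 d_2$, and squaring gives $c_3 \leq c_1 c_2$. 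This part is immediate and uses nothing beyond the standard submultiplicativity of operator norms.

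For the equality case, I would set up the Perron--Frobenius data from Theorem \ref{thm:minimalconnectedfindim}. Let $(\nu^{(1)})^{1/2}, (\mu^{(1)})^{1/2}$ be the left/right eigenvectors of $\N\subset\M$ and $(\nu^{(2)})^{1/2}, (\mu^{(2)})^{1/2}$ those of $\L\subset\N$, so that
\begin{equation*}
D_1 (\nu^{(1)})^{1/2} = d_1 (\mu^{(1)})^{1/2}, \qquad D_2 (\nu^{(2)})^{1/2} = d_2 (\mu^{(2)})^{1/2},
\end{equation*}
together with the transposed relations $D_1^t (\mu^{(1)})^{1/2} = d_1 (\nu^{(1)})^{1/2}$ and $D_2^t (\mu^{(2)})^{1/2} = d_2 (\nu^{(2)})^{1/2}$. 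The hypothesis is exactly $(\nu^{(1)})^{1/2} = (\mu^{(2)})^{1/2}$ as vectors in $\RR^n$, equivalently $\omega_l^{\N\subset\M} = \omega_r^{\L\subset\N}$ via the identifications $\nu_j = \omega_l(q_j)$ and $\mu_j = \omega_r(q_j)$ from the theorem. Under this identification I would compute directly that $(\nu^{(2)})^{1/2}$ is a Perron--Frobenius eigenvector of $D_3$ in the factorized form: applying $D_3 = D_1 D_2$ gives
\begin{equation*}
D_3 (\nu^{(2)})^{1/2} = D_1 D_2 (\nu^{(2)})^{1/2} = d_2\, D_1 (\mu^{(2)})^{1/2} = d_2\, D_1 (\nu^{(1)})^{1/2} = d_1 d_2\, (\mu^{(1)})^{1/2},
\end{equation*}
and analogously $D_3^t (\mu^{(1)})^{1/2} = d_1 d_2\, (\nu^{(2)})^{1/2}$. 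Since $D_3 = D_1 D_2$ is the matrix dimension of the connected inclusion $\L\subset\M$, Lemma \ref{lem:connectediffirred} guarantees that $D_3^t D_3$ and $D_3 D_3^t$ are irreducible, so the positive eigenvectors and eigenvalue produced above are the genuine Perron--Frobenius data by uniqueness; hence $d_3 = d_1 d_2$ and $c_3 = c_1 c_2$.

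Finally, for the conditional expectations I would use the characterization of the minimal expectation through the expectation matrix in \eqref{eq:DdeterminesLambda}. The product $F^0 E^0$ lies in $E(\M,\L)$ and its expectation matrix is $\Lambda_{E^0}\Lambda_{F^0}$ by the multiplicativity remark following Theorem \ref{thm:dimensionmatmult}. I would check that the entries of this product coincide with the entries \eqref{eq:DdeterminesLambda} prescribed for the minimal expectation $G^0$ of $\L\subset\M$, built from $D_3 = D_1 D_2$, its scalar dimension $d_3 = d_1 d_2$, and the eigenvectors $(\mu^{(1)})^{1/2}, (\nu^{(2)})^{1/2}$ identified above; the shared middle vector $(\nu^{(1)})^{1/2} = (\mu^{(2)})^{1/2}$ is precisely what makes the telescoping of the three eigenvector weights collapse correctly, so that $\Lambda_{G^0} = \Lambda_{E^0}\Lambda_{F^0}$, whence $G^0 = F^0 E^0$ by the uniqueness in \eqref{eq:decompositionofE}. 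The main obstacle I anticipate is this last bookkeeping step: one must verify that the factorwise minimal subfactor expectations $E^0_{ij}$ and $F^0_{jk}$ compose to the subfactor minimal expectations $G^0_{ik}$ (using multiplicativity of the minimal index for subfactors \cite{Lon92}, as already invoked in Theorem \ref{thm:dimensionmatmult}), so that matching the scalar weights $\lambda_{ij}$ suffices to identify $G^0$ with $F^0 E^0$.
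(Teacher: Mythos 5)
Your first two steps are correct. The submultiplicativity $c_3\le c_1c_2$ via $\|D_1D_2\|\le\|D_1\|\,\|D_2\|$ is exactly the paper's (alternative) argument. For the equality, you verify directly that $D_3(\nu^{(2)})^{1/2}=d_1d_2\,(\mu^{(1)})^{1/2}$ and $D_3^t(\mu^{(1)})^{1/2}=d_1d_2\,(\nu^{(2)})^{1/2}$, hence that $(\nu^{(2)})^{1/2}$ is a strictly positive eigenvector of the irreducible matrix $D_3^tD_3$ with eigenvalue $(d_1d_2)^2$, which must then be the spectral radius. The paper instead invokes the general fact (\cite[\Thm 8.3.4]{HoJoBook}, \cite[\Sec 3.C]{JoBr90}) that the spectral radius of a product of irreducible non-negative matrices sharing a Perron--Frobenius eigenvector is multiplicative; your computation is a self-contained instance of the same mechanism and is perfectly sound, with the bonus that it identifies $(\mu^{(1)})^{1/2}$ and $(\nu^{(2)})^{1/2}$ as the Perron--Frobenius data of $\L\subset\M$.

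The gap is in the last claim, $G^0=F^0E^0$, and you have located it yourself. By equation (\ref{eq:decompositionofE}) an expectation in $E(\M,\L)$ is determined by its expectation matrix \emph{together with} the family of factorial components in $E(\M_{ik},\L_{ik})$; so after checking $\Lambda_{F^0E^0}=\Lambda_{G^0}$ (the telescoping does work under $\nu^{(1)}=\mu^{(2)}$) you must still show that each component $(F^0E^0)_{ik}$ is the minimal expectation of the subfactor $\L_{ik}\subset\M_{ik}$. Your proposed fix --- composing the factorwise minimal expectations $E^0_{ij}$ and $F^0_{jk}$ and invoking the subfactor multiplicativity of \cite{Lon92} --- does not go through as stated: the intermediate algebra sitting between the factors $\L_{ik}$ and $\M_{ik}$ is the cut-down of $\N$ by $p_ir_k$, whose center is spanned by the $q_jp_ir_k$ and which is therefore \emph{not} a factor, so whether minimal expectations compose to a minimal expectation through it is precisely the content of the proposition; this route is circular. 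The paper closes the argument much more cheaply: since $\Ind(E^0)=c_1\oneop$ and $\Ind(F^0)=c_2\oneop$ are scalars, Remark \ref{rmk:notevenIndFEismult} (\ie \cite[\Prop 3.14]{BDH88}) gives $\Ind(F^0E^0)=\Ind(F^0)\Ind(E^0)=c_1c_2\oneop=c_3\oneop$, so $F^0E^0$ attains the minimal value of $\|\Ind(\cdot)\|$ and coincides with $G^0$ by uniqueness of the minimal expectation for connected inclusions with finite-dimensional centers. You should replace your expectation-matrix bookkeeping by this one-line index computation (or use it to force termwise minimality of the components in the formula $\Ind(E)=\sum_{i,k}\|\Ind(E_{ik})\|\lambda_{ik}^{-1}p_i$). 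The paper additionally exhibits the spherical state of $\L\subset\M$ by proving $E^0{F^0}'={F^0}'E^0$ on $\L'\cap\M$ with a Pimsner--Popa basis, but that is extra information not needed for the bare statement.
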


\begin{proof}
In the same notation as in Theorem \ref{thm:dimensionmatmult}, let $D_1$ and $D_2$ be the matrix dimensions of $\N\subset\M$ and $\L\subset\N$ respectively, hence $D_3 = D_1 D_2$ is the matrix dimension of $\L\subset\M$. By Theorem \ref{thm:minimalconnectedfindim} the minimal index of $\L\subset\M$ equals the spectral radius of $D_1^tD_1D_2D_2^t$, where we have used again invariance under cyclic permutations \cite[\Thm 1.3.22]{HoJoBook}, and $D_1^tD_1$, $D_2D_2^t$ are both $n\times n$ matrices, where $n$ is the dimension of $\Z(\N)$. Denoted by $\rho(A)$ the spectral radius of a square matrix $A$, we have that $\rho(D_1^tD_1D_2D_2^t) \leq \|D_1^tD_1D_2D_2^t\| \leq \|D_1^tD_1\|\|D_2D_2^t\| = \rho(D_1^tD_1)\rho(D_2D_2^t)$, hence the submultiplicativity of the minimal index. Alternatively, one can simply observe that $\|D_3\| \leq \|D_1\|\|D_2\|$, \ie, $d_3\leq d_1d_2$, where $d_i = c^{1/2}_i$, $i=1,2,3$, is the scalar dimension of the respective inclusion.  

If $D_1^tD_1$ and $D_2D_2^t$ have the same Perron-Frobenius eigenvector, then the spectral radius of the product is the product of the spectral radii by \cite[\Thm 8.3.4]{HoJoBook}, see also \cite[\Sec 3.C]{JoBr90}, hence $c_3 = c_1c_2$ in this case.
In particular, this already implies that the minimal expectation in $E(\M,\L)$ is the one given by the composition product $F^0 E^0$ (because $F^0$ and $E^0$ have scalar index, see Remark \ref{rmk:notevenIndFEismult}), but we want to exhibit the associated spherical state $\omega_s^{\L\subset\M}$ on $\L'\cap\M$ appearing in Theorem \ref{thm:minimalconnectedfindim}. The natural ansatz is $\omega_s^{\L\subset\M}:=\omega_s^{\L\subset\N}\circ E^0$ or $\omega_s^{\L\subset\M}:=\omega_s^{\N\subset\M}\circ {F^0}'$, in order to have respectively $F^0E^0$-invariance or ${E^0}'{F^0}'$-invariance on $\L'\cap\M$ (observe that $(F^0E^0)' = {E^0}'{F^0}'$ again because $F^0$ and $E^0$ have scalar index). Indeed, for the first invariance use the fact that $E^0$ and $F^0$ both preserve $L'$ together with $F^0$-invariance of $\omega_s^{\L\subset\N}$ on $\L'\cap\N$, similarly for the second invariance using ${E^0}'$-invariance of $\omega_s^{\N\subset\M}$ on $\N'\cap\M$. To show that the two actually ansatzes coincide we first prove that $E^0{F^0}' = {F^0}'E^0$ on $\L'\cap\M$, 
\footnote{This statement, rephrased in the language of rigid tensor \Cstar-categories, is the key ingredient in the proof that sphericality is preserved by the composition product \cite[\Thm 3.8]{LoRo97} in the case of simple tensor unit (subfactor case). 
The proof presented here, inspired by the one of \cite[\Thm 3]{KaWa95} in the trivial center case, is not categorical but rather tailored to the theory of index for conditional expectations.}. 
For convenience, by tensoring with a type $I_\infty$ factor we may assume that our algebras are properly infinite, hence the finiteness of the index of either expectation guarantees the existence of a Pimsner-Popa basis made of a single element in the algebra, see \cite[\Lem 3.21, \Prop 3.22]{BDH88} and \cite[\Def 1.1.4]{Pop95} for the definition. 
Namely, \eg, for $F^0\in E(\N,\L)$ we have $v\in \N$ such that $v^* e v = \oneop$, where $e$ is a Jones projection implementing $F^0$, hence $v^*v = \Ind(F^0)$, and we may assume $F^0(vv^*) = \oneop$. By \cite[\Rmk 3.8]{BDH88}, see also \cite[\Prop 1.5]{Jol91}, \cite[\Rmk A]{KaWa95}, we can express the dual conditional expectation as ${F^0}' = \Ind(F^0)^{-1} v^* \cdot v$ on $\L'$, hence the desired commutation relation between $E^0$ and ${F^0}'$ immediately follows from $E^0(v) = v$. Now, by the remaining two invariance properties of $\omega^{\L\subset\N}_s$ and $\omega^{\N\subset\M}_s$ and by the assumption that they coincide on $\Z(\N)$, we have the spherical state $\omega_s^{\L\subset\N}\circ E^0 = \omega_s^{\N\subset\M}\circ {F^0}' =: \omega_s^{\L\subset\M}$ for $F^0E^0$, thus concluding the proof.
\end{proof}

\begin{remark}\label{rmk:notevenIndFEismult}
In general it is also not true that the index of conditional expectations respects the composition product, \ie, for $E\in E(\M,\N)$, $F\in E(\N,\L)$ and $\L\subset\N\subset\M$ it might be that $\Ind(FE) \neq \Ind(F)\Ind(E)$, actually the right hand side need not be an element of $\Z(\M)$. However, equality holds whenever $\Ind(F)$ is also an element of  $\Z(\M)$, see \cite[\Prop 3.14]{BDH88}, which is always the case, \eg, when $\N$ is a factor, \cf Corollary \ref{cor:minmultifNfactor} above.
\end{remark}

The next corollary shows that the minimal index is multiplicative in the Jones tower of basic extensions.

\begin{corollary}\label{cor:jonesextensionmultip}(Jones basic extension case, \cf \emph{\cite{KoLo92}}).
Let $\N\subset\M$ be a finite-index connected inclusion of von Neumann algebras with finite-dimensional centers, let $E^0\in E(\M,\N)$ be the minimal expectation and consider the Jones extension $\N\subset\M\subset\M_1 = \M \vee \{e\}$, where $e$ is a Jones projection implementing $E^0$. Then the matrix dimension $\tilde D$ of $\M\subset\M_1$ is the transpose of the matrix dimension $D$ of $\N\subset\M$, \ie, $\tilde D = D^t$, the inclusions $\N\subset\M$ and $\M\subset\M_1$ have the same minimal index $c$, and $c^2$ is the minimal index of $\N\subset\M_1$. Moreover, $E^0 (E^0)_1$ is the minimal expectation for $\N\subset\M_1$, where $(E^0)_1 := \Ad_J \circ {E^0}' \circ \Ad_J \in E(\M_1,\M)$ and $J$ is a modular conjugation of $\M$.

Similar statements hold for the Jones tower of basic extensions $\N\subset\M\subset\M_1\subset\M_2\subset\ldots$ constructed from the minimal expectation $E^0\in E(\M,\N)$.
\end{corollary}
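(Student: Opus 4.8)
The plan is to reduce everything to the earlier results by exploiting the standard structure of the basic construction: the Jones extension $\M\subset\M_1$ is anti-isomorphic, through $\Ad_J$ with $J=J_\M$ a modular conjugation of $\M$, to the dual inclusion $\M'\subset\N'$, since $\M_1=\Ad_J(\N')$ and $\Ad_J(\M)=\M'$ (see \cite{Lon89}, \cite{Lon90}). First I would record the consequences for the centers: the center $\Z(\M_1)=\Ad_J(\Z(\N'))$ is $n$-dimensional, with minimal projections corresponding under $\Ad_J$ to the $q_j$, and the subfactor pieces of $\M\subset\M_1$ are carried by $\Ad_J$ onto the pieces $\M'_{ji}\subset\N'_{ji}$. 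Since the minimal index is invariant under anti-isomorphism and since $[\M_{ij}:\N_{ij}]=[\N'_{ji}:\M'_{ji}]=c_{ij}$ for subfactors, the matrix dimension of $\M\subset\M_1$ has $(j,i)$-entry $d_{ij}$; that is, $\tilde{D}=D^t$.

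Granting $\tilde{D}=D^t$, the assertion that $\N\subset\M$ and $\M\subset\M_1$ have the same minimal index is immediate from Theorem \ref{thm:minimalconnectedfindim}, as the minimal index is the squared operator norm of the matrix dimension and $\|D^t\|=\|D\|$. For the composite $\N\subset\M_1$ I would invoke the multiplicativity of the matrix dimension (Theorem \ref{thm:dimensionmatmult}) applied to $\N\subset\M\subset\M_1$, which gives matrix dimension $\tilde{D}D=D^tD$; since $D^tD$ is positive with $\|D^tD\|=\|D\|^2$, the minimal index of $\N\subset\M_1$ equals $\|D^tD\|^2=\|D\|^4=c^2$.

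For the factorization of the minimal expectation I would first check that $(E^0)_1=\Ad_J\circ (E^0)'\circ\Ad_J$ is indeed the minimal expectation of $\M\subset\M_1$: by Theorem \ref{thm:minimalconnectedfindim}$(a)$ the dual $(E^0)'$ is minimal for $\M'\subset\N'$, and minimality, being intrinsic, is preserved by the anti-isomorphism $\Ad_J$. Then I would verify the hypothesis of Proposition \ref{prop:minindexmultif} for the triple $\N\subset\M\subset\M_1$, namely that the left state $\omega_l^{\M\subset\M_1}$ agrees with the right state $\omega_r^{\N\subset\M}$ on $\Z(\M)$. Since $\Ad_J$ intertwines $(E^0)_1$ with $(E^0)'$ and their duals, and since the spherical state, being jointly $E^0$- and $(E^0)'$-invariant and tracial, is the same for $\N\subset\M$ and its dual $\M'\subset\N'$, one obtains $\omega_s^{\M\subset\M_1}=\omega_s^{\N\subset\M}\circ\Ad_J$ on $\M'\cap\M_1$. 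Restricting to $\Z(\M)$ and using the standard fact $Jp_iJ=p_i$ for central projections (which follows from $J\Omega=\Omega$ and $\sigma^\phi_t$ fixing the center), I get $\omega_l^{\M\subset\M_1}(p_i)=\omega_s^{\N\subset\M}(p_i)=\omega_r^{\N\subset\M}(p_i)$, so the matching condition holds. Proposition \ref{prop:minindexmultif} then yields both $c_3=c^2$ and that $E^0(E^0)_1$ is the minimal expectation of $\N\subset\M_1$.

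Finally, the tower statement follows by induction: each consecutive inclusion $\M_k\subset\M_{k+1}$ is the basic extension of the previous one built from its minimal expectation, so the matrix dimensions alternate as $D,D^t,D,\dots$, each step has minimal index $c$, and the same modular-conjugation argument re-triggers the hypothesis of Proposition \ref{prop:minindexmultif} at every stage, giving minimal index $c^{k+1}$ for $\N\subset\M_k$ together with the factorization of the minimal expectation into the intermediate ones. I expect the main obstacle to be the third step, specifically the precise identification of the spherical state of $\M\subset\M_1$ with that of $\N\subset\M$ under $\Ad_J$ and the verification that $J$ fixes the central projections $p_i$; once this equality of states on $\Z(\M)$ is in hand, the multiplicativity machinery of Proposition \ref{prop:minindexmultif} does the rest, while the transpose relation $\tilde{D}=D^t$ is essentially careful bookkeeping of the anti-isomorphism structure of the basic construction.
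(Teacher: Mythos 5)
Your proposal is correct and follows the same route as the paper, which simply observes that $\M\subset\M_1$ is anti-isomorphic to $\M'\subset\N'$ via $\Ad_J$, deduces $\tilde D=D^t$, and leaves the rest to the results of Sections \ref{sec:inclusions} and \ref{sec:multiplicativity}. The only remark is that the step you flag as the main obstacle is actually immediate: once $\tilde D=D^t$ is known, the left state of $\M\subset\M_1$ is determined by the unique Perron--Frobenius eigenvector of $\tilde D^t\tilde D=DD^t$, which is exactly $\mu^{1/2}$, so $\omega_l^{\M\subset\M_1}=\omega_r^{\N\subset\M}$ follows from Perron--Frobenius uniqueness without any appeal to the spherical state or to $JpJ=p$.
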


\begin{proof}
The inclusions $\M'\subset\N'$ and $\M\subset\M_1$ are anti-isomorphic by means of the modular conjugation $J$ of $\M$, where we may assume $\M$ to be in standard form, see, \eg, \cite[\Sec 1.1.3]{Pop95}. Hence $\tilde D = D^t$ and the rest is a consequence of the results of this section. The second statement is obtained by iterating the argument.
\end{proof}

\section{Additivity}\label{sec:additivity}

Let $\N\subset\M$ be a finite-index connected inclusion of von Neumann algebras with finite-dimensional centers, and denote again by $\{q_j, j=1,\ldots, n\}$, $\{p_i, i=1,\ldots,m\}$ the minimal central projections of $\N$ and $\M$ respectively. We now show that the matrix dimension of the inclusion, see Definition \ref{def:dimensionmatrix}, is \emph{additive} with respect to an arbitrary partition of unity by projections in the relative commutant.

\begin{proposition}\label{prop:dimensionmatadd}
Let $f_1,\ldots,f_N$ be mutually orthogonal projections in $\N'\cap\M$ such that $\sum_\alpha f_\alpha = \oneop$. Define $\N_\alpha := \N f_\alpha$, $\M_\alpha := f_\alpha \M f_\alpha$ for every $\alpha=1,\ldots,N$. Then $D = \sum_{\alpha} D_\alpha$ where $D$ is the matrix dimension of $\N\subset\M$ and $D_\alpha$ is the matrix dimension of $\N_\alpha\subset\M_\alpha$.
\end{proposition}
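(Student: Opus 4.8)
The plan is to reduce everything to the additivity of the dimension $d=(\text{minimal index})^{1/2}$ for \emph{subfactors}, which is \cite[\Thm 5.5]{Lon89} and which was already invoked in the proof of Theorem \ref{thm:dimensionmatmult}. The entire content of the proposition is then a bookkeeping of centers and corners: I must show that each building-block subfactor of the reduced inclusion $\N_\alpha\subset\M_\alpha$ is precisely a reduction of the corresponding building-block subfactor $\N_{ij}\subset\M_{ij}$ of the original inclusion, and that these reductions form a partition of the unit inside the relevant relative commutant. Once this identification is in place the result drops out entrywise, exactly as in Theorem \ref{thm:dimensionmatmult}.

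First I would record the elementary commutation relations. Since $p_i\in\Z(\M)$ while $f_\alpha,q_j\in\M$, and since $q_j\in\Z(\N)$ while $f_\alpha\in\N'$, the three families $\{p_i\}$, $\{q_j\}$, $\{f_\alpha\}$ commute pairwise (recall also $p_iq_j=q_jp_i$). Next I would compute the centers of the reduced algebras by standard reduction theory: as $f_\alpha\in\N'$ we have $\Z(\N_\alpha)=\Z(\N f_\alpha)=\Z(\N)f_\alpha$, so the minimal central projections of $\N_\alpha$ are the nonzero $q_jf_\alpha$; as $f_\alpha\in\M$ we have $\Z(\M_\alpha)=\Z(f_\alpha\M f_\alpha)=\Z(\M)f_\alpha$, so the minimal central projections of $\M_\alpha$ are the nonzero $p_if_\alpha$. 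In particular both reduced algebras have finite-dimensional centers, and the matrix $D_\alpha$, whose entries are by definition the dimensions of the component subfactors of $\N_\alpha\subset\M_\alpha$ built as in Theorem \ref{thm:minimalconnectedfindim}$(c)$, is well defined (this entrywise recipe does not need connectedness of the reduced inclusion, which may fail; connectedness only enters the Perron--Frobenius interpretation). Extending each $D_\alpha$ by zero in the positions where $p_if_\alpha=0$ or $q_jf_\alpha=0$ makes $\sum_\alpha D_\alpha$ a well-defined $m\times n$ matrix.

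The key step is the identification of the building blocks. Writing $g:=p_iq_jf_\alpha$ (a projection, by pairwise commutativity), I would check directly, using $f_\alpha^2=f_\alpha$ and collapsing the commuting central factors, that
\[
(\N_\alpha)_{ij}=\N_{ij}\,g,\qquad (\M_\alpha)_{ij}=g\,\M_{ij}\,g,
\]
where $\M_{ij}=q_j\M p_iq_j$ has unit $p_iq_j$ and $g=(p_iq_j)f_\alpha$ is its compression by $f_\alpha$. I would then verify that $g\in\N_{ij}'\cap\M_{ij}$: membership in $\M_{ij}$ is immediate from $g=q_j(p_if_\alpha)p_iq_j$, and commutation with $\N_{ij}=\N p_iq_j$ follows because $f_\alpha$ commutes with $\N$. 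Hence $\N_{ij}g$ is a factor (reduction of the factor $\N_{ij}$ by a projection in its commutant), $g\M_{ij}g$ is a factor (a corner of the factor $\M_{ij}$), and $(\N_\alpha)_{ij}\subset(\M_\alpha)_{ij}$ is exactly the reduced \emph{subfactor} $\N_{ij}g\subset g\M_{ij}g$, of finite index. Since the $f_\alpha$ are mutually orthogonal with $\sum_\alpha f_\alpha=\oneop$, the projections $g^\alpha:=p_iq_jf_\alpha$ are mutually orthogonal elements of $\N_{ij}'\cap\M_{ij}$ with $\sum_\alpha g^\alpha=p_iq_j$, the unit of $\M_{ij}$ (when $p_iq_j=0$ all terms vanish and both sides are $0$, consistent with the convention $d_{ij}=0$).

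Finally, additivity of the dimension for subfactors \cite[\Thm 5.5]{Lon89}, applied to this partition of the unit of $\M_{ij}$ by projections in the relative commutant, gives $d_{ij}=\sum_\alpha d_{ij}^\alpha$, where $d_{ij}=D_{i,j}$ is the dimension of $\N_{ij}\subset\M_{ij}$ and $d_{ij}^\alpha=(D_\alpha)_{i,j}$ is the dimension of the reduced subfactor $\N_{ij}g^\alpha\subset g^\alpha\M_{ij}g^\alpha$, which we identified with $(\N_\alpha)_{ij}\subset(\M_\alpha)_{ij}$. Reading this identity entrywise over all $i,j$ yields $D=\sum_\alpha D_\alpha$, as claimed. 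The only genuine obstacle is the corner/center bookkeeping of the second and third paragraphs; the analytic heart, subfactor additivity, is imported rather than reproved, precisely as in Theorem \ref{thm:dimensionmatmult}.
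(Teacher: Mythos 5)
Your proposal is correct and follows essentially the same route as the paper: reduce to the entrywise statement by identifying the minimal central projections of $\N_\alpha$ and $\M_\alpha$ as the nonzero $q_jf_\alpha$ and $p_if_\alpha$, observe that $\sum_\alpha f_\alpha p_iq_j = p_iq_j$ is a partition of unity by projections in $\N_{ij}'\cap\M_{ij}$, and import the additivity of the subfactor dimension from \cite[\Thm 5.5]{Lon89}. The only cosmetic difference is that the paper first treats the case where each $f_\alpha$ has full central support and then remarks that extra zero entries may appear in general, whereas you handle all cases uniformly by extending each $D_\alpha$ by zero; your explicit verification that $(\N_\alpha)_{ij}\subset(\M_\alpha)_{ij}$ coincides with the reduced subfactor $\N_{ij}g\subset g\M_{ij}g$ is a detail the paper leaves implicit.
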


\begin{proof}
Assume first that the $f_\alpha$, $\alpha=1,\ldots,N$, have supports $\oneop$ both in $\Z(\N)$ and $\Z(\M)$. For every $\alpha=1,\ldots,N$ the minimal central projections of $\N_\alpha$ and $\M_\alpha$ are given by $f_\alpha q_j$, $j=1,\ldots,n$, and $f_\alpha p_i$, $i=1,\ldots,m$, respectively. Also, each of the $f_\alpha q_j$ and $f_\alpha p_i$ is not zero, otherwise the support of $f_\alpha$ in $\Z(\N)$ or $\Z(\M)$ would not be $\oneop$, in particular the dimensions of the centers do not decrease by cutting with $f_\alpha$. Hence by the additivity of the square root of the minimal index (\ie, of the intrinsic dimension) for the subfactors $\N_{ij}\subset\M_{ij}$, see \cite[\Thm 5.5]{Lon89} and \cf \cite[\p 114]{LoRo97}, for every $i,j$ we get
$$(D)_{i,j} = \sum_\alpha (D_\alpha)_{i,j}$$
where $\sum_\alpha f_\alpha p_i q_j = p_i q_j$ is a partition of unity by projections in $\N_{ij}'\cap\M_{ij}$.

For an arbitrary family of $f_\alpha$, $\alpha=1,\ldots,N$ summing up to $\oneop$, we only have to observe that more zeros might occur as entries $(D_\alpha)_{i,j}$ for some $\alpha$, think of the case $f_\alpha = 0$ for some $\alpha$, or $f_\alpha = p_i$ or $q_j$, $\alpha=1,\ldots,N$, where $N=m$ or $n$, and the same conclusion as before holds.
\end{proof}

\begin{remark}
Consider the partition of unity given by projections $f_\alpha = p_iq_j\in\N'\cap\M$, $\alpha = (i,j)$ and $\alpha = (1,1),\ldots,(m,n)$ in multi-index notation. Then $(D_{(i,j)})_{k,l} = d_{kl}$ if $(i,j)=(k,l)$ and $(D_{(i,j)})_{k,l} = 0$ otherwise, for every $k=1,\ldots,m$, $l=1,\ldots,n$, hence clearly $D = \sum_{(i,j)} D_{(i,j)}$.
\end{remark}

Contrary to the case of subfactors, the scalar dimension $d$ itself (the norm of the matrix dimension) cannot be additive in general. Otherwise, the same argument of \cite[\Thm 2.1]{Lon92} would lead to the multiplicativity of the scalar dimension in this case as well, hence it would contradict Corollary \ref{cor:minnomultifLMfactors}. Nonetheless, if $\N$ or $\M$ is a factor, the square of the scalar dimension, \ie, the minimal index by Theorem \ref{thm:minimalconnectedfindim}, is additive with respect to the decomposition of unity by means of minimal central projections in the other algebra, namely

\begin{proposition}\label{prop:additivityindex}
Assume that either $\N$ or $\M$ is a factor, let $d$ be the scalar dimension of $\N\subset\M$ and $d_k$ the entries of the (column or row) matrix dimension $D$. Then the non-trivial normalized Perron-Frobenius eigenvector of Theorem \ref{thm:minimalconnectedfindim} is a multiple either of $D$ or of $D^t$. Moreover, $d^2 = \sum_k d_k^2$, \ie, the minimal index of $\N\subset\M$ is additive with respect to the decomposition of unity by means of minimal central projections either in $\Z(\M)$ or $\Z(\N)$.
\end{proposition}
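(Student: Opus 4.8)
The plan is to specialize Theorem \ref{thm:minimalconnectedfindim} to the degenerate situation in which one of the two centers is one-dimensional, so that the matrix dimension $D$ collapses to a vector and the Perron-Frobenius analysis becomes entirely elementary. By symmetry it suffices to treat the case in which $\N$ is a factor: the case of $\M$ a factor is handled identically after replacing $D$ with $D^t$ (equivalently, by applying the result to the commutant inclusion $\M'\subset\N'$, whose matrix dimension is $D^t$ and whose minimal index coincides with that of $\N\subset\M$).

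First I would record that if $\N$ is a factor then $n=\dim(\Z(\N))=1$, so that $q_1=\oneop$ and $D$ is an $m\times 1$ matrix, i.e.\ a column vector with entries $d_i:=d_{i1}$, $i=1,\ldots,m$. By the setup of Theorem \ref{thm:minimalconnectedfindim} each $d_i$ is the intrinsic dimension of the subfactor component $\N_{i1}=\N p_i \subset \M_{i1}=p_i\M p_i$ (using that $p_i$ is central in $\M$). In this situation $D^t D$ is the scalar $\sum_i d_i^2 = \|D\|^2 = d^2$, and the first eigenvalue equation in (\ref{eq:PF-eqns}) becomes trivial, solved by $\nu^{1/2}=1$.

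Next I would examine the $m\times m$ matrix $DD^t$, which is the rank-one matrix with entries $d_i d_{i'}$. A one-line computation shows that $D$ is itself an eigenvector, since $DD^t D = D(D^tD) = d^2 D$, with eigenvalue $d^2$. By Lemma \ref{lem:connectediffirred} the matrix $DD^t$ is irreducible, so Perron-Frobenius theory forces the unique positive normalized eigenvector $\mu^{1/2}$ of (\ref{eq:PF-eqns}) to be the positive multiple of $D$ subject to $\sum_i \mu_i = 1$, namely $\mu_i^{1/2}=d_i/d$. This proves that the non-trivial eigenvector is a multiple of $D$ (respectively of $D^t$ when $\M$ is the factor).

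Finally, reading off the eigenvalue yields $d^2=\sum_i d_i^2$. Interpreting this through Theorem \ref{thm:minimalconnectedfindim}, the left-hand side is the minimal index $c=d^2$ of $\N\subset\M$, while each $d_i^2=c_{i1}$ is the minimal index of the subfactor $\N p_i \subset p_i\M p_i$ obtained by cutting with the minimal central projection $p_i\in\Z(\M)$; hence the minimal index is additive under this central decomposition. I do not expect a genuine obstacle, as everything reduces to the vector case of the machinery already established in Theorem \ref{thm:minimalconnectedfindim}. The only points requiring mild care are invoking irreducibility (Lemma \ref{lem:connectediffirred}) to pin the eigenvector down as $D$ itself rather than merely \emph{some} eigenvector of the rank-one matrix, and correctly matching each $d_i^2$ with the minimal index of the appropriate subfactor component.
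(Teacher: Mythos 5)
Your proposal is correct and follows essentially the same route as the paper: reduce to the case $n=1$ (or $m=1$), observe that $DD^t$ is the rank-one matrix with entries $d_id_{i'}$ so that $\mu_i^{1/2}=d_i/d$ by Perron--Frobenius uniqueness, and read off $d^2=D^tD=\sum_k d_k^2$ from the scalar eigenvalue equation. The only cosmetic difference is that you identify the eigenvector by computing $DD^tD=d^2D$ directly, whereas the paper extracts $\mu_i^{1/2}=d_i(\sum_k d_k^2)^{-1/2}$ from the eigenvalue equation combined with the weighted additivity formula (\ref{eq:weightedadditivd}); both are the same elementary linear algebra.
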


\begin{proof}
Assume that $\N$ is a factor, and let $m$ be the dimension of $\Z(\M)$ as before. Then $DD^t$ is an $m\times m$ matrix with entries $d_id_j$, $i,j=1,\ldots, m$. Using the eigenvalue equation for $DD^t$ and the expression for the scalar dimension $d$ in terms of the entries of $D$ and of the Perron-Frobenius eigenvectors contained in Theorem \ref{thm:minimalconnectedfindim}, equation (\ref{eq:weightedadditivd}), we get $(\mu_i)^{1/2} = d_i (\sum_k d_k^2)^{-1/2}$, $i = 1,\ldots, m$, and the other eigenvalue equation $D^tD = d^2$ gives the second statement. Analogously when $\M$ is a factor. 
\end{proof}

\begin{remark}
In either case ($\N$ or $\M$ factor) the eigenvalue equations (\ref{eq:PF-eqns}) in Theorem \ref{thm:minimalconnectedfindim} boil down to $DD^t x = D^tD x$, where $x$ denotes either $\mu^{1/2}$ or $\nu^{1/2}$, and on one of the two sides we mean multiplication of a scalar times a vector.

Furthermore, the formula (\ref{eq:DdeterminesLambda}) for the entries of the expectation matrix $\Lambda_{E^0}$ of the minimal expectation $E^0$ in Theorem \ref{thm:minimalconnectedfindim}, becomes in this case $\lambda_k (= \text{ either }\mu_k \text{ or } \nu_k) = d_k^2 \sum_l d_l^2$, where $k, l$ run between $1$ and either $m$ or $n$.
\end{remark}

\begin{remark}
The additivity of the minimal index (not of its square root, \ie, of the scalar dimension) with respect to minimal central projections contained in Proposition \ref{prop:additivityindex} (assuming $\N$ or $\M$ to be a factor) has already been observed in \cite[\Sec 2.3]{BKLR15}. See also \cite[\Ex 2.7]{Jol90} for the same statement about additivity concerning the index given by a trace in the case of finite von Neumann algebras.   
\end{remark}

\section{Dimension theory for rigid 2-\Cstar-categories}\label{sec:2-cstar-cats}

The theory of dimension for 2-\Cstar-categories is outlined in \cite[\Sec 7]{LoRo97}, under the assumption that the 1-arrow units are simple (which means factoriality in the context of von Neumann algebras). Motivated by the concrete examples of inclusions and bimodules among von Neumann algebras with finite-dimensional centers, for which we refer to Section \ref{sec:inclusions} and \cite[\Sec 2]{Lon17arxiv}, see also \cite[\Ch 5]{ConnesBook}, \cite{Lan01}, \cite{SaYa17arxiv} and references therein, we develop a theory of dimension for 2-\Cstar-categories, under the assumption that every 0-object has finite-dimensional \lqq center", \ie, that the space of 2-arrows between the associated unit 1-arrow and itself is finite-dimensional. 
We re-derive then all the results obtained in the previous sections in this more general (categorical) language, that we regard as the most natural to talk about dimension and minimal index.

\begin{definition}
A (strict) \textbf{2-category} $\C$, in the sense of \cite[\Ch XII]{Mac98}, is a collection of
\begin{itemize}
\item \textbf{0-objects} or \textbf{objects}: a class $\C^{(0)}$ whose elements will be denoted by $\N,\M,\L,\ldots$
\item \textbf{1-arrows} or \textbf{1-morphisms}: a class $\C^{(1)}$ whose elements will be denoted by $X,Y,Z,\ldots$, equipped with a source and a target map both from $\C^{(1)}$ to $\C^{(0)}$. If the source and the target of $X$ are respectively $\N$ and $\M$, we write $X \in \Hom_{\C^{(1)}}(\N,\M)$ or $X: \N \rightarrow \M$.

A unit map from $\C^{(0)}$ to $\C^{(1)}$ associating to every $\N$ a unit 1-arrow $I_\N:\N \rightarrow \N$ and a composition map associating to every pair $(X,Y)$ of 1-arrows that is \lqq composable", \ie, such that $X:\N\rightarrow\M$ and $Y:\M\rightarrow\L$, a composed 1-arrow $Y \otimes X:\N\rightarrow\L$, \footnote{Alternatively, we could have used the notation $X:\M\leftarrow\N$, $Y:\L\leftarrow\M$ for which the order in the composition symbol $Y\otimes X:\L\leftarrow\N$ might appear more natural, or we could have used the opposite \lqq diagrammatic" order $X\otimes Y$ motivated by $\N\xrightarrow{X}\M\xrightarrow{Y}\L$.}. 
Associativity of the composition and unit law hold strictly, namely $X\otimes I_\N = X$ and $I_\M \otimes X = X$ if $X:\N\rightarrow\M$, and $(Z\otimes Y)\otimes X = Z\otimes (Y\otimes X)$ whenever the composition is defined.
\item \textbf{2-arrows} or \textbf{2-morphisms}, or also \textbf{2-cells}:
a class $\C^{(2)}$ whose elements will be denoted by $r,s,t\ldots$, equipped with a source and a target map both from $\C^{(2)}$ to $\C^{(1)}$. Only 2-arrows between \lqq parallel" 1-arrows are allowed.
Namely, if the source and the target of $r$ are respectively $X$ and $Y$, we require that $X$ and $Y$ have a common source $\N$ and a common target $\M$, and we write $r \in \Hom_{\C^{(2)}}(X,Y)$ or $r: X \Rightarrow Y$, or better $r: X \Rightarrow Y : \N \rightarrow \M$. 

A unit map from $\C^{(1)}$ to $\C^{(2)}$ associating to every $X$ a unit 2-arrow $\oneop_X:X \Rightarrow X$ and a \lqq vertical" composition map associating to every pair $(r,s)$ of 2-arrows that is \lqq vertically composable", \ie, such that $r:X\Rightarrow Y$ and $s:Y\Rightarrow Z$, a composed 2-arrow $s \cdot r:X\Rightarrow Z$. As before, associativity of the vertical composition and vertical unit law hold strictly, namely $r\cdot \oneop_X = r$ and $\oneop_Y \cdot r = r$ if $r:X\Rightarrow Y$, and $(t\cdot s)\cdot r = t\cdot (s\cdot r)$ whenever the composition is defined.

A \lqq horizontal" composition map associating to every pair $(s,t)$ of 2-arrows that is \lqq horizontally composable", \ie, such that $s:X\Rightarrow Y:\N\rightarrow \M$ and $t:Z\Rightarrow T:\M\rightarrow \L$, a composed 2-arrow $t \otimes s:Z\otimes X\Rightarrow T\otimes Y:\N\rightarrow\L$. As for the composition of 1-arrows, the horizontal composition of 2-arrows is assumed to be strictly associative, namely $(t\otimes s)\otimes r = t\otimes (s\otimes r)$ whenever it is defined, and the horizontal unit law holds strictly as follows $s\otimes \oneop_{I_\N} = s, \oneop_{I_\N} \otimes r = r$, whenever it is defined. Here $\oneop_{I_\N}$ is the unit 2-arrow associated with the unit 1-arrow associated with the object $\N$.

Among the two compositions (vertical/horizontal) of 2-arrows $s,t,u,v$, we require the following \lqq interchange law" (also called \lqq middle four exchange law") to hold strictly, whenever it is defined 
$$(v\otimes u)\cdot(t\otimes s) = (v\cdot t) \otimes (u\cdot s)$$
and that the horizontal composition respects the vertical unit 2-arrows, \ie \footnote{Notice that $\oneop_X \otimes \oneop_{I_\N} = \oneop_{X} = \oneop_{X\otimes I_{\N}}$ and $\oneop_{I_\M} \otimes \oneop_X = \oneop_{X} = \oneop_{I_\M \otimes X}$ for each $X:\N\rightarrow\M$ follow already from previous requirements.} 
$$\oneop_Y \otimes \oneop_X = \oneop_{Y \otimes X}$$
whenever the composition makes sense, \footnote{These last two properties mean bifunctoriality of the horizontal composition. Notation-wise, we may drop some parentheses by evaluating $\otimes$ always before $\cdot$. We might also omit the $\cdot$ symbol and write, \eg, $sr$ for $s\cdot r$.}.
\end{itemize}
\end{definition}

\begin{remark}
Observe that, by the definition above, $\C^{(0)}$ and $\C^{(1)}$ alone form an ordinary category, and that a 2-category can be equivalently described as an assignment of a category $\Hom_{\C^{(1)}}(\N,\M)$ to every pair of objects $(\N,\M)$, together with a horizontal composition bifunctor for every triple $(\N,\M,\L)$ and horizontal unit functor for every object $\N$, see \cite{Mac98}, \cite[\Sec 2.12]{EGNO15}. 
\end{remark}

\begin{remark}
Recall that a (strict) 2-category with only one 0-object $\M$ is a strict tensor category, where the tensor multiplication is the horizontal composition bifunctor, the tensor unit is $I_\M$, and every two objects (1-arrows of the 2-category) are composable. This motivates the notation $\otimes$ for the horizontal composition. As for ordinary non-strict tensor categories a coherence theorem holds for 2-categories, namely every \lqq non-strict 2-category" (called in the literature weak 2-category or bicategory) is biequivalent to a 2-category, see, \eg, \cite[\Thm 1.5.15]{Lei04}.   
\end{remark}

The following definition appears in \cite[\Sec 7]{LoRo97}, see also \cite{GLR85}, \cite{Zit07}, \cite{BCLS17arxiv}.

\begin{definition}
A 2-category $\C$ is a \textbf{2-\Cstar-category} if
\begin{itemize}
\item $\Hom_{\C^{(2)}}(X,Y)$ is a complex Banach space for every pair of parallel 1-arrows $(X,Y)$. The vertical composition $(r,s) \mapsto s r$ and the horizontal composition $(r,s) \mapsto s\otimes r$ of 2-arrows, whenever defined, are bilinear.
\item There is an antilinear contravariant involutive *-map: $\Hom_{\C^{(2)}}(X,Y) \rightarrow \Hom_{\C^{(2)}}(Y,X)$ for every pair of parallel 
1-arrows $(X,Y)$, namely $t\mapsto t^*$ is antilinear, fulfills $(s r)^* = r^* s^*$, $t^{**} = t$, and it is in addition positive, namely if $t\in\Hom_{\C^{(2)}}(X,Y)$ then $t^* t$ is a positive element in $\Hom_{\C^{(2)}}(X,X)$, \ie, $t^* t = s^* s$, $s\in\Hom_{\C^{(2)}}(X,X)$, and unital, \ie, $\oneop_X^* = \oneop_X$. On the other hand, the horizontal composition is $^*$-preserving, namely $(s \otimes r)^* = s^* \otimes r^*$.
\item The collection of Banach norms is submultiplicative, namely $\| s r\| \leq \|s\| \|r\|$, and fulfills the \Cstar-identity $\|t^* t\| = \|t\|^2$. In particular, $t^* t = 0$ implies $t=0$, \ie, the *-map is positive. 
\end{itemize}
\end{definition}

An immediate consequence of the above assumptions is that

\begin{lemma}
In a 2-\Cstar-category, $\Hom_{\C^{(2)}}(X,X)$ is a unital \Cstar-algebra for every 1-arrow $X$, and $\Hom_{\C^{(2)}}(I_\N,I_\N)$ is in addition commutative for every object $\N$.
\end{lemma}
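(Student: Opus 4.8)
The plan is to verify the two assertions separately, working directly from the three defining axioms of a 2-\Cstar-category. First I would fix a 1-arrow $X:\N\rightarrow\M$ and establish that $\Hom_{\C^{(2)}}(X,X)$ is a unital \Cstar-algebra. The vertical composition restricted to endomorphisms of $X$ is an associative bilinear multiplication with two-sided unit $\oneop_X$ (by the strict vertical unit law $r\cdot\oneop_X = r = \oneop_X\cdot r$). By assumption $\Hom_{\C^{(2)}}(X,X)$ is already a complex Banach space, the $*$-map is an antilinear, contravariant, involutive, positive unital involution (here domain and codomain coincide, so it is genuinely a $*$-algebra involution), the norm is submultiplicative $\|sr\|\leq\|s\|\|r\|$, and it satisfies the \Cstar-identity $\|t^*t\| = \|t\|^2$. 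These are precisely the axioms of a unital \Cstar-algebra, so for this part there is essentially nothing to prove beyond recording that each abstract axiom specializes, in the endomorphism case, to the corresponding \Cstar-algebra axiom.

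The substantive claim is the commutativity of $\Hom_{\C^{(2)}}(I_\N,I_\N)$ for each object $\N$. Here the key structural fact is that the unit 1-arrow $I_\N$ carries \emph{two} a priori different multiplications on its endomorphism space: vertical composition $\cdot$ and horizontal composition $\otimes$. Because $I_\N\otimes I_\N = I_\N$ (strict unit law for 1-arrows), the horizontal composition of two 2-arrows $r,s:I_\N\Rightarrow I_\N$ is again a 2-arrow $s\otimes r:I_\N\Rightarrow I_\N$, so $\otimes$ restricts to a second bilinear product on the same space. I would then invoke the Eckmann--Hilton argument: the interchange law $(v\otimes u)\cdot(t\otimes s) = (v\cdot t)\otimes(u\cdot s)$ says the two products are mutually distributive, and they share a common unit, since $\oneop_{I_\N}$ is the unit for $\cdot$ and also the unit for $\otimes$ on this space (from $s\otimes\oneop_{I_\N} = s$, $\oneop_{I_\N}\otimes r = r$, the strict horizontal unit law). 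The standard Eckmann--Hilton computation then forces the two products to coincide and to be commutative: for $r,s\in\Hom_{\C^{(2)}}(I_\N,I_\N)$ one writes $r\cdot s = (r\otimes\oneop_{I_\N})\cdot(\oneop_{I_\N}\otimes s) = (r\cdot\oneop_{I_\N})\otimes(\oneop_{I_\N}\cdot s) = r\otimes s$, and symmetrically, yielding commutativity of $\cdot$.

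The only point requiring care is checking that both products really do have $\oneop_{I_\N}$ as a shared two-sided unit before running Eckmann--Hilton. For $\cdot$ this is the vertical unit law; for $\otimes$ one must use the horizontal unit law in the specific form $s\otimes\oneop_{I_\N}=s$ and $\oneop_{I_\N}\otimes r = r$, which the definition supplies for 2-arrows with source/target along units. I expect this bookkeeping, rather than the algebra, to be the main (mild) obstacle: one has to confirm that the horizontal composites stay within $\Hom_{\C^{(2)}}(I_\N,I_\N)$ and that the relevant instances of the strict unit laws apply, all of which follow from $I_\N\otimes I_\N=I_\N$ together with the stated axioms. Once that is in place, the Eckmann--Hilton identities give commutativity immediately, completing the proof.
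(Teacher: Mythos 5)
Your proof is correct: the first part is indeed just a restatement of the axioms, and the commutativity of $\Hom_{\C^{(2)}}(I_\N,I_\N)$ via the Eckmann--Hilton argument (interchange law plus the shared unit $\oneop_{I_\N}$ for both compositions, using $I_\N\otimes I_\N=I_\N$) is exactly the standard argument the paper implicitly relies on when it states the lemma as an immediate consequence without proof.
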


We shall always assume our 2-\Cstar-categories to be closed under finite \lqq direct sums of 1-arrows" and \lqq sub-1-arrows". Namely, for every pair of parallel 1-arrows $X,Y:\N\rightarrow\M$ there is a 1-arrow $Z:\N\rightarrow\M$ and two isometric 2-arrows $v:X\Rightarrow Z$ and $w:Y\Rightarrow Z$, \ie, $v^* v = \oneop_X$ and $w^* w = \oneop_Y$, such that $vv^* + ww^* = \oneop_Z$, and for every 1-arrow $T:\N\rightarrow\M$ and every projection $p:T\Rightarrow T$, \ie, $p = p^*p$, there is a 1-arrow $S:\N\rightarrow\M$ and an isometric 2-arrow $u:S\Rightarrow T$, \ie, $u^*u =\oneop_S$, such that $uu^* = p$. We shall write $Z=X\oplus Y$ and $S \prec T$.

Every 2-\Cstar-category can indeed be completed to a 2-\Cstar-category with finite direct sums and sub-1-arrows, see \cite[\App]{LoRo97}.

\begin{definition}\label{def:simpleconnectedfactorial}
In a 2-\Cstar-category, a 1-arrow $X$ is called \textbf{simple}, or \textbf{irreducible}, if 
$$\Hom_{\C^{(2)}}(X,X) = \CC \oneop_X.$$ 
If $X:\N\rightarrow\M$, notice that the maps $s \mapsto \oneop_X \otimes s$ and $t \mapsto t \otimes \oneop_X$ are unital, not necessarily faithful,
*-homomorphisms respectively from $\Hom_{\C^{(2)}}(I_\N,I_\N)$ and $\Hom_{\C^{(2)}}(I_\M,I_\M)$ both to $\Hom_{\C^{(2)}}(X,X)$, actually to its center $\Z(\Hom_{\C^{(2)}}(X,X))$, \footnote{The maps $s \mapsto \oneop_X \otimes s$ and $t \mapsto t \otimes \oneop_X$ are what Mac Lane \cite[\p 275]{Mac98} calls respectively right and left \lqq whiskering" with $X$, but on arbitrary 2-arrows $s$ and $t$ when suitably composable.}. 

We say that $X$ is \textbf{connected} if 
$$(\oneop_X \otimes \Hom_{\C^{(2)}}(I_\N,I_\N)) \,\cap\, (\Hom_{\C^{(2)}}(I_\M,I_\M) \otimes \oneop_X) = \CC\oneop_X.$$
We shall denote $\Z(\N):=\Hom_{\C^{(2)}}(I_\N,I_\N)$ and $\Z(\M):=\Hom_{\C^{(2)}}(I_\M,I_\M)$, and call them respectively the \lqq center" of $\N$ and $\M$. We call $\Hom_{\C^{(2)}}(X,X)$ the \lqq relative commutant" of $X$, and $\Z^X_l(\N) := \oneop_X \otimes \Hom_{\C^{(2)}}(I_\N,I_\N)$, $\Z^X_r(\M) := \Hom_{\C^{(2)}}(I_\M,I_\M) \otimes \oneop_X$ respectively the \lqq left center" and the \lqq right center" of $X$. Notice that $\Z(\N) = \Z^{I_\N}_{l/r}(\N) = \Hom_{\C^{(2)}}(I_\N,I_\N)$ by definition.

We say that $X$ is \textbf{factorial} if its left and right centers are both trivial, \ie, if $\Z^X_l(\N) = \CC\oneop_X$ and $\Z^X_r(\M) = \CC\oneop_X$.
\end{definition}

\begin{lemma}
In a 2-\Cstar-category $\C$, a simple 1-arrow, or more generally a 1-arrow whose relative commutant has trivial center, is necessarily connected. If $\C$ has simple unit 1-arrows (simple tensor units), \ie, if every object has trivial center, then every 1-arrow is factorial and a fortiori connected.
\end{lemma}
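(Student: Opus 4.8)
The plan is to exploit the observation already recorded in Definition \ref{def:simpleconnectedfactorial}, namely that the two whiskering maps $s \mapsto \oneop_X \otimes s$ and $t \mapsto t \otimes \oneop_X$ land not merely inside the relative commutant $\Hom_{\C^{(2)}}(X,X)$ but inside its center $\Z(\Hom_{\C^{(2)}}(X,X))$. I would first make this containment explicit. To see that $\oneop_X \otimes s$ (with $s\in\Z(\N)$) commutes with an arbitrary $a\in\Hom_{\C^{(2)}}(X,X)$, I would write $a = a\otimes\oneop_{I_\N}$ using the horizontal unit law and then apply the interchange law to the two vertical composites $(\oneop_X \otimes s)\cdot(a\otimes\oneop_{I_\N})$ and $(a\otimes\oneop_{I_\N})\cdot(\oneop_X \otimes s)$; both collapse, via the vertical unit law, to $a\otimes s$, whence the two whiskerings commute. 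The symmetric computation with $\oneop_{I_\M}\otimes a = a$ handles $t\otimes\oneop_X$. This is the one genuinely $2$-categorical step, and it establishes that both the left center $\Z^X_l(\N)=\oneop_X\otimes\Z(\N)$ and the right center $\Z^X_r(\M)=\Z(\M)\otimes\oneop_X$ are $*$-subalgebras of $\Z(\Hom_{\C^{(2)}}(X,X))$.

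Given this, the first assertion would follow immediately. If the relative commutant of $X$ has trivial center, \ie $\Z(\Hom_{\C^{(2)}}(X,X)) = \CC\oneop_X$, then both $\Z^X_l(\N)$ and $\Z^X_r(\M)$ are forced into $\CC\oneop_X$; since each contains $\oneop_X$ by unitality of the whiskering maps, their intersection is exactly $\CC\oneop_X$, which is precisely the defining condition for connectedness. A simple $1$-arrow is just the special case $\Hom_{\C^{(2)}}(X,X)=\CC\oneop_X$, whose center is trivially $\CC\oneop_X$, so it is subsumed by the same argument.

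For the second assertion I would fix $X:\N\rightarrow\M$ and use the standing hypothesis that every object has trivial center, \ie $\Z(\N)=\CC\oneop_{I_\N}$ for all $\N$. Then $\Z^X_l(\N)=\oneop_X\otimes\Z(\N)=\CC\,(\oneop_X\otimes\oneop_{I_\N})$, and by the unit law for horizontal composition of $2$-arrows $\oneop_X\otimes\oneop_{I_\N}=\oneop_{X\otimes I_\N}=\oneop_X$ (since $X\otimes I_\N=X$), so $\Z^X_l(\N)=\CC\oneop_X$. The identical computation with $\oneop_{I_\M}\otimes\oneop_X=\oneop_{I_\M\otimes X}=\oneop_X$ gives $\Z^X_r(\M)=\CC\oneop_X$, so $X$ is factorial; and a factorial $1$-arrow satisfies $\Z^X_l(\N)\cap\Z^X_r(\M)=\CC\oneop_X$, hence is a fortiori connected.

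The only step that requires genuine care is the first one, \ie verifying that the images of the two whiskerings really commute with the whole relative commutant, since this rests on a careful bookkeeping of the interchange law together with the horizontal and vertical unit $2$-arrows; everything after that is a direct reading-off from the definitions of connected and factorial.
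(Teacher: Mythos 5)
Your proof is correct and follows exactly the route the paper intends: the paper states this lemma without proof, treating it as immediate from the observation (recorded in Definition \ref{def:simpleconnectedfactorial}) that the whiskering maps land in $\Z(\Hom_{\C^{(2)}}(X,X))$, and your interchange-law computation $(\oneop_X\otimes s)\cdot(a\otimes\oneop_{I_\N}) = a\otimes s = (a\otimes\oneop_{I_\N})\cdot(\oneop_X\otimes s)$ is precisely the verification being taken for granted there. The remaining steps (unitality of the whiskerings forcing both centers to equal $\CC\oneop_X$, and the trivial-unit case giving factoriality) are read off from the definitions just as you say.
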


The maps defined by left/right horizontal composition (\lqq tensoring") with $\oneop_X$, namely $s\in\Z(\N) \mapsto \oneop_X \otimes s$ and $t\in\Z(\M) \mapsto t \otimes \oneop_X$, are compatible with the operations in $\C$ in the following sense. Let $X,Y,Z:\N\rightarrow\M$ be parallel 1-arrows in $\C$.

\begin{lemma}\label{lem:whiskerisrep}
If $w: Y \Rightarrow X$ is 2-arrow in $\C$, then $w \cdot \oneop_Y \otimes s = \oneop_X \otimes s \cdot w$ for every $s\in\Z(\N)$ and $w \cdot t \otimes \oneop_Y = t \otimes \oneop_X \cdot w$ for every $t\in\Z(\M)$ (intertwining relations between representations of the respective center). If $w$ is an isometry, hence $Y\prec X$, then $\oneop_Y \otimes (\cdot) = w^* \cdot \oneop_X \otimes (\cdot) \cdot w$ and similarly on the right (subrepresentations). If $Z = X \oplus Y$ by means of isometries $w,v$, then $\oneop_Z \otimes (\cdot) = w \cdot \oneop_X \otimes (\cdot) \cdot w^* + v \cdot \oneop_Y \otimes (\cdot) \cdot v^*$ and similarly on the right (direct sum of representations).
\end{lemma}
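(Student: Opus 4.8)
The plan is to derive all three statements from the interchange law together with the horizontal and vertical unit laws; no analysis is involved, only formal manipulation of the 2-categorical axioms. Recall from Definition \ref{def:simpleconnectedfactorial} that $s \mapsto \oneop_X \otimes s$ and $t \mapsto t \otimes \oneop_X$ are the (already established) representations of $\Z(\N)$ and $\Z(\M)$ on $\Hom_{\C^{(2)}}(X,X)$; what remains is to check the stated compatibility relations.

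First I would prove the two intertwining relations. Fix $w : Y \Rightarrow X : \N \rightarrow \M$ and $s \in \Z(\N)$, i.e. $s : I_\N \Rightarrow I_\N$. The idea is to compute the single horizontal composite $w \otimes s : Y \Rightarrow X$ in two ways. Writing $w = w \cdot \oneop_Y = \oneop_X \cdot w$ and $s = \oneop_{I_\N} \cdot s = s \cdot \oneop_{I_\N}$, the interchange law $(a \otimes b) \cdot (c \otimes e) = (a \cdot c) \otimes (b \cdot e)$ gives, on one hand,
$$w \otimes s = (w \otimes \oneop_{I_\N}) \cdot (\oneop_Y \otimes s) = w \cdot (\oneop_Y \otimes s),$$
and on the other hand
$$w \otimes s = (\oneop_X \otimes s) \cdot (w \otimes \oneop_{I_\N}) = (\oneop_X \otimes s) \cdot w,$$
where both lines use the horizontal unit law $w \otimes \oneop_{I_\N} = w$. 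Equating the two right-hand sides yields $w \cdot (\oneop_Y \otimes s) = (\oneop_X \otimes s) \cdot w$. The right-center relation $w \cdot (t \otimes \oneop_Y) = (t \otimes \oneop_X) \cdot w$ for $t \in \Z(\M)$ is obtained by the mirror-image computation, decomposing $t \otimes w$ and invoking $\oneop_{I_\M} \otimes w = w$ in place of $w \otimes \oneop_{I_\N} = w$.

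The remaining two statements then follow immediately by compressing and summing. For the subrepresentation claim, with $w$ an isometry and $w^* w = \oneop_Y$, I would multiply the identity $(\oneop_X \otimes s) \cdot w = w \cdot (\oneop_Y \otimes s)$ on the left by $w^*$ and use $w^* w = \oneop_Y$ to get $w^* \cdot (\oneop_X \otimes s) \cdot w = \oneop_Y \otimes s$, and symmetrically on the right. For the direct-sum claim, with $Z = X \oplus Y$ via isometries $w : X \Rightarrow Z$, $v : Y \Rightarrow Z$ satisfying $w w^* + v v^* = \oneop_Z$, I would apply the first statement to each of $w$ and $v$, obtaining $w \cdot (\oneop_X \otimes s) \cdot w^* = (\oneop_Z \otimes s) \cdot w w^*$ and $v \cdot (\oneop_Y \otimes s) \cdot v^* = (\oneop_Z \otimes s) \cdot v v^*$; adding these and using $w w^* + v v^* = \oneop_Z$ gives exactly $\oneop_Z \otimes s$, and likewise on the right.

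There is no genuine obstacle here, since the content is entirely formal; the only thing demanding care is the bookkeeping of sources and targets, so that each horizontal and vertical composite is in fact defined, together with applying the unit laws ($w \otimes \oneop_{I_\N} = w$, $\oneop_{I_\M} \otimes w = w$, and $\oneop_X \cdot w = w = w \cdot \oneop_Y$) in the correct slots. Once the two decompositions of $w \otimes s$ are set up, everything collapses by a single use of the interchange law.
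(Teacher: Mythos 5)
Your proof is correct and follows exactly the route the paper takes: the intertwining relations come from the unit laws $w = w\otimes\oneop_{I_\N} = \oneop_{I_\M}\otimes w$, $w = w\cdot\oneop_Y = \oneop_X\cdot w$ combined with the interchange law, and the subrepresentation and direct-sum claims follow by compressing with the isometries. The paper merely states this in one line ("immediate consequence \ldots the rest is straightforward"); your write-up supplies the same argument in full detail.
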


\begin{proof}
The first statement (which is anyway trivial if written using the graphical calculus for 2-\Cstar-categories) is an immediate consequence of $w=w\otimes\oneop_{I_\N} = \oneop_{I_\M} \otimes w$, of $w = w \cdot \oneop_Y = \oneop_X \cdot w$, and of the interchange law. The rest is straightforward.
\end{proof}

The following is the main assumption on a 2-\Cstar-category, that will allow us to define a matrix-valued dimension function on the category, or more precisely on its 1-arrows.

\begin{definition}
A 2-\Cstar-category $\C$ is said to be \textbf{rigid}, or to have \textbf{conjugate} (also called \textbf{dual}) 1-arrows, if for every 1-arrow $X:\N\rightarrow\M$ there is a 1-arrow denoted by $\Xbar:\M\rightarrow\N$ and a pair of 2-arrows denoted by $r_X\in\Hom_{\C^{(2)}}(I_\N,\Xbar\otimes X)$, $\rbar_X\in\Hom_{\C^{(2)}}(I_\M,X\otimes \Xbar)$ fulfilling the \textbf{conjugate equations}, namely
$$\rbar_X^*\otimes \oneop_X \cdot \oneop_X \otimes r_X = \oneop_X, \quad 
r_X^*\otimes \oneop_\Xbar \cdot \oneop_\Xbar \otimes \rbar_X = \oneop_\Xbar.$$
\end{definition}

\begin{remark}
In the \Cstar setting, the conjugate equations automatically entail two more equations by taking the *-map. Moreover, if $\Xbar$ is a conjugate of $X$, then $X$ is easily seen to be a conjugate of $\Xbar$ by setting $r_\Xbar := \rbar_X$ and $\rbar_\Xbar := r_X$. In the language of Mac Lane \cite[\Ch XII, \Sec 4]{Mac98}, $X$ and $\Xbar$ are \emph{adjoint} 1-arrows in $\C$, with $X$ both a left/right adjoint to the right/left adjoint $\Xbar$. 
\end{remark}

\begin{remark}
There are several ways to realize a rigid tensor \Cstar-category by means of endomorphisms or bimodules of von Neumann factors, see \cite{GiYu17arxiv} and references therein, provided the tensor unit is simple. We are not aware of similar statements in the case of non-simple units and, more generally, of rigid 2-\Cstar-categories. 
\end{remark}

\begin{lemma}\label{lem:uniqueconjandsol}
For any two conjugates $\Xbar$ and $\Xbar '$ of the same 1-arrow $X$, there is an invertible 2-arrow in $\Hom_{\C^{(2)}}(\Xbar,\Xbar ')$. 

Moreover, if $X$ and $\Xbar$ are conjugate 1-arrows, any two solutions $r_X, \rbar_X$ and $s_X, {\overline s}_X$ of the conjugate equations are obtained from one another by $s_X = w \otimes \oneop_X \cdot r_X$ and ${\overline s}_X = \oneop_X \otimes v \cdot \rbar_X$, where $v$ and $w$ are invertible 2-arrows in $\Hom_{\C^{(2)}}(\Xbar,\Xbar)$ fulfilling $v^{-1} = w^*$. A similar statement involving invertible 2-arrows in $\Hom_{\C^{(2)}}(X,X)$ holds.
\end{lemma}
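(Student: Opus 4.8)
The plan is to prove the two assertions separately: the first (uniqueness of a conjugate up to an invertible 2-arrow) is a \lqq rotation'' argument, while the second (parametrisation of the solutions) rests on a uniqueness lemma for the \lqq mate''. For the first assertion, fix solutions $r_X,\rbar_X$ for $\Xbar$ and $r_X',\rbar_X'$ for $\Xbar'$, and define the candidate isomorphism by bending $r_X'$ against $\rbar_X$,
\[ T := \oneop_{\Xbar'}\otimes\rbar_X^*\,\cdot\,r_X'\otimes\oneop_{\Xbar}\ \in\ \Hom_{\C^{(2)}}(\Xbar,\Xbar'), \]
together with its reverse $T' := \oneop_{\Xbar}\otimes(\rbar_X')^*\,\cdot\,r_X\otimes\oneop_{\Xbar'}$; the source/target checks use only the unit laws $I_\N\otimes\Xbar=\Xbar$ and $\Xbar'\otimes I_\M=\Xbar'$. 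I would then compute $T'\cdot T=\oneop_{\Xbar}$ and $T\cdot T'=\oneop_{\Xbar'}$ purely by the interchange law, regrouping the four horizontal factors so that in each case one pair $X\otimes\Xbar$ (resp.\ $X\otimes\Xbar'$) is contracted by a conjugate equation of one solution while the complementary zig-zag is straightened by a conjugate equation of the other. This exhibits $T$ as invertible.

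For the second assertion I would first isolate an auxiliary fact: \emph{if $r,\rbar$ and $r,\rbar'$ both solve the conjugate equations with the same first leg $r$, then $\rbar=\rbar'$}. Consider the maps $F(\xi):=r^*\otimes\oneop_{\Xbar}\,\cdot\,\oneop_{\Xbar}\otimes\xi$ on $\Hom_{\C^{(2)}}(I_\M,X\otimes\Xbar)$ and $G(a):=\oneop_X\otimes a\,\cdot\,\rbar$ on $\Hom_{\C^{(2)}}(\Xbar,\Xbar)$. A single interchange-law computation, in which $\rbar\otimes\xi$ is straightened using the adjoint of the first conjugate equation, shows $G\circ F=\id$. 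Since the second conjugate equation reads exactly $F(\rbar)=\oneop_{\Xbar}=F(\rbar')$, applying $G$ gives $\rbar=G(\oneop_{\Xbar})=\rbar'$.

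Now, given solutions $r_X,\rbar_X$ and $s_X,\overline{s}_X$, I would set $w:=\oneop_{\Xbar}\otimes\rbar_X^*\,\cdot\,s_X\otimes\oneop_{\Xbar}\in\Hom_{\C^{(2)}}(\Xbar,\Xbar)$ and check $s_X=w\otimes\oneop_X\,\cdot\,r_X$: expanding $s_X\otimes\oneop_{\Xbar}\otimes\oneop_X$ applied to $r_X$ as $s_X\otimes r_X$ and contracting the inner $X\otimes\Xbar$ by the first conjugate equation of $r_X,\rbar_X$ leaves $s_X$. Running the same construction with the roles of $r,\rbar$ and $s,\overline{s}$ interchanged produces $w'$ with $r_X=w'\otimes\oneop_X\,\cdot\,s_X$; composing the two relations and using that $a\mapsto a\otimes\oneop_X\,\cdot\,r_X$ is injective (its left inverse is $\xi\mapsto\oneop_{\Xbar}\otimes\rbar_X^*\,\cdot\,\xi\otimes\oneop_{\Xbar}$, built from the adjoint of the second conjugate equation) forces $w'w=\oneop_{\Xbar}=ww'$, so $w$ is invertible. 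To pin down the right leg, I would verify by direct substitution that for \emph{any} invertible $w$ the pair $w\otimes\oneop_X\,\cdot\,r_X$ and $\oneop_X\otimes(w^*)^{-1}\,\cdot\,\rbar_X$ again solves both conjugate equations, the coefficient $(w^*)^{-1}$ being exactly what cancels the $w^*$ produced when one takes adjoints inside the first equation. Thus $s_X,\ \oneop_X\otimes(w^*)^{-1}\cdot\rbar_X$ and $s_X,\overline{s}_X$ are two solutions with the same first leg $s_X$, so mate-uniqueness yields $\overline{s}_X=\oneop_X\otimes(w^*)^{-1}\cdot\rbar_X$; hence $v:=(w^*)^{-1}$ satisfies the claim with $v^{-1}=w^*$. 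The statement with invertibles in $\Hom_{\C^{(2)}}(X,X)$ follows by the same argument after exchanging $X\leftrightarrow\Xbar$ and $r_X\leftrightarrow\rbar_X$.

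The main obstacle is bookkeeping: every step is an instance of the interchange law, and the whole difficulty lies in grouping the horizontal tensor factors so that precisely one conjugate equation fires at each contraction (this is transparent in the graphical calculus for 2-\Cstar-categories but tedious in symbols). Conceptually, the only non-formal point is that the coefficient relating the two right legs is \emph{forced} to be $(w^*)^{-1}$ rather than an independent invertible; this is exactly what the combination of the verification step with mate-uniqueness delivers.
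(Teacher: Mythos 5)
Your proof is correct. The first half is essentially the paper's argument: your $T$ is exactly the paper's $w=\oneop_{\Xbar'}\otimes\rbar_X^*\cdot s_X\otimes\oneop_{\Xbar}$, and your $T'$ is the adjoint $v^*$ of the paper's $v=r_X^*\otimes\oneop_{\Xbar'}\cdot\oneop_{\Xbar}\otimes\overline{s}_X$, so your identities $T'\cdot T=\oneop_{\Xbar}$ and $T\cdot T'=\oneop_{\Xbar'}$ are the adjoints of the paper's $w^*\cdot v=\oneop_{\Xbar}$ and $v\cdot w^*=\oneop_{\Xbar'}$. For the second half the routes genuinely diverge: the paper observes that, in the special case $\Xbar=\Xbar'$, the \emph{same} pair $(v,w)$ already satisfies $s_X=w\otimes\oneop_X\cdot r_X$ and $\overline{s}_X=\oneop_X\otimes v\cdot\rbar_X$ (each checked by one zig-zag contraction), with $v^{-1}=w^*$ inherited from the first half. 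You verify only the formula for $s_X$ directly and then recover the formula for $\overline{s}_X$ indirectly, via a mate-uniqueness lemma (your $G\circ F=\id$) combined with the check that $(w\otimes\oneop_X\cdot r_X,\ \oneop_X\otimes(w^*)^{-1}\cdot\rbar_X)$ is again a solution. This is longer, but it isolates a reusable principle --- the right leg of a solution is determined by the left leg --- which the paper never states explicitly; the paper's route is shorter because it produces $v$ in closed form rather than as $(w^*)^{-1}$, and in particular needs no separate invertibility argument for $w$.

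One small point in your step establishing that $w$ is invertible: composing $s_X=w\otimes\oneop_X\cdot r_X$ with $r_X=w'\otimes\oneop_X\cdot s_X$ in the two orders gives $r_X=(w'\cdot w)\otimes\oneop_X\cdot r_X$ and $s_X=(w\cdot w')\otimes\oneop_X\cdot s_X$. The injectivity you cite (left inverse built from $\rbar_X$ and the second conjugate equation of $r_X,\rbar_X$) only handles the first relation, yielding $w'\cdot w=\oneop_{\Xbar}$; for $w\cdot w'=\oneop_{\Xbar}$ you need the analogous injectivity of $a\mapsto a\otimes\oneop_X\cdot s_X$, with left inverse built from $\overline{s}_X$. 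Left invertibility alone does not imply invertibility in a unital \Cstar-algebra, so both directions are needed; the symmetric statement is available by the role exchange you already invoke, so this is a one-line fix rather than a gap in the method.
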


\begin{proof}
Let $r_X, \rbar_X$ and $s_X, {\overline s}_X$ be solutions of the conjugate equations for $X$, $\Xbar$ and $X$, $\Xbar '$ respectively. Then $v := r_X^* \otimes \oneop_{\Xbar '} \cdot \oneop_{\Xbar} \otimes {\overline s}_X$ and $w := \oneop_{\Xbar'} \otimes {\rbar_X^*} \cdot s_X \otimes \oneop_{\Xbar}$ are both in $\Hom_{\C^{(2)}}(\Xbar,\Xbar ')$ and they fulfill $w^*\cdot v = \oneop_{\Xbar}$, $v\cdot w^* = \oneop_{\Xbar '}$, thus the first statement is proven. The same 2-arrows $v$ and $w$, in the special case $\Xbar = \Xbar '$, fulfill the desired equalities in the second statement.
\end{proof}

\begin{remark}
For every two 1-arrows $X$ and $Y$, if there is an invertible 2-arrow $t:X\Rightarrow Y$ then there is also a unitary 2-arrow $u:X\Rightarrow Y$ defined by polar decomposition. The latter makes sense because we have a well defined continuous functional calculus on $\Hom_{\C^{(2)}}(X,X)$.
In particular, conjugates are uniquely determined up to unitary equivalence.
\end{remark}

When passing from $X$ to $\Xbar$ the roles of $\Z(\N)$ and $\Z(\M)$ are interchanged. By Frobenius reciprocity \cite[\Lem 2.1]{LoRo97}, the same is true for the left and right centers of $X$ and $\Xbar$.

\begin{lemma}\label{lem:lrcenters}
If $X$ and $\Xbar$ are conjugate 1-arrows in a 2-\Cstar-category, then $\Z^X_l(\N) \cong \Z^\Xbar_r(\N)$ and $\Z^X_r(\M) \cong \Z^\Xbar_l(\M)$ as unital commutative \Cstar-algebras and via the same isomorphism mapping $\Hom_{\C^{(2)}}(X,X)$ onto $\Hom_{\C^{(2)}}(\Xbar,\Xbar)$ (with the opposite multiplication). In particular, $\Z^X_l(\N) \cap \Z^X_r(\M) \cong \Z^\Xbar_l(\M) \cap \Z^\Xbar_r(\N)$.
\end{lemma}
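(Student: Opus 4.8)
The plan is to produce a single explicit $*$-preserving bijection $\Phi \colon \Hom_{\C^{(2)}}(X,X) \to \Hom_{\C^{(2)}}(\Xbar,\Xbar)$ built from a solution $r_X, \rbar_X$ of the conjugate equations, to identify it with the Frobenius reciprocity isomorphism of \cite[\Lem 2.1]{LoRo97} specialised to $Y=X$ (hence anti-multiplicative), and then to evaluate it on the generators of the left and right centers. Concretely I set
\[
\Phi(a) := (r_X^* \otimes \oneop_{\Xbar}) \cdot (\oneop_{\Xbar} \otimes a \otimes \oneop_{\Xbar}) \cdot (\oneop_{\Xbar} \otimes \rbar_X), \qquad a \in \Hom_{\C^{(2)}}(X,X),
\]
which lands in $\Hom_{\C^{(2)}}(\Xbar,\Xbar)$ by a source/target count. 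The second conjugate equation gives at once $\Phi(\oneop_X) = \oneop_{\Xbar}$; the standard zig-zag argument supplies the two-sided inverse $b \mapsto (\rbar_X^* \otimes \oneop_X)\cdot(\oneop_X \otimes b \otimes \oneop_X)\cdot(\oneop_X \otimes r_X)$ together with $*$-preservation and anti-multiplicativity, so $\Phi$ is an isomorphism onto $\Hom_{\C^{(2)}}(\Xbar,\Xbar)$ with the opposite product. For the statement on centers the opposite product is immaterial, since $\Z^X_l(\N)$ and $\Z^X_r(\M)$ are commutative.

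The key step is to slide a central element through one leg of $\Phi$. Applying Lemma \ref{lem:whiskerisrep} to $r_X \in \Hom_{\C^{(2)}}(I_\N, \Xbar \otimes X)$ (whose two $1$-arrows both have source and target $\N$) gives, for every $s \in \Z(\N)$, the relation $r_X \cdot s = (\oneop_{\Xbar} \otimes (\oneop_X \otimes s)) \cdot r_X$, whose adjoint reads $r_X^* \cdot (\oneop_{\Xbar} \otimes (\oneop_X \otimes s)) = s \cdot r_X^*$. Substituting $a = \oneop_X \otimes s$ into $\Phi$, using the interchange law to split off the outer $\oneop_{\Xbar}$, and then inserting this relation, I obtain
\[
(r_X^* \otimes \oneop_{\Xbar}) \cdot (\oneop_{\Xbar} \otimes (\oneop_X \otimes s) \otimes \oneop_{\Xbar}) = (s \otimes \oneop_{\Xbar}) \cdot (r_X^* \otimes \oneop_{\Xbar}),
\]
whence $\Phi(\oneop_X \otimes s) = (s \otimes \oneop_{\Xbar}) \cdot \Phi(\oneop_X) = s \otimes \oneop_{\Xbar} \in \Z^\Xbar_r(\N)$. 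Symmetrically, Lemma \ref{lem:whiskerisrep} applied to $\rbar_X \in \Hom_{\C^{(2)}}(I_\M, X \otimes \Xbar)$ gives $((t \otimes \oneop_X) \otimes \oneop_{\Xbar}) \cdot \rbar_X = \rbar_X \cdot t$ for $t \in \Z(\M)$, and the analogous manipulation on the lower leg of $\Phi$ yields $\Phi(t \otimes \oneop_X) = \Phi(\oneop_X) \cdot (\oneop_{\Xbar} \otimes t) = \oneop_{\Xbar} \otimes t \in \Z^\Xbar_l(\M)$.

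Since $\Phi$ is a bijection, these two computations identify $\Z^X_l(\N) = \oneop_X \otimes \Z(\N)$ with $\Z^\Xbar_r(\N) = \Z(\N) \otimes \oneop_{\Xbar}$ and $\Z^X_r(\M) = \Z(\M) \otimes \oneop_X$ with $\Z^\Xbar_l(\M) = \oneop_{\Xbar} \otimes \Z(\M)$, via one and the same restriction of $\Phi$ to the relative commutant $\Hom_{\C^{(2)}}(X,X)$, which is the asserted isomorphism of commutative \Cstar-algebras. The final \emph{in particular} is then immediate, as $\Phi$ carries $\Z^X_l(\N) \cap \Z^X_r(\M)$ onto $\Z^\Xbar_r(\N) \cap \Z^\Xbar_l(\M)$. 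I expect the only real obstacle to be notational bookkeeping: keeping the horizontal and vertical composition orders, the whiskerings, and the source/target matchings straight through the interchange-law manipulations. Everything is algebraically forced once $\Phi(\oneop_X) = \oneop_{\Xbar}$ and the two intertwining relations of Lemma \ref{lem:whiskerisrep} are in hand, and in the graphical calculus the whole computation is just the observation that the cup/cap slides a scalar from the left of $X$ to the right of $\Xbar$.
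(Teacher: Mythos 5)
Your construction is exactly the route the paper has in mind: the lemma is stated without a written proof, justified only by the pointer to Frobenius reciprocity \cite[\Lem 2.1]{LoRo97}, and your map $\Phi$ is precisely the resulting anti-isomorphism $\Hom_{\C^{(2)}}(X,X)\to\Hom_{\C^{(2)}}(\Xbar,\Xbar)$ (the linear version of the \lqq bullet" map discussed later in Section \ref{sec:2-cstar-cats}); the two sliding computations via Lemma \ref{lem:whiskerisrep} are correct and do establish $\Phi(\oneop_X\otimes s)=s\otimes\oneop_\Xbar$ and $\Phi(t\otimes\oneop_X)=\oneop_\Xbar\otimes t$, which is all the lemma needs.

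One intermediate claim is overstated: for an \emph{arbitrary} solution $r_X,\rbar_X$ the map $\Phi$ is \emph{not} $*$-preserving on all of $\Hom_{\C^{(2)}}(X,X)$. The identity $\Phi(a^*)=\Phi(a)^*$ is exactly equation (\ref{eq:bullet}), which the paper shows is equivalent to the trace property of the left/right inverses and holds for standard solutions (Proposition \ref{prop:trace}), not for generic ones (replace a standard solution by its deformation under a non-unitary invertible $w$ as in Lemma \ref{lem:uniqueconjandsol} and the property fails). This does not damage your proof, because on the commutative subalgebras in question $*$-preservation follows directly from your explicit formulas ($\Phi(\oneop_X\otimes s^*)=s^*\otimes\oneop_\Xbar=(s\otimes\oneop_\Xbar)^*$, and likewise on the right), so the restrictions of $\Phi$ are genuine unital $*$-isomorphisms of commutative \Cstar-algebras and the \lqq in particular" clause follows. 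You should simply delete the global $*$-preservation claim, or restrict it to the centers where it is actually verified.
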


We give the following

\begin{definition}
A 2-\Cstar-category $\C$ is said to have \textbf{finite-dimensional centers}, if $\Z(\N)=\Hom_{\C^{(2)}}(I_\N,I_\N)$ is finite-dimensional for every object $\N$ in $\C$, hence in particular $\Z(\N)\cong\CC^N$ as vector spaces, for some $N\in\NN$.
\end{definition}

Throughout this paper, we deal with \emph{rigid} 2-\Cstar-categories in order to have a finite notion of dimension (to be defined) for every 1-arrow. In addition, we shall assume \emph{finite-dimensionality} of the centers (in the sense described above), in order to translate the results of the previous sections and to keep our arguments (mainly decomposition theory and Perron-Frobenius theory) at a finite \Cstar-algebraic/categorical level. Under these assumptions the following fundamental structural result (which entails the \textbf{semisimplicity} of $\C$) holds. It generalizes \cite[\Lem 3.2]{LoRo97} from rigid tensor \Cstar-categories with simple tensor unit to rigid 2-\Cstar-categories with finite-dimensional centers. 

\begin{proposition}\label{prop:semisimp}
In a rigid 2-\Cstar-category $\C$ with finite-dimensional centers, $\Hom_{\C^{(2)}}(X,X)$ is a finite-dimensional \Cstar-algebra 
and $\Hom_{\C^{(2)}}(X,Y)$ is a finite-dimensional vector space for every $X,Y:\N\rightarrow\M$ 1-arrows in $\C$. 
If, in addition, $\C$ is closed under sub-1-arrows, then every 1-arrow is a finite direct sum of simple 1-arrows, \ie, $\C$ is semisimple.

More generally, let $X:\N\rightarrow\M$ be a 1-arrow admitting a conjugate 1-arrow $\Xbar:\M\rightarrow\N$ in a 2-\Cstar-category $\C$, then the following conditions are equivalent
\begin{itemize}
\item $\Hom_{\C^{(2)}}(X,X)$ is finite-dimensional.
\item The left center $\Z^X_l(\N) = \oneop_X \otimes \Hom_{\C^{(2)}}(I_\N,I_\N)$ is finite-dimensional.
\item The right center $\Z^X_r(\M) = \Hom_{\C^{(2)}}(I_\M,I_\M) \otimes \oneop_X$ is finite-dimensional.
\end{itemize}
\end{proposition}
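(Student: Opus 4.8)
The plan is to deduce everything from the final three-fold equivalence, so I first record how the two global assertions follow from it. Under the standing finite-dimensional-centers hypothesis (and full rigidity, so that every 1-arrow has a conjugate), the algebra $\Z^X_l(\N)=\oneop_X\otimes\Z(\N)$ is a $\ast$-quotient of the finite-dimensional commutative algebra $\Z(\N)$, hence finite-dimensional; granting the equivalence, $\Hom_{\C^{(2)}}(X,X)$ is then a finite-dimensional $C^*$-algebra. For parallel $X,Y:\N\rightarrow\M$ I would pass to a direct sum $Z=X\oplus Y$ (available by our standing closure assumptions) with isometries $v:X\Rightarrow Z$, $w:Y\Rightarrow Z$, and identify $\Hom_{\C^{(2)}}(X,Y)=w^*\,\Hom_{\C^{(2)}}(Z,Z)\,v$ as a corner, so that finite-dimensionality of $\Hom_{\C^{(2)}}(Z,Z)$ forces that of $\Hom_{\C^{(2)}}(X,Y)$. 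Finally, if $\C$ is closed under sub-1-arrows, I would write the finite-dimensional $C^*$-algebra $\Hom_{\C^{(2)}}(X,X)\cong\bigoplus_i M_{n_i}(\CC)$, pick a maximal orthogonal family of minimal projections (finitely many), let each cut out a simple sub-1-arrow of $X$, and note that they sum to $\oneop_X$; this exhibits $X$ as a finite direct sum of simple 1-arrows, i.e.\ semisimplicity.

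It remains to prove the equivalence, for which I fix a conjugate $\Xbar$ and a solution $r_X\in\Hom_{\C^{(2)}}(I_\N,\Xbar\otimes X)$, $\rbar_X\in\Hom_{\C^{(2)}}(I_\M,X\otimes\Xbar)$ of the conjugate equations. The implications $(1)\Rightarrow(2)$ and $(1)\Rightarrow(3)$ are immediate: by Definition \ref{def:simpleconnectedfactorial} both $\Z^X_l(\N)$ and $\Z^X_r(\M)$ are $\ast$-subalgebras of $\Hom_{\C^{(2)}}(X,X)$ (indeed of its center), and a $\ast$-subalgebra of a finite-dimensional $C^*$-algebra is finite-dimensional. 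For the converse I only need $(2)\Rightarrow(1)$: applying it to $\Xbar$ (which has $X$ as conjugate) and invoking Lemma \ref{lem:lrcenters}, which gives $\Z^X_r(\M)\cong\Z^{\Xbar}_l(\M)$ together with the $($anti-$)$isomorphism $\Hom_{\C^{(2)}}(X,X)\cong\Hom_{\C^{(2)}}(\Xbar,\Xbar)$, then yields $(3)\Rightarrow(1)$ as well.

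For the crux $(2)\Rightarrow(1)$ the strategy is to realize $\Hom_{\C^{(2)}}(X,X)$ as a \emph{finitely generated} module over $\Z^X_l(\N)$, so that finite-dimensionality of the latter forces that of the former. Concretely I would build from $(r_X,\rbar_X)$ the standard left inverse $\phi_X(t):=r_X^*\,(\oneop_\Xbar\otimes t)\,r_X\in\Z(\N)$ and, after normalizing the solution and composing with the whiskering $s\mapsto\oneop_X\otimes s$, a faithful conditional expectation $E_l:\Hom_{\C^{(2)}}(X,X)\rightarrow\Z^X_l(\N)$; faithfulness is the statement $\phi_X(t^*t)=0\Rightarrow t=0$, which follows because $\phi_X(t^*t)=b^*b$ with $b:=(\oneop_\Xbar\otimes t)\,r_X$ and the Frobenius reciprocity map $t\mapsto b$ is injective \cite[\Lem 2.1]{LoRo97}. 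The conjugate equation $\rbar_X^*\otimes\oneop_X\cdot\oneop_X\otimes r_X=\oneop_X$ is a resolution of the identity playing the role of the Pimsner--Popa relation: combined with the finite-dimensionality of $\Z^X_l(\N)$ it should furnish a \emph{finite} quasi-basis $\{u_1,\dots,u_k\}\subset\Hom_{\C^{(2)}}(X,X)$ with $x=\sum_j u_j\,E_l(u_j^*x)$ for every $x$. Since each $E_l(u_j^*x)$ lies in $\Z^X_l(\N)$, the $u_j$ generate $\Hom_{\C^{(2)}}(X,X)$ as a right $\Z^X_l(\N)$-module, whence $\dim_\CC\Hom_{\C^{(2)}}(X,X)\le k\,\dim_\CC\Z^X_l(\N)<\infty$.

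The main obstacle is precisely this last step: extracting a finite quasi-basis, equivalently showing $E_l$ has finite index, purely at the level of the 2-$C^*$-category. In the von Neumann setting this is \cite[\Cor 3.18, 3.19]{BDH88} (see also \cite{Lon90}), but here no concrete realization of $\C$ by bimodules is available — as the Remark after the definition of rigidity stresses — so the argument must be run categorically from $(r_X,\rbar_X)$ alone. I expect the delicate point to be manufacturing the quasi-basis out of a linear basis of the finite-dimensional commutative algebra $\Z^X_l(\N)$ together with the single reproducing relation supplied by the conjugate equations, and verifying that the expansion $x=\sum_j u_j\,E_l(u_j^*x)$ indeed holds on all of $\Hom_{\C^{(2)}}(X,X)$; once this is secured, the module-finiteness conclusion, and with it the full equivalence, is routine.
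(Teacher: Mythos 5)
Your overall architecture is sound and matches the paper's: reduce everything to the equivalence, prove only $(2)\Rightarrow(1)$, and get $(3)\Rightarrow(1)$ by passing to $\Xbar$ via Lemma \ref{lem:lrcenters}; the corner trick for $\Hom_{\C^{(2)}}(X,Y)$ and the decomposition into simples via minimal projections are also fine. But the crux, $(2)\Rightarrow(1)$, is not actually proved. You set up the left inverse, the expectation $E_l$ onto $\Z^X_l(\N)$, and its faithfulness correctly, and then propose to extract a finite quasi-basis $\{u_1,\dots,u_k\}$ so that $\Hom_{\C^{(2)}}(X,X)$ becomes a finitely generated $\Z^X_l(\N)$-module --- and you yourself flag that manufacturing this quasi-basis is ``the main obstacle'' and only say you ``expect'' it to work. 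That is precisely the step where all the content lies: the existence of a finite quasi-basis for $E_l$ is essentially equivalent to the finite-dimensionality (or finite-index property) you are trying to establish, so as written the argument is circular at its key point, not merely incomplete.

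The paper closes this gap without any quasi-basis, using only the operator inequality $a^*\cdot a\leq\Phi_X(a^*\cdot a)$ of Lemma \ref{lem:pipobound} (the categorical Pimsner--Popa bound, quoted from \cite[\Lem 2.7]{LoRo97}). One first cuts $\Hom_{\C^{(2)}}(X,X)$ by the finitely many minimal projections $\oneop_X\otimes q_j$ of $\Z^X_l(\N)$ to reduce to the case $\Z^X_l(\N)=\CC\oneop_X$, where $\oneop_X\otimes(r_X^*\cdot r_X)$ is a scalar. Then for any positive $a_1,\dots,a_N$ with $\|a_k\|=1$ and $\sum_k a_k\leq\oneop_X$, the bound gives $1\leq\|(\rbar_X^*\cdot\rbar_X)\otimes\oneop_X\|\,\oneop_X\otimes(r_X^*\cdot r_X)\,E_X(a_k)$ for each $k$, and summing over $k$ yields $N\leq\|(\rbar_X^*\cdot\rbar_X)\otimes\oneop_X\|\,\oneop_X\otimes(r_X^*\cdot r_X)$, a constant depending only on the chosen solution. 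This uniformly bounds the number of mutually orthogonal projections, hence the dimension of the $C^*$-algebra. If you want to salvage your write-up, replace the quasi-basis step by this counting argument; you already have every ingredient it needs (the normalized left inverse and the Pimsner--Popa-type inequality), so nothing new has to be constructed.
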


\begin{remark}
In the special case of von Neumann algebra inclusions $\N\subset\M$ with finite Jones index, \ie, admitting an expectation in $E(\M,\N)$ with finite index, the previous proposition boils down to the already mentioned equivalence \cite[\Cor 3.18, 3.19]{BDH88} between the finite-dimensionality of $\N'\cap\M$, $\N'\cap\N$ or $\M'\cap\M$, as it will become clear in what follows.
\end{remark}

Before proving Proposition \ref{prop:semisimp}, we recall some terminology and results from \cite[\Sec 2]{LoRo97}, see also \cite[\Sec 1]{Zit07}. Let $X:\N\rightarrow\M$ and $\Xbar:\M\rightarrow\N$ be conjugate 1-arrows in a 2-\Cstar-category $\C$ (not necessarily with finite-dimensional centers), with $r_X$, $\overline r_X$ a solution of the conjugate equations. For every $t\in\Hom_{\C^{(2)}}(X,X)$, let
\begin{equation}\label{eq:lrinv}
t \mapsto r^*_X \cdot \oneop_{\Xbar} \otimes t \cdot r_X , \quad t \mapsto \rbar^*_X \cdot t \otimes \oneop_{\Xbar} \cdot \rbar_X
\end{equation}
be the \textbf{left inverse} and the \textbf{right inverse} of $X$ defined by $r_X$, $\overline r_X$, and denoted respectively by $t\mapsto\varphi_X(t)$ and $t\mapsto\psi_X(t)$. They map $\Hom_{\C^{(2)}}(X,X)$ respectively to $\Z(\N) = \Hom_{\C^{(2)}}(I_\N,I_\N)$ and to $\Z(\M) = \Hom_{\C^{(2)}}(I_\M,I_\M)$. 
Both these maps are linear, positive, mapping $\oneop_X$ respectively to $r^*_X r_X$ and to $\rbar^*_X \rbar_X$, and they are faithful due to Frobenius reciprocity \cite[\Lem 2.1]{LoRo97}. 
Now we consider the image of $\Z(\N)$ and $\Z(\M)$ in $\Z^X_l(\N)$ and $\Z^X_r(\M)$ respectively via $\otimes$-multiplication with $\oneop_X$ on the left and on the right, and we introduce the following \lqq symmetrically normalized" version of the left and right inverses of $X$ defined by $r_X$, $\overline r_X$, namely
$$t \mapsto (\rbar^*_X \cdot \rbar_X) \otimes \oneop_X \otimes (r^*_X \cdot \oneop_{\Xbar} \otimes t \cdot r_X), 
\quad t \mapsto (\rbar^*_X \cdot t \otimes \oneop_{\Xbar} \cdot \rbar_X) \otimes \oneop_X \otimes (r^*_X \cdot r_X)$$
denoted respectively by $t \mapsto \Phi_X(t)$ and $t \mapsto \Psi_X(t)$, and mapping $\Hom_{\C^{(2)}}(X,X)$ to its center.

\begin{lemma}\label{lem:pipobound}\emph{\cite[\Lem 2.7]{LoRo97}.}
In the previous notation, for every positive element $a^*\cdot a$ in $\Hom_{\C^{(2)}}(X,X)$ the following inequalities hold
$$a^*\cdot a \leq \Phi_X(a^* \cdot a),\quad a^*\cdot a \leq \Psi_X(a^* \cdot a)$$
as elements in the unital \Cstar-algebra $\Hom_{\C^{(2)}}(X,X)$. 

In particular, $\oneop_X \leq (\rbar^*_X \cdot \rbar_X) \otimes \oneop_X \otimes (r^*_X \cdot r_X)$ holds, thus $\oneop_X \otimes (r^*_X \cdot r_X)$ is invertible in $\Z^X_l(\N)$, bounded from below by $\|\rbar_X \otimes \oneop_X\|^{-2} \oneop_X$ and from above by $\| \oneop_X \otimes r_X\|^2 \oneop_X$. Similarly for $(\rbar^*_X \cdot \rbar_X) \otimes \oneop_X$ in $\Z^X_r(\M)$.
\end{lemma}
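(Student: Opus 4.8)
The plan is to establish the two Pimsner--Popa type inequalities by reducing each to the positivity of a single $2\times2$ Gram matrix over the \Cstar-algebra $\Hom_{\C^{(2)}}(X,X)$, and then to read off the ``in particular'' claims as the special case $a=\oneop_X$. Throughout I set $P:=\oneop_X\otimes r_X$ and $Q:=\rbar_X\otimes\oneop_X$, both $2$-arrows in $\Hom_{\C^{(2)}}(X,X\otimes\Xbar\otimes X)$, and I record that the first conjugate equation is precisely $Q^*\cdot P=\oneop_X$.

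First I would isolate the two algebraic identities that carry all the content. For $a\in\Hom_{\C^{(2)}}(X,X)$ put $K:=(\oneop_X\otimes\oneop_\Xbar\otimes a)\cdot P$. A short computation with the interchange law gives $K^*\cdot K=\oneop_X\otimes\varphi_X(a^*\cdot a)$ and $Q^*\cdot Q=(\rbar_X^*\cdot\rbar_X)\otimes\oneop_X$; both are central in $\Hom_{\C^{(2)}}(X,X)$, being images of the whiskering maps into the left, resp.\ right, centre of $X$ (see Definition \ref{def:simpleconnectedfactorial} and Lemma \ref{lem:whiskerisrep}), and their product is exactly $\Phi_X(a^*\cdot a)$. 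The second, crucial, identity is the ``zig-zag'' $Q^*\cdot K=a$: pulling $a$ out of the last tensor slot by the interchange law rewrites $Q^*\cdot K$ as $a\cdot(Q^*\cdot P)$, which equals $a$ by the first conjugate equation. In the graphical calculus this is simply the straightening of one bent $X$-strand carrying the $2$-arrow $a$.

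With these in hand I would form the manifestly positive element $\left(\begin{smallmatrix}K\\ Q\end{smallmatrix}\right)^*\left(\begin{smallmatrix}K\\ Q\end{smallmatrix}\right)$ of $\Mat_2(\Hom_{\C^{(2)}}(X,X))$, whose entries are $K^*K$, $K^*Q=a^*$, $Q^*K=a$ and $Q^*Q$, so that $\left(\begin{smallmatrix}K^*K & a^*\\ a & Q^*Q\end{smallmatrix}\right)\ge 0$ with \emph{central} diagonal entries. The elementary positivity criterion for such a matrix---examined in each irreducible representation of $\Hom_{\C^{(2)}}(X,X)$, where $K^*K$ and $Q^*Q$ act as non-negative scalars $\alpha,\gamma$ and the criterion collapses to $a^*a\le\alpha\gamma$ (the degenerate case $\alpha=0$ forcing $a=0$ there)---yields $a^*\cdot a\le(K^*K)(Q^*Q)=\Phi_X(a^*\cdot a)$. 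The inequality $a^*\cdot a\le\Psi_X(a^*\cdot a)$ follows by the left--right mirror of the same argument, exchanging the roles of $r_X,\rbar_X$ and of the two conjugate equations (now using $P^*\cdot K'=a$ with $K':=(a\otimes\oneop_\Xbar\otimes\oneop_X)\cdot Q$).

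The ``in particular'' assertions are then immediate corollaries. Taking $a=\oneop_X$ and using $\varphi_X(\oneop_X)=r_X^*\cdot r_X$ gives $\oneop_X\le\Phi_X(\oneop_X)=(\rbar_X^*\cdot\rbar_X)\otimes\oneop_X\otimes(r_X^*\cdot r_X)$. Since $(\rbar_X^*\cdot\rbar_X)\otimes\oneop_X\le\|\rbar_X\otimes\oneop_X\|^2\oneop_X$ and every factor is central, multiplying through shows $\oneop_X\otimes(r_X^*\cdot r_X)\ge\|\rbar_X\otimes\oneop_X\|^{-2}\oneop_X$, so it is invertible in $\Z^X_l(\N)$ with the stated lower bound, while the upper bound is just $\oneop_X\otimes(r_X^*\cdot r_X)\le\|\oneop_X\otimes r_X\|^2\oneop_X$; the symmetric statement for $(\rbar_X^*\cdot\rbar_X)\otimes\oneop_X$ comes from the $\Psi_X$ inequality. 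The one genuine subtlety---and the step I would watch most carefully---is that the invertibility of these central elements is part of what is being proved, so it must \emph{not} be presupposed while establishing the basic inequality; the $2\times2$ Gram-matrix argument is designed exactly to avoid any division and to deliver the sharp operator inequality directly, rather than the weaker scalar norm bound that a plain Cauchy--Schwarz estimate would give.
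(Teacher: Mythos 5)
Your argument is correct: the identities $K^*\cdot K=\oneop_X\otimes\varphi_X(a^*\cdot a)$, $Q^*\cdot Q=(\rbar_X^*\cdot\rbar_X)\otimes\oneop_X$ and the zig-zag $Q^*\cdot K=a\cdot(Q^*\cdot P)=a$ all check out against the interchange law, the horizontal unit law and the first conjugate equation, and the mirror identities $P^*\cdot K'=a$, $K'^*\cdot K'=\psi_X(a^*\cdot a)\otimes\oneop_X$ give the $\Psi_X$ bound in the same way. The paper itself offers no argument here --- the lemma is simply quoted from \cite[\Lem 2.7]{LoRo97}, whose proof is written for simple tensor units --- so your write-up is a genuinely self-contained alternative, and the Gram-matrix device is exactly what is needed to upgrade the scalar Pimsner--Popa estimate to the \emph{operator-valued} inequality with the central coefficients $(\rbar_X^*\cdot\rbar_X)\otimes\oneop_X$ and $\oneop_X\otimes(r_X^*\cdot r_X)$: the one-line Cauchy--Schwarz bound $a^*\cdot a=K^*\cdot Q\cdot Q^*\cdot K\le\|Q\|^2\,K^*\cdot K$ only yields the norm version, whereas positivity of $\bigl(\begin{smallmatrix}K^*K&a^*\\ a&Q^*Q\end{smallmatrix}\bigr)$ together with Schur's lemma in each irreducible representation (where the central diagonal entries become non-negative scalars) delivers the sharp statement required when $\Z(\N)$ and $\Z(\M)$ are non-trivial. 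Two cosmetic points: the Gram matrix is $(K\;\;Q)^*\cdot(K\;\;Q)$ for the \emph{row} $(K\;\;Q)$ (equivalently $T^*\cdot T$ with $T\in\Hom_{\C^{(2)}}(X\oplus X,\,X\otimes\Xbar\otimes X)$, using closure under direct sums), not the column you wrote, though the entries you list make the intent unambiguous; and the final claim that the inverse of $\oneop_X\otimes(r_X^*\cdot r_X)$ lies in $\Z^X_l(\N)$ rather than merely in $\Hom_{\C^{(2)}}(X,X)$ deserves the one-line remark that a unital commutative \Cstar-subalgebra is inverse-closed. Neither affects the substance.
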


Hence, for every $t\in\Hom_{\C^{(2)}}(X,X)$
$$E_X(t) := ((\rbar^*_X \cdot \rbar_X) \otimes \oneop_X \otimes (r^*_X \cdot r_X))^{-1} \Phi_X(t),\quad E'_X(t) := ((\rbar^*_X \cdot \rbar_X) \otimes \oneop_X \otimes (r^*_X \cdot r_X))^{-1} \Psi_X(t)$$ 
define two faithful conditional expectations (projections of norm one between unital \Cstar-algebras) from $\Hom_{\C^{(2)}}(X,X)$ onto $\Z^X_l(\N)$ and $\Z^X_r(\M)$ respectively. Moreover, the inequalities in Lemma \ref{lem:pipobound} are actually optimal, see the comments after \cite[\Lem 2.7]{LoRo97}. In the case of trivial centers $\Z^X_l(\N) = \CC\oneop_X$ and $\Z^X_r(\M) = \CC\oneop_X$ they are equivalent to the \textbf{Pimsner-Popa bounds} \cite{PiPo86} for the expectations $E_X$ and $E'_X$ defined by the chosen solution $r_X,\rbar_X$ of the conjugate equations.

\begin{proof} (of Proposition \ref{prop:semisimp}).
We only have to show that if $\Z^X_l(\N)$ is finite-dimensional, where $X:\N\rightarrow\M$ is a 1-arrow with a conjugate $\Xbar:\M\rightarrow\N$, then $\Hom_{\C^{(2)}}(X,X)$ is also finite-dimensional.

Let $\oneop_X \otimes q_j$ be the minimal projections in $\Z^X_l(\N)$, for $j=1,\ldots,n$, where $n$ is the dimension of $\Z^X_l(\N)$ and $q_j\in\Z(\N)$. In particular, $\sum_j \oneop_X \otimes q_j = \oneop_X$. By considering the (finite) central decomposition of $\Hom_{\C^{(2)}}(X,X)$ given by the $\oneop_X \otimes q_j$, $j=1,\ldots,n$, and reducing the expectation $E_X$ accordingly, we may assume that $\Z^X_l(\N) = \CC\oneop_X$. In this case $\oneop_X \otimes (r^*_X \cdot r_X)$ is a positive number.
Now let $a_1,\ldots,a_N$ be arbitrary positive elements in $\Hom_{\C^{(2)}}(X,X)$ such that $\|a_k\| = 1$, $k=1,\ldots,N$, and $\sum_k a_k \leq \oneop_X$. By Lemma \ref{lem:pipobound}, for each $k=1,\ldots,N$ we have $a_k \leq (\rbar^*_X \cdot \rbar_X) \otimes \oneop_X \otimes (r^*_X \cdot r_X) E_X(a_k)$ hence $1 \leq \|(\rbar^*_X \cdot \rbar_X) \otimes \oneop_X\| \oneop_X \otimes (r^*_X \cdot r_X) E_X(a_k)$ because $E_X(a_k)$ is also a positive number. Summing over $k$ and using the positivity of $E_X$ we get $N \leq \|(\rbar^*_X \cdot \rbar_X) \otimes \oneop_X\| \oneop_X \otimes (r^*_X \cdot r_X)$, concluding the proof.
\end{proof}

Now, thanks to Proposition \ref{prop:semisimp}, for every 1-arrow $X:\N\rightarrow\M$ in a rigid 2-\Cstar-category with finite-dimensional centers $\C$, we can consider its \emph{finite} direct sum decomposition into \emph{simple} sub-1-arrows by considering the minimal projections in $\Hom_{\C^{(2)}}(X,X)$. 
We do it in different steps. We first consider the decomposition of $X:\N\rightarrow\M$ into connected sub-1-arrows, then the bi-central decomposition into factorial components, lastly the decomposition into simple ones.

Let $\{z_k, k=1,\ldots,l\}$ be the minimal projections in the algebra $\Z^X_l(\N) \cap \Z^X_r(\M) = (\oneop_X \otimes \Hom_{\C^{(2)}}(I_\N,I_\N)) \,\cap\, (\Hom_{\C^{(2)}}(I_\M,I_\M) \otimes \oneop_X)$, which we recall is a subalgebra of $\Z(\Hom_{\C^{(2)}}(X,X))$. Then the $z_k$ are unique up to permutation of the indices and $z_k = \oneop_X \otimes e_k = f_k \otimes \oneop_X$ for suitable elements
$e_k\in\Z(\N)$, $f_k\in\Z(\M)$. Now, for every $k=1,\ldots,l$, consider the sub-1-arrow of $X$ associated with $z_k$ and denoted by $X_k:\N\rightarrow\M$ with isometries $w_k\in\Hom_{\C^{(2)}}(X_k,X)$ such that $w_k \cdot w_k^* = z_k \leq \oneop_X$ and $w_k^* \cdot w_k = \oneop_{X_k}$.

\begin{lemma}\label{lem:connecteddecomp}
We have that $X = \oplus_k X_k$, where $X_k$ are connected sub-1-arrows of $X$.
\end{lemma}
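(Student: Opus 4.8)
The plan is to treat the two assertions separately: that the family $(X_k)$ realizes $X$ as a direct sum, and that each $X_k$ is connected. For the first, I would note that $\oneop_X = \oneop_X\otimes\oneop_{I_\N} = \oneop_{I_\M}\otimes\oneop_X$ lies in $\Z^X_l(\N)\cap\Z^X_r(\M)$ and is its unit, so the minimal projections $z_k$ of this finite-dimensional commutative \Cstar-algebra (finite-dimensional by Proposition \ref{prop:semisimp}, commutative because it sits inside $\Z(\Hom_{\C^{(2)}}(X,X))$) are mutually orthogonal and satisfy $\sum_k z_k = \oneop_X$. Since the isometries $w_k$ witness $X_k\prec X$ with $w_k\cdot w_k^* = z_k$ and ranges summing to $\oneop_X$, this is exactly the data exhibiting $X = \oplus_k X_k$ (the binary direct sum of the definition iterated finitely many times). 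This step is essentially bookkeeping.

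The substance is connectedness, and here the key tool is Lemma \ref{lem:whiskerisrep}, which lets me identify the left and right centers of the sub-1-arrow $X_k$ with compressions of those of $X$: for $s\in\Z(\N)$ one has $\oneop_{X_k}\otimes s = w_k^*\cdot(\oneop_X\otimes s)\cdot w_k$ and for $t\in\Z(\M)$ one has $t\otimes\oneop_{X_k} = w_k^*\cdot(t\otimes\oneop_X)\cdot w_k$. The plan is then to take an arbitrary element $a\in\Z^{X_k}_l(\N)\cap\Z^{X_k}_r(\M)$, write $a = \oneop_{X_k}\otimes s = t\otimes\oneop_{X_k}$, and transport it back up to $X$ by conjugating with $w_k$. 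Using the two displayed identities together with $w_k\cdot w_k^* = z_k$, the element $b := w_k\cdot a\cdot w_k^*$ equals both $z_k(\oneop_X\otimes s)z_k$ and $z_k(t\otimes\oneop_X)z_k$. Because $z_k = \oneop_X\otimes e_k = f_k\otimes\oneop_X$ commutes with $\oneop_X\otimes s$ inside the commutative algebra $\Z^X_l(\N)$, respectively with $t\otimes\oneop_X$ inside $\Z^X_r(\M)$, these simplify to $b = \oneop_X\otimes(e_k s)\in\Z^X_l(\N)$ and $b = (f_k t)\otimes\oneop_X\in\Z^X_r(\M)$, so $b$ lies in the bi-center $\Z^X_l(\N)\cap\Z^X_r(\M)$ and is supported under $z_k$.

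Finally, since $z_k$ is a minimal projection of the commutative finite-dimensional bi-center of $X$, the corner $z_k\bigl(\Z^X_l(\N)\cap\Z^X_r(\M)\bigr)$ is one-dimensional, so $b = \lambda z_k$ for some scalar $\lambda\in\CC$. Conjugating back with $w_k$, using $w_k^*\cdot w_k = \oneop_{X_k}$ and hence $w_k^*\cdot z_k\cdot w_k = w_k^*\cdot w_k\cdot w_k^*\cdot w_k = \oneop_{X_k}$, yields $a = w_k^*\cdot b\cdot w_k = \lambda\oneop_{X_k}$, whence $\Z^{X_k}_l(\N)\cap\Z^{X_k}_r(\M) = \CC\oneop_{X_k}$ and $X_k$ is connected. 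I expect the only delicate point to be verifying that $b$ genuinely lands in both $\Z^X_l(\N)$ and $\Z^X_r(\M)$ — that is, controlling the interplay between the cutting projection $z_k$ and the whiskering maps — which is precisely where the commutativity of each one-sided center and the minimality of $z_k$ in the bi-center are brought to bear.
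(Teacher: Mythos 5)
Your proposal is correct and follows essentially the same route as the paper: realize $X=\oplus_k X_k$ via the minimal projections $z_k$ of the finite-dimensional commutative bi-center, then conjugate an element $a$ of the bi-center of $X_k$ up to $X$ via $w_k$, observe (using Lemma \ref{lem:whiskerisrep} and commutativity of each one-sided center) that $w_k\cdot a\cdot w_k^*=\oneop_X\otimes(e_k s)=(f_k t)\otimes\oneop_X$ lies in $\Z^X_l(\N)\cap\Z^X_r(\M)$ under $z_k$, and invoke minimality of $z_k$ to force it to be a scalar multiple of $z_k$. The only difference is presentational: you spell out that the $z_k$-corner of the commutative bi-center is one-dimensional, where the paper phrases the same conclusion as invariance under multiplication by the minimal projection $z_k$.
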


\begin{proof}
The first statement follows immediately from the orthogonality of the $z_k$ for different $k$ and from $\sum_k z_k = \oneop_X$.

To prove the second statement, fix $k\in\{1,\ldots,l\}$ and let $a\in (\oneop_{X_k} \otimes \Hom_{\C^{(2)}}(I_\N,I_\N)) \,\cap\, (\Hom_{\C^{(2)}}(I_\M,I_\M) \otimes \oneop_{X_k})$, hence $a = \oneop_{X_k} \otimes b = c \otimes \oneop_{X_k}$ for some $b\in\Z(\N)$ and $c\in\Z(\M)$. 
But then $a = w_k^* \cdot w_k \cdot a \cdot w_k^* \cdot w_k$, regarding $a$ as an element of $\Hom_{\C^{(2)}}(X_k,X_k)$, and $w_k \cdot a \cdot w_k^*$ is an element of $\Hom_{\C^{(2)}}(X,X)$, or better of the subalgebra $(\oneop_{X} \otimes \Hom_{\C^{(2)}}(I_\N,I_\N)) \,\cap\, (\Hom_{\C^{(2)}}(I_\M,I_\M) \otimes \oneop_{X})$ because $w_k \cdot a \cdot w_k^* = \oneop_X \otimes (e_k \cdot b) = (c \cdot f_k) \otimes \oneop_X$. From its invariance under left and right multiplication with $z_k = w_k \cdot w_k^*$, which is minimal, we get $a\in\CC\oneop_{X_k}$.
\end{proof} 

\begin{remark}
Considering \emph{connected} 1-arrows as the building blocks for arbitrary 1-arrows, instead of factorial or simple ones, is the crucial step in order to define \lqq standard solutions" (see Definition \ref{def:stdsol}) of the conjugate equations for $X:\N\rightarrow\M$ in $\C$. Standard solutions generalize those defined in \cite{LoRo97} in the case of simple tensor units, and they correspond to the choice of the minimal conditional expectation in the special case of inclusions of von Neumann algebras $\N\subset\M$ with finite index and finite-dimensional centers considered in Section \ref{sec:inclusions}.
\end{remark}

Now let $X:\N\rightarrow\M$ be a connected 1-arrow in $\C$, and call $\{\oneop_X \otimes q_j,j=1,\ldots,n\}$, $\{p_i \otimes \oneop_X, i=1,\ldots,m\}$ the minimal projections in $\Z_l^X(\N)$ and $\Z_r^X(\M)$ respectively, where $n:=\dim(\Z_l^X(\N))$, $m:=\dim(\Z_r^X(\M))$. Notice that the left and right horizontal compositions with $\oneop_X$ might have a kernel, hence the $p_i$ and $q_j$ need not be projections themselves. Moreover, there might be more minimal projections in $\Z(\N)$ or $\Z(\M)$ that do not correspond to any of the previously fixed $n$ or $m$ projections and that are, in a sense, invisible for $X$.
Clearly the $\oneop_X \otimes q_j$ and $p_i \otimes \oneop_X$ are unique up to permutation of the indices.
Consider the product projections $p_i\otimes \oneop_X \otimes q_j$ in $\Z(\Hom_{\C^{(2)}}(X,X))$. 
If $p_i\otimes \oneop_X \otimes q_j \neq 0$, let $X_{ij}:\N\rightarrow\M$ be the associated sub-1-arrow of $X$ with isometry $v_{ij}\in\Hom_{\C^{(2)}}(X_{ij},X)$ such that $v_{ij}\cdot v_{ij}^* = p_i\otimes \oneop_X \otimes q_j \leq \oneop_X$ and $v_{ij}^* \cdot v_{ij} = \oneop_{X_{ij}}$. Otherwise, let $X_{ij}$ be a \emph{zero} 1-arrow in $\C$, namely such that $\Hom_{\C^{(2)}}(X_{ij}, X_{ij})$ is the zero vector space. In the latter case, notice that necessarily $\oneop_{X_{ij}} = 0$ and we may consider $X_{ij}$ as a sub-1-arrow of $X$ by choosing $v_{ij} = 0$ in $\Hom_{\C^{(2)}}(X_{ij},X)$.

\begin{lemma}\label{lem:factorialdecomp}
We have that $X = \oplus_{i,j} X_{ij}$, where $X_{ij}$ are either factorial sub-1-arrows of $X$, or zero 1-arrows.
\end{lemma}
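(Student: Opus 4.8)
The plan is to mimic the proof of Lemma \ref{lem:connecteddecomp}, refining it by treating the left and right centers separately. First I would establish the direct sum decomposition. Since both $\Z^X_l(\N)$ and $\Z^X_r(\M)$ sit inside the commutative algebra $\Z(\Hom_{\C^{(2)}}(X,X))$, all the projections $p_i \otimes \oneop_X$ and $\oneop_X \otimes q_j$ commute; as distinct minimal projections of their respective commutative algebras they satisfy $(p_i \otimes \oneop_X)(p_{i'} \otimes \oneop_X) = \delta_{ii'}(p_i \otimes \oneop_X)$ and $(\oneop_X \otimes q_j)(\oneop_X \otimes q_{j'}) = \delta_{jj'}(\oneop_X \otimes q_j)$. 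Hence the products $p_i \otimes \oneop_X \otimes q_j = (p_i \otimes \oneop_X)(\oneop_X \otimes q_j)$ are mutually orthogonal projections, and $\sum_{i,j} p_i \otimes \oneop_X \otimes q_j = (\sum_i p_i \otimes \oneop_X)(\sum_j \oneop_X \otimes q_j) = \oneop_X$. Because the $v_{ij}$ satisfy $v_{ij}^* \cdot v_{ij} = \oneop_{X_{ij}}$ and $v_{ij} \cdot v_{ij}^* = p_i \otimes \oneop_X \otimes q_j$, this is exactly the assertion that $X = \oplus_{i,j} X_{ij}$, the vanishing terms supplying the zero 1-arrows.

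The core of the argument is to show that each nonzero $X_{ij}$ is factorial, i.e.\ that both $\Z^{X_{ij}}_l(\N)$ and $\Z^{X_{ij}}_r(\M)$ are trivial; I would prove triviality of the left center, the right center being symmetric. Take $a \in \Z^{X_{ij}}_l(\N)$, so $a = \oneop_{X_{ij}} \otimes b$ for some $b \in \Z(\N)$. Applying the subrepresentation formula of Lemma \ref{lem:whiskerisrep} to the isometry $v_{ij}$, namely $\oneop_{X_{ij}} \otimes (\slot) = v_{ij}^* \cdot \oneop_X \otimes (\slot) \cdot v_{ij}$, gives $a = v_{ij}^* \cdot (\oneop_X \otimes b) \cdot v_{ij}$, whence
\[
v_{ij} \cdot a \cdot v_{ij}^* = (p_i \otimes \oneop_X \otimes q_j)\,(\oneop_X \otimes b)\,(p_i \otimes \oneop_X \otimes q_j).
\]
Here $\oneop_X \otimes b$ lies in the commutative algebra $\Z^X_l(\N) \subseteq \Z(\Hom_{\C^{(2)}}(X,X))$, so it commutes with the projection and the right-hand side collapses to $(p_i \otimes \oneop_X)(\oneop_X \otimes q_j)(\oneop_X \otimes b)$.

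Now I invoke the key observation: since $\oneop_X \otimes q_j$ is a \emph{minimal} projection of the finite-dimensional (by Proposition \ref{prop:semisimp}) commutative algebra $\Z^X_l(\N)$, and $\oneop_X \otimes b$ belongs to that algebra, one has $(\oneop_X \otimes q_j)(\oneop_X \otimes b) = \lambda\,(\oneop_X \otimes q_j)$ for some scalar $\lambda \in \CC$. Therefore $v_{ij} \cdot a \cdot v_{ij}^* = \lambda\,(p_i \otimes \oneop_X \otimes q_j) = \lambda\,v_{ij}\cdot v_{ij}^*$, and compressing back with $v_{ij}^*$ on the left and $v_{ij}$ on the right, using $v_{ij}^* \cdot v_{ij} = \oneop_{X_{ij}}$, yields $a = \lambda\,\oneop_{X_{ij}}$. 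This proves $\Z^{X_{ij}}_l(\N) = \CC\oneop_{X_{ij}}$; the identical argument with the minimal projection $p_i \otimes \oneop_X$ of $\Z^X_r(\M)$ gives $\Z^{X_{ij}}_r(\M) = \CC\oneop_{X_{ij}}$, so $X_{ij}$ is factorial.

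I expect the decomposition part to be entirely routine. The only point in the factoriality part requiring care is the bookkeeping forced by the fact that the $p_i$ and $q_j$ need \emph{not} be projections themselves (only $p_i \otimes \oneop_X$ and $\oneop_X \otimes q_j$ are): throughout one must manipulate the whiskered elements inside $\Z(\Hom_{\C^{(2)}}(X,X))$ rather than the $p_i$, $q_j$ in the centers of the objects. No genuine obstacle arises, since once the element $a$ is transported up to $X$ via $v_{ij}$, the whole matter reduces to the absorption property of a minimal projection in a finite-dimensional commutative $C^*$-algebra.
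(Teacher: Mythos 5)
Your proof is correct and follows essentially the same route as the paper's: orthogonality and completeness of the products $p_i\otimes\oneop_X\otimes q_j$ for the direct sum, then transporting $a$ up to $\Hom_{\C^{(2)}}(X,X)$ via $v_{ij}$ and invoking minimality of $\oneop_X\otimes q_j$ (resp.\ $p_i\otimes\oneop_X$) in the commutative left (resp.\ right) center to force $v_{ij}\cdot a\cdot v_{ij}^*$ to be a scalar multiple of $p_i\otimes\oneop_X\otimes q_j$. Your explicit remark that one must work with the whiskered elements rather than with $p_i$, $q_j$ themselves is exactly the bookkeeping the paper's terser argument implicitly relies on.
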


\begin{proof}
As in the proof of the previous lemma, the first statement follows from the orthogonality of the $p_i\otimes \oneop_X \otimes q_j$ for different $i$ or $j$ and from $\sum_{i,j} p_i\otimes \oneop_X \otimes q_j = \oneop_X$.

To prove the second statement, fix $i\in\{1,\ldots,m\}$, $j\in\{1,\ldots,n\}$ such that $p_i\otimes \oneop_X \otimes q_j \neq 0$ and let $a$ be either in $\Z_l^{X_{ij}}(\N)$ or in $\Z_r^{X_{ij}}(\M)$, \ie, $a = \oneop_{X_{ij}} \otimes b$ or  $a = c \otimes \oneop_{X_{ij}}$ for some $b\in\Z(\N)$ or $c\in\Z(\M)$. Then $a = v_{ij}^* \cdot v_{ij} \cdot a \cdot v_{ij}^* \cdot v_{ij}$, where $v_{ij} \cdot a \cdot v_{ij}^*$ equals either $p_i \otimes \oneop_X \otimes (b \cdot q_j)$ or $ (c \cdot p_i) \otimes \oneop_X \otimes q_j$. From the minimality of $\oneop_X \otimes q_j$ or $p_i \otimes \oneop_X$ in the respective left/right center of $X$, we have that $v_{ij} \cdot a \cdot v_{ij}^*$ is a scalar multiple of $p_i \otimes \oneop_X \otimes q_j$, hence $a\in\CC\oneop_{X_{ij}}$.
\end{proof}

Finally, for every fixed $i\in\{1,\ldots,m\}$, $j\in\{1,\ldots,n\}$ such that $p_i\otimes \oneop_X \otimes q_j \neq 0$, consider the minimal projections $g_{ij,h}$ in $\Hom_{\C^{(2)}}(X_{ij},X_{ij})$ where $h$ runs in a finite set of indices, depending on $i$ and $j$. The corresponding sub-1-arrows $X_{ij,h}:\N\rightarrow\M$ of $X_{ij}$ with isometries $u_{ij,h}\in\Hom_{\C^{(2)}}(X_{ij,h},X_{ij})$ are simple, \ie, $\Hom_{\C^{(2)}}(X_{ij,h},X_{ij,h}) = \CC \oneop_{X_{ij,h}}$, and $X_{ij} = \oplus_h X_{ij,h}$, as one can immediately check. By Frobenius reciprocity \cite[\Lem 2.1]{LoRo97}, the 2-arrows $r_{ij,h}\in\Hom_{\C^{(2)}}(I_\N,\Xbar_{ij,h}\otimes X_{ij,h})$, $\rbar_{ij,h}\in\Hom_{\C^{(2)}}(I_\M,X_{ij,h}\otimes \Xbar_{ij,h})$ solving the conjugate equations for $X_{ij,h}$ and $\Xbar_{ij,h}$ are unique up to multiplication with scalars, compatibly with the equations themselves, once a conjugate $\Xbar_{ij,h}$ of $X_{ij,h}$ is chosen. 
In the same spirit of \cite[\Sec 3]{LoRo97} we may consider then \textbf{normalized solutions} of the conjugate equations, namely those fulfilling 
\footnote{Normalized solutions $r_X, \rbar_X$ of the conjugate equations, in the sense of \cite{LoRo97}, are those fulfilling $\|r_X\| = \|\rbar_X\|$. This condition is however equivalent to $\|\oneop_X \otimes r_X\| = \|\rbar_X \otimes \oneop_X\|$ in the case of 2-\Cstar-categories with simple tensor units, \ie, $\Z(\N) = \CC\oneop_{I_\N}$ for every object $\N$, because the $\otimes$-multiplication with $\oneop_X$ on the left and on the right considered in Definition \ref{def:simpleconnectedfactorial}, mapping $\Z(\N)$ onto $\Z_l^X(\N)$ and $\Z(\M)$ onto $\Z_r^X(\M)$, is trivially injective hence isometric in that case.}
$$\|\oneop_{X_{ij,h}} \otimes r_{ij,h}\| = \|\rbar_{ij,h} \otimes \oneop_{X_{ij,h}}\|.$$ 
The number
\begin{equation}\label{eq:sdsimple}
d_{X_{ij,h}} := \|\oneop_{X_{ij,h}} \otimes r_{ij,h}\|^2 = \|\oneop_{X_{ij,h}} \otimes (r_{ij,h}^* \cdot r_{ij,h})\|
\end{equation}
does not depend on the choice of normalized solution because the latter may vary only by multiplication with a complex phase, nor on the choice of conjugate.
We shall refer to $d_{X_{ij,h}}$ as the (scalar) dimension of $X_{ij,h}$ in this special case of simple 1-arrows.

\begin{lemma}\label{lem:normalizedsol}
Let $X$ and $\Xbar$ be conjugate 1-arrows in a 2-\Cstar-category with $r_X$, $\overline r_X$ a solution of the conjugate equations. Then the normalization condition $\|\oneop_X \otimes r_X\| = \|\rbar_X \otimes \oneop_X\|$ is equivalent to $\|\oneop_\Xbar \otimes \rbar_X\| = \|r_X \otimes \oneop_\Xbar\|$, and in this case the four norms are all equal.

If $X$, or equivalently $\Xbar$, is a factorial 1-arrow, then the normalization condition reads
$$\oneop_X \otimes (r^*_X \cdot r_X) = (\rbar^*_X \cdot \rbar_X) \otimes \oneop_X,$$
or equivalently
$$\oneop_\Xbar \otimes (\rbar^*_X \cdot \rbar_X) = (r^*_X \cdot r_X) \otimes \oneop_\Xbar,$$
as scalar multiples of $\oneop_X$ in $\Hom_{\C^{(2)}}(X,X)$, or of $\oneop_\Xbar$ in $\Hom_{\C^{(2)}}(\Xbar,\Xbar)$, respectively, and in this case the two scalars are equal.  
\end{lemma}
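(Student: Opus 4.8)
The plan is to reduce both assertions to two norm identities that hold for \emph{every} solution of the conjugate equations, and then to read off the statement by bookkeeping.

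First I would rewrite each of the four norms as the norm of a single central element whiskered with $\oneop_X$ or $\oneop_\Xbar$. Since the horizontal composition is $*$-preserving and $\oneop_X^* = \oneop_X$, the interchange law gives $(\oneop_X\otimes r_X)^*\cdot(\oneop_X\otimes r_X) = \oneop_X\otimes(r_X^*\cdot r_X)$, so by the \Cstar-identity $\|\oneop_X\otimes r_X\|^2 = \|\oneop_X\otimes(r_X^*\cdot r_X)\|$; likewise $\|r_X\otimes\oneop_\Xbar\|^2 = \|(r_X^*\cdot r_X)\otimes\oneop_\Xbar\|$, $\|\rbar_X\otimes\oneop_X\|^2 = \|(\rbar_X^*\cdot\rbar_X)\otimes\oneop_X\|$ and $\|\oneop_\Xbar\otimes\rbar_X\|^2 = \|\oneop_\Xbar\otimes(\rbar_X^*\cdot\rbar_X)\|$. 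The key point is that one and the same element $s := r_X^*\cdot r_X\in\Z(\N)$ feeds the first two of these, while $t := \rbar_X^*\cdot\rbar_X\in\Z(\M)$ feeds the last two.

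The heart of the argument is then the pair of identities $\|\oneop_X\otimes r_X\| = \|r_X\otimes\oneop_\Xbar\|$ and $\|\rbar_X\otimes\oneop_X\| = \|\oneop_\Xbar\otimes\rbar_X\|$, valid for any solution. By the previous paragraph these amount to $\|\oneop_X\otimes s\| = \|s\otimes\oneop_\Xbar\|$ for $s\in\Z(\N)$ and $\|t\otimes\oneop_X\| = \|\oneop_\Xbar\otimes t\|$ for $t\in\Z(\M)$. Both equalities are exactly the isometry of the \Cstar-algebra isomorphisms $\Z^X_l(\N)\cong\Z^\Xbar_r(\N)$ and $\Z^X_r(\M)\cong\Z^\Xbar_l(\M)$ of Lemma~\ref{lem:lrcenters}, under which the left whiskering $\oneop_X\otimes s$ corresponds to the right whiskering $s\otimes\oneop_\Xbar$ (both being images of the single element $s$), and symmetrically for $t$. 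If one prefers a self-contained derivation, it suffices to check that the two unital $*$-homomorphisms $s\mapsto\oneop_X\otimes s$ and $s\mapsto s\otimes\oneop_\Xbar$ of the commutative \Cstar-algebra $\Z(\N)$ have the same kernel, so that they induce isometric isomorphisms of the common quotient; for the inclusion $\oneop_X\otimes s = 0\Rightarrow s\otimes\oneop_\Xbar = 0$ (take $s = s^*$), the intertwining relation of Lemma~\ref{lem:whiskerisrep} applied to $r_X$ gives $r_X\cdot s = (\oneop_\Xbar\otimes(\oneop_X\otimes s))\cdot r_X = 0$, hence $s\cdot r_X^* = (r_X\cdot s)^* = 0$, and feeding the second conjugate equation yields $s\otimes\oneop_\Xbar = ((s\cdot r_X^*)\otimes\oneop_\Xbar)\cdot(\oneop_\Xbar\otimes\rbar_X) = 0$; the reverse inclusion follows by exchanging the roles of $X$ and $\Xbar$ (a conjugate pair with $r_\Xbar = \rbar_X$, $\rbar_\Xbar = r_X$), and the case of $t\in\Z(\M)$ is identical using the first conjugate equation.

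With these two identities in hand the first assertion is immediate: the normalization condition $\|\oneop_X\otimes r_X\| = \|\rbar_X\otimes\oneop_X\|$ for $X$ is transformed, term by term, into $\|r_X\otimes\oneop_\Xbar\| = \|\oneop_\Xbar\otimes\rbar_X\|$, which is precisely the normalization condition for the conjugate pair; and chaining the two identities through the equality shows that when either holds all four norms coincide. For the factorial statement, note that $X$ factorial forces $\Z^X_l(\N) = \CC\oneop_X$ and $\Z^X_r(\M) = \CC\oneop_X$ (and $\Xbar$ is then factorial by Lemma~\ref{lem:lrcenters}), so that $\oneop_X\otimes(r_X^*\cdot r_X) = \lambda\oneop_X$ and $(\rbar_X^*\cdot\rbar_X)\otimes\oneop_X = \lambda'\oneop_X$ are positive scalars (invertible by Lemma~\ref{lem:pipobound}); since these scalars are the squares of the corresponding norms, the normalization $\|\oneop_X\otimes r_X\| = \|\rbar_X\otimes\oneop_X\|$ reads $\lambda = \lambda'$, \ie the operator equation $\oneop_X\otimes(r_X^*\cdot r_X) = (\rbar_X^*\cdot\rbar_X)\otimes\oneop_X$, and the equivalent form on $\Xbar$ is obtained symmetrically (the two being equivalent by the first assertion). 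Finally, writing $\oneop_X\otimes(r_X^*\cdot r_X) = \lambda\oneop_X$ and $\oneop_\Xbar\otimes(\rbar_X^*\cdot\rbar_X) = \mu\oneop_\Xbar$, the identity $\|\rbar_X\otimes\oneop_X\| = \|\oneop_\Xbar\otimes\rbar_X\|$ reads $\lambda = \mu$, so the two scalars agree. I expect the only genuine obstacle to be the whiskering-matching step of the third paragraph, \ie identifying $\oneop_X\otimes s$ with $s\otimes\oneop_\Xbar$ under the isomorphism of Lemma~\ref{lem:lrcenters} (equivalently, the equal-kernel fact); once this is secured, everything else is formal manipulation with the interchange law and the \Cstar-identity.
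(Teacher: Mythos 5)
Your proposal is correct and follows essentially the same route as the paper: the paper's (much terser) proof likewise invokes the \Cstar-identity to reduce the four norms to whiskered central elements, cites Lemma \ref{lem:lrcenters} for the two identities $\|\oneop_X \otimes r_X\| = \|r_X \otimes \oneop_\Xbar\|$ and $\|\rbar_X \otimes \oneop_X\| = \|\oneop_\Xbar \otimes \rbar_X\|$, and deduces the factorial case from $\|\oneop_X\| = \|\oneop_\Xbar\| = 1$. Your self-contained equal-kernel argument via Lemma \ref{lem:whiskerisrep} and the conjugate equations is a worthwhile expansion of the step the paper leaves implicit, but it does not change the underlying strategy.
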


\begin{proof}
It is enough to observe that, by the \Cstar identity on the norms and by Lemma \ref{lem:lrcenters}, 
$\|\oneop_X \otimes r_X\| = \|r_X \otimes \oneop_\Xbar\|$ and $\|\oneop_\Xbar \otimes \rbar_X\| = \|\rbar_X \otimes \oneop_X\|$ always hold. The second statement is then a consequence of $\|\oneop_X\| = \|\oneop_\Xbar\| = 1$.
\end{proof}

By Frobenius reciprocity $\Xbar_{ij,h}$ is a simple sub-1-arrow of $\Xbar_{ij}$ for any choice of conjugate 1-arrow of $X_{ij}$, and by the previous lemma the choice of normalized solutions $r_{ij,h}$, $\rbar_{ij,h}$ is symmetric in $X_{ij,h}$ and $\Xbar_{ij,h}$, moreover $d_{X_{ij,h}} = d_{\Xbar_{ij,h}} \geq 1$. 

Now, let ${\overline u}_{ij,h}\in\Hom_{\C^{(2)}}(\Xbar_{ij,h}, \Xbar_{ij})$ be a family of isometries expressing the direct sum decomposition $\Xbar_{ij} = \oplus_h \Xbar_{ij,h}$, independently of the previously fixed family of isometries $u_{ij,h}$ expressing the decomposition $X_{ij} = \oplus_h X_{ij,h}$. Similarly to \cite[\Sec 3]{LoRo97}, we define
$$r_{ij} := \sum_h {\overline u}_{ij,h} \otimes u_{ij,h} \cdot r_{ij,h}, \quad \rbar_{ij} := \sum_h {u}_{ij,h} \otimes {\overline u}_{ij,h} \cdot \rbar_{ij,h}$$
also written as
$$r_{ij} := \oplus_h r_{ij,h}, \quad \rbar_{ij} := \oplus_h \rbar_{ij,h}.$$
The 2-arrows $r_{ij}\in\Hom_{\C^{(2)}}(I_\N,\Xbar_{ij}\otimes X_{ij})$, $\rbar_{ij}\in\Hom_{\C^{(2)}}(I_\M,X_{ij}\otimes \Xbar_{ij})$ form a solution of the conjugate equations for $X_{ij}$ and $\Xbar_{ij}$, as one can directly check. Moreover, a solution defined in this way is again \emph{normalized}, indeed 
\begin{equation}\label{eq:dijadditive}
\|\oneop_{X_{ij}} \otimes (r_{ij}^* \cdot r_{ij})\| = \|\oneop_{X_{ij}} \otimes (\sum_h r_{ij,h}^* \cdot r_{ij,h})\|
= \sum_h \|\oneop_{X_{ij,h}} \otimes (r_{ij,h}^* \cdot r_{ij,h})\|
\end{equation}
because $\oneop_{X_{ij}} \otimes (r_{ij,h}^* \cdot r_{ij,h})$ and $\oneop_{X_{ij,h}} \otimes (r_{ij,h}^* \cdot r_{ij,h})$ are both positive scalar multiples respectively of $\oneop_{X_{ij}}$ and $\oneop_{X_{ij,h}}$ by the factoriality of $X_{ij}$ and $X_{ij,h}$ (the latter are also simple), and they are the \emph{same} scalar by Lemma \ref{lem:whiskerisrep}, moreover $\|\oneop_{X_{ij}}\| = \|\oneop_{X_{ij,h}}\| = 1$ for every $h$.
Thus $\|\oneop_{X_{ij}} \otimes (r_{ij}^* \cdot r_{ij})\| = \|(\rbar_{ij}^* \cdot \rbar_{ij}) \otimes \oneop_{X_{ij}}\|$ because each $r_{ij,h}, \rbar_{ij,h}$ is normalized. The number
\begin{equation}\label{eq:sdfactorial}
d_{X_{ij}} := \|\oneop_{X_{ij}} \otimes r_{ij}\|^2 = \|\oneop_{X_{ij}} \otimes (r_{ij}^* \cdot r_{ij})\|
\end{equation}
does not depend on the choice of solution $r_{ij}, \rbar_{ij}$ constructed in this way (because the normalized solutions $r_{ij,h}, \rbar_{ij,h}$ for each $h$ may vary only up to a phase, and because the choice of orthonormal bases of isometries ${\overline u}_{ij,h}$ and $u_{ij,h}$ is clearly irrelevant when computing $r_{ij}^* \cdot r_{ij}$), nor on the choice of conjugate. 
We call this number the (scalar) dimension of the factorial 1-arrow $X_{ij}$, and we call any solution $r_{ij}, \rbar_{ij}$ constructed as above a \textbf{standard solution} of the conjugate equations for $X_{ij}$ and $\Xbar_{ij}$.

By \cite[\Lem 3.3, 3.7]{LoRo97}, with obvious modifications from the case of simple tensor units to the case of factorial 1-arrows considered here, we have the following strengthening of Lemma \ref{lem:uniqueconjandsol}. 

\begin{lemma}\label{lem:uniquestdij}
For any two standard solutions $r_{ij}, \rbar_{ij}$ and $r'_{ij}, \rbar'_{ij}$ of the conjugate equations for $X_{ij}$ and $\Xbar_{ij}$, there is a unitary 2-arrow $u$ in $\Hom_{\C^{(2)}}(\Xbar_{ij},\Xbar_{ij})$ such that $r'_{ij} = u \otimes \oneop_{X_{ij}} \cdot r_{ij}$ and $\rbar'_{ij} = \oneop_{X_{ij}} \otimes u \cdot \rbar_{ij}$. Similarly with a unitary 2-arrow in $\Hom_{\C^{(2)}}(X_{ij},X_{ij})$.
\end{lemma}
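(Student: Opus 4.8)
The plan is to bootstrap from the general uniqueness already recorded in Lemma \ref{lem:uniqueconjandsol} and to use standardness only to rigidify the two \emph{a priori} distinct invertibles into a single unitary. First I would apply Lemma \ref{lem:uniqueconjandsol} to the two solutions $r_{ij},\rbar_{ij}$ and $r'_{ij},\rbar'_{ij}$ (both associated with $X_{ij}$ and the \emph{same} fixed conjugate $\Xbar_{ij}$): this produces invertible 2-arrows $v,w\in\Hom_{\C^{(2)}}(\Xbar_{ij},\Xbar_{ij})$ with $v^{-1}=w^*$ such that $r'_{ij}=w\otimes\oneop_{X_{ij}}\cdot r_{ij}$ and $\rbar'_{ij}=\oneop_{X_{ij}}\otimes v\cdot\rbar_{ij}$. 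Since $v^{-1}=w^*$, the equality $v=w$ is equivalent to $w^{-1}=w^*$, \ie to $w$ being unitary, in which case I would simply set $u:=w=v$. Thus the whole statement reduces to showing that standardness forces $w$ to be unitary.

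To establish this I would pass to the simple constituents entering the two standard solutions, adapting \cite[\Lem 3.3, 3.7]{LoRo97}. By Proposition \ref{prop:semisimp} and the essential uniqueness of the decomposition into simple sub-1-arrows, the two standard solutions arise from decompositions $X_{ij}=\oplus_h X_{ij,h}$ and $\Xbar_{ij}=\oplus_h\Xbar_{ij,h}$ whose simple pieces are pairwise unitarily isomorphic after matching indices, so the two families of isometries realizing the decompositions differ by unitaries in $\Hom_{\C^{(2)}}(X_{ij},X_{ij})$ and in $\Hom_{\C^{(2)}}(\Xbar_{ij},\Xbar_{ij})$. On each simple component the situation is rigid: because $\Hom_{\C^{(2)}}(\Xbar_{ij,h},\Xbar_{ij,h})=\CC\oneop_{\Xbar_{ij,h}}$, Lemma \ref{lem:uniqueconjandsol} forces the two normalized solutions $r_{ij,h}$ and $r'_{ij,h}$ to differ by a \emph{scalar} $w_h$, and the normalization condition of Lemma \ref{lem:normalizedsol} pins down $|w_h|=1$; the dual relation $v_h^{-1}=w_h^*$ then yields $v_h=w_h$, one common phase on the two sides.

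The last step I would carry out is the assembling: combining the phases $w_h$ with the unitaries intertwining the two simple decompositions should produce a single unitary $u\in\Hom_{\C^{(2)}}(\Xbar_{ij},\Xbar_{ij})$, and then, using the direct-sum formulas $r_{ij}=\oplus_h r_{ij,h}$ and $\rbar_{ij}=\oplus_h\rbar_{ij,h}$, I would check directly that $r'_{ij}=u\otimes\oneop_{X_{ij}}\cdot r_{ij}$ and $\rbar'_{ij}=\oneop_{X_{ij}}\otimes u\cdot\rbar_{ij}$ with the \emph{same} $u$. The variant with a unitary in $\Hom_{\C^{(2)}}(X_{ij},X_{ij})$ would then follow by the symmetry $r\leftrightarrow\rbar$, $X_{ij}\leftrightarrow\Xbar_{ij}$ of the conjugate equations. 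The hard part will be precisely this assembling: one must verify that the independently matched component unitaries are forced to act as the \emph{same} $u$ on both conjugate equations, which is where the constraint $v^{-1}=w^*$ together with the factoriality of $X_{ij}$ (making $\oneop_{X_{ij}}\otimes(r_{ij}^*\cdot r_{ij})$ and $(\rbar_{ij}^*\cdot\rbar_{ij})\otimes\oneop_{X_{ij}}$ equal scalars, by Lemma \ref{lem:normalizedsol}) collapses the two sides to a single unitary.
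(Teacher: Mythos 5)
Your reduction is set up correctly: by Lemma \ref{lem:uniqueconjandsol} everything comes down to showing that the canonical invertible $w$ relating the two solutions is unitary, and passing to the simple constituents, where normalized solutions are unique up to a phase, is indeed the skeleton of the argument the paper delegates to \cite[\Lem 3.3, 3.7]{LoRo97}. The genuine gap is exactly the step you flag as ``the hard part'', and the mechanism you propose for it cannot work. The equality of the two scalars $\oneop_{X_{ij}}\otimes(r_{ij}^*\cdot r_{ij})=(\rbar_{ij}^*\cdot\rbar_{ij})\otimes\oneop_{X_{ij}}$ from Lemma \ref{lem:normalizedsol} is nothing but the normalization condition, and normalized solutions that are \emph{not} unitarily related to a standard one exist as soon as $\Hom_{\C^{(2)}}(X_{ij},X_{ij})\neq\CC\oneop_{X_{ij}}$ (take $r'=tw\otimes\oneop\cdot r$, $\rbar'=t^{-1}\oneop\otimes(w^*)^{-1}\cdot\rbar$ with $w$ invertible non-unitary and $t>0$ chosen to rebalance the two norms; \cf Theorem \ref{thm:stdiffminimal}). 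So ``$v^{-1}=w^*$ plus equality of the normalization scalars'' cannot force $w^*\cdot w=\oneop_{\Xbar_{ij}}$; the component-wise phase structure must actually be used, and that is where the real work lies.

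Concretely, if you carry out the assembling you find $r'_{ij}=b\otimes a\cdot r_{ij}$ and $\rbar'_{ij}=a\otimes b\cdot\rbar_{ij}$, where $a\in\Hom_{\C^{(2)}}(X_{ij},X_{ij})$ and $b\in\Hom_{\C^{(2)}}(\Xbar_{ij},\Xbar_{ij})$ are unitaries built from the two families of isometries, the intertwiners between the matched simple summands, and the phases $w_h$. Writing $\oneop_{\Xbar_{ij}}\otimes a\cdot r_{ij}=\bar a\otimes\oneop_{X_{ij}}\cdot r_{ij}$ (Frobenius reciprocity), the first relation gives $u=b\cdot\bar a$, so unitarity of $u$ is equivalent to unitarity of $\bar a$, \ie to the $*$-preservation of the conjugation $a\mapsto\bar a$; and the compatibility of the same $u$ with the second relation is exactly the identity (\ref{eq:bullet}) for the standard solution. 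Both facts are the trace property for normalized solutions across unitarily equivalent simple summands --- the content of \cite[\Lem 3.7]{LoRo97} and of the factorial case of Proposition \ref{prop:trace} --- and neither follows from anything you have invoked. To close the proof you must either establish this trace property first, or replace the assembling by the sphericality-plus-convexity argument of Theorem \ref{thm:stdiffminimal}: both standard solutions give the exact scalar $d_{X_{ij}}$, so the spherical state applied to $w^*\cdot w$ and to $(w^*\cdot w)^{-1}$ equals $1$, and faithfulness together with $\mu+\mu^{-1}\geq 2$ forces $w^*\cdot w=\oneop_{\Xbar_{ij}}$.
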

 
Notably, by the computation shown in equation (\ref{eq:dijadditive}), the scalar dimension of factorial 1-arrows (\cf \cite[\p 114]{LoRo97}) is \emph{additive}, namely
$$d_{X_{ij}} = \sum_h d_{X_{ij,h}}$$
where $d_{X_{ij,h}}$ is defined in equation (\ref{eq:sdsimple}). Moreover, by Lemma \ref{lem:normalizedsol} we have $d_{X_{ij}} = d_{\Xbar_{ij}} \geq 1$.

\begin{remark}
Notice however that the factoriality of $X_{ij}$ is crucial in the proof of additivity of $d_{X_{ij}}$. Indeed there is no reason to expect that the number $\| \oneop_X \otimes (r^*_X\cdot r_X)\|$, defined by some solution of the conjugate equations $r_X, \rbar_X$ for a \emph{non-factorial} 1-arrow $X$, is equal to $\sum_h \| \oneop_{X_h} \otimes (r^*_{X_h}\cdot r_{X_h})\|$, where $X = \oplus_h X_h$, not even if the $X_h$ are simple sub-1-arrows of $X$ and if we consider as before $r_X:= \oplus_h r_{X_h}, \rbar_X := \oplus_h \rbar_{X_h}$.
\end{remark}

We introduce below another notion of standardness for solutions of the conjugate equations, for \emph{arbitrary} 1-arrows, which is motivated by the theory of minimal index in the von Neumann algebra context and which boils down to the one given in \cite{LoRo97} in the factorial case. With this notion we obtain a \emph{weighted} additivity formula for the scalar dimension, generalizing the one valid for the square root of the minimal index presented in Theorem \ref{thm:minimalconnectedfindim}, equation (\ref{eq:weightedadditivd}). We show then that standardness is intrinsically characterized by \lqq sphericality" and, independently, \lqq minimality" properties of the solutions themselves.

In what follows, let $\C$ be a rigid 2-\Cstar-category with finite-dimensional centers.

\begin{definition}
Let $X$ be a connected 1-arrow in $\C$, and denote as before $n=\dim(\Z_l^X(\N))$, $m=\dim(\Z_r^X(\M))$. We define the \textbf{matrix dimension}, or simply \textbf{dimension}, of $X$ as the $m\times n$ matrix $D_X$ with entries
$$(D_X)_{i,j} := d_{X_{ij}} \quad \text{or}\quad (D_X)_{i,j} := 0$$
for $i=1,\ldots,m$, $j= 1,\ldots,n$, depending on whether $X_{ij}$ is a non-zero factorial sub-1-arrow appearing in the decomposition of $X$, hence $d_{X_{ij}}$ is the number defined in equation (\ref{eq:sdfactorial}), or a zero 1-arrow (see Lemma \ref{lem:factorialdecomp}). The \textbf{scalar dimension} of $X$ is the number 
$$d_X := \|D_X\|$$ 
where $\|\cdot\|$ denotes the $l^2$-operator norm.

For an arbitrary 1-arrow $X$ in $\C$, we consider first the decomposition into connected sub-1-arrows $X_k$, $k=1\ldots,l$ (see Lemma \ref{lem:connecteddecomp}), and we define the matrix dimension and the scalar dimension of $X$ respectively as
$$D_X := \oplus_k D_{X_k}, \quad d_X := \|D_X\| = \max_{k}\, d_{X_k}$$
where $\oplus$ denotes the direct sum of matrices. 

If we consider the vector of scalar dimensions of the connected components $\vec{d}_X := (d_{X_1}, \ldots, d_{X_l})^t$, that we may call the \textbf{vector dimension} of $X$, then $d_X = \|\vec{d}_X\|_\infty$ by definition, where $\|\cdot\|_\infty$ is the $l^\infty$-norm on vectors.
\end{definition}

\begin{remark}\label{rmk:Dconj}
Notice that the scalar dimension $d_X$ coincides with the one defined in equations (\ref{eq:sdsimple}) and (\ref{eq:sdfactorial}) for simple and factorial 1-arrows respectively. Moreover, $d_X \geq 1$ by normalization, and 
$$D_{\Xbar} = (D_X)^t$$ 
by Lemma \ref{lem:lrcenters}. In particular, $d_X = d_{\Xbar}$. The matrix dimension is invariant under unitary equivalence, namely $D_X = D_Y$ if there is a unitary 2-arrow in $\Hom_{\C^{(2)}}(X,Y)$.
\end{remark}

Considering connected 1-arrows as the building blocks for this theory of dimension is due to the following fact, which generalizes Lemma \ref{lem:connectediffirred}. 

\begin{lemma}
Let $X$ be an arbitrary 1-arrow in $\C$ and consider its decomposition into factorial or zero sub-1-arrows $X=\oplus_{i,j} X_{ij}$ as in Lemma \ref{lem:factorialdecomp} (but without the connectedness assumption). Let $S$ be the $m\times n$ matrix with the $(i,j)$-th entry equal to $1$ (or any positive number) if $X_{ij}$ is non-zero and equal to $0$ if $X_{ij}$ is zero. Then $X$ is connected if and only if $S$ is indecomposable, \ie, if and only if $S S^t$, or equivalently $S^t S$, are irreducible square matrices.
\end{lemma}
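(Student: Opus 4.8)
The plan is to reduce the statement to the purely matrix-theoretic equivalence already recorded in Lemma \ref{lem:connectediffirred}, namely indecomposability of $S$ $\Leftrightarrow$ irreducibility of $SS^t$ $\Leftrightarrow$ irreducibility of $S^tS$, so that the only genuinely new content is the dictionary between \emph{connectedness of $X$} and \emph{indecomposability of $S$}. First I would set up the combinatorial picture. Both $\Z^X_l(\N) = \oneop_X \otimes \Z(\N)$ and $\Z^X_r(\M) = \Z(\M) \otimes \oneop_X$ are commutative $C^*$-subalgebras of the center of $\Hom_{\C^{(2)}}(X,X)$, and they commute with one another; their atoms are respectively the minimal projections $\oneop_X \otimes q_j$ ($j=1,\dots,n$) and $p_i \otimes \oneop_X$ ($i=1,\dots,m$), each family summing to $\oneop_X$. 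Hence the algebra they generate is commutative with atoms the nonzero products $w_{ij} := p_i \otimes \oneop_X \otimes q_j$, i.e. exactly the projections $v_{ij} \cdot v_{ij}^*$ cutting out the factorial pieces $X_{ij}$ of Lemma \ref{lem:factorialdecomp}. I would then introduce the bipartite graph $G$ on the vertex set $\{1,\dots,m\} \sqcup \{1,\dots,n\}$ with an edge $(i,j)$ precisely when $X_{ij}\neq 0$, equivalently $S_{ij}\neq 0$. A first useful observation is that $G$ has no isolated vertices: from $p_i \otimes \oneop_X = \sum_j w_{ij}$ and $p_i \otimes \oneop_X \neq 0$ at least one $w_{ij}\neq 0$, so $S$ has no zero row, and symmetrically no zero column.

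The heart of the argument is the identification of the projections of $\Z^X_l(\N) \cap \Z^X_r(\M)$ with the unions of connected components of $G$. A projection $z$ in this intersection lies in $\Z^X_r(\M)$, hence equals $\sum_{i \in A} p_i \otimes \oneop_X$ for some $A \subseteq \{1,\dots,m\}$, and simultaneously lies in $\Z^X_l(\N)$, hence equals $\sum_{j \in B} \oneop_X \otimes q_j$ for some $B \subseteq \{1,\dots,n\}$. Multiplying these two expressions for $z$ by an arbitrary nonzero atom $w_{i'j'}$ and comparing gives, whenever $w_{i'j'}\neq 0$, that $i' \in A \Leftrightarrow j' \in B$; that is, along every edge of $G$ the endpoint in $\{1,\dots,m\}$ lies in $A$ iff the endpoint in $\{1,\dots,n\}$ lies in $B$. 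Thus $A \sqcup B$ is closed under passing to neighbours in $G$, so it is a union of connected components. Conversely each component $K$ yields, via $A = K \cap \{1,\dots,m\}$ and $B = K \cap \{1,\dots,n\}$, a nonzero projection $z_K \in \Z^X_l(\N) \cap \Z^X_r(\M)$: one checks that $\sum_{i\in A} p_i\otimes\oneop_X$ and $\sum_{j\in B}\oneop_X\otimes q_j$ agree on every atom, using that an edge of $G$ has both endpoints in $K$ or neither. Hence the minimal projections of $\Z^X_l(\N) \cap \Z^X_r(\M)$ correspond bijectively to the connected components of $G$, and so $X$ is connected, i.e. $\Z^X_l(\N) \cap \Z^X_r(\M) = \CC \oneop_X$, if and only if $G$ is connected, if and only if $S$ is indecomposable.

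Finally I would invoke the matrix-theoretic equivalence between indecomposability of $S$ and irreducibility of $SS^t$ and of $S^tS$, already established in Lemma \ref{lem:connectediffirred} following \cite[\Lem 1.3.2 (b)]{GdHJ89}, which applies verbatim since $S$ has been shown to have no zero rows or columns. I expect the main subtlety to be bookkeeping rather than conceptual: the maps $s \mapsto \oneop_X \otimes s$ and $t \mapsto t \otimes \oneop_X$ may have kernels, so the $p_i$ and $q_j$ need not be projections and one must work throughout with the genuine projections $p_i \otimes \oneop_X$, $\oneop_X \otimes q_j$ and $w_{ij}$ inside the commutative center of $\Hom_{\C^{(2)}}(X,X)$, where Lemma \ref{lem:whiskerisrep} guarantees that orthogonality, summation to $\oneop_X$, and multiplicativity of the atoms behave exactly as in the von Neumann algebra case of Lemma \ref{lem:connectediffirred}.
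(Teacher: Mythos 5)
Your argument is correct and is exactly the intended one: the paper states this lemma without proof, implicitly deferring to Lemma \ref{lem:connectediffirred} (itself proved by ``suitable modification'' of the arguments in \cite{GdHJ89}), and what you write out — identifying the projections of $\Z^X_l(\N)\cap\Z^X_r(\M)$ with unions of connected components of the bipartite graph of nonzero atoms $p_i\otimes\oneop_X\otimes q_j$, after checking there are no zero rows or columns — is precisely that modification carried out in the 2-$C^*$-categorical setting. No gaps; the care you take with the possible kernels of $\oneop_X\otimes(\cdot)$ and $(\cdot)\otimes\oneop_X$ is the right bookkeeping point.
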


In particular, the matrix dimension $D_X$ of a connected 1-arrow $X$ is indecomposable, thus, using Theorem \ref{thm:minimalconnectedfindim}, equations (\ref{eq:PF-eqns}) and (\ref{eq:DdeterminesLambda}) as a guideline, we consider the \emph{unique} positive $l^2$-normalized Perron-Frobenius eigenvectors $\nu_X^{1/2}$ and $\mu_X^{1/2}$ respectively of $(D_X)^t D_X$ and $D_X (D_X)^t$ associated with the eigenvalue $d^{\,2}_X$ (the common spectral radius), namely
\begin{align*}
(D_X)^t D_X\, \nu_X^{1/2} &= d_X^2\, \nu_X^{1/2}\\
D_X (D_X)^t \mu_X^{1/2} &= d_X^2\, \mu_X^{1/2}.
\end{align*}

\begin{remark}\label{rmk:weightedadditivdcat}
Notice that $\nu_{\Xbar}^{1/2} = \mu_X^{1/2}$ and $\mu_{\Xbar}^{1/2} = \nu_X^{1/2}$, and that, \eg, $\nu_X^{1/2} = 1$ if $(D_X)^t D_X$ is a number (namely if it equals $d^{\,2}_X$), \ie, if $n=1$. 
Moreover, the previous Perron-Frobenius eigenvalue equations are equivalent to $D_X \nu_X^{1/2} = d_X \mu_X^{1/2}$, $(D_X)^t \mu_X^{1/2} = d_X \nu_X^{1/2}$, hence from $\|\nu_X^{1/2}\| = \|\mu_X^{1/2}\| = 1$ we obtain 
\begin{equation}\label{eq:disDangles}
d_X = \langle D_X \nu_X^{1/2}| \mu_X^{1/2}\rangle = \langle \nu_X^{1/2}| (D_X )^t \mu_X^{1/2}\rangle
\end{equation}
where $\langle\cdot|\cdot\rangle$ denotes the $l^2$-scalar product, \cf equation (\ref{eq:weightedadditivd}) in Theorem \ref{thm:minimalconnectedfindim}.
\end{remark}

In the notation of Lemma \ref{lem:connecteddecomp}, \ref{lem:factorialdecomp}, see also Lemma \ref{lem:lrcenters}, we give the following

\begin{definition}\label{def:stdsol}
Let $X$ and $\Xbar$ be conjugate 1-arrows in $\C$, and assume that $X$, or equivalently $\Xbar$, is connected. For every $i=1,\ldots,m$, $j= 1,\ldots,n$, if $X_{ij}$ is a non-zero factorial sub-1-arrow of $X$, choose a standard solution $r_{ij}$, $\rbar_{ij}$ of the conjugate equations for $X_{ij}$ and $\Xbar_{ij}$, set $r_{ij} = \rbar_{ij} = 0$ otherwise. We define
$$r_X := \sum_{i,j} \frac{\mu_{X,i}^{1/4}}{\nu_{X,j}^{1/4}}\, \overline{v}_{ij} \otimes v_{ij} \cdot r_{ij}, \quad 
\rbar_X := \sum_{i,j} \frac{\nu_{X,j}^{1/4}}{\mu_{X,i}^{1/4}}\, v_{ij} \otimes \overline{v}_{ij} \cdot \rbar_{ij}$$
also written as
$$r_X = \oplus_{i,j} \frac{\mu_{X,i}^{1/4}}{\nu_{X,j}^{1/4}}\, r_{ij}, \quad \rbar_X = \oplus_{i,j} \frac{\nu_{X,j}^{1/4}}{\mu_{X,i}^{1/4}}\, \rbar_{ij}$$
where $\nu_{X,j}^{1/4}$, $\mu_{X,i}^{1/4}$ denote the square roots of the entries of the eigenvectors $\nu_X^{1/2}$, $\mu_X^{1/2}$, and $v_{ij}\in\Hom_{\C^{(2)}}(X_{ij}, X)$, $\overline{v}_{ij}\in\Hom_{\C^{(2)}}(\Xbar_{ij}, \Xbar)$ are families of isometries with ranges $p_i \otimes \oneop_X \otimes q_j$ and $q_j \otimes \oneop_\Xbar \otimes p_i$ realizing the decompositions $X = \oplus_{i,j} X_{ij}$ and $\Xbar = \oplus_{i,j} \Xbar_{ij}$.

For an arbitrary pair of conjugate 1-arrows $X$ and $\Xbar$ in $\C$, consider first the decompositions into connected sub-1-arrows $X = \oplus_k X_k$ and $\Xbar = \oplus_k \Xbar_k$, and set
$$r_X := \oplus_k r_{X_k}, \quad \rbar_X := \oplus_k \rbar_{X_k}$$
where $\oplus$ are defined as before by means of families of isometries with ranges $z_k = f_k \otimes \oneop_X \otimes e_k$ and $\tilde z_k = e_k \otimes \oneop_\Xbar \otimes f_k$ realizing the decompositions. 

We call the 2-arrows $r_X$, $\rbar_X$ defined in this way a \textbf{standard solution} of the conjugate equations for $X$ and $\Xbar$ in $\C$.
\end{definition}

Let's first check that the \lqq weights" $\mu_{X,i}^{1/4}\nu_{X,j}^{-1/4}$ and $\nu_{X,j}^{1/4}\mu_{X,i}^{-1/4}$ that we add on the direct summands defining $r_X$ and $\rbar_X$ do not break the conjugate equations. Indeed
\begin{align*}
\rbar^*_X \otimes \oneop_X \cdot \oneop_X \otimes r_X 
&= \sum_{i,j} \frac{\nu_{X,j}^{1/4}}{\mu_{X,i}^{1/4}}\, \rbar^*_{ij} \otimes \oneop_X \cdot v^*_{ij} \otimes \overline{v}^*_{ij} \otimes \oneop_X \cdot \sum_{i',j'} \frac{\mu_{X,i'}^{1/4}}{\nu_{X,j'}^{1/4}}\, \oneop_X \otimes \overline{v}_{i'j'} \otimes v_{i'j'} \cdot \oneop_X \otimes r_{i'j'}\\
&= \sum_{i,j} \rbar^*_{ij} \otimes \oneop_X \cdot v^*_{ij} \otimes \oneop_{\Xbar_{ij}} \otimes v_{ij} \cdot \oneop_X \otimes r_{ij}\\
&= \sum_{i,j} v_{ij} \cdot \rbar^*_{ij} \otimes \oneop_{X_{ij}} \cdot \oneop_{X_{ij}} \otimes r_{ij} \cdot v^*_{ij} = \sum_{i,j} v_{ij} \cdot v^*_{ij} = \oneop_X
\end{align*}
using the fact that our weights are not affected by complex conjugation (they are positive real numbers), using orthogonality of the $\overline{v}_{ij}$, completeness of the $v_{ij}$, one of the conjugate equations for each $r_{ij}$, $\rbar_{ij}$, the interchange law and $\oneop_{I_\M} \otimes v_{ij} = v_{ij}$, $v_{ij} \otimes \oneop_{I_\N}= v_{ij}$. Similarly $r^*_X \otimes \oneop_\Xbar \cdot \oneop_\Xbar \otimes \rbar_X = \oneop_\Xbar$, hence $r_X$, $\rbar_X$ are indeed a solution of the conjugate equations as we claimed.  

\begin{proposition}\label{prop:stdisnormalized}
Let $X$ and $\Xbar$ be conjugate 1-arrows in $\C$. Then any standard solution $r_X$, $\rbar_X$ of the conjugate equations for $X$ and $\Xbar$ is normalized, \ie, we have $\|\oneop_X \otimes (r_X^* \cdot r_X)\| = \|(\rbar_X^* \cdot \rbar_X) \otimes \oneop_X\| = d_X$. Moreover, if $X$, or equivalently $\Xbar$, is connected, then
\begin{equation}\label{eq:stdisscalarifconnected}
\oneop_X \otimes (r_X^* \cdot r_X) = (\rbar_X^* \cdot \rbar_X) \otimes \oneop_X = d_X \,\oneop_X
\end{equation}
\ie, both 2-arrows are scalar multiples of $\oneop_X$ in $\Hom_{\C^{(2)}}(X,X)$, and the scalar is $d_X$, \footnote{This is a generalization of the fact that $\Ind(E^0)$ is a \emph{scalar} operator in $\Z(\M)$, namely $\Ind(E^0) = c \oneop$, where $c = \|\Ind(E^0)\|$ is the minimal index of a connected inclusion of von Neumann algebras $\N\subset\M$ as in Section \ref{sec:inclusions} and $E^0$ is the minimal expectation in $E(\M,\N)$.}.

If $X$ and $\Xbar$ are arbitrary, then 
\begin{equation}\label{eq:stdisscalaroplus}
\oneop_X \otimes (r_X^* \cdot r_X) = (\rbar_X^* \cdot \rbar_X) \otimes \oneop_X = \oplus_k d_{X_k} \oneop_{X_k}
\end{equation}
where $X=\oplus_k X_k$ is the decomposition into connected components as in Lemma \ref{lem:connecteddecomp}.

Similarly exchanging $X$ with $\Xbar$ and $r_X$ with $\rbar_X$, and recalling that $d_X = d_{\Xbar}$.
In particular, neither $\oneop_X \otimes (r_X^* \cdot r_X)$, nor $(\rbar_X^* \cdot \rbar_X) \otimes \oneop_X$, nor their norm depend on the choice of standard solutions.
\end{proposition}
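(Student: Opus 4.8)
The plan is to reduce everything to the factorial blocks $r_{ij}$, $\rbar_{ij}$ and then invoke the Perron--Frobenius eigenvalue relations of Remark \ref{rmk:weightedadditivdcat}. I first treat the connected case. Expanding $r_X^* \cdot r_X$ from Definition \ref{def:stdsol}, applying the interchange law, and using the orthonormality of the defining isometries, $v_{ij}^* \cdot v_{i'j'} = \delta_{ii'}\delta_{jj'}\,\oneop_{X_{ij}}$ and $\overline{v}_{ij}^* \cdot \overline{v}_{i'j'} = \delta_{ii'}\delta_{jj'}\,\oneop_{\Xbar_{ij}}$, every cross term collapses to $\delta_{ii'}\delta_{jj'}\,\oneop_{\Xbar_{ij}\otimes X_{ij}}$, while the weights square, giving
$$r_X^* \cdot r_X = \sum_{i,j} \frac{\mu_{X,i}^{1/2}}{\nu_{X,j}^{1/2}}\, r_{ij}^* \cdot r_{ij} \in \Z(\N), \qquad \rbar_X^* \cdot \rbar_X = \sum_{i,j} \frac{\nu_{X,j}^{1/2}}{\mu_{X,i}^{1/2}}\, \rbar_{ij}^* \cdot \rbar_{ij} \in \Z(\M).$$

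The key computation is the identification of the blocks, for which I would prove $r_{ij}^* \cdot r_{ij} = d_{X_{ij}}\, q_j$ and $\rbar_{ij}^* \cdot \rbar_{ij} = d_{X_{ij}}\, p_i$. For the support, apply Lemma \ref{lem:whiskerisrep} to the $2$-arrow $r_{ij}$: for $s\in\Z(\N)$ it yields $r_{ij} \cdot s = \oneop_{\Xbar_{ij}} \otimes (\oneop_{X_{ij}} \otimes s) \cdot r_{ij}$, and by factoriality of $X_{ij}$ one has $\oneop_{X_{ij}} \otimes s = \chi_{ij}(s)\,\oneop_{X_{ij}}$ for a character $\chi_{ij}$; since $v_{ij} \cdot v_{ij}^* = p_i \otimes \oneop_X \otimes q_j$, a short whiskering computation gives $\chi_{ij}(q_{j'}) = \delta_{jj'}$, hence $r_{ij} \cdot q_{j'} = \delta_{jj'}\, r_{ij}$ and $r_{ij}^* \cdot r_{ij}$ is supported on $q_j$. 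The scalar is then read off from $\oneop_{X_{ij}} \otimes (r_{ij}^* \cdot r_{ij}) = d_{X_{ij}}\,\oneop_{X_{ij}}$, which holds by factoriality, normalization and the defining equation (\ref{eq:sdfactorial}). The statement for $\rbar_{ij}$ is the mirror argument on the right, matching the scalars via Lemma \ref{lem:normalizedsol}.

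Substituting the blocks, the coefficient of $q_j$ in $r_X^* \cdot r_X$ becomes $\nu_{X,j}^{-1/2}\sum_i (D_X)_{i,j}\,\mu_{X,i}^{1/2} = \nu_{X,j}^{-1/2}\big((D_X)^t\mu_X^{1/2}\big)_j$, and this collapses to the constant $d_X$ \emph{precisely} because of the eigenvalue relation $(D_X)^t\mu_X^{1/2} = d_X\,\nu_X^{1/2}$ from Remark \ref{rmk:weightedadditivdcat}; this is exactly the point at which the weights $\mu_{X,i}^{1/4}\nu_{X,j}^{-1/4}$ cannot be dropped. Thus $r_X^* \cdot r_X = d_X\sum_j q_j$, and whiskering gives $\oneop_X \otimes (r_X^* \cdot r_X) = d_X\sum_j \oneop_X \otimes q_j = d_X\,\oneop_X$ since $\sum_j \oneop_X \otimes q_j = \oneop_X$. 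The symmetric computation, now invoking $D_X\,\nu_X^{1/2} = d_X\,\mu_X^{1/2}$, yields $(\rbar_X^* \cdot \rbar_X) \otimes \oneop_X = d_X\,\oneop_X$, establishing (\ref{eq:stdisscalarifconnected}) and the normalization in the connected case.

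For arbitrary $X = \oplus_k X_k$ with $r_X = \oplus_k r_{X_k}$ (Lemma \ref{lem:connecteddecomp} and Definition \ref{def:stdsol}), the same orthogonality argument gives $r_X^* \cdot r_X = \sum_k s_k$ with $s_k := r_{X_k}^* \cdot r_{X_k} \in \Z(\N)$ supported on $e_k$, again via Lemma \ref{lem:whiskerisrep} applied to $r_{X_k}$ together with $\oneop_{X_k} \otimes e_{k'} = \delta_{kk'}\,\oneop_{X_k}$. Using the subrepresentation and direct-sum identities of Lemma \ref{lem:whiskerisrep} and the orthogonality $\oneop_{X_{k'}} \otimes e_k = w_{k'}^* \cdot z_k \cdot w_{k'} = 0$ for $k'\neq k$ (where $z_k = \oneop_X \otimes e_k = w_k \cdot w_k^*$), I obtain $\oneop_X \otimes s_k = w_k \cdot (\oneop_{X_k} \otimes s_k) \cdot w_k^* = d_{X_k}\, w_k \cdot w_k^*$ by the connected case, whence $\oneop_X \otimes (r_X^* \cdot r_X) = \oplus_k d_{X_k}\,\oneop_{X_k}$, which is (\ref{eq:stdisscalaroplus}), with norm $\max_k d_{X_k} = d_X$; the mirror computation handles $\rbar_X$. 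Since the right-hand sides are written purely through the intrinsic numbers $d_{X_k}$, independence of the choice of standard solution is automatic. The main obstacle is the block identity $r_{ij}^* \cdot r_{ij} = d_{X_{ij}}\, q_j$: pinning down both the support $q_j$ and the scalar requires the whiskering calculus of Lemma \ref{lem:whiskerisrep} and careful bookkeeping of which minimal central projection each factorial component sees.
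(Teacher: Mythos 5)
Your proof is correct and follows essentially the same route as the paper's: decompose $X$ into the factorial blocks $X_{ij}$, use normalization and factoriality to evaluate each block, and let the Perron--Frobenius relations $(D_X)^t\mu_X^{1/2}=d_X\,\nu_X^{1/2}$ and $D_X\,\nu_X^{1/2}=d_X\,\mu_X^{1/2}$ collapse the weighted sum to $d_X\,\oneop_X$; the only (cosmetic) difference is that you compute $r_X^*\cdot r_X$ inside $\Z(\N)$ and whisker at the end, whereas the paper whiskers first and works inside $\Hom_{\C^{(2)}}(X,X)$ via the intermediate sub-1-arrows $X_j$. One small caveat: the block identity $r_{ij}^*\cdot r_{ij}=d_{X_{ij}}\,q_j$ presupposes that the $q_j$ are chosen to be minimal projections of $\Z(\N)$ (the paper explicitly warns they need not be projections, since $\oneop_X\otimes(\cdot)$ may have a kernel); this choice is always possible, and the conclusion is unaffected after tensoring with $\oneop_X$, but it should be stated.
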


\begin{proof}
Assume first that $X$, hence $\Xbar$, is connected and compute
$$\oneop_X \otimes (r_X^* \cdot r_X) = \sum_{i,j} \frac{\mu_{X,i}^{1/2}}{\nu_{X,j}^{1/2}}\, \oneop_X \otimes (r_{ij}^* \cdot r_{ij}) = \sum_{i,j,j'} \frac{\mu_{X,i}^{1/2}}{\nu_{X,j}^{1/2}}\, (t_{j'} \cdot \oneop_{X_{j'}} \cdot t_{j'}^*) \otimes (r_{ij}^* \cdot \oneop_{\Xbar_{ij}} \otimes \oneop_{X_{ij}} \cdot r_{ij})$$
where $X_{j'}$ is the sub-1-arrow of $X$ given by a projection $\oneop_X \otimes q_{j'}$ in $\Hom_{\C^{(2)}}(X,X)$, minimal in $\Z_l^X(\N)$ if $X:\N\rightarrow\M$, and $t_{j'}\in\Hom_{\C^{(2)}}(X_{j'},X)$ is the associated isometry such that $t_{j'}\cdot t^*_{j'} = \oneop_X \otimes q_{j'}$. Notice that $\sum_{j'} \oneop_X \otimes q_{j'} = \oneop_{X}$. Moreover, $\oneop_{X_{j'}} = t^*_{j'}\cdot \oneop_X \otimes q_{j'} \cdot t_{j'} = \oneop_{X_{j'}} \otimes q_{j'}$ for every $j' = 1,\ldots,n$, and similarly we get $\oneop_{X_{ij}} = p_i \otimes \oneop_{X_{ij}} \otimes q_j$ and $\oneop_{\Xbar_{ij}} = q_j \otimes \oneop_{\Xbar_{ij}} \otimes p_i$ for every $i=1,\ldots,m$, $j=1,\ldots,n$, even when $X_{ij}$ and $\Xbar_{ij}$ are zero 1-arrows. In particular, $\oneop_{X_{j'}} \otimes \oneop_{\Xbar_{ij}} = \delta_{j,j'} \oneop_{X_j} \otimes \oneop_{\Xbar_{ij}}$ because, \eg, $\oneop_X \otimes (q_{j'}\cdot q_j) = \oneop_X \otimes q_{j'} \cdot \oneop_X \otimes q_j = 0$ and $\oneop_{X_{j'}} \otimes (\cdot)$ is a subrepresentation of $\oneop_X \otimes (\cdot)$ for $\Z(\N)$ by Lemma \ref{lem:whiskerisrep}. Thus we continue
$$=  \sum_{i,j} \frac{\mu_{X,i}^{1/2}}{\nu_{X,j}^{1/2}}\, t_{j} \cdot \oneop_{X_{j}} \otimes (r_{ij}^* \cdot r_{ij}) \cdot t_{j}^* 
= \sum_{i,j} \frac{\mu_{X,i}^{1/2}}{\nu_{X,j}^{1/2}}\, (D_X)_{i,j}\, t_{j} \cdot t_{j}^*
= d_X\, \oneop_X$$
using the fact that $\oneop_{X_{j}} \otimes (r_{ij}^* \cdot r_{ij})$ is a scalar multiple of $\oneop_{X_{j}}$ because the left center of $X_{j}$ is trivial \footnote{We might have introduced the notion of \lqq left or right factoriality" for 1-arrows, \cf Definition \ref{def:simpleconnectedfactorial}, but we refrain from doing so at the moment.}, this scalar is $d_{X_{ij}}$ (or $0$ if $X_{ij}$ is zero) because $\oneop_{X_{ij}} \otimes (r_{ij}^* \cdot r_{ij}) = d_{X_{ij}} \oneop_{X_{ij}}$ by Lemma \ref{lem:normalizedsol} and equation (\ref{eq:sdfactorial}), and we invoke again Lemma \ref{lem:whiskerisrep} on the sub-1-arrow $X_{ij}$ of $X_j$ given by the projection $p_i \otimes \oneop_{X_j}$. 
The last equality is obtained by summing first over $i$ and using $(D_X)^t \mu_X^{1/2} = d_X \nu_X^{1/2}$ (see Remark \ref{rmk:weightedadditivdcat}), thus each $\nu_{X,j}^{1/2}$ cancels, and then summing over $j$. The equality $(\rbar_X^* \cdot \rbar_X) \otimes \oneop_X = d_X\,\oneop_X$ is obtained from a symmetric argument using the other equation $D_X \nu_X^{1/2} = d_X \mu_X^{1/2}$, thus we have shown equation (\ref{eq:stdisscalarifconnected}) and the normalization condition follows. 

In the general case, equation (\ref{eq:stdisscalaroplus}) and the normalization condition follow from the very definition of standard solutions for non-connected 1-arrows, indeed $\oneop_X \otimes (r_X^* \cdot r_X) = (\rbar_X^* \cdot \rbar_X) \otimes \oneop_X = \sum_k d_{X_k} z_k$, where $z_k$ are the minimal projections in $\Z^X_l(\N) \cap \Z^X_r(\M)$. 
\end{proof}

\begin{remark}\label{rmk:non-std}
If we drop the weights given by the Perron-Frobenius eigenvectors $\nu_X^{1/2}$ and $\mu_X^{1/2}$ in the definition of standard solutions for a connected 1-arrow $X$, and consider $\oplus_{i,j} r_{ij}$, $\oplus_{i,j} \rbar_{ij}$ instead, where the $r_{ij}$, $\rbar_{ij}$ are standard and normalized, what we get is again a solution of the conjugate equations (by the same computation), but this solution is in general \emph{not} normalized, nor it fulfills the analogue of equation (\ref{eq:stdisscalarifconnected}) in Proposition \ref{prop:stdisnormalized}. The normalization condition is satisfied if and only if the 1-arrow $X$ itself fulfills $\|D_X\|_\infty = \|D_X\|_1$ (see \cite[\Sec 5.6.4, 5.6.5]{HoJoBook} for the definition of the $l^\infty$ and $l^1$ matrix norms), while the analogue of equation (\ref{eq:stdisscalarifconnected}) holds if and only if $d_X^{-1}D_X$ is both column and row-stochastic, hence in particular if $\|D_X\|_\infty = \|D_X\|_1 = \|D_X\|_2$. The condition on $d_X^{-1}D_X$ is equivalent to $D_X 1_n = d_X 1_m$, $(D_X)^t 1_m = d_X 1_n$, where $1_k$ denotes the vector in $\CC^k$ with coordinates all equal to $1$, thus we get $n=m$ and $\nu_{X,j}^{1/2} = \mu_{X,i}^{1/2} = 1/\sqrt{n}$ for every $i$ and $j$. In other words, a solution $\oplus_{i,j} r_{ij}$, $\oplus_{i,j} \rbar_{ij}$ fulfills the analogue of equation (\ref{eq:stdisscalarifconnected}), as any standard solution does, if and only if $\oplus_{i,j} r_{ij}$, $\oplus_{i,j} \rbar_{ij}$ is itself standard, and this can only happen if the dimensions of the left and right center of $X$ are equal.

Moreover, $\oplus_{i,j} r_{ij}$, $\oplus_{i,j} \rbar_{ij}$ does \emph{not} correspond in general to the minimal conditional expectation in the special case of connected von Neumann algebra inclusions, \cf equation (\ref{eq:DdeterminesLambda}) in Theorem \ref{thm:minimalconnectedfindim}. We shall come back later to this minimality property, and generalize it to 2-\Cstar-categories.  
\end{remark}

Standard solutions $r_X$, $\overline r_X$, or better the associated left and right inverses $\varphi_X$ and $\psi_X$ defined on the unital \Cstar-algebra $\Hom_{\C^{(2)}}(X,X)$ by equation (\ref{eq:lrinv}), with values in $\Z(\N)$ and $\Z(\M)$ respectively, enjoy the following \emph{trace property}, which does \emph{not} however characterize them among all the solutions of the conjugate equations. In order to show the trace property we need a preliminary lemma.

\begin{lemma}\label{lem:inequivijk}
Let $X:\N\rightarrow\M$ be a 1-arrow in $\C$ (not necessarily connected) and let $X = \oplus_{i,j} X_{ij}$ be the decomposition into factorial or zero sub-1-arrows as in Lemma \ref{lem:factorialdecomp}. Then $\Hom_{\C^{(2)}}(X_{ij},X_{i'j'})$ is the zero vector space whenever $i\neq i'$ or $j\neq j'$.

Similarly, let $X = \oplus_k X_k$ be the decomposition into connected sub-1-arrows as in Lemma \ref{lem:connecteddecomp}. Then $\Hom_{\C^{(2)}}(X_k,X_{k'})$ is the zero vector space whenever $k \neq k'$.
\end{lemma}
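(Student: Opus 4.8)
The plan is to reduce both assertions to the elementary fact that in a unital \Cstar-algebra a matrix element between two orthogonal \emph{central} projections vanishes; the entire point is that the projections cutting out the summands $X_{ij}$ (resp. $X_k$) are central in the relative commutant $\Hom_{\C^{(2)}}(X,X)$.

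First I would handle the factorial decomposition $X=\oplus_{i,j}X_{ij}$. Recall from Lemma \ref{lem:factorialdecomp} that each non-zero $X_{ij}$ comes with an isometry $v_{ij}\in\Hom_{\C^{(2)}}(X_{ij},X)$ satisfying $v_{ij}\cdot v_{ij}^* = p_i\otimes\oneop_X\otimes q_j =: z_{ij}$ and $v_{ij}^*\cdot v_{ij}=\oneop_{X_{ij}}$, and that by the interchange law $z_{ij}=(p_i\otimes\oneop_X)\cdot(\oneop_X\otimes q_j)$ is the product of an element of $\Z_r^X(\M)$ and an element of $\Z_l^X(\N)$, both lying in the center of $\Hom_{\C^{(2)}}(X,X)$ by Definition \ref{def:simpleconnectedfactorial}; hence $z_{ij}$ is central. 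Given $t\in\Hom_{\C^{(2)}}(X_{ij},X_{i'j'})$ I would pass to $\hat t:=v_{i'j'}\cdot t\cdot v_{ij}^*\in\Hom_{\C^{(2)}}(X,X)$, from which $t=v_{i'j'}^*\cdot\hat t\cdot v_{ij}$ is recovered since the $v$'s are isometries. A one-line computation using $v_{i'j'}^*\cdot v_{i'j'}=\oneop_{X_{i'j'}}$ and $v_{ij}^*\cdot v_{ij}=\oneop_{X_{ij}}$ gives $\hat t = z_{i'j'}\cdot\hat t\cdot z_{ij}$. Now if $(i,j)\neq(i',j')$ then $i\neq i'$ or $j\neq j'$, and because the $p_i\otimes\oneop_X$ (resp. the $\oneop_X\otimes q_j$) are mutually orthogonal, $z_{i'j'}\cdot z_{ij}=0$; centrality then yields $\hat t = z_{i'j'}\cdot z_{ij}\cdot\hat t = 0$, whence $t=0$.

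The only configuration not covered by that computation is when one of $X_{ij},X_{i'j'}$ is a zero 1-arrow, but there $\oneop_{X_{ij}}=0$ (or $\oneop_{X_{i'j'}}=0$), so $t=t\cdot\oneop_{X_{ij}}=0$ (or $t=\oneop_{X_{i'j'}}\cdot t=0$) trivially. For the second assertion I would run the identical argument with the connected summands $X_k$ of Lemma \ref{lem:connecteddecomp}: here the cutting projections are $z_k=w_k\cdot w_k^*$, the minimal projections of $\Z_l^X(\N)\cap\Z_r^X(\M)$, which is a subalgebra of $\Z(\Hom_{\C^{(2)}}(X,X))$, so the $z_k$ are again central and mutually orthogonal, and for $k\neq k'$ the same manipulation gives $w_{k'}\cdot t\cdot w_k^* = z_{k'}\cdot z_k\cdot(w_{k'}\cdot t\cdot w_k^*)=0$.

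I do not anticipate a genuine obstacle. The single point needing care is the identity $\hat t=z_{i'j'}\cdot\hat t\cdot z_{ij}$ (and its analogue for the $z_k$), which is immediate from the isometry relations; the conceptual input, namely that $z_{ij}$ and $z_k$ are central, is already established in Definition \ref{def:simpleconnectedfactorial} and in the construction preceding Lemma \ref{lem:connecteddecomp}, so no new structural result is required.
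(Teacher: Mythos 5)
Your proof is correct and rests on exactly the same mechanism as the paper's: the summands $X_{ij}$ (resp.\ $X_k$) are cut out by mutually orthogonal projections $p_i\otimes\oneop_X\otimes q_j$ (resp.\ $z_k$) lying in $\Z(\Hom_{\C^{(2)}}(X,X))$, so any 2-arrow between distinct summands is annihilated. The only cosmetic difference is that you conjugate by the isometries to work inside $\Hom_{\C^{(2)}}(X,X)$ and invoke centrality there, whereas the paper computes directly in $\Hom_{\C^{(2)}}(X_{ij},X_{i'j'})$ via the interchange law and Lemma \ref{lem:whiskerisrep}; both are one-line verifications of the same fact.
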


\begin{proof}
Simply observe that $t \in \Hom_{\C^{(2)}}(X_{ij},X_{i'j'})$ fulfills $t = \oneop_{X_{i'j'}} \cdot t \cdot \oneop_{X_{ij}} = p_{i'} \otimes \oneop_{X_{i'j'}} \otimes q_{j'} \cdot \oneop_{I_\M} \otimes t \otimes \oneop_{I_\N} \cdot p_i \otimes \oneop_{X_{ij}} \otimes q_j = (p_i \cdot p_{i'}) \otimes t \otimes (q_j \cdot q_{j'}) = \delta_{i,i'} \delta_{j,j'} t$ by the orthogonality of the $\oneop_X \otimes q_j$ and of the $p_i \otimes \oneop_X$, and by Lemma \ref{lem:whiskerisrep}, as in the proof of Proposition \ref{prop:stdisnormalized}.

Similarly for $\Hom_{\C^{(2)}}(X_k,X_{k'})$ by observing that $\oneop_{X_k} = \oneop_{X_k} \otimes e_k = f_k \otimes \oneop_{X_k}$ and by the orthogonality of the $\oneop_X \otimes e_k = f_k \otimes \oneop_X$.
\end{proof}

\begin{proposition}\label{prop:trace}
Let $X$ and $\Xbar$ be conjugate 1-arrows in $\C$. Let $r_X$, $\rbar_X$ be a standard solution of the conjugate equations for $X$ and $\Xbar$. Then the associated left and right inverses of $X$ are tracial, namely
$$\varphi_X(s \cdot t) = \varphi_X(t \cdot s), \quad \psi_X(s \cdot t) = \psi_X(t \cdot s)$$
for every $s,t\in \Hom_{\C^{(2)}}(X,X)$. Either equality is equivalent to the following one 
\begin{equation}\label{eq:bullet}
\oneop_\Xbar \otimes \rbar_X^* \cdot \oneop_\Xbar \otimes t \otimes \oneop_\Xbar \cdot r_X \otimes \oneop_\Xbar = r_X^* \otimes \oneop_\Xbar \cdot \oneop_\Xbar \otimes t \otimes \oneop_\Xbar \cdot \oneop_\Xbar \otimes \rbar_X
\end{equation}
between elements of $\Hom_{\C^{(2)}}(\Xbar,\Xbar)$, for every $t\in \Hom_{\C^{(2)}}(X,X)$.
\end{proposition}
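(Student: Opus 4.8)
The plan is to prove the two assertions separately: first that a \emph{standard} solution yields tracial left and right inverses, and then the equivalence of either trace identity with (\ref{eq:bullet}), the latter holding for an arbitrary (not necessarily standard) solution.

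For the trace property I would first reduce to the connected case. By Lemma \ref{lem:inequivijk} the relative commutant splits as an orthogonal direct sum $\Hom_{\C^{(2)}}(X,X) = \oplus_k \Hom_{\C^{(2)}}(X_k,X_k)$ over the connected components, and since a standard solution is by Definition \ref{def:stdsol} the corresponding direct sum $r_X = \oplus_k r_{X_k}$, the maps $\varphi_X,\psi_X$ are block-diagonal with blocks $\varphi_{X_k},\psi_{X_k}$; so it suffices to treat $X$ connected. In that case Lemma \ref{lem:inequivijk} again gives $\Hom_{\C^{(2)}}(X,X) = \oplus_{i,j}\Hom_{\C^{(2)}}(X_{ij},X_{ij})$, every $t$ is block-diagonal with blocks $t_{ij} = v_{ij}^*\cdot t\cdot v_{ij}$, and products are computed blockwise since the off-diagonal Hom-spaces vanish. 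Inserting the explicit form $r_X = \sum_{i,j}\mu_{X,i}^{1/4}\nu_{X,j}^{-1/4}\,\overline{v}_{ij}\otimes v_{ij}\cdot r_{ij}$ into the definition (\ref{eq:lrinv}) and using the interchange law together with the orthogonality relations $\overline{v}_{ij}^*\cdot\overline{v}_{kl} = 0$, $v_{ij}^*\cdot v_{kl} = 0$ for $(i,j)\neq(k,l)$, the cross terms drop out and I expect to obtain $\varphi_X(t) = \sum_{i,j}\mu_{X,i}^{1/2}\nu_{X,j}^{-1/2}\,\varphi_{X_{ij}}(t_{ij})$, and likewise for $\psi_X$. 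The weights are positive scalars, hence irrelevant to the trace identity, which thus reduces to the traciality of $\varphi_{X_{ij}}$ and $\psi_{X_{ij}}$ on each factorial block. This last fact is the content of \cite{LoRo97} in the factorial case (the same ``obvious modification'' already invoked for Lemma \ref{lem:uniquestdij}), since $r_{ij},\rbar_{ij}$ is a standard solution for the factorial $X_{ij}$.

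For the equivalence with (\ref{eq:bullet}) I would work with an arbitrary solution and use the two ``partial transpose'' identities produced by the conjugate equations: writing $L(t)$ and $R(t)$ for the left- and right-hand sides of (\ref{eq:bullet}), both lying in $\Hom_{\C^{(2)}}(\Xbar,\Xbar)$, the snake identities and the interchange law give
$$\oneop_\Xbar\otimes t\cdot r_X = R(t)\otimes\oneop_X\cdot r_X,\qquad r_X^*\cdot\oneop_\Xbar\otimes t = r_X^*\cdot L(t)\otimes\oneop_X,$$
the second being the adjoint of the first once one checks $R(t)^* = L(t^*)$. Substituting these into $\varphi_X(s\cdot t) = r_X^*\cdot\oneop_\Xbar\otimes s\cdot\oneop_\Xbar\otimes t\cdot r_X$ and $\varphi_X(t\cdot s) = r_X^*\cdot\oneop_\Xbar\otimes t\cdot\oneop_\Xbar\otimes s\cdot r_X$, and collapsing with the interchange law, yields the single formula
$$\varphi_X(s\cdot t) - \varphi_X(t\cdot s) = r_X^*\cdot (R(t)-L(t))\otimes s\cdot r_X.$$
Thus (\ref{eq:bullet}) immediately forces traciality of $\varphi_X$. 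Conversely, if $\varphi_X$ is tracial then $r_X^*\cdot u\otimes s\cdot r_X = 0$ for all $s$, where $u := R(t)-L(t)$; rewriting $u\otimes s = (u\otimes\oneop_X)\cdot(\oneop_\Xbar\otimes s)$ and applying the first transpose identity to $s$ identifies this expression with $\psi_{\Xbar}(u\cdot R(s))$, where $\psi_{\Xbar}$ is the (faithful) right inverse of $\Xbar$ determined by $\rbar_X,r_X$. Since $t\mapsto R(t)$ is a linear bijection onto $\Hom_{\C^{(2)}}(\Xbar,\Xbar)$, this says $\psi_{\Xbar}(u\cdot v) = 0$ for every $v$, and taking $v = u^*$ with faithfulness gives $u = 0$, i.e. (\ref{eq:bullet}). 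The argument for $\psi_X$ is entirely symmetric, exchanging the roles of $r_X$ and $\rbar_X$, so both trace identities are equivalent to (\ref{eq:bullet}).

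I expect the main obstacle to be the careful verification of the two transpose identities directly from the conjugate equations, keeping track of the sources and targets of every 1-arrow and of which snake straightens which strand; this is where the graphical calculus does the real work, and the precise bookkeeping of the whiskering maps (Lemma \ref{lem:whiskerisrep}) is needed to make the collapses legitimate. By contrast, the weight cancellation and the reduction to factorial blocks are mechanical once Lemma \ref{lem:inequivijk} is in hand.
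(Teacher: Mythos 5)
Your proposal is correct and rests on the same two pillars as the paper's proof, but it assembles them in the opposite order and fills in a step the paper only cites. For ``standard $\Rightarrow$ tracial'', the paper proves equation (\ref{eq:bullet}) blockwise: by Lemma \ref{lem:inequivijk} applied to $\Xbar$ one compresses with the isometries $\overline{v}_{ij}$, the weights cancel because each side of (\ref{eq:bullet}) carries one $r$-type and one $\rbar$-type leg (so the factors $\mu_{X,i}^{1/4}\nu_{X,j}^{-1/4}$ and $\nu_{X,j}^{1/4}\mu_{X,i}^{-1/4}$ multiply to $1$), and the factorial case is quoted from \cite[\Lem 3.7]{LoRo97}, \cite[\Prop 2.4, 2.6]{BKLR15}; traciality then follows from the equivalence. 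You instead prove the trace identity blockwise: your formula $\varphi_X(t)=\sum_{i,j}\mu_{X,i}^{1/2}\nu_{X,j}^{-1/2}\,\varphi_{X_{ij}}(t_{ij})$ is right, the (non-cancelling) weights occur identically in $\varphi_X(s\cdot t)$ and $\varphi_X(t\cdot s)$, and the factorial case does the rest. Both routes work. For the equivalence with (\ref{eq:bullet}) the paper simply refers to \cite[\Lem 2.3 (c)]{LoRo97}, \cite[\Prop 2.6]{BKLR15}, noting that triviality of the centers plays no role there; your self-contained argument via the partial-transpose identities, faithfulness of $\psi_{\Xbar}$ and bijectivity of $s\mapsto L(s)$ is a legitimate replacement and is essentially what those references do.

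One correction: your two transpose identities have $L$ and $R$ interchanged. The conjugate equations give $\oneop_\Xbar\otimes t\cdot r_X = L(t)\otimes\oneop_X\cdot r_X$ with $L(t)$ the \emph{left}-hand side of (\ref{eq:bullet}) (apply $w\mapsto\oneop_\Xbar\otimes\rbar_X^*\cdot w\otimes\oneop_\Xbar$ to both sides and use $\oneop_\Xbar\otimes\rbar_X^*\cdot r_X\otimes\oneop_\Xbar=\oneop_\Xbar$), and the adjoint relation $R(t)=L(t^*)^*$ then yields $r_X^*\cdot\oneop_\Xbar\otimes t = r_X^*\cdot R(t)\otimes\oneop_X$. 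As you state them, the two identities together would already force $L=R$ for an arbitrary solution, which is false. The slip is harmless: your final display just acquires the opposite sign, $r_X^*\cdot(L(t)-R(t))\otimes s\cdot r_X$, and both the forward and converse directions go through unchanged since the target identity $L(t)=R(t)$ and the bijectivity of $s\mapsto L(s)$, $s\mapsto R(s)$ are symmetric in the two rotations. But it confirms that the bookkeeping you flag as the main obstacle is exactly where the care is needed.
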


\begin{proof}
The stated equivalence between either trace property and equation (\ref{eq:bullet}) follows by the conjugate equations and Frobenius reciprocity, with the same arguments of \cite[\Lem 2.3 (c)]{LoRo97}, \cite[\Prop 2.6]{BKLR15}. See the latter reference for a proof using graphical calculus and observe that the triviality assumption on the centers plays no role there.

Hence we have to show that standardness of $r_X$, $\rbar_X$ does imply equation (\ref{eq:bullet}). Assume first that $X$, hence $\Xbar$, is connected. By Lemma \ref{lem:inequivijk} applied to $\Xbar$, we only have to show the equality after multiplying on the left with $\overline{v}_{ij}^*$ and on the right with $\overline{v}_{ij}$, for every $i,j$, where $\overline{v}_{ij}\in\Hom_{\C^{(2)}}(\Xbar_{ij}, \Xbar)$ is the family of isometries in the definition of $r_X$, $\rbar_X$. The coefficients coming from $\mu_{X,i}^{1/4}$ and $\nu_{X,j}^{1/4}$ in the definition of $r_X$, $\rbar_X$ cancel on both sides, and we know by \cite[\Lem 3.7]{LoRo97}, \cite[\Prop 2.4, 2.6]{BKLR15} that standardness of each $r_{ij}$, $\rbar_{ij}$ implies the analogue of equation (\ref{eq:bullet}) for factorial 1-arrows $X_{ij}$, $\Xbar_{ij}$, with $t$ replaced by $v_{ij}^*\cdot t \cdot v_{ij}$. 
Now, dropping the connectedness assumption on $X$, again by Lemma \ref{lem:inequivijk} applied to $\Xbar$, we can check equation (\ref{eq:bullet}) after multiplying on the left with $\overline{w}_k^*$ and on the right with $\overline{w}_k$, for every $k$, where $\overline{w}_{k}\in\Hom_{\C^{(2)}}(\Xbar_{k}, \Xbar)$ is the family of isometries in the definition of $r_X$, $\rbar_X$. By the previous step applied to each $w_k^*\cdot t \cdot w_k$ we have the statement.
\end{proof}

\begin{remark}
If $t\in\Hom_{\C^{(2)}}(X,X)$, then equation (\ref{eq:bullet}) applied to $t^*$ is the definition of the antilinear involutive multiplication-preserving \lqq bullet" map $t\mapsto t^{\bullet}\in\Hom_{\C^{(2)}}(\Xbar,\Xbar)$ considered in \cite[\Sec 2]{LoRo97}, such that $t^{*\bullet} = t^{\bullet *}$, also called conjugate map in \cite[\Sec 2]{HePe15}, or better conjugate functor, and denoted by $t\mapsto \overline{t}$. Note that $\oneop_X^\bullet = \oneop_\Xbar$ is a reformulation of the conjugate equations.

If $t\in\Hom_{\C^{(2)}}(X,Y)$, where $X,Y$ are both connected and non-factorial 1-arrows, the analogue of equation (\ref{eq:bullet}) does \emph{not} hold in general for standard solutions $r_X$, $\rbar_X$ and $r_Y$, $\rbar_Y$, because the matrix dimensions $D_X$ and $D_Y$ may be different and the contributions from $\mu_{X,i}^{1/4}$, $\mu_{Y,i'}^{1/4}$ and $\nu_{X,j}^{1/4}$, $\nu_{X,j'}^{1/4}$ need no longer cancel.

On another hand, one can consider \emph{non-standard} solutions of the conjugate equations for conjugate 1-arrows $X$ and $\Xbar$ (not necessarily factorial, nor connected), \eg, direct sums of standard solutions over the factorial components $\oplus_{i,j} r_{ij}$, $\oplus_{i,j} \rbar_{ij}$ (\cf Remark \ref{rmk:non-std}), or direct sums of normalized solutions over the simple components $\oplus_{h} r_{h}$, $\oplus_{h} \rbar_{h}$ (choosing a decomposition $X = \oplus_h X_h$ into simple sub-1-arrows, \ie, choosing a decomposition of $\oneop_X$ by minimal projections in $\Hom_{\C^{(2)}}(X,X)$).
Both these options fulfill equation (\ref{eq:bullet}), \ie, they give rise to traces on the unital \Cstar-algebra $\Hom_{\C^{(2)}}(X,X)$, by arguments similar to those presented in Proposition \ref{prop:trace} and \cite[\Lem 3.7]{LoRo97}, see also the proof of \cite[\Thm 4.22]{BDH14}. 
Using these non-standard solutions one can define non-standard maps $t\mapsto t^\bullet$ in $\C$, even for 1-arrows $t\in\Hom_{\C^{(2)}}(X,Y)$ with $X\neq Y$. Alternatively, one can embed $\Hom_{\C^{(2)}}(X,Y)$ as a corner of $\Hom_{\C^{(2)}}(X \oplus Y, X \oplus Y)$ and use again standard solutions for the direct sum 1-arrow to define $t\mapsto t^\bullet$ in $\C$.
\end{remark}

The trace property guarantees the \emph{uniqueness} of standard solutions (up to unitaries), generalizing Lemma \ref{lem:uniquestdij} from factorial to arbitrary 1-arrows $X$ in $\C$.

\begin{lemma}\label{lem:uniquestd}
For any two standard solutions $r_X, \rbar_X$ and $r'_X, \rbar'_X$ of the conjugate equations for $X$ and $\Xbar$, there is a unitary 2-arrow $u$ in $\Hom_{\C^{(2)}}(\Xbar,\Xbar)$ such that $r'_X = u \otimes \oneop_{X} \cdot r_X$ and $\rbar'_X = \oneop_{X} \otimes u \cdot \rbar_X$. Similarly with a unitary 2-arrow in $\Hom_{\C^{(2)}}(X,X)$.
\end{lemma}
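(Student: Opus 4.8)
The plan is to reduce the statement to its factorial counterpart, Lemma \ref{lem:uniquestdij}, by exploiting the very construction of standard solutions in Definition \ref{def:stdsol} as weighted direct sums over the factorial (resp.\ connected) decomposition of $X$. I would first treat the connected case. Let $r_X,\rbar_X$ and $r'_X,\rbar'_X$ be two standard solutions for a connected $X$ and its conjugate $\Xbar$. By Definition \ref{def:stdsol} each of them has the form $r_X=\sum_{i,j}\frac{\mu_{X,i}^{1/4}}{\nu_{X,j}^{1/4}}\,\overline v_{ij}\otimes v_{ij}\cdot r_{ij}$ for families of isometries $v_{ij}\in\Hom_{\C^{(2)}}(X_{ij},X)$, $\overline v_{ij}\in\Hom_{\C^{(2)}}(\Xbar_{ij},\Xbar)$ realizing $X=\oplus_{ij}X_{ij}$ and $\Xbar=\oplus_{ij}\Xbar_{ij}$, and for standard solutions $r_{ij},\rbar_{ij}$ of the factorial sub-1-arrows $X_{ij},\Xbar_{ij}$. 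A key point I would stress is that the weights $\mu_{X,i}^{1/4}\nu_{X,j}^{-1/4}$ are the \emph{same} for both solutions, since they are determined by the Perron--Frobenius eigenvectors of the intrinsic matrix dimension $D_X$ (Remark \ref{rmk:weightedadditivdcat}) and hence do not depend on any choice.

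Next I would align the two decompositions. The projections $p_i\otimes\oneop_X\otimes q_j$ are intrinsic, being built from the minimal projections of $\Z_r^X(\M)$ and $\Z_l^X(\N)$, so I may assume that both standard solutions are written with respect to one fixed family $v_{ij},\overline v_{ij}$: if $r'_X$ is presented through a different family $v'_{ij},\overline v'_{ij}$ with the same ranges, then $v^*_{ij}v'_{ij}$ and $\overline v^*_{ij}\overline v'_{ij}$ are unitaries between the respective factorial components, and transporting the factorial data of $r'_X$ along them produces standard solutions of the fixed $X_{ij},\Xbar_{ij}$ (standardness being preserved under unitary equivalence of 1-arrows, by the defining norm property in equation (\ref{eq:sdfactorial})). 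After this reduction the two solutions differ only in their factorial data $r_{ij}$ versus $r'_{ij}$.

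I would then assemble the global unitary. By Lemma \ref{lem:uniquestdij} there are unitaries $u_{ij}\in\Hom_{\C^{(2)}}(\Xbar_{ij},\Xbar_{ij})$ with $r'_{ij}=u_{ij}\otimes\oneop_{X_{ij}}\cdot r_{ij}$ and $\rbar'_{ij}=\oneop_{X_{ij}}\otimes u_{ij}\cdot\rbar_{ij}$ --- crucially the \emph{same} $u_{ij}$ on both sides. Set $u:=\sum_{ij}\overline v_{ij}\cdot u_{ij}\cdot\overline v^*_{ij}\in\Hom_{\C^{(2)}}(\Xbar,\Xbar)$, which is unitary because the ranges $q_j\otimes\oneop_\Xbar\otimes p_i$ of the $\overline v_{ij}$ are mutually orthogonal and sum to $\oneop_\Xbar$. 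From $u\cdot\overline v_{ij}=\overline v_{ij}\cdot u_{ij}$ and the interchange law one gets $(u\otimes\oneop_X)\cdot(\overline v_{ij}\otimes v_{ij})=(\overline v_{ij}\otimes v_{ij})\cdot(u_{ij}\otimes\oneop_{X_{ij}})$, whence $u\otimes\oneop_X\cdot r_X=\sum_{ij}\frac{\mu_{X,i}^{1/4}}{\nu_{X,j}^{1/4}}(\overline v_{ij}\otimes v_{ij})\cdot r'_{ij}=r'_X$, and symmetrically $\oneop_X\otimes u\cdot\rbar_X=\rbar'_X$. The non-connected case then follows one level up: decompose $X=\oplus_k X_k$ into connected components (Lemma \ref{lem:connecteddecomp}), apply the connected case to get unitaries $u_k\in\Hom_{\C^{(2)}}(\Xbar_k,\Xbar_k)$, and assemble $u:=\sum_k\overline w_k u_k\overline w^*_k$ using the isometries realizing $\Xbar=\oplus_k\Xbar_k$; the vanishing of the off-diagonal $\Hom$-spaces (Lemma \ref{lem:inequivijk}) ensures that $u$ is again a unitary doing the job.

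I expect the main obstacle to be the alignment and bookkeeping of the second step: making precise that two a priori independently constructed standard solutions may be compared over a single decomposition, checking that unitary transport carries standard solutions to standard solutions, and verifying that the \emph{same} component unitary $u_{ij}$ serves both $r_X$ and $\rbar_X$. Once these points are settled, the orthogonality of the ranges and the interchange-law identity $(u\otimes\oneop_X)\cdot(\overline v_{ij}\otimes v_{ij})=(\overline v_{ij}\otimes v_{ij})\cdot(u_{ij}\otimes\oneop_{X_{ij}})$ reduce the assembly of the global unitary to a routine computation.
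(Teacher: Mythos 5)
Your proof is correct, but it follows a genuinely different route from the paper's. The paper proves the lemma globally: it first extends the trace identity (\ref{eq:bullet}) to a ``mixed'' version involving \emph{two} standard solutions, namely $\oneop_\Xbar \otimes \rbar_X^* \cdot \oneop_\Xbar \otimes t \otimes \oneop_\Xbar \cdot r'_X \otimes \oneop_\Xbar = r_X^* \otimes \oneop_\Xbar \cdot \oneop_\Xbar \otimes t \otimes \oneop_\Xbar \cdot \oneop_\Xbar \otimes \rbar'_X$ (proved like Proposition \ref{prop:trace}), then sets $t=\oneop_X$; combined with Lemma \ref{lem:uniqueconjandsol} this shows at once that the canonical intertwiner $u := \oneop_{\Xbar} \otimes \rbar_X^* \cdot r'_X \otimes \oneop_{\Xbar}$ is unitary and implements both relations. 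That argument is shorter and yields a closed formula for $u$, but it hides the decomposition bookkeeping inside the proof of the mixed trace identity. Your argument instead reduces directly to the factorial uniqueness statement (Lemma \ref{lem:uniquestdij}) and assembles $u=\sum_{i,j}\overline v_{ij}\cdot u_{ij}\cdot\overline v_{ij}^*$ by hand; the points you flag as delicate --- that the Perron--Frobenius weights are intrinsic and hence common to both solutions, that the two direct-sum presentations can be aligned over one family of isometries because the ranges $p_i\otimes\oneop_X\otimes q_j$ and $q_j\otimes\oneop_\Xbar\otimes p_i$ are canonical, that unitary transport preserves standardness, and that Lemma \ref{lem:uniquestdij} supplies a single $u_{ij}$ working for both $r_{ij}$ and $\rbar_{ij}$ --- are exactly the right ones, and you resolve each of them correctly. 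The interchange-law computation $(u\otimes\oneop_X)\cdot(\overline v_{ij}\otimes v_{ij})=(\overline v_{ij}\otimes v_{ij})\cdot(u_{ij}\otimes\oneop_{X_{ij}})$ and the two-level induction (factorial components inside connected components, then connected components via Lemma \ref{lem:inequivijk}) go through as you describe. In short: your proof is more explicit and makes the reduction to \cite{LoRo97} transparent, at the cost of more bookkeeping; the paper's is more economical and exhibits the unitary intrinsically.
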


\begin{proof}
We first state a slight generalization of equation (\ref{eq:bullet}), namely
$$\oneop_\Xbar \otimes \rbar_X^* \cdot \oneop_\Xbar \otimes t \otimes \oneop_\Xbar \cdot r'_X \otimes \oneop_\Xbar = r_X^* \otimes \oneop_\Xbar \cdot \oneop_\Xbar \otimes t \otimes \oneop_\Xbar \cdot \oneop_\Xbar \otimes \rbar'_X$$ 
for every $t\in\Hom_{\C^{(2)}}(X,X)$, whenever $r_X, \rbar_X$ and $r'_X, \rbar'_X$ are standard. The proof is analogous to the one contained in Lemma \ref{prop:trace}.

Now, setting $t=\oneop_X$ and using the conjugate equations for both standard solutions, we have that the 2-arrow $u := \oneop_{\Xbar} \otimes \rbar_X^* \cdot r'_X \otimes \oneop_{\Xbar}$ in $\Hom_{\C^{(2)}}(\Xbar,\Xbar)$ is unitary and relates the two standard solutions as desired.
\end{proof}

We introduce now two \emph{canonical} states $\omega_l^X$ and $\omega_r^X$ respectively on the left and right center of a connected 1-arrow $X$ in $\C$, namely two further invariants extracted from the matrix dimension $D_X$. We characterize then standardness of a solution of the conjugate equations for $X$ and $\Xbar$ by means of an intrinsic property called sphericality. 

\begin{definition}\label{def:lrstates}
Let $X:\N\rightarrow\M$ be a connected 1-arrow in $\C$, let $D_X$ be its matrix dimension and let $\nu_X^{1/2}$, $\mu_X^{1/2}$ be the associated unique Perron-Frobenius eigenvectors considered before. We define two faithful states $\omega_l^X: \Z^X_l(\N) \rightarrow \CC$ and $\omega_r^X: \Z^X_r(\M) \rightarrow \CC$ by setting their values on minimal projections as
$$\omega_l^X(\oneop_X \otimes q_j) := \nu_{X,j}, \quad \omega_r^X(p_i \otimes \oneop_X) := \mu_{X,i}$$
for every $j=1,\ldots,n$, $i=1,\ldots,m$, where $\nu_{X,j}$, $\mu_{X,i}$ denote the squares of the entries of $\nu_X^{1/2}$, $\mu_X^{1/2}$, thus $\omega_l^X(\oneop_X) = 1$, $\omega_r^X(\oneop_X) = 1$. We refer to them as the \textbf{left} and \textbf{right state} of $X$.
\end{definition}

\begin{remark}
Observe that passing from $X$ to a conjugate $\Xbar$ simply interchanges the roles of left and right, \ie, $\omega_l^\Xbar = \omega_r^X$ and $\omega_r^\Xbar = \omega_l^X$.
\end{remark}

\begin{remark}\label{rmk:non-preferredlrstates}
If $X$ is not connected, we do not have a preferred choice of left and right state of $X$. One can consider the decomposition $X = \oplus_k X_k$ into connected components as in Lemma \ref{lem:connecteddecomp}, and define
$$\omega_l^X(\cdot) := \oplus_k \alpha_k\, \omega_l^{X_k}(\cdot), \quad \omega_r^X(\cdot) := \oplus_k \beta_k\, \omega_r^{X_k}(\cdot)$$
on $\Z^X_l(\N)$ and $\Z^X_r(\M)$ respectively, where $\alpha_k$, $\beta_k$ are arbitrary numbers. This definition makes sense because $\Z^X_l(\N)$ is the image of $\Z(\N)$ under a direct sum representation $\oneop_X \otimes (\cdot) = \oplus_k (\oneop_{X_k} \otimes (\cdot))$ by Lemma \ref{lem:whiskerisrep}, and similarly for $\Z^X_r(\M)$.
Moreover, we shall always assume to have $\alpha_k = \beta_k\geq 0$ for every $k$, so to have two positive linear functionals. These functionals are respectively faithful and normalized, if $\alpha_k > 0$ for every $k$ and $\sum_k \alpha_k = 1$.
\end{remark}

We can now state and prove the previously announced equivalence between standardness and sphericality for solutions of the conjugate equations.

\begin{theorem}\label{thm:stdiffspherical}
Let $X$ and $\Xbar$ be conjugate 1-arrows in $\C$, and assume that they are connected. Let $r_X$, $\rbar_X$ be a standard solution of the conjugate equations for $X$ and $\Xbar$. Then the two positive linear functionals on $\Hom_{\C^{(2)}}(X,X)$ obtained by composing the associated left/right inverses $\varphi_X$, $\psi_X$ with the canonical left/right states $\omega_l^X$, $\omega_r^X$ of $X$ coincide, namely
\begin{equation}\label{eq:sphericality}
\omega_l^X(\oneop_X \otimes \varphi_X(t)) = \omega_r^X(\psi_X(t) \otimes \oneop_X)
\end{equation}
for every $t\in \Hom_{\C^{(2)}}(X,X)$. We call this property \textbf{sphericality} of the solution $r_X$, $\rbar_X$.

Vice versa, every solution fulfilling sphericality is necessarily standard. 
\end{theorem}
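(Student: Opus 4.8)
The plan is to prove both implications by reducing the global statement to the factorial sub-1-arrows $X_{ij}$ of $X$ (Lemma \ref{lem:factorialdecomp}) and invoking the factorial/simple-unit case from \cite{LoRo97}; the point is that the Perron--Frobenius weights built into a standard solution are exactly what makes the state-weighted left and right inverses symmetric.

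First I would show that a standard solution is spherical. Fix isometries $v_{ij}\in\Hom_{\C^{(2)}}(X_{ij},X)$ and $\overline v_{ij}\in\Hom_{\C^{(2)}}(\Xbar_{ij},\Xbar)$ realizing $X=\oplus_{ij}X_{ij}$ and $\Xbar=\oplus_{ij}\Xbar_{ij}$, and expand $\varphi_X(t)=r_X^*\cdot\oneop_\Xbar\otimes t\cdot r_X$ using Definition \ref{def:stdsol}. Orthogonality of the $\overline v_{ij}$ together with the interchange law annihilates all off-diagonal terms, leaving $\varphi_X(t)=\sum_{ij}\frac{\mu_{X,i}^{1/2}}{\nu_{X,j}^{1/2}}\,\varphi_{ij}(t_{ij})$, where $t_{ij}:=v_{ij}^*\cdot t\cdot v_{ij}$ and $\varphi_{ij}$ is the left inverse of the factorial 1-arrow $X_{ij}$ defined by $r_{ij},\rbar_{ij}$. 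Using Lemma \ref{lem:whiskerisrep} one checks $r_{ij}\cdot q_{j'}=\delta_{jj'}\,r_{ij}$, so $\varphi_{ij}(t_{ij})=\alpha_{ij}(t_{ij})\,q_j$ for a scalar functional $\alpha_{ij}$ on $\Hom_{\C^{(2)}}(X_{ij},X_{ij})$. Applying $\omega_l^X$ from Definition \ref{def:lrstates} then yields $\omega_l^X(\oneop_X\otimes\varphi_X(t))=\sum_{ij}\nu_{X,j}^{1/2}\mu_{X,i}^{1/2}\,\alpha_{ij}(t_{ij})$, the weight $\mu_{X,i}^{1/2}\nu_{X,j}^{-1/2}$ from the standard solution combining with $\omega_l^X(\oneop_X\otimes q_j)=\nu_{X,j}$ into the symmetric factor $\nu_{X,j}^{1/2}\mu_{X,i}^{1/2}$. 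The mirror computation gives $\omega_r^X(\psi_X(t)\otimes\oneop_X)=\sum_{ij}\nu_{X,j}^{1/2}\mu_{X,i}^{1/2}\,\beta_{ij}(t_{ij})$ with $\psi_{ij}(t_{ij})=\beta_{ij}(t_{ij})\,p_i$. Since $X_{ij}$ is factorial and $r_{ij},\rbar_{ij}$ standard, the factorial case of sphericality \cite[\Thm 3.8 and 3.11]{LoRo97} gives $\alpha_{ij}=\beta_{ij}$, whence \eqref{eq:sphericality}.

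For the converse I would fix one standard solution $r_X^0,\rbar_X^0$ (spherical by the first part) and, given an arbitrary spherical $r_X,\rbar_X$, use Lemma \ref{lem:uniqueconjandsol} to write $r_X=w\otimes\oneop_X\cdot r_X^0$ and $\rbar_X=\oneop_X\otimes v\cdot\rbar_X^0$ with $v,w\in\Hom_{\C^{(2)}}(\Xbar,\Xbar)$ invertible and $v^{-1}=w^*$; the aim is to force $w$ unitary, for then $r_X,\rbar_X$ is standard by Lemma \ref{lem:uniquestd}. Setting $a:=w^*\cdot w$, both $w$ and $a$ are block-diagonal along the $\overline v_{ij}$ by Lemma \ref{lem:inequivijk}, with blocks $w_{ij}:=\overline v_{ij}^*\cdot w\cdot\overline v_{ij}$ and $a_{ij}=w_{ij}^*\cdot w_{ij}$. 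The same simplification as above gives $\varphi_X(t)=\sum_{ij}\frac{\mu_{X,i}^{1/2}}{\nu_{X,j}^{1/2}}\,r_{ij}^*\cdot a_{ij}\otimes t_{ij}\cdot r_{ij}$ and $\psi_X(t)=\sum_{ij}\frac{\nu_{X,j}^{1/2}}{\mu_{X,i}^{1/2}}\,\rbar_{ij}^*\cdot t_{ij}\otimes a_{ij}^{-1}\cdot\rbar_{ij}$, which are precisely the left and right inverses $\varphi_{ij}',\psi_{ij}'$ of the twisted factorial solution $r_{ij}'=w_{ij}\otimes\oneop_{X_{ij}}\cdot r_{ij}$, $\rbar_{ij}'=\oneop_{X_{ij}}\otimes(w_{ij}^{-1})^*\cdot\rbar_{ij}$. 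Feeding these into \eqref{eq:sphericality}, the strictly positive symmetric weights $\nu_{X,j}^{1/2}\mu_{X,i}^{1/2}$ again factor out, and since the $t_{ij}$ vary independently (take $t=v_{ij}\cdot s\cdot v_{ij}^*$) sphericality of $r_X,\rbar_X$ collapses to $\varphi_{ij}'=\psi_{ij}'$ for every $(i,j)$, i.e.\ factorial sphericality of each $r_{ij}',\rbar_{ij}'$. By \cite[\Thm 3.8 and 3.11]{LoRo97} each $r_{ij}',\rbar_{ij}'$ is then standard, so $w_{ij}$ is unitary by Lemma \ref{lem:uniquestdij}; hence $a_{ij}=\oneop_{\Xbar_{ij}}$ for all $i,j$, $a=\oneop_\Xbar$, and $w$ is unitary.

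The main obstacle is the bookkeeping in descending from the global identity \eqref{eq:sphericality} to the per-block factorial equations. One must verify that off-diagonal components of $t$ never contribute (orthogonality of the $\overline v_{ij}$ together with Lemma \ref{lem:inequivijk}), that each factorial inverse $\varphi_{ij}(t_{ij})$ is supported on the single central projection $q_j$ (Lemma \ref{lem:whiskerisrep}), and, crucially, that the weights $\mu_{X,i}^{1/4}\nu_{X,j}^{-1/4}$ hard-wired into the standard solution of Definition \ref{def:stdsol} cancel against the state weights of Definition \ref{def:lrstates} to leave the symmetric factor $\nu_{X,j}^{1/2}\mu_{X,i}^{1/2}$. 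Without this cancellation the reduction to the factorial sphericality of \cite{LoRo97} would break down, which is precisely why (as noted in Remark \ref{rmk:non-std}) the unweighted solution $\oplus_{ij}r_{ij}$ fails to be spherical in general.
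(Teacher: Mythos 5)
Your proof is correct, and the forward direction is essentially the paper's own argument: expand $\varphi_X$ and $\psi_X$ along the factorial decomposition, observe that the weights $\mu_{X,i}^{1/4}\nu_{X,j}^{-1/4}$ of Definition \ref{def:stdsol} combine with the eigenvector weights of Definition \ref{def:lrstates} into the symmetric factor $\nu_{X,j}^{1/2}\mu_{X,i}^{1/2}$, and invoke factorial sphericality of standard solutions (the paper cites \cite[\Lem 3.9]{LoRo97} and \cite[\Prop 2.5]{BKLR15} for this; your pointer to Thm.~3.8/3.11 of \cite{LoRo97} is slightly off but the content is the same). Where you genuinely diverge is in the converse. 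The paper runs the argument of \cite[\Lem 3.9]{LoRo97} \emph{globally} on $X$: it uses Lemma \ref{lem:uniqueconjandsol}, the trace property of standard solutions established in Proposition \ref{prop:trace}, and faithfulness of the left/right inverses and states to force $w^*\cdot w=\oneop_\Xbar$ in one stroke. You instead exploit Lemma \ref{lem:inequivijk} to block-diagonalize $w$ along the $\Xbar_{ij}$, identify the resulting expressions as the left/right inverses of the twisted factorial solutions $r'_{ij},\rbar'_{ij}$, strip off the strictly positive weights $\nu_{X,j}^{1/2}\mu_{X,i}^{1/2}$ by testing on $t=v_{ij}\cdot s\cdot v_{ij}^*$, and apply the factorial converse per block. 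Your route avoids invoking the global trace property and keeps all the analysis inside the factorial components where \cite{LoRo97} applies verbatim, at the cost of more bookkeeping; the paper's is shorter once Proposition \ref{prop:trace} is in hand. One small imprecision: you write $\varphi_{ij}(t_{ij})=\alpha_{ij}(t_{ij})\,q_j$, but since $s\mapsto\oneop_X\otimes s$ may have a kernel the correct statement (the one the paper proves in Proposition \ref{prop:stdisnormalized} and uses in \eqref{eq:sphericalityonij}) is that $\oneop_X\otimes\varphi_{ij}(t_{ij})$ is a scalar multiple of $\oneop_X\otimes q_j$; as you only ever evaluate $\omega_l^X$ on $\oneop_X\otimes\varphi_X(t)$, this does not affect the argument. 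Likewise, the cleanest way to close the converse is not via unitarity of $w$ and Lemma \ref{lem:uniquestd}, but simply to note that once each $r'_{ij},\rbar'_{ij}$ is standard, $r_X,\rbar_X$ is literally the weighted direct sum prescribed by Definition \ref{def:stdsol}, hence standard.
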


Equation (\ref{eq:sphericality}) is independent of the choice of standard solutions by Lemma \ref{lem:uniquestd} and, after dividing by $\oneop_X \otimes (r_X^* \cdot r_X) = (\rbar_X^* \cdot \rbar_X) \otimes \oneop_X = d_X \oneop_X$, it is a generalization of the spherical state $\omega_s$ of a connected inclusion of von Neumann algebras considered in Section \ref{sec:inclusions}. Recall that $E_X = d_X^{-1} \oneop_X \otimes \varphi_X(\cdot)$ and $E'_X = d_X^{-1} \psi_X(\cdot) \otimes \oneop_X$ are conditional expectations from $\Hom_{\C^{(2)}}(X,X)$ onto $\Z_l^X(\N)$ and $\Z_r^X(\M)$ respectively.

\begin{definition}\label{def:sstate}
Let $X$ be a connected 1-arrow in $\C$, then
$$\omega_s^X(\cdot) := \omega_l^X(E_X(\cdot)) = \omega_r^X(E'_X(\cdot))$$
is a faithful state $\omega_s^X: \Hom_{\C^{(2)}}(X,X) \rightarrow \CC$. We call it the \textbf{spherical state} of $X$.
\end{definition}

One can check (\ref{eq:sphericality}) on $t = p_i \otimes \oneop_X \otimes q_j$ by computing
\begin{equation}\label{eq:sphericalityonij}
\oneop_X \otimes \varphi_X(t) = \frac{\mu_{X,i}^{1/2}}{\nu_{X,j}^{1/2}} (D_X)_{i,j}\, \oneop_X \otimes q_j, \quad
\psi_X(t) \otimes \oneop_X = \frac{\nu_{X,j}^{1/2}}{\mu_{X,i}^{1/2}} (D_X)_{i,j}\, p_i \otimes \oneop_X,
\end{equation}
hence we get the number $\mu_{X,i}^{1/2} \nu_{X,j}^{1/2} (D_X)_{i,j}$ on either side, by applying $\omega_l^X$ or $\omega_r^X$. Summing over $i,j$ and using equation (\ref{eq:stdisscalarifconnected}) we re-obtain the identity (\ref{eq:disDangles}) for the scalar dimension $d_X$.

\begin{proof} (of Theorem \ref{thm:stdiffspherical}).
Assume that $r_X$, $\rbar_X$ is standard, then the proof of equation (\ref{eq:sphericality}) relies on the same computation leading to (\ref{eq:sphericalityonij}), with $(D_X)_{i,j}$ replaced by the number $t_{ij}$ such that $\oneop_{X_{ij}} \otimes \varphi_{X_{ij}}(v_{ij}^*\cdot t \cdot v_{ij}) = t_{ij}\, \oneop_{X_{ij}}$, $v_{ij}\in\Hom_{\C^{(2)}}(X_{ij},X)$, and on the sphericality property of standard solutions in the factorial case \cite[\Lem 3.9]{LoRo97}, \cite[\Prop 2.5]{BKLR15}, namely $\oneop_{X_{ij}} \otimes \varphi_{X_{ij}}(s) = \psi_{X_{ij}}(s) \otimes \oneop_{X_{ij}}$ for every $s\in\Hom_{\C^{(2)}}(X_{ij},X_{ij})$. 

Conversely, a solution fulfilling equation (\ref{eq:sphericality}) is necessarily standard by the same argument used in \cite[\Lem 3.9]{LoRo97}, because we have Lemma \ref{lem:uniqueconjandsol}, we have shown the trace property for standard solutions in Proposition \ref{prop:trace}, and the left/right inverses and states are faithful.
\end{proof}

\begin{remark}\label{rmk:sphericalitynon-connected}
If $X$ is not connected, by taking direct sums one can show that equation (\ref{eq:sphericality}) still holds for standard solutions, for any choice of functionals $\omega_l^X$, $\omega_r^X$ as in Remark \ref{rmk:non-preferredlrstates}. Conversely, equation (\ref{eq:sphericality}) characterizes standardness if we choose $\omega_l^X$, $\omega_r^X$ to be faithful, \eg $\alpha_k = 1$ for every $k$, or $\alpha_k = 1/p$ where $p = \dim(\Z^X_l(\N) \cap \Z^X_r(\M))$.
\end{remark}

\begin{remark}
Similar in spirit, but inequivalent notions of sphericality (beyond the factorial or simple tensor units case) have been considered in the literature of bimodules \cite[\Sec 4]{BDH14} and 2-\Cstar-categories \cite[\Def 3.5, 3.12]{Zit07}, and used to single out \lqq special" solutions of the conjugate equations. The first is close to the unweighted direct sum solution $\oplus_{i,j} r_{ij}$, $\oplus_{i,j} \rbar_{ij}$ discussed in Remark \ref{rmk:non-std}, 
the second is given for \lqq centrally balanced" 1-arrows, a notion which is quite orthogonal to connectedness, that we regard as the most natural to develop and generalize the theory of minimal index.
\end{remark}

Using the spherical state, we give below a generalization of the trace formula contained in Proposition \ref{prop:trace} to the case $X \neq Y$.

\begin{proposition}
Let $X,Y:\N\rightarrow\M$ be connected 1-arrows in $\C$. Assume that $\oneop_Z\otimes(\cdot)$ and $(\cdot)\otimes\oneop_Z$, for $Z=X,Y$, are isomorphisms of $\Z(\N)$ and $\Z(\M)$ onto the respective left/right center, and denote as before by $p_i \otimes \oneop_X$, $p_i \otimes \oneop_Y$, $\oneop_X \otimes q_j$, $\oneop_Y \otimes q_j$ the minimal projections in the left/right centers, where $i=1,\ldots,m$, $j=1,\ldots,n$ and $n=\dim(\Z_l^X(\N))=\dim(\Z_l^Y(\N))$, $m=\dim(\Z_r^X(\M))=\dim(\Z_r^Y(\M))$. Then 
$$\frac{\omega_s^X(p_i \otimes (t^*\cdot t) \otimes q_j)}{\omega_s^Y(p_i \otimes (t\cdot t^*) \otimes q_j)} = \frac{d_Y}{d_X} \frac{\mu_{X,i}^{1/2} \nu_{X,j}^{1/2}}{\mu_{Y,i}^{1/2} \nu_{Y,j}^{1/2}}$$
for every $t\in\Hom_{\C^{(2)}}(X,Y)$.

If $X,Y$ are factorial 1-arrows, then
$$\frac{\omega_s^X(t^*\cdot t)}{\omega_s^Y(t\cdot t^*)} = \frac{d_Y}{d_X}.$$
\end{proposition}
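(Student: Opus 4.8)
The plan is to reduce everything to the factorial case, which is the real content, and then assemble the connected statement from it componentwise. Throughout I fix standard solutions $r_X,\rbar_X$ and $r_Y,\rbar_Y$ and recall from Definition~\ref{def:sstate} that $\omega_s^X(\cdot)=d_X^{-1}\,\omega_l^X(\oneop_X\otimes\varphi_X(\cdot))$, so the whole problem is to control the standard left inverses $\varphi_X,\varphi_Y$.

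I would treat the factorial case first. When $X,Y$ are factorial and $\Hom_{\C^{(2)}}(X,Y)\neq 0$ they necessarily lie over common minimal projections $p_i\in\Z(\M)$, $q_j\in\Z(\N)$, so by Lemma~\ref{lem:whiskerisrep} the direct sum $W:=X\oplus Y$, with isometries $a:X\Rightarrow W$, $b:Y\Rightarrow W$ ($a^*a=\oneop_X$, $b^*b=\oneop_Y$, $a^*b=0$, $aa^*+bb^*=\oneop_W$), is again factorial. Choosing a standard solution of the form $r_W=(\overline a\otimes a)\cdot r_X+(\overline b\otimes b)\cdot r_Y$, a short interchange-law computation using $a^*b=0$ gives $\varphi_W(a\,s\,a^*)=\varphi_X(s)$, hence $\tau_W(a\,s\,a^*)=\tau_X(s)$, where for a factorial $Z$ I write $\tau_Z$ for the scalar with $\oneop_Z\otimes\varphi_Z(\cdot)=\tau_Z(\cdot)\,\oneop_Z$, so that $\omega_s^Z=d_Z^{-1}\tau_Z$ and $\tau_Z(\oneop_Z)=d_Z$ by (\ref{eq:stdisscalarifconnected}); symmetrically $\tau_W(b\,s'\,b^*)=\tau_Y(s')$. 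Now for $t\in\Hom_{\C^{(2)}}(X,Y)$ one has $(b\,t\,a^*)^*\cdot(b\,t\,a^*)=a\,(t^*\cdot t)\,a^*$ and $(b\,t\,a^*)\cdot(b\,t\,a^*)^*=b\,(t\cdot t^*)\,b^*$, so the trace property of standard solutions (Proposition~\ref{prop:trace}) applied to $W$ yields
$$\tau_X(t^*\cdot t)=\tau_W\!\left(a\,(t^*\cdot t)\,a^*\right)=\tau_W\!\left(b\,(t\cdot t^*)\,b^*\right)=\tau_Y(t\cdot t^*),$$
and dividing by $d_X$ and $d_Y$ gives the factorial formula $\omega_s^X(t^*\cdot t)/\omega_s^Y(t\cdot t^*)=d_Y/d_X$.

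For the connected case I would decompose $t$ along the factorial sub-1-arrows. With $v^X_{ij}\in\Hom_{\C^{(2)}}(X_{ij},X)$, $v^Y_{ij}\in\Hom_{\C^{(2)}}(Y_{ij},Y)$ the isometries onto $p_i\otimes\oneop_X\otimes q_j$ and $p_i\otimes\oneop_Y\otimes q_j$, Lemma~\ref{lem:inequivijk} gives $t=\sum_{ij}v^Y_{ij}\,t_{ij}\,(v^X_{ij})^*$ with $t_{ij}:=(v^Y_{ij})^*\cdot t\cdot v^X_{ij}\in\Hom_{\C^{(2)}}(X_{ij},Y_{ij})$, and then $p_i\otimes(t^*\cdot t)\otimes q_j=v^X_{ij}\,(t_{ij}^*\cdot t_{ij})\,(v^X_{ij})^*$, likewise on the $Y$ side. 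The key local identity, proved exactly as (\ref{eq:sphericalityonij}) but with $\oneop_{X_{ij}}$ replaced by a general $s\in\Hom_{\C^{(2)}}(X_{ij},X_{ij})$ and using the explicit $r_X$ of Definition~\ref{def:stdsol}, is
$$\varphi_X\!\left(v^X_{ij}\,s\,(v^X_{ij})^*\right)=\frac{\mu_{X,i}^{1/2}}{\nu_{X,j}^{1/2}}\,\varphi_{X_{ij}}(s),\qquad \varphi_{X_{ij}}(s)=\tau_{ij}(s)\,q_j,\quad \tau_{ij}(s)=d_{X_{ij}}\,\omega_s^{X_{ij}}(s),$$
the support statement $\varphi_{X_{ij}}(s)=\tau_{ij}(s)q_j$ following from $r_{ij}\cdot q_j=r_{ij}$ (a whiskering identity valid because $X_{ij}$ lies over $q_j$) via Lemma~\ref{lem:whiskerisrep}. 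Applying $\omega_l^X$ and using $\omega_l^X(\oneop_X\otimes q_j)=\nu_{X,j}$ then gives $\omega_s^X(p_i\otimes(t^*\cdot t)\otimes q_j)=d_X^{-1}\mu_{X,i}^{1/2}\nu_{X,j}^{1/2}d_{X_{ij}}\,\omega_s^{X_{ij}}(t_{ij}^*\cdot t_{ij})$, and symmetrically for $Y$. Forming the ratio and inserting the factorial identity $d_{X_{ij}}\omega_s^{X_{ij}}(t_{ij}^*\cdot t_{ij})=d_{Y_{ij}}\omega_s^{Y_{ij}}(t_{ij}\cdot t_{ij}^*)$ from the first step, these factors cancel and leave exactly $\tfrac{d_Y}{d_X}\tfrac{\mu_{X,i}^{1/2}\nu_{X,j}^{1/2}}{\mu_{Y,i}^{1/2}\nu_{Y,j}^{1/2}}$ (the ratio being understood for the indices with $X_{ij},Y_{ij}\neq 0$, the remaining ones giving $0/0$).

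The main obstacle is the local identity for $\varphi_X$ on the corner $v^X_{ij}\cdot(\slot)\cdot(v^X_{ij})^*$ together with the support claim: both demand careful use of the interchange law and of the orthogonality relations $(v^X_{kl})^*\cdot v^X_{ij}=\delta_{(kl),(ij)}\oneop_{X_{ij}}$, $\overline{v}_{kl}^*\cdot\overline{v}_{ij}=\delta_{(kl),(ij)}\oneop_{\Xbar_{ij}}$, while tracking that the Perron--Frobenius weights $\mu_{X,i}^{1/4}$, $\nu_{X,j}^{1/4}$ combine to $\mu_{X,i}^{1/2}/\nu_{X,j}^{1/2}$. A reassuring check is that $s=\oneop_{X_{ij}}$ reproduces (\ref{eq:sphericalityonij}) and the normalization $\oneop_X\otimes(r_{ij}^*\cdot r_{ij})=(D_X)_{i,j}\,\oneop_X\otimes q_j$. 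The only essential categorical input beyond this bookkeeping is the trace property of Proposition~\ref{prop:trace}; once the factorial identity is secured, the connected case is forced.
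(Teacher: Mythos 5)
Your proof is correct and follows essentially the same route as the paper's: decompose $t$ along the factorial sub-1-arrows via Lemma \ref{lem:inequivijk}, compute $\varphi_X$ on the corners $v_{ij}\cdot(\slot)\cdot v_{ij}^*$ directly from Definition \ref{def:stdsol} (your local identity is exactly the computation behind (\ref{eq:sphericalityonij})), and reduce to the factorial trace identity $\varphi_{X_{ij}}(t_{ij}^*\cdot t_{ij})=\varphi_{Y_{ij}}(t_{ij}\cdot t_{ij}^*)$. The only difference is that the paper simply cites this last identity from \cite[Lem.\ 3.7]{LoRo97}, \cite[Prop.\ 2.4]{BKLR15}, whereas you rederive it from Proposition \ref{prop:trace} via the corner embedding into $X\oplus Y$ --- a legitimate, self-contained variant that the paper itself suggests in a remark.
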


\begin{proof}
The first statement is obtained by direct computation, using the definition of standard solutions for $X,Y$, and the trace property in the factorial case $\varphi_{X_{ij}}(t_{ij}^* \cdot t_{ij}) = \varphi_{Y_{ij}}(t_{ij} \cdot t_{ij}^*)$, for every $t_{ij} \in \Hom_{\C^{(2)}}(X_{ij},Y_{ij})$, see \cite[\Lem 3.7]{LoRo97}, \cite[\Prop 2.4]{BKLR15}. 
The second statement follows because $D_X = d_X$, $D_Y = d_Y$ and $\mu_X^{1/2} = \nu_X^{1/2} = 1$, $\mu_Y^{1/2} = \nu_Y^{1/2} = 1$ in the factorial case. 
\end{proof}

If $X$ and $\Xbar$ are connected, standard solutions are \textbf{minimal} among all other solutions, in the sense that they minimize the number $\|\oneop_X \otimes (r_X^*\cdot r_X)\| \, \|(\rbar_X^*\cdot \rbar_X) \otimes \oneop_X\|$, and by Proposition \ref{prop:stdisnormalized} the minimum is precisely the square of the scalar dimension of $X$ (that we may call the \textbf{minimal index} of $X$, or of $\Xbar$, in $\C$). Moreover, this property is another intrinsic characterization of standardness.

\begin{theorem}\label{thm:stdiffminimal}
Let $X$ and $\Xbar$ be conjugate 1-arrows in $\C$, and assume that they are connected. 
Then any solution $r_X$, $\rbar_X$ of the conjugate equations for $X$ and $\Xbar$ fulfills
\begin{equation}\label{eq:minimality}
\|\oneop_X \otimes (r_X^*\cdot r_X)\| \, \|(\rbar_X^*\cdot \rbar_X) \otimes \oneop_X\| \geq d^{\,2}_X
\end{equation}
where the equality is attained if and only if the solution is standard.
\end{theorem}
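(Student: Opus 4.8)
The plan is to settle the ``if'' direction at once and then reduce the ``only if'' and the inequality to a statement about normalized traces on the factors $\Hom_{\C^{(2)}}(\Xbar_{ij},\Xbar_{ij})$, coupled through Perron--Frobenius theory exactly as in Theorem \ref{thm:minimalconnectedfindim}. If $r_X,\rbar_X$ is standard and $X$ is connected, then Proposition \ref{prop:stdisnormalized} gives $\oneop_X\otimes(r_X^*\cdot r_X)=(\rbar_X^*\cdot\rbar_X)\otimes\oneop_X=d_X\,\oneop_X$, so the left-hand side of \eqref{eq:minimality} is $d_X\cdot d_X=d_X^2$ and equality holds. For an arbitrary solution, fix a standard one $r_X^0,\rbar_X^0$ and, by Lemma \ref{lem:uniqueconjandsol}, write $r_X=w\otimes\oneop_X\cdot r_X^0$, $\rbar_X=\oneop_X\otimes v\cdot\rbar_X^0$ with $v,w\in\Hom_{\C^{(2)}}(\Xbar,\Xbar)$ invertible and $v^{-1}=w^*$. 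Set $a:=w^*\cdot w$, so that $v^*\cdot v=a^{-1}$. By Lemma \ref{lem:inequivijk} applied to $\Xbar$, the algebra $\Hom_{\C^{(2)}}(\Xbar,\Xbar)$ is block diagonal along the factorial decomposition, whence $a=\oplus_{i,j}a_{ij}$ with each $a_{ij}$ positive invertible in the finite factor $\Hom_{\C^{(2)}}(\Xbar_{ij},\Xbar_{ij})$.

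Let $\tau_{ij}$ be the unique normalized trace on that factor and set $x_{ij}:=\tau_{ij}(a_{ij})$, $y_{ij}:=\tau_{ij}(a_{ij}^{-1})$. Using the trace property of standard solutions (Proposition \ref{prop:trace}) together with the normalization \eqref{eq:sdfactorial} one obtains, in the corner of the respective center, the identities $r_{ij}^*\cdot(a_{ij}\otimes\oneop_{X_{ij}})\cdot r_{ij}=d_{X_{ij}}\,x_{ij}$ and $\rbar_{ij}^*\cdot(\oneop_{X_{ij}}\otimes a_{ij}^{-1})\cdot\rbar_{ij}=d_{X_{ij}}\,y_{ij}$. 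Running the computation that proves Proposition \ref{prop:stdisnormalized} with $(D_X)_{i,j}$ replaced by $(D_X)_{i,j}x_{ij}$ (respectively $(D_X)_{i,j}y_{ij}$), and writing $w_{ij}:=\mu_{X,i}^{1/2}\nu_{X,j}^{1/2}(D_X)_{i,j}$, I would arrive at
$$\omega_l^X\big(\oneop_X\otimes(r_X^*\cdot r_X)\big)=\sum_{i,j}w_{ij}\,x_{ij},\qquad \omega_r^X\big((\rbar_X^*\cdot\rbar_X)\otimes\oneop_X\big)=\sum_{i,j}w_{ij}\,y_{ij},$$
where $\omega_l^X,\omega_r^X$ are the canonical faithful states of Definition \ref{def:lrstates}.

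The inequality then follows from three facts: first, $\|c\|\geq\omega(c)$ for a positive element $c$ and a state $\omega$, applied with $\omega_l^X$ and $\omega_r^X$; second, the trace Cauchy--Schwarz inequality $x_{ij}y_{ij}=\tau_{ij}(a_{ij})\tau_{ij}(a_{ij}^{-1})\geq 1$, with equality precisely when $a_{ij}$ is a scalar; third, the Perron--Frobenius identity $\sum_{i,j}w_{ij}=d_X$ from \eqref{eq:disDangles} and Remark \ref{rmk:weightedadditivdcat}. Combining them through one more Cauchy--Schwarz,
$$\|\oneop_X\otimes(r_X^*\cdot r_X)\|\,\|(\rbar_X^*\cdot\rbar_X)\otimes\oneop_X\|\geq\Big(\sum_{i,j}w_{ij}x_{ij}\Big)\Big(\sum_{i,j}w_{ij}y_{ij}\Big)\geq\Big(\sum_{i,j}w_{ij}\sqrt{x_{ij}y_{ij}}\Big)^2\geq\Big(\sum_{i,j}w_{ij}\Big)^2=d_X^2,$$
which is \eqref{eq:minimality}. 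The non-connected case is then handled by taking direct sums over the connected components and invoking \eqref{eq:stdisscalaroplus}.

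For the converse, overall equality forces every inequality above to be sharp; the final Cauchy--Schwarz being sharp, together with $\sqrt{x_{ij}y_{ij}}\geq 1$ and $w_{ij}>0$ exactly on the support of $D_X$, gives $x_{ij}y_{ij}=1$ and $x_{ij}/y_{ij}$ constant there, hence each $a_{ij}$ a single common scalar $c>0$; thus $a=w^*\cdot w=c\,\oneop$ and $w$ is a scalar times a unitary. By Lemma \ref{lem:uniquestd}, after the rescaling $r_X\mapsto c^{-1/2}r_X$, $\rbar_X\mapsto c^{1/2}\rbar_X$ (which leaves \eqref{eq:minimality} unchanged), the solution is standard. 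The step I expect to be the main obstacle is exactly this equality analysis: one has to disentangle the three distinct sharpness conditions and keep careful track of the residual global rescaling under which the product in \eqref{eq:minimality} is invariant, since it is precisely this rescaling --- harmless for the minimal value but not for standardness --- that must be normalized away to identify the extremal solutions with the standard ones.
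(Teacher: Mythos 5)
Your argument reaches the right conclusion but travels a genuinely different route from the paper's. The paper writes the given solution as $r_X=\oneop_\Xbar\otimes w\cdot s_X$, $\rbar_X={w^*}^{-1}\otimes\oneop_\Xbar\cdot\overline{s}_X$ with $s_X,\overline{s}_X$ standard, uses the sphericality already established in Theorem \ref{thm:stdiffspherical} to rewrite $\omega_r^X(\psi_X(w^{-1}\cdot{w^*}^{-1})\otimes\oneop_X)$ as $\omega_l^X(\oneop_X\otimes\varphi_X(w^{-1}\cdot{w^*}^{-1}))$, so that both factors are values of one positive functional, and then gets the bound in a single stroke from the spectral decomposition of $w^*\cdot w$ and the estimate $\mu_\alpha\mu_{\alpha'}^{-1}+\mu_{\alpha'}\mu_\alpha^{-1}\geq 2$; no decomposition into factorial blocks is needed. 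You instead work blockwise, replace the spectral estimate by $\tau(a)\tau(a^{-1})\geq 1$ on each block, and recouple the blocks with an extra Cauchy--Schwarz against the Perron--Frobenius weights $w_{ij}$. This is a valid alternative, and your equality analysis is in fact more scrupulous than the paper's: you correctly isolate the residual global rescaling $r_X\mapsto c^{1/2}r_X$, $\rbar_X\mapsto c^{-1/2}\rbar_X$, which attains equality in (\ref{eq:minimality}) without being standard in the sense of Definition \ref{def:stdsol} (the paper's phrase that ``$1$ is the only eigenvalue of $w^*\cdot w$'' silently absorbs exactly this normalization).

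One step does need repair. $\Hom_{\C^{(2)}}(\Xbar_{ij},\Xbar_{ij})$ is \emph{not} a factor in general: factoriality of $X_{ij}$ means triviality of its left and right centers, not of the center of its endomorphism algebra (in the inclusion picture this algebra is $\N_{ij}'\cap\M_{ij}$ for a possibly reducible subfactor), so there is no ``unique normalized trace,'' and your identity $r_{ij}^*\cdot(a_{ij}\otimes\oneop_{X_{ij}})\cdot r_{ij}=d_{X_{ij}}\,x_{ij}$ would be false for an arbitrarily chosen trace. The fix is to \emph{define} $\tau_{ij}$ as $d_{X_{ij}}^{-1}$ times the left inverse of $\Xbar_{ij}$ given by the standard solution: it is a faithful tracial state by Proposition \ref{prop:trace} and Frobenius reciprocity, the first identity becomes a tautology, and the second identity then follows because the left and right inverses attached to a standard solution of a \emph{factorial} 1-arrow coincide (sphericality in the factorial case, \cite[\Lem 3.9]{LoRo97}, i.e.\ the case $m=n=1$ of Theorem \ref{thm:stdiffspherical}) --- the trace property alone only tells you that both are traces, not that they are the same one. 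With $\tau_{ij}$ faithful, $\tau_{ij}(a_{ij})\tau_{ij}(a_{ij}^{-1})=1$ still forces $a_{ij}$ to be a scalar, and the rest of your argument goes through unchanged.
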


\begin{proof}
Consider the canonical left and right states $\omega_l^X$ and $\omega_r^X$ of $X$, and observe that 
$$\|\oneop_X \otimes (r_X^*\cdot r_X)\| \geq \omega_l^X(\oneop_X \otimes (r_X^*\cdot r_X)),\quad \|(\rbar_X^*\cdot \rbar_X) \otimes \oneop_X\| \geq \omega_r^X((\rbar_X^*\cdot \rbar_X) \otimes \oneop_X)$$
because $\omega_l^X$, $\omega_r^X$ have norm 1. By Lemma \ref{lem:uniqueconjandsol} there is an invertible 2-arrow $w$ in $\Hom_{\C^{(2)}}(X,X)$ such that $r_X = \oneop_\Xbar \otimes w \cdot s_X$ and $\rbar_X = {w^*}^{-1} \otimes \oneop_\Xbar \cdot {\overline s}_X$ and $s_X$, ${\overline s}_X$ is standard. Denote by $\varphi_X$, $\psi_X$ the left/right inverses of $X$ associated with $s_X$, ${\overline s}_X$. Thus $\omega_l^X(\oneop_X \otimes (r_X^*\cdot r_X)) = \omega_l^X(\oneop_X \otimes \varphi_X(w^*\cdot w))$ and $\omega_r^X((\rbar_X^*\cdot \rbar_X) \otimes \oneop_X) = \omega_r^X(\psi_X(w^{-1} \cdot {w^*}^{-1}) \otimes \oneop_X) = \omega_l^X(\oneop_X \otimes \varphi_X(w^{-1} \cdot {w^*}^{-1}))$ by sphericality of standard solution (Theorem \ref{thm:stdiffspherical}). Now, as in the proof of \cite[\Thm 3.11]{LoRo97} we take the spectral decomposition of $w^*\cdot w = \sum_\alpha \mu_\alpha e_\alpha$ in $\Hom_{\C^{(2)}}(X,X)$, where $\mu_\alpha$ are distinct eigenvalues, thus $(w^*\cdot w)^{-1} = \sum_\alpha \mu_\alpha^{-1} e_\alpha$, and
\begin{align*}
\omega_l^X(\oneop_X \otimes (r_X^*\cdot r_X))\, \omega_r^X((\rbar_X^*\cdot \rbar_X) \otimes \oneop_X) 
&= \sum_{\alpha, \alpha'} \mu_\alpha \mu_{\alpha'}^{-1} \omega_l^X(\oneop_X \otimes \varphi_X(e_\alpha)) \, \omega_l^X(\oneop_X \otimes \varphi_X(e_{\alpha'})) \\
&\geq \omega_l^X(\oneop_X \otimes \varphi_X(\oneop_X))^2 = d^{\,2}_X
\end{align*}
by the same estimate $\mu_\alpha \mu_{\alpha'}^{-1} + \mu_{\alpha'} \mu_\alpha^{-1} \geq 2$ used in \cite[\Thm 3.11]{LoRo97} and by Proposition \ref{prop:stdisnormalized} (in the connected case) applied to $s_X$, ${\overline s}_X$ in the last equality. The last inequality is indeed an equality if and only if $1$ is the only eigenvalue of $w^*\cdot w$, \ie, if and only if $w$ is unitary and $r_X$, $\rbar_X$ is itself standard. Thus by Proposition \ref{prop:stdisnormalized} we have the second statement.
\end{proof}

\begin{remark}
If $X$ and $\Xbar$ are not connected, let $X = \oplus_k X_k$, $\Xbar = \oplus_k \Xbar_k$ be the decompositions into connected components as in Lemma \ref{lem:connecteddecomp}. The inequality (\ref{eq:minimality}) is again fulfilled by any solution $r_X$, $r_\Xbar$, indeed the same proof works (see Remark \ref{rmk:sphericalitynon-connected}) for arbitrary states $w_l^X$, $w_r^X$ defined as in Remark \ref{rmk:non-preferredlrstates}. We can take, \eg, $\alpha_k = 1/q$ if $k$ corresponds to one of the maximal entries $d_{X_k}$ of the vector dimension $\vec{d}_X$, with $q$ the number of times that this maximal value (equal to the scalar dimension $d_X$) occurs, and $\alpha_k = 0$ otherwise, in order to have $\omega_l^X(\oneop_X \otimes \varphi_X(\oneop_X)) = d_X$. The equality in (\ref{eq:minimality}) does not characterize standardness anymore, as the state considered here is not necessarily faithful and the two members of (\ref{eq:minimality}) only give information on the connected components of $X$ with maximal scalar dimension.

In general, given a solution $r_X$, $\rbar_X$ one can write 
$$\oneop_X \otimes (r_X^* \cdot r_X) = \oplus _k (\oneop_{X_k} \otimes (r_{X_k}^* \cdot r_{X_k})), \quad 
(\rbar_X^* \cdot \rbar_X) \otimes \oneop_X = \oplus _k ((\rbar_{X_k}^* \cdot \rbar_{X_k}) \otimes \oneop_{X_k})$$
where $r_{X_k} := \overline{w}_k^* \otimes w_k^* \cdot r_X$, $\rbar_{X_k} := w_k^* \otimes \overline{w}_k^* \cdot \rbar_X$, and $w_k\in\Hom_{\C^{(2)}}(X_k,X)$, $\overline{w}_k\in\Hom_{\C^{(2)}}(\Xbar_k,\Xbar)$ are isometries with ranges $z_k = f_k \otimes \oneop_X \otimes e_k$ and $\tilde{z}_k = e_k \otimes \oneop_\Xbar \otimes f_k$ realizing $X = \oplus_k X_k$, $\Xbar = \oplus_k \Xbar_k$. Then $r_X$, $\rbar_X$ is standard if and only if each $r_{X_k}$, $\rbar_{X_k}$ is standard (by definition), \ie, if the latter fulfill (\ref{eq:minimality}) with the equality sign.
\end{remark}

As already checked (with different techniques) in the special case of von Neumann algebra inclusions, see Section \ref{sec:multiplicativity} and \ref{sec:additivity}, we conclude by showing \textbf{additivity} and \textbf{multiplicativity} of the matrix dimension in this more general 2-\Cstar-categorical setting.

\begin{proposition}
Let $X:\N\rightarrow\M$ be a 1-arrow in $\C$ with matrix dimension $D_X$, and assume (without loss of generality) that it is connected. Let $f_\alpha$, $\alpha=1,\ldots,N$, be mutually orthogonal projections in $\Hom_{\C^{(2)}}(X,X)$ such that $\sum_\alpha f_\alpha = \oneop_X$. Then 
$$D_X = \sum_\alpha D_{X_\alpha}$$ 
where $X_\alpha$ is the sub-1-arrow of $X$ corresponding to $f_\alpha$ and $D_{X_\alpha}$ is the matrix dimension defined using the minimal projections in $\Z_l^X(\N)$ and $\Z_r^X(\M)$.
\end{proposition}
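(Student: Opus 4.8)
The plan is to reduce everything to the additivity of the scalar dimension over \emph{factorial} direct-sum decompositions, which itself follows from the additivity over \emph{simple} components already recorded in equation~(\ref{eq:dijadditive}). First I would recall that, by the definition of the matrix dimension, $(D_X)_{i,j} = d_{X_{ij}}$, where $X_{ij}$ is the factorial sub-1-arrow of $X$ cut out by the central projection $p_i \otimes \oneop_X \otimes q_j \in \Z(\Hom_{\C^{(2)}}(X,X))$ through an isometry $v_{ij}$ with $v_{ij} \cdot v_{ij}^* = p_i \otimes \oneop_X \otimes q_j$ and $v_{ij}^* \cdot v_{ij} = \oneop_{X_{ij}}$ (Lemma~\ref{lem:factorialdecomp}). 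Since $D_{X_\alpha}$ is formed using the \emph{same} minimal projections $\oneop_X \otimes q_j$, $p_i \otimes \oneop_X$ of the left/right centers of $X$, transported to $X_\alpha$, its $(i,j)$-entry is $d_{(X_\alpha)_{ij}}$, where $(X_\alpha)_{ij}$ is the sub-1-arrow of $X_\alpha$ attached to $p_i \otimes \oneop_{X_\alpha} \otimes q_j$ (with the convention that the entry is $0$ when this sub-1-arrow is zero). Thus it suffices to prove, for each fixed $i,j$, the scalar identity $d_{X_{ij}} = \sum_\alpha d_{(X_\alpha)_{ij}}$.

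The key structural fact I would establish next is that cutting $X$ by the $f_\alpha$ refines, at each index $(i,j)$, the factorial component $X_{ij}$ into a direct sum of factorial pieces. Because $p_i \otimes \oneop_X \otimes q_j$ is central in $\Hom_{\C^{(2)}}(X,X)$, it commutes with every $f_\alpha$, so the products $f_{\alpha,ij} := f_\alpha \cdot (p_i \otimes \oneop_X \otimes q_j)$ are mutually orthogonal projections, dominated by $p_i \otimes \oneop_X \otimes q_j$, with $\sum_\alpha f_{\alpha,ij} = p_i \otimes \oneop_X \otimes q_j$. By Lemma~\ref{lem:whiskerisrep} the sub-1-arrow $(X_\alpha)_{ij}$ of $X_\alpha$ attached to $p_i \otimes \oneop_{X_\alpha} \otimes q_j$ is unitarily equivalent to the sub-1-arrow of $X$ cut out by $f_{\alpha,ij}$; since $f_{\alpha,ij} \leq p_i \otimes \oneop_X \otimes q_j$, the latter is a sub-1-arrow of $X_{ij}$, and these pieces realize $X_{ij} = \oplus_\alpha (X_\alpha)_{ij}$. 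Each $(X_\alpha)_{ij}$ is then factorial (or zero): for a sub-1-arrow of $X_{ij}$ with defining isometry $u$, Lemma~\ref{lem:whiskerisrep} expresses its left/right whiskering as $u^* \cdot (\oneop_{X_{ij}} \otimes (\cdot)) \cdot u$, which is scalar because $X_{ij}$ is factorial.

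Finally I would deduce the entrywise additivity. Refining each $(X_\alpha)_{ij}$ further into simple sub-1-arrows and collecting them, $X_{ij} = \oplus_{\alpha}\oplus_h (X_\alpha)_{ij,h}$ is a decomposition of the factorial 1-arrow $X_{ij}$ into simples. Applying the additivity~(\ref{eq:dijadditive}) to $X_{ij}$ and, separately, to each factorial $(X_\alpha)_{ij}$ gives
\begin{equation*}
d_{X_{ij}} = \sum_{\alpha}\sum_h d_{(X_\alpha)_{ij,h}} = \sum_\alpha d_{(X_\alpha)_{ij}},
\end{equation*}
where zero sub-1-arrows contribute $0$ by convention. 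Since this holds for every $i,j$, we obtain $(D_X)_{i,j} = \sum_\alpha (D_{X_\alpha})_{i,j}$, \ie $D_X = \sum_\alpha D_{X_\alpha}$. The one point requiring care — and the main potential obstacle — is the factoriality of the refined pieces $(X_\alpha)_{ij}$: additivity of the scalar dimension is available only for factorial 1-arrows (\cf the Remark following~(\ref{eq:dijadditive})), so it is essential that passing from $X_{ij}$ to the $(X_\alpha)_{ij}$ preserves factoriality, which is exactly what the compression argument through Lemma~\ref{lem:whiskerisrep} secures. No support hypotheses on the $f_\alpha$ are needed, since vanishing pieces simply produce zero entries.
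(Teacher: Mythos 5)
Your proof is correct and follows essentially the same route as the paper's: both cut each factorial component $X_{ij}$ by the commuting projections $f_\alpha\cdot(p_i\otimes\oneop_X\otimes q_j)$ (the paper transports them as $v_{ij}^*\cdot f_\alpha\cdot v_{ij}$), refine into simple pieces, apply the additivity computation of equation (\ref{eq:dijadditive}), and identify $X_{ij,\alpha}$ with $(X_\alpha)_{ij}$ up to unitary equivalence. Your explicit verification that the refined pieces $(X_\alpha)_{ij}$ remain factorial is a point the paper leaves implicit, but it is the same argument.
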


\begin{proof}
Let $\oneop_X \otimes q_j$, $p_i \otimes \oneop_X$ be the minimal projections in the left and right center of $X$ respectively, and let $X = \oplus_{i,j} X_{ij}$ be the decomposition into factorial or zero sub-1-arrows, realized by isometries $v_{ij}$ with range $p_i\otimes \oneop_X \otimes q_j$, as in Lemma \ref{lem:factorialdecomp}. Then for fixed $i,j$, the 2-arrows $v_{ij}^*\cdot f_\alpha \cdot v_{ij}\in\Hom_{\C^{(2)}}(X_{ij}, X_{ij})$, $\alpha=1,\ldots,N$, are mutually orthogonal projections (possibly zero), because $f_\alpha$ and $p_i \otimes \oneop_X \otimes q_j$ commute, and they sum up to $\oneop_{X_{ij}}$, thus $X_{ij} = \oplus_{\alpha} X_{ij,\alpha}$ where $X_{ij,\alpha}$ are the associated sub-1-arrows of $X_{ij}$. 
By decomposing further each $v_{ij}^*\cdot f_\alpha \cdot v_{ij}$ into minimal projections in $\Hom_{\C^{(2)}}(X_{ij}, X_{ij})$, and by the computation shown in (\ref{eq:dijadditive}) in the factorial case (or trivially if $X_{ij} = 0$), we obtain $d_{X_{ij}} = \sum_\alpha d_{X_{ij,\alpha}}$. Moreover, $d_{X_{ij,\alpha}} = d_{X_{\alpha,ij}}$ because $X_{ij,\alpha}$ and $X_{\alpha,ij}$ are unitarily equivalent, thus the statement.
\end{proof}

\begin{proposition}
Let $X:\N\rightarrow\M$ and $Y:\M\rightarrow\L$ be 1-arrows in $\C$ with matrix dimensions $D_X$, $D_Y$ respectively, and assume (without loss of generality) that they are connected. Assume in addition that $\Z_r^X(\M)$ and $\Z_l^Y(\M)$ are isomorphic and that the isomorphism commutes with the two representations $(\cdot) \otimes \oneop_X$ and $\oneop_Y \otimes (\cdot)$ of $\Z(\M)$ (this happens, in particular, if the latter are also isomorphisms). Then 
$$D_{Y\otimes X} = D_Y D_X$$ 
where $D_{Y\otimes X}$ is the matrix dimension defined using the minimal projections in $\Z_l^X(\N)$ and $\Z_r^Y(\L)$.
\end{proposition}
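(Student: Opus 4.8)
The plan is to mirror the proof of Theorem \ref{thm:dimensionmatmult} and compute $D_{Y\otimes X}$ entry-by-entry, reducing everything to the factorial case. Write $\{p_i \otimes \oneop_X, i=1,\dots,m\}$ for the minimal projections of $\Z_r^X(\M)$, where $p_i \in \Z(\M)$. By the standing hypothesis the isomorphism $\Z_r^X(\M) \cong \Z_l^Y(\M)$ commutes with the two representations $(\cdot)\otimes\oneop_X$ and $\oneop_Y \otimes(\cdot)$ of $\Z(\M)$, so the same central elements $p_i$ also label the minimal projections $\oneop_Y \otimes p_i$ of $\Z_l^Y(\M)$; in particular $\sum_i p_i \otimes \oneop_X = \oneop_X$ and $\sum_i \oneop_Y \otimes p_i = \oneop_Y$. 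Set
$$P_i := \oneop_Y \otimes p_i \otimes \oneop_X \in \Hom_{\C^{(2)}}(Y\otimes X, Y\otimes X), \quad i=1,\dots,m,$$
which are mutually orthogonal projections with $\sum_i P_i = \oneop_{Y\otimes X}$ by the interchange law and the two previous identities.

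Decompose $X = \oplus_i X_i'$ and $Y = \oplus_i Y_i'$ along the ranges of $p_i \otimes \oneop_X$ and $\oneop_Y \otimes p_i$ respectively, so that $Y\otimes X = \oplus_{i,i'} Y_{i'}' \otimes X_i'$. Whiskering $p_{i''}$ into the middle of $Y_{i'}' \otimes X_i'$ and using associativity together with Lemma \ref{lem:whiskerisrep} gives $\delta_{i',i''}\oneop_{Y_{i'}'\otimes X_i'} = \delta_{i,i''}\oneop_{Y_{i'}'\otimes X_i'}$ for every $i''$, whence $Y_{i'}'\otimes X_i' = 0$ whenever $i\neq i'$. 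Thus only the matching terms survive, $Y\otimes X = \oplus_i Y_i'\otimes X_i'$, and $P_i$ is the projection onto the $i$-th summand. Refining each $X_i' = \oplus_j X_{ij}$ and $Y_i' = \oplus_k Y_{ki}$ into their factorial (or zero) sub-1-arrows as in Lemma \ref{lem:factorialdecomp}, we obtain $Y\otimes X = \bigoplus_{k,i,j} Y_{ki}\otimes X_{ij}$. Each summand $Y_{ki}\otimes X_{ij}$ is factorial (or zero): its left center is $\oneop_{Y_{ki}}\otimes(\oneop_{X_{ij}}\otimes\Z(\N))\subseteq\CC\oneop$ by factoriality of $X_{ij}$, and symmetrically on the right by factoriality of $Y_{ki}$. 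Grouping the summands with fixed $(k,j)$ realizes the factorial sub-1-arrow of $Y\otimes X$ attached to the projection $s_k\otimes\oneop_{Y\otimes X}\otimes q_j$, namely $(Y\otimes X)_{kj} = \oplus_i Y_{ki}\otimes X_{ij}$; this direct sum is again factorial, since all its pieces carry the same left- and right-central data $q_j$, $s_k$, so the scalar by which any $s\in\Z(\N)$ acts is its $q_j$-component, independently of $i$.

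It remains to identify $d_{(Y\otimes X)_{kj}}$. By the additivity of the scalar dimension of factorial 1-arrows over their simple sub-1-arrows, equation (\ref{eq:dijadditive}), applied first to $(Y\otimes X)_{kj}$ and then to each $Y_{ki}\otimes X_{ij}$, we get $d_{(Y\otimes X)_{kj}} = \sum_i d_{Y_{ki}\otimes X_{ij}}$. Finally, the scalar dimension is multiplicative on composable factorial 1-arrows, $d_{Y_{ki}\otimes X_{ij}} = d_{Y_{ki}}\,d_{X_{ij}}$: this is the factorial analogue of the multiplicativity of the dimension proved for simple tensor units in \cite{LoRo97}, obtained by checking that the composed solution
$$\oneop_{\Xbar_{ij}}\otimes r_{Y_{ki}}\otimes\oneop_{X_{ij}}\cdot r_{X_{ij}}, \qquad \oneop_{Y_{ki}}\otimes\rbar_{X_{ij}}\otimes\oneop_{\Ybar_{ki}}\cdot\rbar_{Y_{ki}}$$
built from standard solutions for the factors is again standard, all relevant whiskered centers being trivial. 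Combining,
$$(D_{Y\otimes X})_{k,j} = d_{(Y\otimes X)_{kj}} = \sum_i d_{Y_{ki}}\,d_{X_{ij}} = \sum_i (D_Y)_{k,i}(D_X)_{i,j} = (D_Y D_X)_{k,j},$$
which is the claim.

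The hypothesis $\Z_r^X(\M)\cong\Z_l^Y(\M)$ enters exactly in the first paragraph: it guarantees a common labelling of the middle minimal projections by the $p_i$, so that the $P_i$ resolve the identity and the cross terms vanish; without it the inner dimension $m$ of the product $D_Y D_X$ would be ill-defined. The main obstacle is the final multiplicativity $d_{Y_{ki}\otimes X_{ij}} = d_{Y_{ki}}\,d_{X_{ij}}$ for factorial 1-arrows; although each factor is factorial and hence behaves as in the simple-unit theory, one must verify that composing standard solutions preserves standardness in the present 2-categorical setting, which is the only genuinely non-formal step and is handled by the cited adaptation of \cite{LoRo97} (equivalently, by the multiplicativity of the minimal index for subfactors \cite{Lon92}).
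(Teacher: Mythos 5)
Your proof is correct and follows essentially the same route as the paper's: decompose $Y\otimes X$ along the outer projections $r_k\otimes\oneop_{Y\otimes X}\otimes q_j$ and the matched middle projections $\oneop_Y\otimes p_i\otimes\oneop_X$, identify the resulting factorial pieces with $Y_{ki}\otimes X_{ij}$, and conclude by the additivity computation of equation (\ref{eq:dijadditive}) over the middle index together with multiplicativity of the scalar dimension for factorial 1-arrows. The only cosmetic differences are the order in which you perform the two decompositions (middle first, then outer, versus the paper's outer first) and that you sketch the factorial-case multiplicativity via composed standard solutions rather than citing \cite[\Cor 3.10]{LoRo97} directly.
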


\begin{proof}
Let $\oneop_X \otimes q_j$, $r_k \otimes \oneop_Y$ be the minimal projections in $\Z_l^X(\N)$ and $\Z_r^Y(\L)$ respectively. By assumption, the minimal projections in $\Z_r^X(\M)$ and $\Z_l^Y(\M)$ can be written as $p_i \otimes \oneop_X$ and $\oneop_Y \otimes p_i$ with respect to the same $p_i\in\Z(\M)$, and $\oneop_Y \otimes (p_i \otimes \oneop_X) = (\oneop_Y \otimes p_i) \otimes \oneop_X$ by strict associativity of the horizontal composition in $\C$. Let $Y \otimes X = \oplus_{k,j} (Y \otimes X)_{kj}$ be the decomposition into factorial or zero sub-1-arrows, realized by isometries $v_{kj}$ with range $r_k\otimes \oneop_{Y \otimes X} \otimes q_j$, analogue to Lemma \ref{lem:factorialdecomp}. With the obvious notation, we have that $(Y\otimes X)_{kj}$ and $Y_k \otimes X_j$ are unitarily equivalent. Now, for fixed $k,j$, the 2-arrows $v_{kj}^*\cdot \oneop_Y \otimes p_i \otimes \oneop_X \cdot v_{kj}\in\Hom_{\C^{(2)}}((Y \otimes X)_{kj}, (Y \otimes X)_{kj})$ are mutually orthogonal projections (possibly zero), because $\oneop_Y \otimes p_i \otimes \oneop_X$ and $r_k\otimes \oneop_{Y \otimes X} \otimes q_j$ commute, and they sum up to $\oneop_{(Y \otimes X)_{kj}}$, thus $(Y \otimes X)_{kj} = \oplus_i (Y \otimes X)_{kj,i}$ and $d_{(Y \otimes X)_{kj}} = \sum_i d_{(Y \otimes X)_{kj,i}}$ as in equation (\ref{eq:dijadditive}). Observing that $(Y \otimes X)_{kj,i}$ is unitarily equivalent to $Y_{ki} \otimes X_{ij}$, again in the obvious notation, and using the multiplicativity of the scalar dimension in the factorial case \cite[\Cor 3.10]{LoRo97}, we get $d_{(Y \otimes X)_{kj,i}} = d_{Y_{ki}} d_{X_{ij}}$ hence the proof is complete.
\end{proof}

As a consequence of these last two propositions, one can prove statements analogue to those contained in Section \ref{sec:multiplicativity} and \ref{sec:additivity} on the (sub)multiplicativity of the scalar dimension $d_X$ in the case of connected (non-factorial) 1-arrows $X$, on the sufficient conditions for sharp multiplicativity, on the additivity of $d_X^{\, 2}$ (\ie, additivity of the minimal index of $X$) when either the left or right center of $X$ is trivial and we decompose along the remaining right or left central projections, together with the weighted additivity formula for $d_X$ obtained in (\ref{eq:disDangles}), \cf Theorem \ref{thm:minimalconnectedfindim}, equation (\ref{eq:weightedadditivd}).

In particular, in view of the multiplicativity property of the minimal index in the Jones tower case (Corollary \ref{cor:jonesextensionmultip}), let $Y$ be a self-conjugate 1-arrow of the form $Y=\Xbar\otimes X:\N\rightarrow\N$ and assume to have a braiding $\eps_{Y,Y}\in\Hom_{\C^{(2)}}(Y\otimes Y,Y\otimes Y)$ arising in some specific example of rigid 2-\Cstar-category with finite-dimensional centers $\C$. A natural question is to exploit the relationship between the Markov trace arising from the spherical state $\omega_s^{Y^n}$ of $Y\otimes\cdots\otimes Y$ obtained by iterating the standard left inverse of $Y$ (Definition \ref{def:sstate}) and knot invariants as in the Jones polynomial case \cite{Jon87}.





\bigskip
{\bf Acknowledgements.}
We thank the Isaac Newton Institute (INI) for Mathematical Sciences in Cambridge for hospitality during the program \lqq Operator algebras: subfactors and their applications" (OAS), supported by EPSRC grant numbers EP/K032208/1 and EP/R014604/1.
L.G. thanks also the Department of Mathematics of the Ohio University and of the Georg-August-Universit\"at G\"ottingen for financial support, the latter during the workshop LQP41 \lqq Foundations and constructive aspects of QFT" where part of our results were presented.
We acknowledge the MIUR Excellence Department Project awarded to the Department of Mathematics, University of Rome Tor Vergata, CUP E83C18000100006.

\small


\def\cprime{$'$}

\end{document}